\title{Limits of multivariate elliptic beta integrals and related bilinear forms}
\author{Fokko J. van de Bult and Eric M. Rains}
\newtheorem{theorem}{Theorem}[section]
\newtheorem{definition}[theorem]{Definition}
\newtheorem{proposition}[theorem]{Proposition} 
\newtheorem{corollary}[theorem]{Corollary}
\newtheorem{lemma}[theorem]{Lemma}
\newcommand{\II}{I\!I}
\DeclareMathOperator{\val}{val}
\DeclareMathOperator{\lc}{lc}
\begin{document}

\begin{abstract}
In this article we consider the elliptic Selberg integral, which is a $BC_n$ symmetric multivariate extension of the elliptic beta integral. We categorize the limits that are obtained as $p\to 0$, for given behavior of the parameters as $p\to 0$. This article is therefore the multivariate version of \cite{vdBR}. The integrand of the elliptic Selberg integral is the measure for the biorthogonal functions from \cite{Rainstrafo}, so we also consider the limits of the associated bilinear form. We also provide the limits for the discrete version of this bilinear form, which is related to a multivariate extension of the Frenkel-Turaev summation.
\end{abstract}

\maketitle

Elliptic hypergeometric functions have been a popular area of study since the publication \cite{FT} of Frenkel and Turaev's summation formula. Elliptic hypergeometric series are a generalization of hypergeometric series, where the quotient of two subsequent terms is an elliptic function of $n$ instead of a rational function in $n$, respectively a rational function in $q^n$, for ordinary, respectively basic, hypergeometric functions. Just as in the case of other classes of hypergeometric series, there also exist closely related elliptic hypergeometric integrals, which involve the elliptic gamma function \cite{egf}. The most important of these is a generalization of the beta integral \cite{Spirbeta}.

Many identities for ordinary and basic hypergeometric functions can be generalized to the elliptic hypergeometric setting. 
One of these results is the construction of a family of biorthogonal functions, which are biorthogonal both with respect to the Frenkel-Turaev summation formula \cite{SZ1}, \cite{SZ2}, and with respect to the elliptic beta integral \cite{S}. These biorthogonal functions are generalizations of the (Askey-)Wilson polynomials. These biorthogonal functions have been generalized to the multivariate ($BC_n$-symmetric) setting, discrete measure in \cite{RainsBCn}, continuous measure in \cite{Rainstrafo}. The multivariate biorthogonal functions are an elliptic analogue of both the Koornwinder polynomials and the Macdonald polynomials. The associated generalization of the measures give multivariate analogues of the Frenkel-Turaev summation formula and the elliptic beta integral, the latter is called the elliptic Selberg integral. 

It is well known that upon taking a proper limit in elliptic hypergeometric functions (i.e., letting the parameter $p$ go to 0), one obtains basic hypergeometric functions. In this way we can make precise the statement that certain elliptic hypergeometric identities are generalizations of corresponding basic hypergeometric identities. The limit obtained depends, as one would expect, on how all the other parameters behave as $p\to 0$. Studying these different possible limits reveals the structure behind the multitudinous different basic hypergeometric identities. This project has been carried out by the authors for the univariate elliptic beta integral \cite{vdBR}, univariate biorthogonal functions \cite{vdBRuniv}, and multivariate biorthogonal functions \cite{vdBRmult}. 

In this last paper we only considered the limits of the biorthogonal functions themselves, not of the related measures. In particular we now have families of functions which are formally biorthogonal, but without a proper measure with respect to which they are biorthogonal. In this article we consider the limits of the elliptic hypergeometric measures and find at least one measure for each of the limiting families of \cite{vdBRmult}. In the process we obtain the limits of the multivariate Frenkel-Turaev summation and the elliptic Selberg integral (the multivariate beta integral), which are the normalization constants in the measure. 

As in \cite{vdBRmult} we only consider limits in which the parameter $t$, controlling the cross-terms of the multivariate Selberg integral, remains constant as $p\to 0$. One might well expect that there are still other interesting limits to be found in which $t$ does depend non-trivially on $p$. Fixing $t$ ensures that the combinatorics behind the different limiting measures/integrals we obtain here is identical to the combinatorics of the univariate limits given in \cite{vdBR}, just as the combinatorics for the limits of the multivariate biorthogonal functions from \cite{vdBRmult} equals the combinatorics of the univariate limits in \cite{vdBRuniv}.

The article is organized as follows. In Section \ref{secnot} we will introduce the notation, and present basic properties of the basic functions. In Section \ref{secdisc} we consider the limits of the discrete measures, that is the multivariate Frenkel-Turaev summation and its associated bilinear form. The following section discusses this same situation but with more parameters (i.e., higher ``$m$'' in the notation of \cite{Rainstrafo}). Section \ref{secac} then considers the continuous measures, that is the bilinear forms associated to the elliptic Selberg integral, with the next section giving some limits to bilateral series of this continuous measure. In Section \ref{sec6} we consider the limits of the associated integral with more variables. Finally Appendix \ref{apb} contains some tedious calculations which are necessary to obtain the series limits in Section \ref{secac} and the bilateral limits.

\section{Notation}\label{secnot}
The notations we use are identical to the notations from \cite{vdBRmult}. 

\subsection{Univariate $q$-symbols}

We say a function $f(x;z)$ is written multiplicatively in $x$ if the presence of multiple parameters at the place of $x$ indicates a product; and if $\pm$ symbols in those parameters also indicate a product over all possible combinations of $+$ and $-$ signs. For example
\begin{align*}
f(x_1, x_2, \ldots, x_n;z) &= \prod_{r=1}^n f(x_i;z), \\ f(x^{\pm 1} y^{\pm 1};z) &= 
f(xy;z)f(x/y;z)f(y/x;z)f(1/xy;z).
\end{align*}

Now we define the $q$-symbols and their elliptic analogues as in \cite{GR}. Let $0<|q|,|p|<1$ and set
\begin{align*}
(x;q) &= \prod_{r=0}^\infty (1-xq^r), &
(x;q)_m &= \prod_{r=0}^{m-1} (1-xq^r), & (x;p,q) &= \prod_{r,s\geq 0} (1-xp^rq^s) \\
\theta(x;p) &= (x,p/x;p),&
\theta(x;q;p)_m &= \prod_{r=0}^{m-1} \theta(xq^r;p), & 
\Gamma(x;p,q) & = \prod_{i,j\geq 0} \frac{1-p^{i+1}q^{j+1}/x}{1-p^i q^j x}.
\end{align*}
All these functions are written multiplicatively in $x$. Note that the terminating product $(x;q)_m$ is also defined if $|q|\geq 1$. Likewise $\theta(x;q;p)_m$ is defined for all $q$, though we must still insist on $|p|<1$.

%We call a function $f:\mathbb{C}^* \to \mathbb{C}$ symmetric if $f(z)=f(1/z)$. A function is $p$-abelian if it satisfies
%$f(z)=f(pz)$. 
%
%We define the space $A(u_0;p,q)$ as the space of all meromorphic symmetric $p$-abelian functions $f$ such that
%\[
%\theta(pq z^{\pm 1}/u_0;q;p)_m f(z_i)
%=  \frac{\Gamma(u_0 z^{\pm 1})}{\Gamma(u_0 q^{-m} z^{\pm 1})} f(z)
%\]
%is holomorphic for sufficiently large $m$. That is, $f$ can only have poles at the points $u_0 q^{-l} p^k$ and
%$u_0^{-1} q^l p^k$ for $k\in \mathbb{Z}$ and $1\leq l\leq m$, and these poles must be simple.

%
%
%\section{Notation: Partitions and multivariate $q$-symbols}\label{secnotmult}
%In the rest of this article we will focus on multivariate biorthogonal systems. We will need a lot of terminology to define multivariate analogues of the biorthogonal functions. In particular we need to consider parititions (which are the multivariate analogues of the integers used as index for the biorthogonal functions), and multivariate analogues of the $q$-pochhammer symbols. 

\subsection{Partitions}
We use the notations of \cite{RainsBCn} for partitions, which is the notation from Macdonald's book \cite{Macdonald} with some additions. 
If $\lambda \subset m^n$ then we write $m^n-\lambda$ for the complementary partition, given by 
\[
(m^n-\lambda)_i = \begin{cases} m-\lambda_{n+1-i}  & 1\leq i\leq n \\ 0 & i>n \end{cases}
\]
%Moreover, if $\ell(\lambda)\leq n$ we define $m^n+\lambda$ to be the partition
%\[
%(m^n+\lambda)_i = \begin{cases} m+\lambda_i & 1\leq i\leq n \\0 & i>n \end{cases}
%\]
%Similarly, if $\lambda_1\leq m$ we define $m^n\cdot \lambda$ to be the partition
%\[
%(m^n \cdot \lambda)_i = \begin{cases} m & 1\leq i\leq n, \\ \lambda_{i-n} & i>n. \end{cases}
%\]
%Stated differently: $m^n \cdot \lambda = (m^n+\lambda')'$. 
%
%We define the relation $\prec_m$ by setting $\kappa \prec_m \lambda$ if $\kappa \subset \lambda \subset m^n+k$ for sufficiently large $n$ (for example $n= \max(\ell(\kappa),\ell(\lambda))$). Likewise we define $\prec_m'$ by setting $\kappa \prec_m' \lambda$ if and only if $\kappa' \prec_m \lambda'$. In both cases we omit the subscript if $m=1$. 
%Recall that a chain $0 =\lambda^{(0)} \prec \lambda^{(1)} \prec \lambda^{(2)} \prec \cdots \prec \lambda^{(n)}$ corresponds to a semistandard Young tableau of shape $\lambda^{(n)}$, where the entries of the diagram are determined by writing $k$ in the boxes in the strip $\lambda^{(k)}/ \lambda^{(k-1)}$. 

Some convenient numbers associated with $\lambda$ are
\begin{align*}
|\lambda|&=\sum_i \lambda_i \\
n(\lambda) &= \sum_{i} \lambda_i (i-1)= \sum_{(i,j)\in \lambda} (i-1) = \sum_{j} \binom{\lambda_j'}{2}  = \frac12 \sum_{(i,j) \in \lambda} (\lambda_j'-1).
\end{align*}
Here we use $\sum_{(i,j)\in \lambda}$, which means we sum over all boxes in the Young diagram, i.e. we sum over 
$1\leq i\leq l(\lambda)$ and $1\leq j\leq \lambda_i$. A similar notation is used for products. Notice that we can extend these definitions for arbitrary $\lambda \in \mathbb{Z}^n$, by using the first definition for $n(\lambda)$. We can also define $n(\lambda')$ for arbitrary $\lambda \in \mathbb{Z}^n$ by using the third equation, i.e. 
$n(\lambda') =  \sum_{i} \binom{\lambda_i}{2}$, in particular we do not need to define $\lambda'$ itself to define $n(\lambda')$.
%For reference, we should note that $i-1$ denotes the coleg length and $j-1$ the coarm length.

In the entire article we will use $n$ for the number of variables $z_i$, which means that our partitions usually satisfy $\ell(\lambda)\leq n$. (From context it should always be clear when we use $n$ as number of variables and when we use it for the function $n(\lambda)$.)

\subsection{Multivariate $q$-symbols}

Let us now define the $C$-symbols (also written multiplicatively in $x$).
\begin{align}
\label{eqdefc0}C_{\lambda}^0(x;q,t;p) &= \prod_{(i,j)\in\lambda} \theta(q^{j-1}t^{1-i}x;p) &\tilde C_{\lambda}^0(x;q,t) &= \prod_{(i,j)\in\lambda} (1-q^{j-1}t^{1-i}x) \\
\label{eqdefcm}C_{\lambda}^-(x;q,t;p) &= \prod_{(i,j)\in\lambda} \theta(q^{\lambda_i-j}t^{\lambda_j'-i}x;p) & \tilde C_{\lambda}^-(x;q,t) &= \prod_{(i,j)\in\lambda} (1-q^{\lambda_i-j}t^{\lambda_j'-i}x)\\
\label{eqdefcp} C_{\lambda}^+(x;q,t;p) &= \prod_{(i,j)\in\lambda} \theta(q^{\lambda_i+j-1}t^{2-\lambda_j'-i}x;p) &
\tilde C_{\lambda}^+(x;q,t) &= \prod_{(i,j)\in\lambda} (1-q^{\lambda_i+j-1}t^{2-\lambda_j'-i}x)
\end{align}
%As before, a product over $(i,j) \in \lambda$ means that we take the product over all $1\leq i$ and $1\leq j\leq \lambda_i$; thus these are products over all 
%boxes in the Young-diagram of $\lambda$.
The elliptic $C_{\lambda}$ are as in \cite{RainsBCn}, while the $\tilde C_{\lambda}$ are the 
$C_{\lambda}$ from \cite{RainsBCnpoly}. Observe that the following alternative expressions
\begin{align}\label{eqaltdefcl}
C_{\lambda}^0(x;q,t;p) & = \prod_{i=1}^n \theta(t^{1-i}x;p)_{\lambda_i}, \qquad \qquad 
C_{\lambda}^-(x;q,t;p) = \prod_{1\leq i<j\leq n} \frac{\theta(t^{j-1-i}x;p)_{\lambda_i-\lambda_j}}{\theta(t^{j-i}x;p)_{\lambda_i-\lambda_j}}
\prod_{i=1}^n  \theta(t^{n-i}x;p)_{\lambda_i}, \\
C_{\lambda}^+(x;q,t;p) &= \prod_{1\leq i<j\leq n} \frac{\theta(t^{2-j-i}x;p)_{\lambda_i+\lambda_j}}{\theta(t^{3-j-i}x;p)_{\lambda_i+\lambda_j}}
\prod_{i=1}^n \frac{\theta(t^{2-2i}x;p)_{2\lambda_i}}{\theta(t^{2-n-i}x;p)_{\lambda_i}}. \nonumber 
\end{align}
hold (they are equivalent to \cite[(1.11)-(1.13)]{Rainstrafo}), 
 which allows us to define these functions for arbitrary $\lambda \in \mathbb{Z}^n$, given the usual convention 
 $\theta(x;p)_{-n} = 1/\prod_{i=1}^n \theta(xq^{-i};p)$ for $n\geq 0$. Similar expressions with $\theta(x)$ replaced by $(1-x)$ give definitions for the $\tilde C_{\lambda}^\epsilon$ for $\lambda \in \mathbb{Z}^n$.

%\begin{figure}
%\begin{center}
%
%\begin{picture}(100,100)
%
%\drawline(0,50)(100,50)(100,75)(0,75)(0,0)(25,0)(25,75)
%\drawline(0,25)(50,25)(50,75)
%\drawline(75,50)(75,75)
%
%\mput{13,12}{$(3,1)$}
%\mput{13,37}{$(2,1)$}
%\mput{13,62}{$(1,1)$}
%\mput{38,37}{$(2,2)$}
%\mput{38,62}{$(1,2)$}
%\mput{63,62}{$(1,3)$}
%\mput{88,62}{$(1,4)$}
%
%\end{picture}
%
%\end{center}
%\caption{The numbering of the boxes in a Young-diagram for $\lambda=(4,2,1)$.}
%\end{figure}
The $\Delta$-symbols are defined by
\[
\Delta_\lambda^0(a~|~b;q,t;p) = \frac{C_{\lambda}^0(b;q,t;p)}{C_\lambda^0(pqa/b;q,t;p)},
\] 
which is written multiplicatively in $b$ and
\[
\Delta_\lambda(a~|~b_1,\ldots, b_r;q,t;p) = \Delta_\lambda^0(a~|~b_1, \ldots, b_r;q,t;p) \frac{C_{2\lambda^2}^0(pqa;q,t;p)}{C_\lambda^-(pq,t;q,t;p) C_{\lambda}^+(a,pqa/t;q,t;p)}
\]
which is emphatically not written multiplicatively. Here $2\lambda^2$ denotes the partition with $(2\lambda^2)_i = 2(\lambda_{\lceil i/2\rceil})$. A $q$-hypergeometric version of $\Delta_{\lambda}$ is defined by 
\[
\tilde \Delta_{\lambda}^{(n)}(a;q,t) = \frac{\tilde C_{2\lambda^2}^0(aq;q,t) \tilde C_{\lambda}^0(t^n;q,t)}{
\tilde C_{\lambda}^0(aq/t^n) \tilde C_{\lambda}^-(q,t;q,t) \tilde C_{\lambda}^+(a,aq/t;q,t)}
\left(-\frac{1}{a^2q^2t^{n-1}}\right)^{|\lambda|} q^{-3n(\lambda')} t^{5 n(\lambda)}.
\]
Whenever no confusion is possible we omit the $;q,t;p$ or the $;q,t$ from the arguments.

The $C_\lambda$'s are multivariate analogues of the theta Pochhammer symbols, while the $\tilde C_{\lambda}$'s are multivariate versions of $q$-Pochhammer symbols. The $\Delta_{\lambda}$ and $\tilde \Delta_{\lambda}$ correspond univariately to the summands of a very well poised series, indeed 
%
%Univariate versions of these functions are given by $C_{l}^* = C_{(l,0,0,\ldots)}^*$. These are generalizations of theta-pochhammer symbols for the elliptic $C_l$
%\[
%C_l^0(x;q,t;p) = \theta(x;q;p)_l, \qquad 
%C_l^-(x;q,t;p) = \theta(x;q;p)_l, \qquad 
%C_l^+(x;q,t;p) = \theta(q^l x;q;p)_l.
%\]
%%(Here $C_l$ is $C_{\lambda}$ for the partition $\lambda=(l,0,0,\ldots)$),
%The $\tilde C_{\lambda}$'s are generalizations of $q$-pochhammer symbols
%\[
%\tilde C_l^0(x;q,t) = (x;q)_l, \qquad 
%\tilde C_l^-(x;q,t) = (x;q)_l, \qquad 
%\tilde C_l^+(x;q,t) = (q^l x;q)_l.
%\]
%The $\Delta_{l}$'s are products of these theta pochhammer symbols
\begin{align*}
%\Delta_l^0(a~|~b;q,t;p) &= \frac{\theta(b;q;p)_l}{\theta(\frac{pqa}{b};q;p)_l}, &
\Delta_l(a~|~b_1,\ldots,b_r;q,t;p) &=
\frac{\theta(apq^{2l};p)}{\theta(ap;p)}
\frac{\theta(ap,\frac{ap^2q}{t};q;p)_l}{\theta(q,\frac{t}{p};q;p)_l}
\prod_{s=1}^r \frac{\theta(b_s;q;p)_l}{\theta(\frac{pqa}{b_s};q;p)_l} q^l, \\ 
%\Delta_l(a~|~b_1,\ldots,b_r;q,t;p) &=
%\frac{\theta(pqa;q;p)_{2l} \theta(\frac{pqa}{t};q;p)_{l}}{\theta(pq,t,aq^l;q;p)_l }
%\prod_{s=1}^r \frac{\theta(b_s;q;p)_l}{\theta(\frac{pqa}{b_s};q;p)_l}, \\ 
\tilde \Delta_{l}^{(1)}(a;q,t) &= \frac{1-aq^{2l}}{1-a} \frac{(a;q)_{l}}{(q;q)_l} \left( -\frac{1}{a^2q^2}\right)^l q^{-3\binom{l}{2}}.
\end{align*}
%We'd like to observe that all these univariate versions are independent of $t$, except $\Delta_l(a~|~b_1,\ldots,b_s;q,t;p)$. However, in practice we'll only meet $\Delta_\lambda(a~|~t^n,b_1,\ldots,b_s;q,t;p)$, whose univariate version clearly is $t$-independent (recall $n$ denotes the number of variables).
\subsection{Transformations of generalized $q$-symbols}
It is convenient to write down a few elementary transformation formulas for these functions, analogues of some identities for theta Pochhammer symbols. 
%First we have for any $m>\kappa_1$
%\begin{equation}\label{eqcmdotkappa}
%C_{m\cdot \kappa}^0(x) = \theta(x;q;p)_m C_{\kappa}^0(x/t),
%\end{equation}
%which is a simple combinatorial argument. 
The following expressions can all be obtained from the two elementary symmetries $\theta(px;p)=%\theta(1/x;p)=
-\frac{1}{x}\theta(x;p)$.
%and $\theta(px;p) =-\frac{1}{x} \theta(x;p)$.
\begin{align} 
C_{\lambda}^0(px) & = C_{\lambda}^0(x) \left(-\frac{1}{x}\right)^{|\lambda|} q^{-n(\lambda')} t^{n(\lambda)}, \label{eqc0p}  \\
C_{\lambda}^-(px) & = C_{\lambda}^-(x) \left(-\frac{1}{x}\right)^{|\lambda|} q^{-n(\lambda')} t^{-n(\lambda)}, \label{eqcmp} \\
C_{\lambda}^+(px) & =  C_{\lambda}^+(x) \left(-\frac{1}{qx}\right)^{|\lambda|} q^{-3n(\lambda')} t^{3n(\lambda)}. \label{eqcpp}
\end{align}

Likewise we can find shifting formulas for the $\Delta$ functions:
\begin{align}
\Delta_{\lambda}^0(a~|~pb, \ldots, v_i,\ldots) & = \Delta_{\lambda}^0(a~|~b,\ldots,v_i,\ldots) \left( \frac{1}{aq}\right)^{|\lambda|} q^{-2n(\lambda')}t^{2n(\lambda)} \\
\Delta_{\lambda}^0(\frac{a}{p}~|~ b_1, \ldots, b_r) &= \Delta_{\lambda}^0(a~|~b_1,\ldots, b_r)\left(\frac{\prod_i b_i}{(-aq)^r}\right)^{|\lambda|}
q^{-r n(\lambda')} t^{r n(\lambda)} \\
\Delta_{\lambda}(a~|~pb, \ldots, v_i, \ldots) &= \Delta_{\lambda}(a~|~b,\ldots,v_i,\ldots) \left( \frac{1}{aq}\right)^{|\lambda|} 
q^{-2n(\lambda')}t^{2n(\lambda)} \\
\Delta_{\lambda}(\frac{a}{p}~|~ b_1, \ldots, b_r) &= \Delta_{\lambda}(a~|~b_1,\ldots, b_r)\left(\frac{pq}{t} \frac{\prod_i b_i}{(-aq)^{r-2}}\right)^{|\lambda|}
q^{(2-r) n(\lambda')} t^{(r-2) n(\lambda)}.
\end{align}
We would like to remark that $\Delta_{\lambda}^0(a~|~b_1,\ldots,b_{r})$ is
invariant if we multiply each individual $b_j$ by an integer multiple of
$p$, while keeping the product $\prod_r b_r$ fixed. Moreover, if $r$ is
even, then $\Delta_{\lambda}^0$ is invariant if we multiply $a$ and the
$b_j$'s by integer multiples of $p$, as long as the balancing condition
$\prod_i b_i = (apq)^{r/2}$ holds (both before and after the
$p$-shift). Similarly, as long as the balancing condition $pq \prod_{i}
b_i=t (apq)^k$ holds $\Delta_{\lambda}(a~|~b_1,\ldots,b_{2k+2})$ remains
invariant under multiplication of the parameters by integer powers of $p$.

\subsection{Power series in $p$}
Most functions we are interested in are elements of the field $M(x)$, defined in \cite[Section 2]{vdBRuniv}. This is a field of (multivariate) meromorphic functions in the variables $x=(x_1,x_2,\ldots)$, which can be expressed as power series 
$f=\sum_{t\in T} a_t(x) p^t$ for some discrete set $T$, which is bounded from below, with coefficients $a_t$, which are rational functions in $x$. The valuation of such a series is $\val(f) = \min_{t\in T}t$ and the leading coefficient is given by $\lc(f) = a_{\val(t)}$. Since we are interested in the behavior as $p\to 0$, we think of the valuation as describing the size of $f$ as $p\to 0$, while the leading coefficient gives the limit of $f$ (after proper rescaling). The conditions on the space imply that this limit is always uniform on compact sets outside the zero-set of some polynomial in $x$. Moreover, due to some extra conditions placed on the rational functions $a_t$ we obtained the following iterated limit property \cite[Proposition 2.3]{vdBRuniv}.
\begin{proposition}\label{propiteratedlim}
Let $f\in M(x)$, write $p^u x = (p^{u_1}x_1,p^{u_2}x_2,\ldots)$. Then for small enough $\epsilon>0$ and any $u$ we have
\[
\lc(\lc(f)(p^u x)) = \lc(f(p^{\epsilon u}x)), \qquad \val(f)+ \epsilon \  \val(\lc(f)(p^u x)) = \val(f(p^{\epsilon u} x)).
\]
\end{proposition}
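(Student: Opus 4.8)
The plan is to reduce the statement to a term-by-term analysis of the expansion in $p$, isolating the contribution of the leading term of $f$ and showing that every other term stays strictly subdominant once $\epsilon$ is small. Write $f=\sum_{t\in T}a_t(x)p^t$ with the $a_t$ rational, $T$ discrete and bounded below, set $N=val(f)$ and $g=lc(f)=a_N$. The substitution $x\mapsto p^{\epsilon u}x$ acts termwise, $f(p^{\epsilon u}x)=\sum_{t\in T}a_t(p^{\epsilon u}x)\,p^t$, so the first ingredient is a purely rational-function observation: for any nonzero rational $r(x)$ and any real $\epsilon>0$, $val(r(p^{\epsilon u}x))=\epsilon\,val(r(p^{u}x))$ and $lc(r(p^{\epsilon u}x))=lc(r(p^{u}x))$. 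After clearing denominators this reduces to a Laurent polynomial $P=\sum_\alpha c_\alpha x^\alpha$, for which $P(p^{\epsilon u}x)=\sum_\alpha c_\alpha p^{\epsilon\langle u,\alpha\rangle}x^\alpha$; since $\epsilon>0$, the set of exponents $\alpha$ achieving $\min_\alpha\langle u,\alpha\rangle$, and whether the associated coefficient polynomial vanishes identically, is independent of $\epsilon$, so both $val$ and $lc$ scale as claimed, and the general case follows by taking a quotient.

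Applying this to each $a_t$, the summand $a_t(p^{\epsilon u}x)\,p^t$ has $p$-valuation $t+\epsilon v_t$ with $v_t:=val(a_t(p^u x))$ and leading coefficient $lc(a_t(p^u x))$. I would then show that for $\epsilon$ small enough, depending on $u$ and $f$ but uniformly in $t$, the term $t=N$ strictly dominates, i.e. $N+\epsilon v_N<t+\epsilon v_t$ for every other $t\in T$ with $a_t\neq0$, equivalently $\epsilon(v_N-v_t)<t-N$. Since $T$ is discrete and bounded below, $t-N$ is bounded away from $0$ and grows without bound, so it suffices to bound $v_t$ linearly in $t$; this is precisely what the extra conditions imposed on the coefficients in the definition of $M(x)$ (see \cite[Section 2]{vdBRuniv}) provide, yielding $|v_t|\le C(u)(1+t)$. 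With such a bound the inequality holds for all relevant $t$ once $\epsilon<\epsilon_0(u,f)$: the finitely many $t$ below any cutoff are handled individually, and beyond the cutoff the linear bound on $v_t$ is beaten by the linear growth of $t-N$. No cancellation can spoil this since one term is strictly smallest.

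Once domination is established both identities are read off. We get $val(f(p^{\epsilon u}x))=N+\epsilon v_N$, and since $v_N=val(a_N(p^u x))=val(lc(f)(p^u x))$ this is exactly $val(f(p^{\epsilon u}x))=val(f)+\epsilon\,val(lc(f)(p^u x))$; likewise $lc(f(p^{\epsilon u}x))=lc(a_N(p^{\epsilon u}x))=lc(g(p^u x))=lc(lc(f)(p^u x))$ by the rational-function observation of the first paragraph. Along the way one also checks that $f(p^{\epsilon u}x)$ again lies in $M(x)$ — that $M(x)$ is closed under $x\mapsto p^{\epsilon u}x$, again part of the setup of \cite{vdBRuniv} — with discreteness and lower-boundedness of the new exponent set coming from the same linear control on the $v_t$.

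The step I expect to be the main obstacle is the uniformity in the second paragraph: one must genuinely use the fine structure of $M(x)$ (the growth conditions on degrees and pole orders of the $a_t$ as $t\to\infty$) to get a single $\epsilon_0$ that works simultaneously for the whole infinite family of terms. Everything else — the rational-function lemma and the final extraction of $val$ and $lc$ — is elementary bookkeeping with valuations.
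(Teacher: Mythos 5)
Your argument is correct and is essentially the intended one: this paper does not actually prove the proposition but imports it from \cite[Proposition 2.3]{vdBRuniv}, and its only comment on why it holds --- that it follows ``due to some extra conditions placed on the rational functions $a_t$'' --- is exactly the growth control on $v_t=val(a_t(p^ux))$ that you invoke in your domination step. The termwise substitution, the scaling lemma $val(r(p^{\epsilon u}x))=\epsilon\,val(r(p^{u}x))$ and $lc(r(p^{\epsilon u}x))=lc(r(p^{u}x))$ for rational $r$, and the choice of a single $\epsilon_0$ that beats both the finitely many exponents near $val(f)$ and the linearly controlled tail are all as expected; the one ingredient you cannot verify from this paper alone is the precise form of the condition on the $a_t$ in the definition of $M(x)$, which you correctly identify and flag as the load-bearing hypothesis.
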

As a corollary we obtain the following important result on the valuation of a sum of two terms
\begin{corollary}\label{corsumequalval}
Let $f,g\in M(x)$ and define $h=f+g$. 
\begin{itemize}
\item If $\val(f)<\val(g)$, then $\val(h)=\val(f)$ and $\lc(h)=\lc(f)$. 
\item If $\val(f)=\val(g)$, and there exists a $u$ such that 
for all small enough $\epsilon>0$ we have $\val(f(p^{\epsilon u}x))< \val(g(p^{\epsilon u}x))$. Then $\val(h)=\val(f)$ and $\lc(h)=\lc(f)+\lc(g)$. 
\end{itemize}
\end{corollary}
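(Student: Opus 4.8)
The plan is to reduce both claims to inspecting a single coefficient in the power-series expansion of $h=f+g$. I would write $f=\sum_{s\in T_f}a_s(x)p^s$ and $g=\sum_{s\in T_g}b_s(x)p^s$, with $a_{val(f)}=lc(f)$ and $b_{val(g)}=lc(g)$ nonzero rational functions, extending by $a_s=0$ for $s\notin T_f$ and $b_s=0$ for $s\notin T_g$, so that $h=\sum_s(a_s+b_s)p^s$; since $M(x)$ is a field, $h\in M(x)$ and has a well-defined valuation.

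For the first bullet, $val(g)>val(f)$ forces $b_s=0$ for every $s\le val(f)$, so $h$ has no term $p^s$ with $s<val(f)$ and the coefficient of $p^{val(f)}$ in $h$ equals $a_{val(f)}+b_{val(f)}=lc(f)\ne 0$. Hence $val(h)=val(f)$ and $lc(h)=lc(f)$, with no further input needed.

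For the second bullet, set $v=val(f)=val(g)$. Then $h$ has no term below order $p^v$, and the coefficient of $p^v$ in $h$ is $lc(f)+lc(g)$; if this is nonzero as a rational function, $val(h)=v$ and $lc(h)=lc(f)+lc(g)$, which is the claim. So the real content is to exclude the degenerate case $lc(f)+lc(g)=0$, and here I would use the hypothesis on $u$. Assuming $lc(g)=-lc(f)$, I would choose $u$ as in the hypothesis and then an $\epsilon>0$ small enough that the conclusion of Proposition~\ref{propiteratedlim} holds for both $f$ and $g$ and that $val(f(p^{\epsilon u}x))<val(g(p^{\epsilon u}x))$ --- such a common $\epsilon$ exists because each requirement holds for all sufficiently small $\epsilon$. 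The valuation clause of Proposition~\ref{propiteratedlim} then gives $val(f(p^{\epsilon u}x))=v+\epsilon\,val(lc(f)(p^ux))$ and $val(g(p^{\epsilon u}x))=v+\epsilon\,val(lc(g)(p^ux))$, while $lc(g)(p^ux)=-lc(f)(p^ux)$ has the same valuation as $lc(f)(p^ux)$; thus the two right-hand sides coincide, contradicting the strict inequality. Hence $lc(f)+lc(g)\ne 0$, and we are done.

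I expect the only genuinely delicate point to be the bookkeeping around Proposition~\ref{propiteratedlim}: selecting one $\epsilon$ that simultaneously validates the iterated-limit identity for $f$, validates it for $g$, and makes the hypothesised strict inequality hold, together with the routine but necessary observations that $val$ is unchanged under $lc(f)\mapsto-lc(f)$ and that $lc(f)(p^ux)$ is the object on which $val$ should be evaluated after substituting $x\mapsto p^ux$ into a fixed rational function. Everything else is immediate from the definitions of $val$ and $lc$ as the location and coefficient of the leading term.
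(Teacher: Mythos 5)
Your proof is correct and follows exactly the route the paper intends: the paper states this result without proof as an immediate consequence of Proposition~\ref{propiteratedlim}, and your argument (reduce to the coefficient of $p^{val(f)}$, then rule out the cancellation $lc(f)+lc(g)=0$ by comparing $val(f(p^{\epsilon u}x))$ and $val(g(p^{\epsilon u}x))$ via the iterated-limit identity and the invariance of $val$ under negation) is precisely the intended deduction. No gaps.
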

%It remains to note that $\theta(p^{\alpha}z;p) \in M(x)$ for any monomial $z$ in terms of the $x_i$ and any $\alpha \in \mathbb{R}$, which (as $M(x)$ is a field), implies that all functions appearing in this article are elements of $M(x)$, as they can be all be written as finite sums of products of such theta functions.

\subsection{Limits of generalized $q$-symbols} 
Of course the $q$-symbols discussed before are elements of the field $M(x)$, and since every function appearing in this article is build using these $q$-symbols, they are elements of $M(x)$ as well. Let us now discuss the valuations and leading coefficients of the elliptic $q$-symbols.
%
%In the rest of the article we will be concerned with taking limits as $p\to 0$ of functions which are built up using these $q$-symbols. It is convenient at this time to describe the limits of the elliptic $q$-symbols. It should not come as a surprise that these are ordinary $q$-symbols.

For ordinary theta functions we have
\[
\val(\theta(xp^{\alpha};p)) = \frac12 \{\alpha\}(\{\alpha\}-1)-\frac12 \alpha(\alpha-1), \qquad 
\lc(\theta(xp^{\alpha};p)) = \begin{cases} (1-x) \left(-\frac{1}{x}\right)^{ \alpha } & \alpha \in \mathbb{Z} \\
\left(-\frac{1}{x}\right)^{\lfloor \alpha \rfloor} & \alpha \not \in \mathbb{Z}, \end{cases}
\]
where $\{\alpha\} = \alpha-\lfloor \alpha \rfloor$ denotes the fractional part of $\alpha$. Note that $\val(\theta(xp^{\alpha};p))$ is a continuous piecewise linear function in $\alpha$. The valuations and leading coefficients of the $C$-symbols are direct consequences of this. While a general formula is easily given, it is rather complex. Thus we refer to the shifting formulas
\eqref{eqc0p} to note that it suffices to give the results for $0\leq \alpha<1$. In that case we get 
\begin{equation}\label{eqlimc}
\val(C_{\lambda}^{\epsilon}(xp^{\alpha})=|\lambda| (\frac12\{\alpha\}(\{\alpha\}-1) - \frac12\alpha(\alpha-1)), \quad (\alpha\in \mathbb{R}), \qquad 
\lc(C_{\lambda}^{\epsilon}(xp^{\alpha})=\begin{cases} \tilde C_{\lambda}^{\epsilon}(x) & \alpha=0 \\\
1 & 0<\alpha<1, \end{cases}
\end{equation}
where $\epsilon=0$, $+$, or $-$.%, and for other values of $\alpha$ we can reduce the limit to one of these two cases. In an algebraic sense these formulas can be read as saying that the valuation of the function on the left hand side is 0, and the leading coefficient equals the right hand side.

To take limits of $\Delta_{\lambda}^0$ it is often most convenient to express it in terms of $C_{\lambda}^0$, and take the limits of the $C_{\lambda}^0$'s. One of the important reasons we so often use the $\Delta_{\lambda}^0$ is that it is elliptic (under the balancing condition given above). After taking the limit, we cannot shift by $p$ anymore, so ellipticity becomes a non-existent concept, thus removing the usefulness of this notation.

As for $\Delta_{\lambda}$ we'll only consider $\Delta_{\lambda}(a p^{\alpha}~|~t^n ;q,t;p)$. It turns out that every instance of $\Delta_{\lambda}$ we encounter has $t^n$ as one of its $b$-parameters. Moreover the quotient of any $\Delta_{\lambda}$ and this one is a $\Delta_{\lambda}^0$ and we can express its limits in terms of $\tilde C_{\lambda}^0$'s as described above. Thus writing down the valuation and leading coefficient of this specific $\Delta_{\lambda}$ suffices to be able to obtain the limits of the general case. We assume $\ell(\lambda) \leq n$, as otherwise $\Delta_{\lambda}(a ~|~ t^n)=0$ identically.
%
%While we often see more $b$-parameters, in practice it is simpler to write the difference as a product of $C_{\lambda}^0$'s whose limits we already know. Note that once again it suffices to obtain the limits for $0\leq \alpha<1$.
\begin{align*}
\val(\Delta_{\lambda}(ap^{\alpha}~|~ t^n)) & = -2\alpha |\lambda|, \quad (0\leq \alpha<1), \\ 
\lc(\Delta_{\lambda}(ap^{\alpha}~|~ t^n) )&= \begin{cases}
\tilde \Delta_{\lambda}^{(n)}(a;q,t) & \alpha=0, \\
\frac{\tilde C_{\lambda}^0(t^n)}{\tilde C_{\lambda}^-(q,t;q,t)}
\left(-\frac{1}{a^2q^2t^{n-1}}\right)^{|\lambda|} q^{-3 n(\lambda') } t^{5n(\lambda)}
  & 0<\alpha<1.
\end{cases}
\end{align*}

We would like to finish this subsection by making the following observation. Notice that the leading coefficients of 
these terms, only depend on whether $\alpha=0$ or $0<\alpha<1$. For general $\alpha$ it then follows that the leading coefficients  $\lc(C_{\lambda}^{\epsilon}(p^{\alpha}x) )$ and $\lc(\Delta(ap^{\alpha}~|~t^n))$ only depend on $\alpha$ through the  component of $\mathbb{R}$ which contains $\alpha$ if we cut $\mathbb{R}$ at the integers (i.e., write $\mathbb{R} = \mathbb{Z} \cup \bigcup_{n\in \mathbb{Z}} (n,n+1)$). Moreover, the leading coefficients associated to two $\alpha$'s in different components, which are related to each other by an integer shifts (i.e. either both $\alpha$'s are integers, or both are non-integers), differ by a  monomial factor (in $x$, $q$ and $t$).

\subsection{A space of functions}
A meromorphic function $f(z_i,\ldots, z_n)$ is called a $BC_n$-symmetric $p$-abelian function if it satisfies
\begin{itemize}
\item $f$ is invariant under permutations of the $z_i$;
\item $f$ is invariant under replacing any one of the $z_i$ by $1/z_i$;
\item $f$ is invariant under replacing any one of the $z_i$ by $pz_i$.
\end{itemize}
We define the space $A^{(n)}(u_0;p,q)$ as the space of all $BC_n$-symmetric $p$-abelian functions $f$ such that
\[
\prod_{i=1}^n \theta(pq z_i^{\pm 1}/u_0;q;p)_m f(\ldots,z_i,\ldots)
= \prod_{i=1}^n \frac{\Gamma(u_0 z_i^{\pm 1})}{\Gamma(u_0 q^{-m} z_i^{\pm 1})} f(\ldots,z_i,\ldots)
\]
is holomorphic for sufficiently large $m$. That is, $f$ can only have poles at the points $u_0 q^{-l} p^k$ and
$u_0^{-1} q^l p^k$ for $k\in \mathbb{Z}$ and $1\leq l\leq m$, and these poles must be simple. It should be noted that 
$A^{(1)}(u_0;p,q) = A(u_0;p,q)$ as defined in \cite{vdBRuniv}.

The definition of functions in $A^{(n)}$ does not put any conditions on what happens if we take the limit $p\to 0$, nor does it allow us to plug in values $z_i\to z_ip^{\zeta}$ and $u_0 \to u_0p^{\gamma_0}$. In order to ensure that the limit as $p\to 0$ is well-behaved and we are allowed to change variables as indicated we define $\tilde {A}^{(n)}(u_0)$ to be those functions $f\in A^{(n)}(u_0)$ such that
\begin{itemize}
\item $f(p^{\zeta_i}z_i;u_0p^{\gamma_0};q,p) \in M(z_i,u_0,q)$ for all $\zeta_i \in \mathbb{R}$ and all $\gamma_0\in \mathbb{R}$;
\item For every $M$ and $\zeta$, there exist constants $C>0$ and $\alpha\in \mathbb{R}$ such that if $\frac{1}{Mp^{\zeta}} < |z_i|,|u_0|  <Mp^{\zeta}$  and every $|p|<\frac12$ we have
\[
\left|\prod_{i=1}^n \theta(pq z_i^{\pm 1}/u_0;q;p)_m f(\ldots,z_i,\ldots)\right|  \leq C |p|^{-\alpha}.
\]
\end{itemize}
Examples of functions in $\tilde {A}^{(n)}(u_0)$ are functions in $A^{(n)}(u_0)$ which can be expressed as finite sums of products of $p$-theta functions with arguments which are monomials in the variables $z_i$, $u_0$, $q$, and $p$ (and perhaps other variables the functions may depend on). In particular the biorthogonal functions $\mathcal{R}_{\lambda}^{(n)}$ from \cite{RainsBCn} and \cite{Rainstrafo} fall in this space.

%\section{Limits of the measures}\label{seclimmeasure}
%In this section we will show that for each of the possible biorthogonal systems identified in Appendix \ref{secsystemlimit} we indeed find at least one formal measure that works (i.e. one, not necessarily positive, measure with respect to which the biorthogonal functions are indeed biorthogonal). This section can be viewed in part as a kind of a multivariable version of \cite{vdBR}, in which the authors studied the limits as $p\to 0$ of the elliptic beta integral. The other extra feature of this section is showing that in appropriate circumstances, not only do the beta integrals converge, but the inner product they define as well. Throughout the section we assume $|q|,|t|,|p|<1$. 
%
%The easiest case are the finitely supported measures, so we treat them first. After these we treat the continuous measures.
%
\section{Finitely supported measures}\label{secdisc}
In this section we consider the multivariate extension of the Frenkel-Turaev \cite{FT} summation. The number of terms in this sum is finite. We only consider the limit where we keep the number of terms fixed. We expect the limits where the number of terms tend to infinity to be identical to the series measures obtained as limit of the continuous measure in Proposition \ref{propsumlim}. In this fixed number of terms case taking the limit can be done trivially by exchanging limit and sum. Moreover, going to the related bilinear form can be done by just taking the limit of the functions in the finite number of points in the support of the measure. Put otherwise, this section is a completely algebraic affair, in which we are interested in obtaining the leading coefficients of some power series in $p$.

Let us recall the finitely supported bilinear form below.
\begin{definition}
Let $f\in  {A}^{(n)}(u_0)$ and $g\in A^{(n)}(u_1)$. For parameters $t_0,t_1,t_2,t_3,u_0,u_1$ such that
$t_0t_1=q^{-N}t^{1-n}$ and $t^{n-1}t_2t_3u_0u_1=pq^{N+1}$ we define the bilinear form
\[
\langle f,g\rangle_{t,u} := 
\sum_{\mu \subset N^n} f(t_0t^{n-i} q^{\mu_i}) g(t_0t^{n-i} q^{\mu_i})
w_\mu
\]
where
\[
w_\mu:=w_{\mu}(t_0,t_1;t_2,t_3,u_0,u_1)=
\frac{\Delta_{\mu}(t^{2(n-1)}t_0^2 ~|~ t^n, t^{n-1} t_0t_1,t^{n-1} t_0t_2,t^{n-1} t_0t_3, t^{n-1}t_0u_0,t^{n-1}t_0u_1 )}{
\Delta_{N^n}^0(t^{n-1}t_1/u_0 ~|~ t_1/t_0,pq/u_0t_2,pq/u_0t_3,pq/u_0u_1)}.
\]
\end{definition}
This form is normalized by $\langle 1,1\rangle_{t,u}=1$.

\begin{lemma}\label{lemweightsym}
The weights $w_\mu$ are $p$-elliptic in $t_0$, $t_1$, $t_2$, $t_3$, $u_0$, and $u_1$, as long as the balancing conditions are satisfied, which implies
\begin{align*}
w_{\mu}(pt_0,t_1/p;t_2,t_3,u_0,u_1) & = w_{\mu}(t_0,t_1;t_2,t_3,u_0,u_1), \\
w_{\mu}(t_0,t_1;pt_2,t_3,u_0,u_1/p) & = w_{\mu}(t_0,t_1;t_2,t_3,u_0,u_1), \\
w_{\mu}(t_0,t_1;t_2,pt_3,u_0,u_1/p) & = w_{\mu}(t_0,t_1;t_2,t_3,u_0,u_1), \\
w_{\mu}(t_0,t_1;t_2,t_3,pu_0,u_1/p) & = w_{\mu}(t_0,t_1;t_2,t_3,u_0,u_1),
\end{align*}
and the weights satisfy the equation
\[
w_{\mu}(t_0,t_1;t_2,t_3,u_0,u_1) = w_{\mu}(t_0p^{1/2},t_1p^{-1/2};t_2p^{1/2},t_3 p^{1/2},u_0p^{-1/2},u_1p^{-1/2}).
\]
Moreover $w_{\mu}$ is invariant under permutations of $t_2$, $t_3$, $u_0$, and $u_1$ and satisfies
\[
w_{\mu}(t_0,t_1;t_2,t_3,u_0,u_1) = w_{N^n-\mu}(t_1,t_0;t_2,t_3,u_0,u_1)
\]
\end{lemma}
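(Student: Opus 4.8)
The plan is to derive every assertion from the transformation and invariance properties of $\Delta_\lambda$ and $\Delta_\lambda^0$ collected in Section~\ref{secnot}, after first recording two bookkeeping facts. The numerator $\Delta_\mu(t^{2(n-1)}t_0^2\mid t^n,t^{n-1}t_0t_1,t^{n-1}t_0t_2,t^{n-1}t_0t_3,t^{n-1}t_0u_0,t^{n-1}t_0u_1)$ has $2k+2=6$ lower parameters, and a one-line computation shows its balancing condition $pq\prod b_i=t(apq)^2$ is, modulo $t_0t_1=q^{-N}t^{1-n}$, exactly $t^{n-1}t_2t_3u_0u_1=pq^{N+1}$; likewise the denominator $\Delta_{N^n}^0(t^{n-1}t_1/u_0\mid t_1/t_0,pq/u_0t_2,pq/u_0t_3,pq/u_0u_1)$ has $r=4$ lower parameters, and its condition $\prod b_i=(apq)^2$ reduces to the same identity. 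Hence, whenever the two stated balancing conditions hold, both $\Delta$'s lie in the regime where the invariance remarks of Section~\ref{secnot} apply.

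For the $p$-shift identities: a shift of $(t_0,t_1,t_2,t_3,u_0,u_1)$ by integer powers of $p$ preserving both balancing conditions sends the parameters of $\Delta_\mu$ to parameters differing from the old ones by integer powers of $p$ and still satisfying $pq\prod b_i=t(apq)^2$, so $\Delta_\mu$ is unchanged; it sends the parameters of $\Delta_{N^n}^0$ likewise to an integer $p$-shift, which either fixes $\prod b_i$ or scales it in step with $(apq)^2$, so $\Delta_{N^n}^0$ is unchanged by the two invariance remarks. Applying this to the four elementary shifts $(pt_0,t_1/p)$, $(pt_2,u_1/p)$, $(pt_3,u_1/p)$, $(pu_0,u_1/p)$ — each visibly balancing-preserving — yields the four displayed equalities, and since these four generate the full rank-$4$ lattice of balancing-preserving integer $p$-shifts, this is the asserted $p$-ellipticity. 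The $p^{1/2}$-identity is of the same nature: substituting $t_0\mapsto p^{1/2}t_0$, $t_1\mapsto p^{-1/2}t_1$, $t_2\mapsto p^{1/2}t_2$, $t_3\mapsto p^{1/2}t_3$, $u_0\mapsto p^{-1/2}u_0$, $u_1\mapsto p^{-1/2}u_1$ multiplies the numerator's parameters $(a;b_1,\ldots,b_6)$ by $(p;1,1,p,p,1,1)$ and the denominator's $(a;b_1,\ldots,b_4)$ by $(1;p^{-1},1,1,p)$ — integer powers of $p$ — while preserving both balancing conditions, so again both $\Delta$'s, hence $w_\mu$, are unchanged.

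For invariance under permutations of $t_2,t_3,u_0,u_1$: the numerator is symmetric in these because $\Delta_\mu$ equals $\Delta_\mu^0$ of its lower parameters, which is written multiplicatively (hence symmetrically) in them, times a factor independent of the lower parameters, and the permutations act only on the last four. For the $\mu$-independent denominator I would expand $\Delta_{N^n}^0(a\mid b)=\prod_j C_{N^n}^0(b_j)/C_{N^n}^0(pqa/b_j)$, use the reflection $C_{N^n}^0(x)=C_{N^n}^0(pq^{1-N}t^{n-1}/x)$ (immediate from the symmetry $\theta(p/x;p)=\theta(x;p)$ and the product form \eqref{eqaltdefcl}) together with $t_0t_1=q^{-N}t^{1-n}$ to rewrite the $b$-denominator as the manifestly symmetric $\prod_{v\in\{t_2,t_3,u_0,u_1\}}C_{N^n}^0(pqt^{n-1}t_0/v)$, and use the same reflection together with $t^{n-1}t_2t_3u_0u_1=pq^{N+1}$ to identify each $C_{N^n}^0(pq/u_0v)$ with $C_{N^n}^0$ of the reciprocal of the product of the two remaining parameters among $\{t_2,t_3,u_1\}$; this exhibits the $b$-numerator (the factor $C_{N^n}^0(t_1/t_0)$ being untouched) as symmetric in $t_2,t_3,u_0,u_1$ as well. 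Alternatively one may invoke the multivariate Frenkel--Turaev summation, which identifies this denominator with $\sum_{\mu\subset N^n}(\text{numerator of }w_\mu)$, a sum of manifestly symmetric terms.

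I expect the complementation identity $w_\mu(t_0,t_1;\ldots)=w_{N^n-\mu}(t_1,t_0;\ldots)$ to be the main obstacle. After cancelling the common $\prod_v C_{N^n}^0(pq/u_0v)$ factors in the two denominators, it is equivalent to a termwise identity relating $\Delta_\mu(t^{2(n-1)}t_0^2\mid\cdots)$, $\Delta_{N^n-\mu}(t^{2(n-1)}t_1^2\mid\cdots)$ and an explicit $\mu$-free ratio of $C_{N^n}^0$'s. I would prove it by expanding both $\Delta$'s via $\Delta_\lambda=\Delta_\lambda^0\cdot C_{2\lambda^2}^0(pqa)/(C_\lambda^-(pq,t)C_\lambda^+(a,pqa/t))$ and applying the complementation formulas for the $C$-symbols — $C_{N^n-\mu}^0(x)=C_{N^n}^0(x)/C_\mu^0(pq^{1-N}t^{n-1}/x)$ and its analogues for $C_{N^n-\mu}^-$ and $C_{N^n-\mu}^+$ from \cite{RainsBCn} — after which the two sides agree up to a monomial times a product of $\theta$'s that collapses once $t_1=q^{-N}t^{1-n}/t_0$ and $t^{n-1}t_2t_3u_0u_1=pq^{N+1}$ are substituted. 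The delicate part is the bookkeeping for $C^-$ and $C^+$, which involves the conjugate partitions $\mu'$ and $(N^n-\mu)'$ and a box reindexing refining $(i,j)\mapsto(n+1-i,N+1-j)$; the remainder is routine.
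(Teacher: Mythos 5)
Your handling of the $p$-shift identities and the permutation symmetry coincides with the paper's: both rest on direct computation via the invariance remarks for $\Delta_\lambda$ and $\Delta_\lambda^0$ and on the reflection $C_{N^n}^0(x)=C_{N^n}^0(pq^{1-N}t^{n-1}/x)$, and your verification that the relevant balancing conditions are preserved is correct (the paper merely prefers to deduce the $(pt_0,t_1/p)$ shift from the others, which is an organizational choice, not a substantive difference). The genuine divergence is in the complementation identity $w_\mu(t_0,t_1;\cdots)=w_{N^n-\mu}(t_1,t_0;\cdots)$. You propose the direct route: expand both $\Delta$'s, apply the complementation formulas $C_{N^n-\mu}^\epsilon$ in terms of $C_\mu^\epsilon$ from \cite{RainsBCn}, and check that the leftover monomials and theta products cancel under the balancing conditions. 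The paper explicitly acknowledges that this works but calls it ``complicated combinatorial arguments'' and instead argues abstractly: the biorthogonal functions $\tilde R_\lambda^{(n)}$ are biorthogonal with respect to the measure, reversing the order of summation shows that the measure with weights $w_{N^n-\mu}(t_1,t_0;\cdots)$ supported on the same finite point set admits the same complete biorthogonal system with the same norms, and a measure on a finite set is uniquely determined by such data. The paper's argument buys a two-line proof at the cost of importing the biorthogonality theorem as input; your argument is self-contained and elementary but leaves the most delicate step --- the bookkeeping for $C^-$ and $C^+$ under the box reindexing $(i,j)\mapsto(n+1-i,N+1-j)$ and the final cancellation of the residual theta factors --- asserted rather than carried out, so as written it is a correct plan rather than a complete proof of that clause.
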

\begin{proof}
This follows from direct calculations using the equations from Section \ref{secnot}.
In order to prove the $u_0\to pu_0$ and $u_1\to u_1/p$ and the final shift-by-$p^{1/2}$ equations we need to use the balancing conditions. 
It is easier to infer the $t_0\to pt_0$ and $t_1\to t_1/p$ equation from the other 4 $p$-shift equations, than to calculate it directly.

The permutation symmetry of the four final parameters follows from the equation \[C_{N^n}^0(x) = C_{N^n}^0(p q^{1-N} t^{n-1}/x)\]
(which is a consequence of the $\theta(x;p)=\theta(p/x;p)$ symmetry). 

The final equation can be shown by some complicated combinatorial arguments. Easier, however, is to note that the biorthogonal functions $\tilde R_{\lambda}^{(n)}$ (with $\lambda \subset N^n$) from \cite{RainsBCn} are biorthogonal with respect to this measure, and note that
\begin{multline*}
\sum_{\mu \subset N^n} f(t_0t^{n-i} q^{\mu_i}) g(t_0t^{n-i} q^{\mu_i}) w_\mu(t_0,t_1;t_2,t_3,u_0,u_1) \\= 
\sum_{\mu \subset N^n} f(t_1t^{n-i} q^{\mu_i}) g(t_1t^{n-i} q^{\mu_i}) w_{N^n-\mu}(t_0,t_1;t_2,t_3,u_0,u_1),
\end{multline*}
by inverting the order of summation. As the measure on a finite set is uniquely determined by a complete basis of orthogonal functions and their norms, it must follow that these measures are the same, which implies the final equation.
%
%is a consequence of 
%\begin{align*}
%C_{\mu}^0(x) C_{m^n-\mu}^0(pq^{1-m}t^{n-1}/x) &= C_{m^n}^0(x), 
%\end{align*}
%and (mathematica)
%\[
%C_\mu^0(t^n,q^{-m}) C_{m^n-\mu}^-(t,pq)= C_\mu^-(pq,t) C_{m^n-\mu}^0(t^n, q^{-m})
%\]
%and, etc.
\end{proof}
In order to find the limits of the measures, it suffices to look at the limits of the 
weights. We want to consider limits $\lim_{p\to 0} w_{\mu}(t_0p^{\alpha_0},\ldots,t_3p^{\alpha_3},u_0p^{\alpha_4},u_1p^{\alpha_5})$. The above symmetries provide an action on the $\alpha$-vectors which leaves $w_{\mu}$ invariant, and so leaves the limits identical (where we possibly allow the interchange of $t_0$ and $t_1$). Thus we only want to consider vectors in some fundamental domain of that action.
\begin{lemma}
Consider the set $A$ of parameters $\alpha \in \mathbb{R}^6$ with $\alpha_0+\alpha_1=0$ and $\alpha_2+\alpha_3+\alpha_4+\alpha_5=1$. Let $G$ act on $A$ by shifts 
\begin{align*}
t_1(\alpha) &= (\alpha_0+1,\alpha_1-1, \alpha_2,\alpha_3,\alpha_4,\alpha_5) \\
t_2(\alpha) &= (\alpha_0,\alpha_1, \alpha_2+1,\alpha_3,\alpha_4,\alpha_5-1) \\
t_3(\alpha) &= (\alpha_0,\alpha_1, \alpha_2,\alpha_3+1,\alpha_4,\alpha_5-1) \\
t_4(\alpha) &= (\alpha_0,\alpha_1, \alpha_2,\alpha_3,\alpha_4+1,\alpha_5-1) \\
t_5(\alpha) &= (\alpha_0+1/2,\alpha_1-1/2, \alpha_2+1/2,\alpha_3+1/2,\alpha_4-1/2,\alpha_5-1/2)
\end{align*}
by permutations of $(\alpha_2,\alpha_3,\alpha_4,\alpha_5)$ and by permutations of $(\alpha_0,\alpha_1)$. A fundamental 
domain for this action is the polytope determined by the inequalities
\[
-\frac12 \leq \alpha_0\leq 0, \qquad \alpha_1=-\alpha_0, \qquad 
\alpha_2\leq \alpha_3\leq \alpha_4\leq \alpha_5 % \leq \alpha_2+1
, \qquad \alpha_4+\alpha_5\leq 1, \qquad
\alpha_2+\alpha_3+\alpha_4+\alpha_5=1.
\]
\end{lemma}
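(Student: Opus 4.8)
The plan is to realise $G$ as an index‑two extension of a reducible affine reflection group, whose fundamental domain is classical, and then cut that domain in half by one extra symmetry. First I would separate the two blocks of coordinates: write $A=A_1\times A_2$ with $A_1=\{\alpha_0+\alpha_1=0\}$ the line parametrised by $\alpha_0$, and $A_2=\{\alpha_2+\alpha_3+\alpha_4+\alpha_5=1\}$. The generator $t_1$ and the transposition $r_0$ of $(\alpha_0,\alpha_1)$ act only on $A_1$; the generators $t_2,t_3,t_4$ together with all permutations of $(\alpha_2,\alpha_3,\alpha_4,\alpha_5)$ act only on $A_2$; and $t_5$ is the only generator mixing the two factors. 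Let $G_0\le G$ be generated by every generator except $t_5$. On $A_1$ it acts as the infinite dihedral group $\langle\alpha_0\mapsto-\alpha_0,\ \alpha_0\mapsto\alpha_0+1\rangle$, on $A_2$ it acts as the affine Weyl group of type $A_3$ (the semidirect product of the root lattice $Q$ generated by $e_2-e_5,\,e_3-e_5,\,e_4-e_5$ with $S_4$), and since these act on complementary factors $G_0$ is their direct product. By elementary lattice arithmetic of the same flavour as the computations in Section~\ref{secnot}, $G_0$ is a normal subgroup of $G$ with $G/G_0\cong\mathbb{Z}/2\mathbb{Z}$: indeed $t_5^2$ is the translation by $(1,-1,1,1,-1,-1)$, which lies in $G_0$, while $t_5\notin G_0$ because its $A_1$‑component is the half‑translation $\alpha_0\mapsto\alpha_0+\tfrac12$, and conjugation of the point group $S_2\times S_4$ by the translation $t_5$ again lands in $G_0$.

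By the standard theory of affine Weyl groups, applied to each factor, a fundamental domain for $G_0$ is
\[
\mathcal A=[-\tfrac12,0]\times\mathrm{Alc},\qquad \mathrm{Alc}=\{\alpha_2\le\alpha_3\le\alpha_4\le\alpha_5,\ \alpha_5-\alpha_2\le 1\}\subset A_2,
\]
where $[-\tfrac12,0]$ is a fundamental interval for the dihedral action on $A_1$, and $\mathrm{Alc}$ is a genuine alcove because on its interior every difference $\alpha_i-\alpha_j$ lies strictly between $-1$ and $1$, so no affine wall $\{\alpha_i-\alpha_j\in\mathbb{Z}\}$ passes through it. I would then introduce the element $g:=s\,r_0\,t_5\in G$, where $s$ is the permutation of $(\alpha_2,\alpha_3,\alpha_4,\alpha_5)$ exchanging $\alpha_2\leftrightarrow\alpha_4$ and $\alpha_3\leftrightarrow\alpha_5$; explicitly
\[
g:(\alpha_0;\alpha_2,\alpha_3,\alpha_4,\alpha_5)\longmapsto(-\alpha_0-\tfrac12;\ \alpha_4-\tfrac12,\ \alpha_5-\tfrac12,\ \alpha_2+\tfrac12,\ \alpha_3+\tfrac12).
\]
A direct computation, using $\alpha_2+\alpha_3+\alpha_4+\alpha_5=1$ (so that $\alpha_4+\alpha_5\le1$ is equivalent to $\alpha_2+\alpha_3\ge0$), shows that $g$ is an involution lying in $G\setminus G_0$, that the inequality $\alpha_5-\alpha_2\le1$ is automatic on $\mathrm{Alc}\cap\{\alpha_4+\alpha_5\le1\}$ (there $2\alpha_5+\alpha_4+\alpha_3\le 2\alpha_5+2(1-\alpha_5)=2$), so that $P=[-\tfrac12,0]\times(\mathrm{Alc}\cap\{\alpha_4+\alpha_5\le1\})$ is precisely the polytope in the statement, and that $g$ carries $P$ onto $[-\tfrac12,0]\times(\mathrm{Alc}\cap\{\alpha_4+\alpha_5\ge1\})$. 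Consequently $P\cup g(P)=\mathcal A$, while $\mathrm{int}\,P=\mathrm{int}\,\mathcal A\cap\{\alpha_4+\alpha_5<1\}$ and $\mathrm{int}\,g(P)=\mathrm{int}\,\mathcal A\cap\{\alpha_4+\alpha_5>1\}$ are disjoint.

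The two defining properties of a fundamental domain then follow readily. Covering: given $\alpha\in A$, choose $g_0\in G_0$ with $g_0\alpha\in\mathcal A=P\cup g(P)$; if $g_0\alpha\in P$ we are done, and if $g_0\alpha\in g(P)$ then $g^{-1}g_0\alpha\in P$, so in either case $\alpha$ lies in a $G$‑translate of $P$. Disjointness of interior translates: since $g\notin G_0$ we have the disjoint union $G=G_0\cup G_0g$; suppose $h\,\mathrm{int}\,P\cap\mathrm{int}\,P\neq\emptyset$. If $h\in G_0$, then because $\mathrm{int}\,P\subset\mathrm{int}\,\mathcal A$ we also get $h\,\mathrm{int}\,\mathcal A\cap\mathrm{int}\,\mathcal A\neq\emptyset$, forcing $h=e$ since $\mathcal A$ is a fundamental domain for $G_0$. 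If $h=h_0g$ with $h_0\in G_0$, pick $x\in\mathrm{int}\,P$ with $hx\in\mathrm{int}\,P$; then $gx\in h_0^{-1}\mathrm{int}\,P\subset h_0^{-1}\mathrm{int}\,\mathcal A$ and also $gx\in\mathrm{int}\,g(P)\subset\mathrm{int}\,\mathcal A$, so $h_0=e$ as before, whence $h=g$ and $gx\in\mathrm{int}\,P\cap\mathrm{int}\,g(P)=\emptyset$, a contradiction. Thus only $h=e$ can satisfy $h\,\mathrm{int}\,P\cap\mathrm{int}\,P\neq\emptyset$, and $P$ is a fundamental domain. I expect the genuinely non‑routine step to be the material of the second paragraph — guessing the alcove $\mathrm{Alc}$ and, above all, producing the element $g$ and verifying that it interchanges the two halves of $\mathcal A$ cut by the hyperplane $\{\alpha_4+\alpha_5=1\}$; everything else is bookkeeping with the alcove theorem and the normal‑subgroup structure. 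A convenient sanity check along the way is that $\mathrm{vol}\,P=\tfrac12\,\mathrm{vol}\,\mathcal A$, consistent with $[G:G_0]=2$.
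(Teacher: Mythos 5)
Your proof is correct, and it both contains the paper's argument and goes beyond it. The paper's proof is purely algorithmic and only establishes the covering half of the claim: integer shifts and permutations bring $(\alpha_2,\dots,\alpha_5)$ into the alcove $\alpha_2\le\alpha_3\le\alpha_4\le\alpha_5\le\alpha_2+1$; if $\alpha_4+\alpha_5>1$ one applies the half-integer shift followed by the reordering $(\alpha_2,\alpha_3,\alpha_4,\alpha_5)\mapsto(\alpha_4-\tfrac12,\alpha_5-\tfrac12,\alpha_2+\tfrac12,\alpha_3+\tfrac12)$ — exactly your involution $g$ — and then one notes, as you do, that $\alpha_5\le\alpha_2+1$ becomes redundant given $\alpha_3\le\alpha_4$ and $\alpha_4+\alpha_5\le1$; finally $\alpha_0$ is normalized. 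So the combinatorial core (the $\widetilde A_3$ alcove, the role of $t_5$ composed with the block swap, the redundancy of $\alpha_5-\alpha_2\le1$) is identical. What your write-up adds is the structural packaging — $G_0$ as the direct product of an infinite dihedral group and the affine Weyl group of $A_3$, normal of index two in $G$ with $t_5$ representing the nontrivial coset, and $P\sqcup g(P)$ tiling the $G_0$-alcove — which buys you the disjointness statement (no nontrivial element of $G$ maps the interior of the polytope into itself), a property the paper asserts implicitly by calling $P$ a fundamental domain but never verifies. The index-two coset argument and the volume sanity check are sound; the one presentational quibble is that your justification of $P\cup g(P)=\mathcal A$ only spells out $g(P)\subseteq\{\alpha_4+\alpha_5\ge1\}$-half and leaves the reverse inclusion to the involution property, which is fine but worth one explicit sentence.
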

\begin{proof}
We have to show that from any vector we can go to this polytope. Indeed by integer shifts and permuting the last four variables we can ensure $\alpha_2\leq \alpha_3\leq \alpha_4\leq \alpha_5 \leq \alpha_2+1$. If $\alpha_4+\alpha_5 >1$ then $\alpha_2+\alpha_3<0$, so after a half integer shift and rearranging the elements in order we get $(\alpha_2,\alpha_3, \alpha_4,\alpha_5) \mapsto (\alpha_4-\frac12,\alpha_5-\frac12, \alpha_2+\frac12, \alpha_3+\frac12)$, with in particular $(\alpha_2+\frac12) + (\alpha_3+\frac12)<1$. 
 Thus we can assume $\alpha_4+\alpha_5\leq 1$. 
The equation $\alpha_5\leq \alpha_2+1$, or equivalently $\alpha_3+\alpha_4+2\alpha_5 \leq 2$ now follows from $\alpha_3\leq \alpha_4$ and $\alpha_4+\alpha_5\leq 1$ so we can omit it.
Now we use integer
shifts to get $|\alpha_0|\leq \frac12$, and if necessary interchange $\alpha_0$ and $\alpha_1$ to come into this fundamental domain. 
\end{proof}

We chose this fundamental domain, as for $\alpha$ within this fundamental domain it is straightforward to determine the valuation and leading coefficient of $w_{\mu}(p^{\alpha_r} t_r)$. Direct inspection shows that the valuation of these weights is of the form $a(\alpha)|\mu|+b(\alpha) Nn$ for some piecewise linear functions $a$ and $b$:
\begin{align}
a(\alpha)&:= -2\alpha_0 + \sum_{r\geq 1: \alpha_r+\alpha_0<0} (\alpha_0+\alpha_r) + \sum_{r\geq 1:\alpha_0>\alpha_r} (\alpha_0-\alpha_r),\\
b(\alpha)&:= \sum_{r\geq 2:\alpha_1+\alpha_r<0} (\alpha_1+\alpha_r) + \sum_{r\geq 2:\alpha_1+\alpha_r>1} (1-\alpha_1-\alpha_r). \label{eqdefbmu}
\end{align}

We want to be able to take the limit of the bilinear form if we insert the biorthogonal functions 
$\tilde R_{\lambda}^{(n)}$ from \cite{RainsBCn}. We observed in \cite{vdBRmult} that, when the parameters were specialized to the appropriate values, the valuation of 
$\tilde R_{\lambda}^{(n)}(t_0 t^{n-i} q^{\mu_i} p^{\alpha_0};t_r p^{\alpha_r})$ is independent of $\mu$.
Thus in order for the leading coefficient of the bilinear form not to reduce to a single term (either $\mu=0$ or $\mu=N^n$) in the limit, we need to have $a(\alpha)=0$ (i.e. the valuations of all weights are identical). 

Moreover we notice that $1=\tilde R_{0}^{(n)}$ is one of the functions we want to be able to insert in the bilinear form, and that we normalized the bilinear form to ensure $\langle 1, 1\rangle=1$. In particular, the valuation of the sum of all the weights must be zero, so we want the valuation of the individual weights to vanish as well. Otherwise the sum of the leading coefficients of the weights would vanish, and therefore we would not obtain a non-degenerate linear form acting on the constant functions. (Note that $b(\alpha)$ is a sum of negative terms, so it is indeed non-positive.)

The same conditions hold if we just want an interesting limiting series summation formula: a summation formula for a series of one term is rather trivial, and a series which sums to zero is also not very interesting.

It is straightforward to obtain the location in the fundamental domain where $a$ and $b$ vanish, which gives us the following theorem.
\begin{theorem}\label{thmdisclim}
Let $\alpha \in \mathbb{R}^6$ satisfy the conditions 
\begin{multline*}
-\frac12 \leq \alpha_0\leq 0, \qquad \alpha_1=-\alpha_0, \qquad 
\alpha_0\leq \alpha_r \leq \alpha_0+1, \quad (2\leq r\leq 5) %\leq \alpha_2+1
, \qquad \alpha_r+\alpha_s\leq 1, \quad (2\leq r<s\leq 5) \\
\alpha_2+\alpha_3+\alpha_4+\alpha_5=1, %\qquad \alpha_0\leq \alpha_r\leq \alpha_0+1, 
\qquad \sum_{2\leq r\leq 5: \alpha_0+\alpha_r<0} \alpha_0+\alpha_r=2\alpha_0.
\end{multline*}
Let $t_r$ ($0\leq r\leq 5$) be parameters satisfying $t^{n-1} t_0 t_1 =q^{-N}$ and 
$t^{n-1} t_2  t_3 t_4t_5=q^{N+1}$. 
Let $f\in \tilde{A}^{(n)}(t_4)$ and $g\in \tilde{A}^{(n)}(t_5)$ %be such that $f(zp^{\alpha_0})$ and $g(zp^{\alpha_0})$ (with parameters $t_rp^{\alpha_r}$) are elements of $M(t_r, z,q,t)$. 
Then $\langle f,g \rangle_{t_rp^{\alpha_r}} \in M(t_r,q,t)$ and
\begin{equation}\label{eqlimdisc}
\lim_{p\to 0} p^{-\val(f)-\val(g)}\langle f,g \rangle_{t_rp^{\alpha_r}} 
=
\sum_{\mu \subset N^n} 
\lc(f)(t_0t^{n-i} q^{\mu_i}) \lc(g)(t_0t^{n-i} q^{\mu_i}) w_{\mu,\alpha}(t_r),
\end{equation}
with $\lc(f)=\lc(f(zp^{\alpha_0};t_rp^{\alpha_r}))$ and similarly for $\lc(g)$ and the valuations, and where the weights $w_{\mu,\alpha}$ satisfy
\[
\sum_{\mu \subset N^n} w_{\mu,\alpha}=1.
\]
%
%
%Let $\tilde t_r$, $\tilde u_r$ be parameters satisfying $\tilde t_0 \tilde t_1=q^{-m}t^{1-n}$ and 
%$t^{n-1}\tilde t_2 \tilde t_3\tilde u_0\tilde u_1=q^{m+1}$. Write $t_r=\tilde t_r p^{\alpha_r}$ and $u_r=\tilde u_rp^{\alpha_{4+r}}$. 
%Let $f\in A(u_0)$ and $g\in A(u_1)$ be such that $f(zp^{\alpha_0})$ and $g(zp^{\alpha_0})$ with parameters $t_r=\tilde t_r p^{\alpha_r}$ and $u_r=\tilde u_r p^{\alpha_{r+4}}$ are elements of $M(\tilde t,\tilde u, z,q,t)$. Then $\langle f,g \rangle_{t,u} \in M(\tilde t, \tilde u,q,t)$ and
%\[
%\lim_{p\to 0} p^{-\val(f(zp^{\alpha_0}))-\val(g(zp^{\alpha_0}))}\langle f,g \rangle_{t,u} 
%=
%\sum_{\mu \subset m^n} 
%\lc(f)(\tilde t_0t^{n-i} q^{\mu_i}) \lc(g)(\tilde t_0t^{n-i} q^{\mu_i}) w_{\mu,\alpha}(\tilde t_r,\tilde u_r),
%\]
%with $\lc(f)=\lc(f(zp^{\alpha_0}))$ and $\lc(g)=\lc(g(zp^{\alpha_0}))$, and the weights $w_{\mu,\alpha}$ satisfy
%\[
%\sum_{\mu \subset m^n} w_{\mu,\alpha}=1.
%\]
Here the weights $w_{\mu,\alpha}$ are given as (using the notation $t_4=u_0$ and $t_5=u_1$)
\begin{itemize}
\item If $\alpha_0=0$ 
\begin{align*}
{w}_{\mu,\alpha} (t_0, t_1;t_2,t_3,t_4,t_5) 
 & = 
% \tilde \Delta_{\mu}^{(n)}(t^{2(n-1)}t_0^2) 
 \frac{\tilde C_{2\mu^2}^0(qt^{2(n-1)} t_0^2) \tilde C_{\mu}^0(t^n,q^{-N})(\frac{1}{qt^{4(n-1)}t_0^3t_1})^{|\mu|}  q^{-2n(\mu')}t^{4n(\mu)} }{\tilde C_{\mu}^0(qt^{n-2} t_0^2,qt^{n-1}t_0/t_1) \tilde C_{\mu}^-(q,t) \tilde C_{\mu}^+(t^{2(n-1)} t_0^2,qt^{2n-1} t_0^2)\tilde C_{N^n}^0(t_1/t_0)}
\\&\times 
\prod_{2\leq r\leq 5:\alpha_r=0}
\frac{\tilde C_{\mu}^0(t^{n-1}t_0t_r) \tilde C_{N^n}^0(t^{n-1}t_1t_r)}{\tilde C_\mu^0(qt^{n-1}t_0/t_r)}
(-qt^{n-1}t_0/t_r)^{|\mu|} q^{n(\mu')} t^{-n(\mu)}
\\ & \times \prod_{2\leq r\leq 5:\alpha_r=1}
\frac{\tilde C_{\mu}^0(t^{n-1}t_0t_r) \tilde C_{N^n}^0(\frac{q^{1-N}}{t_1t_r})}{\tilde C_\mu^0(qt^{n-1}t_0/t_r)}
(-t^{n-1}t_0t_r)^{-|\mu|}  q^{-n(\mu')} t^{n(\mu)}
\\ & \times
\prod_{2\leq r<s\leq 5: \alpha_r+\alpha_s=1} \frac{1}{\tilde C_{N^n}^0(q/t_rt_s)} 
%\prod_{2\leq r\leq 4:\alpha_r+\alpha_5=0} (-q/t_rt_5)^{mn} q^{n(n^m)} t^{-n(m^n)} /\tilde C_{m^n}^0(q/t_rt_5).
\end{align*}

\item If $-1/2<\alpha_0<0$ 
\begin{align*}
 {w}_{\mu,\alpha} (t_0, t_1;t_2,t_3,t_4,t_5) 
& =  \frac{\tilde C_{\mu}^0(t^n,q^{-N}) (t_0^2t^{2(n-1)})^{-|\mu|} q^{-2n(\mu')} t^{4n(\mu)}}{\tilde C_{\mu}^-(q,t)  }
\\ & \qquad \times
 \prod_{2\leq r\leq 5:\alpha_r=\alpha_0}
\frac{ (qt^{2(n-1)}t_0^2)^{|\mu|} q^{2n(\mu')} t^{-2n(\mu)}}{ \tilde C_{\mu}^0(qt^{n-1}t_0/t_r)}
\tilde C_{N^n}^0(t^{n-1}t_1t_r)
\\ & \qquad \times \prod_{2\leq r\leq 5:\alpha_0<\alpha_r<-\alpha_0}   (-t^{n-1}t_0t_r)^{|\mu|} q^{n(\mu')} t^{-n(\mu)}
%\\ & \qquad \times 
\prod_{2\leq r\leq 5:\alpha_r=-\alpha_0} \tilde C_{\mu}^0(t^{n-1}t_0t_r)
%\\ & \qquad \times \prod_{2\leq r\leq 4:-\alpha_0<\alpha_r<1+\alpha_0} \frac{1}{1}1 
\\ & \qquad \times \prod_{2\leq r\leq 5:\alpha_r=1+\alpha_0}\frac{
%\tilde C_{m^n}^0(t^{n-1}t_1t_r) (-t^{n-1}t_1t_r)^{-mn} q^{-n(n^m)} t^{n(m^n)}
\tilde C_{N^n}^0(qt^{n-1}t_0/t_r)}{\tilde C_{\mu}^0(qt^{n-1}t_0/t_r)}
%\\ & \qquad \times
\prod_{2\leq r<s\leq 5: \alpha_r+\alpha_s=1} \frac{1}{\tilde C_{N^n}^0(q/t_rt_s)}.
\end{align*}

\item If $\alpha_0=-1/2$ 
%\begin{align*}
% {w}_{\mu,\alpha} &(t_0, t_1,t_2,t_3,t_4,t_5)
%= 
%\frac{\tilde \Delta_{\mu}(t^{2(n-1)} t_0^2) \tilde C_\mu^0(q^{-m})}{\tilde C_\mu^0(qt^{n-1}t_0/t_1)\tilde C_{m^n}^0(t_1/t_0)}
%(-\frac{q^3 t^{5(n-1)} t_0^5}{t_1})^{|\mu|} (-\frac{t_1}{t_0})^{mn} q^{5n(\mu')+n(n^m)} t^{-5n(\mu) -n(m^n)} 
%\\ & \qquad \times \prod_{2\leq r\leq 5: \alpha_r=-1/2}
% \frac{\tilde C_{\mu}^0(t^{n-1}t_0t_r) \tilde C_{m^n}^0(t^{n-1}t_1t_r) (-\frac{qt^{n-1}t_0}{t_r})^{|\mu|} q^{n(\mu')}t^{-n(\mu)}}{
%\tilde C_{\mu}^0(qt^{n-1}t_0/t_r) } 
%\\ & \qquad \times \prod_{2\leq r\leq 5: \alpha_r=1/2} \frac{\tilde C_{\mu}^0(t^{n-1}t_0t_r) \tilde C_{m^n}^0(t^{n-1}t_1t_r)}{\tilde C_{\mu}^0(qt^{n-1}t_0/t_r)} (-t^{n-1}t_1t_r)^{-mn} q^{-n(n^m)} t^{n(m^n)} 
%(-t^{n-1}t_0t_r)^{-|\mu|} q^{-n(\mu')} t^{n(\mu)} 
%\\ & \qquad \times
%\prod_{\substack{2\leq r<s \leq 5 \\ \alpha_r+\alpha_s=1}} \frac{1}{\tilde C_{m^n}^0(q/t_rt_s)} 
%\end{align*}
\begin{align*}
 {w}_{\mu,\alpha} (t_0, t_1;t_2,t_3,t_4,t_5)
& = 
\frac{\tilde C_{2\mu^2}^0(qt^{2(n-1)}t_0^2)\tilde C_{\mu}^0(t^n,q^{-N})(q  t_0/t_1)^{|\mu|}  q^{2n(\mu')}}{\tilde C_{\mu}^0(qt^{n-2}t_0^2,qt^{n-1}\frac{t_0}{t_1}) \tilde C_{\mu}^-(q,t) \tilde C_{\mu}^+(t^{2(n-1)}t_0^2, qt^{n-2}t_0^2)\tilde C_{N^n}^0(q^{1-N}t^{n-1}\frac{t_0}{t_1})} 
\\ & \qquad \times \prod_{2\leq r\leq 5: \alpha_r=-1/2}
 \frac{\tilde C_{\mu}^0(t^{n-1}t_0t_r) \tilde C_{N^n}^0(t^{n-1}t_1t_r) (-\frac{qt^{n-1}t_0}{t_r})^{|\mu|} q^{n(\mu')}t^{-n(\mu)}}{
\tilde C_{\mu}^0(qt^{n-1}t_0/t_r) } 
\\ & \qquad \times \prod_{2\leq r\leq 5: \alpha_r=1/2} \frac{\tilde C_{\mu}^0(t^{n-1}t_0t_r) \tilde C_{N^n}^0(\frac{q^{1-N}}{t_1t_r})}{\tilde C_{\mu}^0(qt^{n-1}t_0/t_r)} 
(-t^{n-1}t_0t_r)^{-|\mu|} q^{-n(\mu')} t^{n(\mu)} 
\\ & \qquad \times
\prod_{\substack{2\leq r<s \leq 5 \\ \alpha_r+\alpha_s=1}} \frac{1}{\tilde C_{N^n}^0(q/t_rt_s)}.
\end{align*}
%\marginpar{Simplify to either include fewer multiplicative terms as in $-1/2<\alpha_0<0$ case, or to preserve at least the $C$-factor symmetry between $\alpha_r$'s differing by 1 as in $\alpha_0=-1/2$ case?}
\end{itemize}
\end{theorem}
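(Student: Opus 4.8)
The plan is to reduce the entire statement to the two ingredients already assembled in Section \ref{secnot}: the fact that, for a finitely supported sum, limits may be taken term by term (so $val(\langle f,g\rangle_{t_rp^{\alpha_r}})$ and its leading coefficient can be computed from the individual summands), and the explicit valuations and leading coefficients of the $q$-symbols $C_\lambda^\epsilon$ and $\Delta_\lambda(\,\cdot\mid t^n)$ recorded in \eqref{eqlimc} and the displayed formula for $lc(\Delta_\lambda(ap^\alpha\mid t^n))$. First I would substitute $t_r\mapsto t_rp^{\alpha_r}$ into the definition of $w_\mu$, expanding $\Delta_\mu$ in its numerator via the formula defining $\Delta_\lambda$ in terms of $C^0,C^-,C^+$, and likewise expand the $\Delta_{N^n}^0$ in the denominator into a ratio of $C^0$'s. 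Each factor is then of the form $C_\mu^\epsilon(xp^\beta)$ or $C_{N^n}^\epsilon(xp^\beta)$ with $\beta$ an explicit $\mathbb Z$-affine combination of the $\alpha_r$; here one uses the shifting formulas \eqref{eqc0p}--\eqref{eqcpp} to bring each $\beta$ into $[0,1)$, picking up the monomial prefactors in $q,t$ and the arguments. Summing the resulting piecewise-linear valuation contributions gives $val(w_\mu(p^{\alpha_r}t_r)) = a(\alpha)|\mu| + b(\alpha)\,Nn$ with $a,b$ as displayed; imposing $a(\alpha)=0$ and $b(\alpha)=0$ (the latter forces every $\alpha_r+\alpha_s\le 1$ and the former gives the balancing-type condition $\sum_{\alpha_0+\alpha_r<0}(\alpha_0+\alpha_r)=2\alpha_0$) cuts out exactly the polytope in the hypothesis.

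Next, on that polytope the valuation of every weight is $0$, so $val(\langle f,g\rangle) = val(f)+val(g)$ by Corollary \ref{corsumequalval} (the hypothesis $f\in\tilde A^{(n)}(t_4)$, $g\in\tilde A^{(n)}(t_5)$, which via the defining relation of $A^{(n)}$ forces the arguments $t_0t^{n-i}q^{\mu_i}$ to be regular points for the relevant $m$, guarantees $val\bigl(f(t_0t^{n-i}q^{\mu_i}p^{\alpha_0})\bigr)$ is independent of $\mu$ and equals $val(f)$, as was observed in \cite{vdBRmult}); hence the rescaled limit in \eqref{eqlimdisc} exists and equals $\sum_\mu lc(f)(t_0t^{n-i}q^{\mu_i})\,lc(g)(t_0t^{n-i}q^{\mu_i})\,w_{\mu,\alpha}$ where $w_{\mu,\alpha} := lc(w_\mu(p^{\alpha_r}t_r))$. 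The normalization $\sum_\mu w_{\mu,\alpha}=1$ then follows immediately: insert $f=g=1=\tilde R_0^{(n)}$, for which $lc=1$ and $val=0$, and use $\langle 1,1\rangle_{t,u}=1$, which has valuation $0$ and leading coefficient $1$, so taking $lc$ of both sides of the term-by-term expansion gives the claim. (One must check $lc$ commutes with the finite sum here; this is exactly the content of Corollary \ref{corsumequalval} applied inductively, together with the fact that all summands have equal valuation $val(f)+val(g)$ and the generic-$\alpha$ perturbation argument guaranteeing no cancellation of the top term generically.)

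Finally, to produce the three explicit formulas one just reads off $lc(w_\mu(p^{\alpha_r}t_r))$ in each of the three regimes $\alpha_0=0$, $-1/2<\alpha_0<0$, $\alpha_0=-1/2$. Within each regime, whether a given $q$-symbol $C_\mu^\epsilon(xp^\beta)$ contributes its $\tilde C_\mu^\epsilon(x)$ leading coefficient or just $1$ depends only on whether the reduced exponent $\{\beta\}$ is $0$, which by \eqref{eqlimc} partitions the factors according to which of the conditions $\alpha_r=0$, $\alpha_r=1$, $\alpha_r=\alpha_0$, $\alpha_r=-\alpha_0$, $\alpha_r=1+\alpha_0$, $\alpha_r+\alpha_s=1$, etc.\ hold; the $C_{N^n}^0$ factors in the denominator $\Delta_{N^n}^0$ always contribute their $\tilde C_{N^n}^0$ (since $N^n$ is fixed, not tending to a partition, and the relevant exponents land at $0$ on the nose under the balancing relations), which is why the $\tilde C_{N^n}^0$ terms appear with the sign they do. Collecting the monomial prefactors in $q,t$ from all the $p$-shifts and regrouping gives precisely the stated products over $\{r:\alpha_r=\cdots\}$ and $\{r<s:\alpha_r+\alpha_s=1\}$. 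The case split at the boundary $\alpha_0=0$ and $\alpha_0=-1/2$ arises because at those values extra factors (the $C_\mu^\pm$ coming from $\Delta_\mu$, and the $C^0$'s with argument involving $t_0/t_1$) have reduced exponent exactly $0$ rather than in the open interval $(0,1)$.

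\textbf{Main obstacle.} The genuinely delicate point is bookkeeping, not ideas: one must track all the $\mathbb Z$- and $\tfrac12\mathbb Z$-shifts in the $p$-exponents through the many $C^0,C^-,C^+$ factors of $\Delta_\mu$ and $\Delta_{N^n}^0$, apply \eqref{eqc0p}--\eqref{eqcpp} correctly (noting the asymmetric $q^{-1}$ in \eqref{eqcpp} and the differing signs of $t^{n(\lambda)}$), and verify that the monomial prefactors collapse to the clean exponents $q^{\pm n(\mu')}t^{\mp n(\mu)}$, $(-qt^{n-1}t_0/t_r)^{|\mu|}$, etc.\ displayed in the theorem — in particular confirming that the $N$-dependence enters only through the arguments $q^{-N}$, $q^{1-N}$ of the $\tilde C$'s and through $b(\alpha)Nn$ (which vanishes on the polytope), so that no stray power of $q^N$ survives. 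A secondary subtlety is justifying that the hypotheses $f\in\tilde A^{(n)}(t_4)$, $g\in\tilde A^{(n)}(t_5)$ are exactly what is needed for $f(t_0t^{n-i}q^{\mu_i}p^{\alpha_0})\in M(\cdots)$ with $\mu$-independent valuation; this uses the pole structure built into the definition of $A^{(n)}(u_0)$ (poles only at $u_0q^{-l}p^k$ and $u_0^{-1}q^lp^k$), the balancing relations tying $t_4,t_5$ to the evaluation grid $t_0t^{n-i}q^{\mu_i}$, and the observation from \cite{vdBRmult} quoted before the theorem.
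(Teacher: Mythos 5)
Your proposal follows essentially the same route as the paper's (very terse) proof: observe $val(w_\mu(t_rp^{\alpha_r}))=0$ on the stated polytope, interchange limit and the finite sum, and obtain the explicit weights by direct calculation of leading coefficients via the shifting formulas and \eqref{eqlimc}. One small correction: your intermediate claim that $val(\langle f,g\rangle_{t_rp^{\alpha_r}})=val(f)+val(g)$ is not justified in general (the paper explicitly notes it can fail, e.g.\ when $f$ and $g$ are orthogonal, which is exactly why the theorem is phrased as a limit of $p^{-val(f)-val(g)}\langle f,g\rangle$ rather than as a statement about $lc$ of the form); this does not affect the validity of the limit formula itself, since each summand has valuation at least $val(f)+val(g)$ and the rescaled limit is the (possibly vanishing) sum of leading coefficients.
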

Note that we cannot guarantee that $\val(\langle f,g \rangle_{t_r}) = \val(f)+\val(g)$ (for example if $f$ and $g$ are orthogonal to each other), so we had to write the main statement in this theorem as a limit. However, for generic $f$ and $g$ the equation $\val(\langle f,g \rangle_{t_r}) = \val(f)+\val(g)$ holds.
%What is not mentioned in this theorem as it is extremely hard to make precise is that for vectors $\alpha$ not discussed in the above theorem but in the fundamental domain, the limit of the inner product will not give us nice limits if we plug in the biorthogonal functions.
\begin{proof}
By direct computation we observe that $\val(w_\mu(t_rp^{\alpha_r}))=0$, and, as the inner product is defined as just a finite sum, we can interchange limit and sum. The limits for $f(t_0 t^{n-i} q^{\mu_i} p^{\alpha_0})$ is obvious as a member of $M(t_r,q,t)$, likewise for $g$. The explicit expressions for $w_{\mu,\alpha}$ are obtained by direct calculation.
\end{proof}

In the case $\alpha=(0,0,0,0,\frac12,\frac12)$, or $\alpha=(-\frac12,\frac12,\frac12,\frac12,0,0)$ (which give the same measures by the symmetry relations from Lemma \ref{lemweightsym}), the measure becomes the discrete measure for 
the multivariate $q$-Racah polynomials, \cite[(3.10)]{SvDmultRacah}.

\section{Series with more parameters} We are also interested in the extension of the limits above to series with more parameters (thus in the notation of \cite{Rainstrafo}, with higher $m$). The series with two extra parameters ($m=1$) satisfies a transformation identity conjectured by Warnaar \cite[Conjecture 6.1]{Warnaar} and proved by the second author \cite[Theorem 4.9]{RainsBCn}. In this more general case we lack an explicit evaluation formula for the series. As a consequence we cannot easily determine the valuation of the series. This leads to two complications. First of all we can no longer use the heuristic that the valuation of summands must equal the valuation of the complete series to exclude uninteresting cases. Secondly we must now worry that the limiting series we obtain by simply interchanging limit and sum vanishes identically. To simplify notation, we will not consider bilinear forms in this section, but only the plain series, but there are no other complications to turn this series into a bilinear form and take the limit thereof than those which have been discussed in Section \ref{secdisc}.

The series we consider is 
\begin{definition} For parameters $t_r \in (\mathbb{C}^*)^{2m+6}$ such that
$t_0t_1=q^{-N}t^{1-n}$ and $t^{2(n-1)}\prod_{r=0}^{2m+5} t_r=(pq)^{m+1}$ we consider  the series 
\[
%S_m(t_r) := 
\sum_{\mu \subset N^n} w_\mu(t_r)
\]
where
\[
w_\mu(t_r) :=
\Delta_{\mu}(t^{2(n-1)}t_0^2 ~|~ t^n, t^{n-1} t_0t_1,t^{n-1} t_0t_2, \ldots, t^{n-1} t_0t_{2m+5}).
\]
\end{definition}
Notice that the $m=0$ case is the series $\langle 1,1\rangle$ from the previous section up to scaling.
As before we can easily obtain some simple symmetries.
\begin{lemma}\label{lemhighermsym}
The weights $w_\mu$ are $p$-elliptic in the $t_r$ as long as the balancing conditions are satisfied. Moreover they satisfy 
\[
w_{\mu}(t_0,t_1;t_2,\ldots,t_{2m+5}) = w_{\mu}(t_0p^{1/2},t_1p^{-1/2};t_2p^{1/2},\ldots, t_{m+3} p^{1/2},t_{m+4}p^{-1/2}, \ldots, t_{2m+5}p^{-1/2}).
\]
Finally $w_{\mu}$ is invariant under permutations of $(t_2,\ldots,t_{2m+5})$ and satisfies
\[
w_{\mu}(t_0,t_1;t_2,\ldots,t_{2m+5}) = w_{N^n-\mu}(t_1,t_0;t_2,\ldots,t_{2m+5}) \frac{C_{N^n}^0(t_1/t_0)}{C_{N^n}^0(t_0/t_1)} \prod_{r=2}^{2m+5} \frac{C_{N^n}^0(t^{n-1}t_0t_r)}{C_{N^n}^0(t^{n-1}t_1t_r)}.
\]
\end{lemma}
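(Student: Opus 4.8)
The plan is to reduce everything to the shifting formulas for $\Delta_\lambda$ and the elementary symmetry $\theta(px;p) = -x^{-1}\theta(x;p)$ collected in Section \ref{secnot}, exactly as in the proof of Lemma \ref{lemweightsym}. First, $p$-ellipticity: since $w_\mu(t_r) = \Delta_\mu(t^{2(n-1)}t_0^2 \mid t^n, t^{n-1}t_0t_1, \ldots, t^{n-1}t_0t_{2m+5})$ is a $\Delta_\lambda$ with $a = t^{2(n-1)}t_0^2$ and with $2m+6$ lower parameters $b_r = t^{n-1}t_0 t_r$ (reading $b_0 = t^n$, i.e.\ identifying $t_1$-slot appropriately), the balancing condition $pq\prod_r b_r = t(apq)^{m+1}$ is precisely the combination of $t_0 t_1 = q^{-N} t^{1-n}$ and $t^{2(n-1)}\prod_{r=0}^{2m+5} t_r = (pq)^{m+1}$; then the remark at the end of the ``Transformations of generalized $q$-symbols'' subsection — that $\Delta_\lambda(a \mid b_1,\dots,b_{2k+2})$ is invariant under multiplying parameters by integer powers of $p$ whenever $pq\prod_i b_i = t(apq)^k$ holds before and after — gives $p$-ellipticity in the $t_r$ directly. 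In particular multiplying the pair $(t_r, t_s)$ by $(p, 1/p)$ (for $2\le r,s$) leaves $w_\mu$ unchanged, and the same for the $(t_0, t_1)$ pair.

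Second, the $p^{1/2}$-shift identity. Here I would write $w_\mu$ in terms of the $C^0_\lambda$'s via the definition of $\Delta_\lambda$, namely
\[
w_\mu = \frac{C^0_{2\mu^2}(pqa)}{C^-_\mu(pq,t)\, C^+_\mu(a, pqa/t)} \prod_{r} \frac{C^0_\mu(b_r)}{C^0_\mu(pqa/b_r)},
\]
and then apply the $C^0_\lambda(px) = C^0_\lambda(x)(-1/x)^{|\lambda|} q^{-n(\lambda')} t^{n(\lambda)}$ rule (and its $C^\pm$ analogues \eqref{eqcmp}, \eqref{eqcpp}) to track what happens when we send $(t_0,t_1,t_2,\dots,t_{m+3},t_{m+4},\dots,t_{2m+5}) \mapsto (t_0 p^{1/2}, t_1 p^{-1/2}, t_2 p^{1/2}, \dots, t_{m+3} p^{1/2}, t_{m+4} p^{-1/2}, \dots, t_{2m+5} p^{-1/2})$. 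Under this substitution $a = t^{2(n-1)}t_0^2 \mapsto pa$, each $b_r = t^{n-1}t_0 t_r$ with $r \le m+3$ becomes $pb_r$ (one factor of $p^{1/2}$ from $t_0$, one from $t_r$), and each $b_r$ with $r \ge m+4$ is unchanged; meanwhile $pqa/b_r$ gets multiplied by $p$ or $1$ oppositely. Collecting the monomial prefactors and using the balancing condition (which controls $\prod_r b_r$ and hence the total power of $p$, $q$, $t$) should show the prefactors cancel, giving invariance. This is the step where the $m=0$ case in Lemma \ref{lemweightsym} already needed the balancing condition, and I expect it to be the main obstacle here — not conceptually hard, but the bookkeeping of the $q^{n(\lambda')}$, $t^{n(\lambda)}$ and $(-1)^{|\lambda|}$ factors across the $C^0$, $C^-$, $C^+$ pieces is delicate, and one must verify the half-integer shift really lands back in the allowed lattice for the invariance remark to apply (equivalently, apply the remark to the square of the shift and check the $p^{1/2}$ version by hand).

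Finally, the permutation symmetry in $(t_2, \dots, t_{2m+5})$ is immediate because $\Delta_\lambda(a \mid b_1, \dots, b_r)$ — apart from the $C^0_\mu(t^n)$ factor which sits in a distinguished slot — is written multiplicatively in the remaining $b$'s (via $\Delta^0_\lambda$), so permuting $t_2,\dots,t_{2m+5}$ just permutes factors. For the last identity, $w_\mu(t_0,t_1;t_2,\dots) = w_{N^n-\mu}(t_1,t_0;t_2,\dots)\cdot \frac{C^0_{N^n}(t_1/t_0)}{C^0_{N^n}(t_0/t_1)}\prod_{r=2}^{2m+5}\frac{C^0_{N^n}(t^{n-1}t_0 t_r)}{C^0_{N^n}(t^{n-1}t_1 t_r)}$, I would use the complementation symmetry of the $C_\lambda$-symbols under $\lambda \mapsto N^n - \lambda$ — the same mechanism behind $C^0_{N^n}(x) = C^0_{N^n}(pq^{1-N}t^{n-1}/x)$ used in Lemma \ref{lemweightsym} — applied termwise to each $C^0_\mu(b_r)$, $C^0_\mu(pqa/b_r)$, $C^\pm_\mu$, $C^0_{2\mu^2}(pqa)$ appearing in $w_\mu$, and then match the leftover factors (those involving the full rectangle $N^n$ rather than $\mu$) against the stated correction product; swapping $t_0 \leftrightarrow t_1$ corresponds exactly to $a \leftrightarrow pq^{1-N}t^{n-2}\cdot(\text{stuff})$ under the balancing condition, which is what produces the $N^n-\mu$ argument. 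In the $m=0$ case this correction product collapses to $1$ by the balancing condition (one checks $\prod_{r=2}^{5} C^0_{N^n}(t^{n-1}t_0 t_r) = \prod C^0_{N^n}(t^{n-1}t_1 t_r)$ using $t^{n-1}t_2t_3t_4t_5 = q^{N+1}$ and the $x \mapsto pq^{1-N}t^{n-1}/x$ symmetry), recovering Lemma \ref{lemweightsym}, which is a useful consistency check on the signs and exponents.
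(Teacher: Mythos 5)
Your treatment of the first three claims matches the paper's: the $p$-ellipticity follows from the invariance remark for $\Delta_\lambda$ under integer $p$-shifts (note the balancing exponent is $k=m+2$, since there are $2m+6$ lower parameters, not $m+1$), the $p^{1/2}$-shift is a direct computation with \eqref{eqc0p}--\eqref{eqcpp} using both balancing conditions, and you correctly observe that the permutation symmetry is now immediate because, unlike in Lemma \ref{lemweightsym}, there is no normalizing $\Delta^0_{N^n}$ denominator. Where you diverge is the final complementation identity, and this is exactly the step the paper flags as the delicate one. You propose to apply $\mu\mapsto N^n-\mu$ termwise to \emph{every} factor of $w_\mu$, including $C^-_\mu$, $C^+_\mu$ and $C^0_{2\mu^2}(pqa)$; the complementation identities for those symbols are nowhere established in the paper and are precisely the ``complicated combinatorial arguments'' it avoids. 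The paper's route is to observe that the $m=0$ case is already known (from Lemma \ref{lemweightsym}, whose final equation was proved not combinatorially but by uniqueness of a finite measure with a given biorthogonal basis), and then to handle only the \emph{extra} factors $\Delta^0_\mu(a\,|\,t^{n-1}t_0t_r)$ via the single identity $C^0_\mu(x)\,C^0_{N^n-\mu}(pq^{1-N}t^{n-1}/x)=C^0_{N^n}(x)$, which is what produces the correction product in the statement. Your route is plausible in principle but substantially heavier and is left entirely as a sketch at the one point where a sketch does not suffice.

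There is also a concrete error in your consistency check: the correction product does \emph{not} collapse to $1$ at $m=0$. The weights of Lemma \ref{lemweightsym} are normalized by the denominator $\Delta^0_{N^n}(t^{n-1}t_1/u_0\,|\,t_1/t_0,pq/u_0t_2,pq/u_0t_3,pq/u_0u_1)$ (so that $\langle 1,1\rangle=1$), whereas the weights of this section are the bare $\Delta_\mu$. At $m=0$ the correction factor $\frac{C^0_{N^n}(t_1/t_0)}{C^0_{N^n}(t_0/t_1)}\prod_{r=2}^{5}\frac{C^0_{N^n}(t^{n-1}t_0t_r)}{C^0_{N^n}(t^{n-1}t_1t_r)}$ is exactly the ratio of that normalization under $t_0\leftrightarrow t_1$, which is generically $\neq 1$ (check $n=1$, $N=1$: the factor $\theta(t_1/t_0;p)/\theta(t_0/t_1;p)=-t_1/t_0$ alone is already nontrivial). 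So your sanity check would, if carried out, appear to contradict Lemma \ref{lemweightsym} rather than recover it; the reconciliation is the normalization discrepancy, and recognizing it is in fact the key to the paper's shorter proof.
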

\begin{proof}
The proof is the same as for Lemma \ref{lemweightsym}, except for the final equation. We cannot use the previous argument of the weights on both sides of the equation determining the same measure as we do not have biorthogonal functions. Thus it seems that we are forced to use some combinatorial argument to equate the two sides of products of theta functions. However we can circumvent most of the complications by observing that the equation holds for $m=0$, and using the equation
\[
C_{\mu}^0(x) C_{N^n-\mu}^0(pq^{1-N}t^{n-1}/x) = C_{N^n}^0(x)
\]
to adjust the equation for the extra $C_{\mu}^0$ terms.
\end{proof}
We now want to consider $w_{\mu}(t_rp^{\alpha_r})$. The above symmetries provide a group action on the $\alpha_r$ which preserves $w_{\mu}$. We could not find a pretty fundamental domain for the action which includes the $t_0\leftrightarrow t_1$ interchange, so we just consider the following
\begin{lemma}\label{lemfd2}
Consider the set $A$ of parameters $\alpha \in \mathbb{R}^{2m+6}$ with $\alpha_0+\alpha_1=0$ and $\sum_{r= 2}^{2m+5} \alpha_r=m+1$. Let $G$ act on $A$ by integer shifts preserving the balancing conditions, by the half integer shift
\begin{align*}
\alpha \to (\alpha_0-\frac12,\alpha_1+\frac12;\alpha_2+\frac12,\ldots,\alpha_{m+3}+\frac12,\alpha_{m+4}-\frac12,\ldots,\alpha_{2m+5}-\frac12),
\end{align*}
and by permutations of $(\alpha_2,\ldots,\alpha_{2m+5})$. A fundamental 
domain for this action is the polytope determined by the inequalities
\[
-\frac12 \leq \alpha_0\leq 0, \qquad \alpha_1=-\alpha_0, \qquad 
\alpha_2\leq \alpha_3\leq \cdots \leq \alpha_{2m+5}\leq \alpha_2+1, \qquad
\sum_{r=2}^{2m+5} \alpha_r=1.
\]
In this fundamental domain we have $\alpha_2\geq -\frac12$ and $\alpha_{2m+5} \leq \frac32$.
\end{lemma}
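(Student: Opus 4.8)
The plan is to follow the template of the earlier fundamental–domain lemma: it suffices to show that every $G$–orbit in $A$ meets the stated polytope $P$, since (as there) injectivity of the parametrization is not needed for the applications. The key structural observation is that, once the constraint $\alpha_1=-\alpha_0$ is imposed, the block $(\alpha_2,\dots,\alpha_{2m+5})$ and the pair $(\alpha_0,\alpha_1)$ are only coupled through the half–integer shift, so the reduction can be done one block at a time.

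First I would reduce the block $(\alpha_2,\dots,\alpha_{2m+5})$. The integer shifts in $G$ fixing $\alpha_0$ and $\alpha_1$ are exactly the translations by $\{k\in\mathbb{Z}^{2m+4}:\sum_r k_r=0\}$, and together with the permutations of $(\alpha_2,\dots,\alpha_{2m+5})$ these generate the affine symmetric group acting on the hyperplane $\sum_{r=2}^{2m+5}\alpha_r=m+1$. The region $\alpha_2\le\alpha_3\le\cdots\le\alpha_{2m+5}\le\alpha_2+1$ is precisely its closed fundamental alcove, so every orbit meets it; concretely one first sorts the $\alpha_r$ by a permutation, and then, while $\alpha_{2m+5}>\alpha_2+1$, applies the translation $(\alpha_2,\alpha_{2m+5})\mapsto(\alpha_2+1,\alpha_{2m+5}-1)$ and re–sorts, which strictly decreases $\sum_r\alpha_r^2$ and hence, by discreteness of the lattice, terminates. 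Next I would put $\alpha_0$ in range: applying the integer shift $(\alpha_0,\alpha_1)\mapsto(\alpha_0+1,\alpha_1-1)$ repeatedly forces $\alpha_0$ into a half–open interval of length $1$, and if $\alpha_0>0$ I apply the half–integer shift of $G$, which sends $\alpha_0$ to $\alpha_0-\tfrac12\in[-\tfrac12,0]$ while shifting $\alpha_2,\dots,\alpha_{m+3}$ up by $\tfrac12$ and $\alpha_{m+4},\dots,\alpha_{2m+5}$ down by $\tfrac12$. Since re–sorting and re–alcoving that block again uses only permutations and the root–lattice translations that fix $(\alpha_0,\alpha_1)$, it leaves $\alpha_0$ untouched, so one further pass of the block reduction lands us in $P$. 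The only point requiring care is precisely this non–interference of the two reductions — it is the one place the half–integer coupling could create a loop — and I expect that bookkeeping to be the main (albeit minor) obstacle; the rest is routine.

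Finally the supplementary inequalities are pure averaging. In $P$ we have $\alpha_2\le\alpha_r\le\alpha_2+1$ for all $2\le r\le 2m+5$, so summing the $2m+4$ upper bounds against $\sum_{r=2}^{2m+5}\alpha_r=m+1$ gives $m+1\le(2m+4)\alpha_2+(2m+3)$, hence $\alpha_2\ge-\tfrac12$; symmetrically $\alpha_r\ge\alpha_{2m+5}-1$ gives $m+1\ge(2m+4)\alpha_{2m+5}-(2m+3)$, hence $\alpha_{2m+5}\le\frac{3m+4}{2m+4}\le\frac32$.
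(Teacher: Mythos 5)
Your proof is correct and follows essentially the same route as the paper's, which simply says the verification is ``as before'' (i.e., as in the $m=0$ fundamental-domain lemma: alcove-reduce $(\alpha_2,\dots,\alpha_{2m+5})$ by permutations and root-lattice shifts, normalize $\alpha_0$ by half-integer shifts) and then gives exactly your averaging argument for $\alpha_2\ge-\frac12$ and $\alpha_{2m+5}\le\frac32$. The only differences are cosmetic: you reduce the block before normalizing $\alpha_0$ and then re-reduce, and you make the termination of the alcove reduction explicit via the strict decrease of $\sum_r\alpha_r^2$, details the paper leaves implicit.
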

\begin{proof}
The verification that this is a fundamental domain is as before, except that we start with half-integer shifting $\alpha_0$ until it is in the given interval.

For the verification of the bound on $\alpha_2$ we notice that
\[
m+1 =  \alpha_2 + \sum_{r=3}^{2m+5} \alpha_r \leq 
\alpha_2 + \sum_{r=3}^{2m+5} (\alpha_2 +1)  = (2m+4)\alpha_2 + (2m+3),
\]
which simplifies to $\alpha_2 \geq -\frac12$. The equation for $\alpha_{2m+5}$ is obtained similarly.
\end{proof}
In this fundamental domain the valuation of the summands $w_{\mu}$ is determined to be
\begin{multline*}
\val(w_{\mu}(t_rp^{\alpha_r}))  =  |\mu|\Big( -2\alpha_0+ \sum_{r\geq 2:\alpha_r\leq \alpha_0} 2\alpha_0 + \sum_{r\geq 2:\alpha_0<\alpha_r<-\alpha_0} (\alpha_0+\alpha_r)  \\+ \sum_{r\geq 2:1+\alpha_0<\alpha_r<1-\alpha_0} (\alpha_r-1-\alpha_0)
 + \sum_{r\geq 2:\alpha_r\geq 1-\alpha_0} (-2\alpha_0) \Big).
\end{multline*}
As before, we insist on this valuation to be equal for all values of $\mu$, thus we conclude that the term between brackets must vanish. Let us determine when this happens. 
\begin{lemma}\label{lemzero}
Let $\alpha_r$ be in the fundamental domain from Lemma \ref{lemfd2}. Then 
\begin{multline*}
 -2\alpha_0+ \sum_{r\geq 2:\alpha_r\leq \alpha_0} 2\alpha_0 + \sum_{r\geq 2:\alpha_0<\alpha_r<-\alpha_0} (\alpha_0+\alpha_r)  \\+ \sum_{r\geq 2:1+\alpha_0<\alpha_r<1-\alpha_0} (\alpha_r-1-\alpha_0)
 + \sum_{r\geq 2:\alpha_r\geq 1-\alpha_0} (-2\alpha_0)=0
\end{multline*}
if and only if any of the following holds
\begin{itemize}
\item $\alpha_0=0$ or $\alpha_0=-\frac12$;
\item $\alpha_2\leq \alpha_0$ and $-\alpha_0\leq \alpha_r \leq 1+\alpha_0$ for $r\geq 3$;
\item $\alpha_0<\alpha_r<1-\alpha_0$ for $r\geq 2$, $2\leq A:=\#\{r:\alpha_r<-\alpha_0\}\leq m+3$, $B:=\#\{r:\alpha_r>1+\alpha_0\}\leq m+1$ and 
\begin{equation}\label{eqwhenzero}
\sum_{r:-\alpha_0\leq \alpha_r\leq 1+\alpha_0} \alpha_r= (m+1-B) + (A-2-B) \alpha_0.
\end{equation}
For each choice of $A$, $B$ and $\alpha_0$ there exist values of $\alpha_r$ ($r\geq 2$) which solve this equation. If $A=m+3$ then $\alpha_r=1+\alpha_0$ for all $r$ such that $-\alpha_0\leq \alpha_r\leq 1+\alpha_0$ and if $B=m+1$ then $\alpha_r=-\alpha_0$ for all $r$ such that $-\alpha_0\leq \alpha_r\leq 1+\alpha_0$\footnote{If $A=m+3$ and $B=m+1$ then there are no $r$ such that $-\alpha_0\leq \alpha_r\leq 1+\alpha_0$, so this does not lead to a contradiction.}.
\end{itemize}
\end{lemma}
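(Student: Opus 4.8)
The plan is to analyze the piecewise-linear function

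$$F(\alpha) := -2\alpha_0+ \sum_{r\geq 2:\alpha_r\leq \alpha_0} 2\alpha_0 + \sum_{r\geq 2:\alpha_0<\alpha_r<-\alpha_0} (\alpha_0+\alpha_r) + \sum_{r\geq 2:1+\alpha_0<\alpha_r<1-\alpha_0} (\alpha_r-1-\alpha_0) + \sum_{r\geq 2:\alpha_r\geq 1-\alpha_0} (-2\alpha_0)$$

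directly on the fundamental domain, exploiting that $\sum_{r=2}^{2m+5}\alpha_r = 1$ and $-\tfrac12\le\alpha_0\le 0$. The first observation is that each of the four sums contributing to $F$ is \emph{nonpositive}: on the fundamental domain $\alpha_0\le 0$, so the first and last sums are $\le 0$; for the second sum, $\alpha_0<\alpha_r$ gives $\alpha_0+\alpha_r>2\alpha_0$ but we also have $\alpha_r<-\alpha_0$, i.e. $\alpha_0+\alpha_r<0$; similarly the third sum ranges over $\alpha_r<1-\alpha_0$, i.e. $\alpha_r-1-\alpha_0<0$. Hence $F(\alpha)\le -2\alpha_0 \le 0$, and $F(\alpha)=0$ forces both $-2\alpha_0$ to be cancelled and every remaining summand to vanish. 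I would first dispose of the boundary cases $\alpha_0=0$ and $\alpha_0=-\tfrac12$, where $-2\alpha_0$ is either already zero or equals $1$ and is cancelled automatically by the structure of the sums (when $\alpha_0=-\tfrac12$ the conditions $\alpha_r\le\alpha_0$ and $\alpha_r\ge 1-\alpha_0$ and $\alpha_0<\alpha_r<-\alpha_0$ partition differently), so these always give $F=0$; this is the first bullet.

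**Main case: $-\tfrac12<\alpha_0<0$.** Here $-2\alpha_0>0$ is a fixed positive quantity that must be exactly cancelled by the (nonpositive) sums. I would split the parameters $r\ge 2$ into the five regions $\alpha_r\le\alpha_0$, $\alpha_0<\alpha_r<-\alpha_0$, $-\alpha_0\le\alpha_r\le 1+\alpha_0$, $1+\alpha_0<\alpha_r<1-\alpha_0$, $\alpha_r\ge 1-\alpha_0$, and note the middle region contributes nothing to $F$. Writing $p_0=\#\{r:\alpha_r\le\alpha_0\}$, $p_4=\#\{r:\alpha_r\ge 1-\alpha_0\}$, and using $A=\#\{r:\alpha_r<-\alpha_0\}$, $B=\#\{r:\alpha_r>1+\alpha_0\}$ as in the statement, $F=0$ becomes

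$$2\alpha_0\big(1 - p_0 - p_4\big) = \sum_{\alpha_0<\alpha_r<-\alpha_0}(\alpha_0+\alpha_r) + \sum_{1+\alpha_0<\alpha_r<1-\alpha_0}(\alpha_r-1-\alpha_0),$$

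and I would case on whether the "generic" regions $\alpha_0<\alpha_r<-\alpha_0$ and $1+\alpha_0<\alpha_r<1-\alpha_0$ are empty. If both open regions are empty the right side is $0$, forcing $p_0+p_4=1$; combined with the ordering $\alpha_2\le\cdots\le\alpha_{2m+5}$ and the bounds $\alpha_2\ge-\tfrac12$, $\alpha_{2m+5}\le\tfrac32$ from Lemma \ref{lemfd2}, one checks $p_4$ must be $0$ (since $\alpha_r\ge 1-\alpha_0>1$ together with the sum constraint and $m+1\le$ terms is too large) hence $p_0=1$, which is exactly the second bullet: $\alpha_2\le\alpha_0$ and $-\alpha_0\le\alpha_r\le 1+\alpha_0$ for the rest. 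Otherwise at least one open region is nonempty; using the ordering of the $\alpha_r$, if $\alpha_r\le\alpha_0$ occurs for some $r$ then by monotonicity the open region $\alpha_0<\alpha_r<-\alpha_0$ sits "above" it, and a short argument (comparing $2\alpha_0(1-p_0-p_4)$, which is a nonpositive multiple of $2\alpha_0$ when $p_0+p_4\ge 1$, against a sum of terms strictly between $2\alpha_0$ and $0$) shows consistency fails unless $p_0=p_4=0$; that lands us in the third bullet with all $\alpha_r\in(\alpha_0,1-\alpha_0)$, and rearranging the displayed identity (moving the $-\alpha_0\le\alpha_r\le 1+\alpha_0$ part across, using $\sum_r\alpha_r=1$ and that there are $2m+4$ parameters, with $A$ of them in $(\alpha_0,-\alpha_0)$ and $B$ in $(1+\alpha_0,1-\alpha_0)$) yields precisely \eqref{eqwhenzero}. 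The constraints $2\le A$ and $B\le m+1$ (and $A\le m+3$) come from the domain: $A\ge 2$ because $\alpha_2,\alpha_3\ge-\tfrac12$ can be forced small, and the symmetric bound via $\alpha_{2m+5}\le\tfrac32$; I would derive these the same way Lemma \ref{lemfd2} derives its bounds, by averaging.

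**Solvability and the degenerate sub-cases.** Finally I would check the claim that for each admissible $(A,B,\alpha_0)$ a solution exists: \eqref{eqwhenzero} prescribes the sum of the $(2m+4-A-B)$ middle coordinates, which must lie in the interval $[(2m+4-A-B)(-\alpha_0),\ (2m+4-A-B)(1+\alpha_0)]$ dictated by the region $-\alpha_0\le\alpha_r\le 1+\alpha_0$; substituting the formula for that prescribed sum, the endpoint inequalities reduce to $A\le m+3$ and $B\le m+1$ respectively, which are exactly our hypotheses, so a choice of middle coordinates (e.g. all equal) exists, and one can then freely pick the $A$ small and $B$ large coordinates within their open intervals while respecting monotonicity. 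The boundary-of-feasibility remarks follow: if $A=m+3$ the allowed interval for the middle-coordinate sum collapses to its upper endpoint, forcing every such $\alpha_r=1+\alpha_0$, and dually for $B=m+1$.

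**Expected main obstacle.** The genuinely delicate part is the main case bookkeeping: correctly handling the \emph{half-open} boundaries $\alpha_r=-\alpha_0$ and $\alpha_r=1+\alpha_0$ (which contribute $0$ to $F$ but must be counted consistently in $A$ vs. the "middle" block), and ruling out the mixed scenarios where $p_0$ or $p_4$ is positive \emph{and} an open region is nonempty — there the sign analysis "$F\le -2\alpha_0$ with equality iff each piece vanishes" must be pushed a little further, using the strict ordering of the $\alpha_r$ to show that a nonempty open region forces the boundary counts to zero. Everything else is linear algebra on the simplex.
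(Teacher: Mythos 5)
There is a genuine error at the heart of your argument. Your ``first observation'' --- that all four sums are nonpositive, so that $F(\alpha)\le-2\alpha_0\le 0$ and $F=0$ forces every summand to vanish --- is false on two counts. Since $\alpha_0\le 0$ we have $-2\alpha_0\ge 0$, so the leading term and the fourth sum $\sum_{\alpha_r\ge 1-\alpha_0}(-2\alpha_0)$ are \emph{nonnegative}; and the terms of the third sum satisfy $\alpha_r-1-\alpha_0>0$ because of the \emph{lower} bound $\alpha_r>1+\alpha_0$ (your deduction from the upper bound only gives $\alpha_r-1-\alpha_0<-2\alpha_0$, which is not negative). The expression is a genuine mixture of positive and negative contributions, and the conclusion ``every remaining summand must vanish'' would outright contradict the third bullet of the lemma, where $A\ge 2$ strictly negative terms $\alpha_0+\alpha_r$ are balanced against $-2\alpha_0$ and the $B$ positive terms via \eqref{eqwhenzero}. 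The paper's proof exploits the sign mixture rather than denying it: it first rules out $\alpha_{2m+5}\ge 1-\alpha_0$ (that would force $\alpha_2\ge-\alpha_0$ via $\alpha_{2m+5}\le\alpha_2+1$, leaving only nonnegative terms with $-2\alpha_0>0$), then treats $\alpha_2\le\alpha_0$ (where the third and fourth sums are empty, the $r=2$ term cancels $-2\alpha_0$, and what remains is a sum of nonpositive terms), and only in the remaining case derives the balancing identity.

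Your ``main case'' paragraph partially recovers this structure, but the displayed balance equation carries a sign error: the $p_4$ terms contribute $-2\alpha_0$ each, the same sign as the leading term, so $F=0$ reads $S_2+S_3=2\alpha_0(1-p_0+p_4)$, not $2\alpha_0(1-p_0-p_4)$; and the step ruling out the mixed cases ($p_0$ or $p_4$ positive together with a nonempty open region) is precisely where the careful sign analysis is needed and is only gestured at. Finally, the case $\alpha_0=-\frac12$ does not vanish ``automatically by the structure of the sums'': there every term equals $\alpha_r-\frac12$, and one needs the balancing condition $\sum_{r\ge 2}\alpha_r=m+1$ to conclude $F=1+\sum_{r\ge2}(\alpha_r-\tfrac12)=0$. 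Your overall plan (boundary cases, region bookkeeping, averaging argument for solvability and for the bounds on $A$ and $B$) does match the paper's, but as written the proof rests on false intermediate claims and omits the sign balancing that the result actually turns on.
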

\begin{proof}
If $\alpha_0=0$ then all terms vanish, and thus we get identically 0 as desired. This would imply by the $p^{1/2}$-shift equation that the valuation also vanishes identically if $\alpha_0=-\frac12$, and this is indeed seen to be true by using the balancing condition (essentially the left hand side becomes $1 +\sum_{r\geq 2} (\alpha_r-\frac12)$). 

For $-\frac12<\alpha_0<0$ we see that $-2\alpha_0$ and the terms for $\alpha_r>1+\alpha_0$ are (strictly) positive, while the terms for $\alpha_r<-\alpha_0$ are negative. But if $\alpha_{2m+5}\geq 1-\alpha_0$ then all $\alpha_r\geq -\alpha_0$ (for $r\geq 2$), so we have a sum of positive terms, which can never vanish. So we see that $\alpha_{2m+5}<1-\alpha_0$. If $\alpha_2\leq \alpha_0$ we see that $\alpha_r\leq 1+\alpha_0$ for $r\geq 2$, so the term between brackets simplifies to \[
\sum_{r\geq 3:\alpha_\leq \alpha_0} 2\alpha_0 + \sum_{r\geq 3:\alpha_0<\alpha_r<-\alpha_0} (\alpha_0+\alpha_r),\] which is a sum of all negative terms, so this can vanish only if $\alpha_r \geq -\alpha_0$ for $r\geq 3$. Otherwise, we can assume $\alpha_0<\alpha_2$ and $\alpha_{2m+5} < 1-\alpha_0$, and see that we have a solution as long as 
$\alpha_{2m+5} \leq \alpha_2+1$, and the equation reduces to 
\begin{multline*}
0 = -2\alpha_0 + A\alpha_0 + \sum_{r\geq 2: \alpha_r<-\alpha_0} \alpha_r 
-B(1+\alpha_0)  + \sum_{r\geq 2: \alpha_r>1+\alpha_0} \alpha_r 
 \\ = -B + (A-2-B) \alpha_0 + \left( m+1- \sum_{r\geq 2: -\alpha_0\leq \alpha_r\leq 1+\alpha_0} \alpha_r \right)
\end{multline*}
which simplifies to the given equation. Now we observe that we can only get a solution if 
\[
-\alpha_0 \leq 
\frac{\sum_{r\geq 2: -\alpha_0 \leq \alpha_r \leq 1+\alpha_0} \alpha_r}{\#\{r\geq 2:-\alpha_0\leq \alpha_r \leq 1+\alpha_0\}} 
\leq 1+\alpha_0.
\]
The numerator of the quotient is expressed in terms of $A$, $B$ and $\alpha_0$ above, while the denominator is seen to equal $2m+4-A-B$. Simplifying the resulting equation gives the upper bounds on $A$ and $B$ (one must separately consider the case $2m+4-A-B=0$). The lower bound on $A$ follows from the fact that the left hand side would otherwise be positive. Given $A$, $B$ and $\alpha_0$ we can now set $A$ values of $\alpha_r$ equal to $-\alpha_0 + \frac{(2+c)}{A} \alpha_0$, $2m+4-A-B$ values of 
$\alpha_r$ equal to $\frac{(m+1-B) + (A-2-B) \alpha_0}{2m+4-A-B}$ and 
$B$ values of $\alpha_r$ equal to $1+\alpha_0 - \frac{c}{B}\alpha_0$ where $c:=\frac{2B(A-1)}{A+B}$. Here $c$ was chosen such that $\left(-\alpha_0 + \frac{(2+c)}{A}\alpha_0\right) +1 = 1+\alpha_0 - \frac{c}{B}\alpha_0$.
\end{proof}
The same calculations that show that $A\leq m+3$ also show that $\#\{r\geq 2:\alpha_r\leq -\alpha_0\} \leq m+3$ (i.e. we include the ones at $-\alpha_0$), and likewise we obtain that 
$\#\{r\geq 2: \alpha_r\geq 1+\alpha_0\}\leq m+1$. 

We can now give the following theorem
\begin{theorem}
Let $\alpha_r \in (\mathbb{C}^*)^{2m+6}$ be in the fundamental domain of Lemma \ref{lemfd2} and satisfy the condition from Lemma \ref{lemzero}. Moreover assume $t_0t_1=q^{-N} t^{1-n}$ and 
$\prod_{r=0}^{2m+5} t_r = q^{m+1}$.
Then we obtain 
\begin{equation}\label{eqlcswitch}
\lc\left( \sum_{\mu \subset N^n} w_{\mu}(t_r p^{\alpha_r})\right) 
= \sum_{\mu \subset N^n} \lc(w_\mu(t_rp^{\alpha_r})),
\end{equation}
unless (using $A$ and $B$ as before)
\begin{itemize}
\item $\alpha_0=0$, $A=1$, $B=0$ and $\alpha_r\not \in \mathbb{Z}$ for $r\geq 2$;
\item $-\frac12<\alpha_0<0$,  $\alpha_2<\alpha_0$ and $-\alpha_0<\alpha_r<1+\alpha_0$ for all $r\geq 3$;
\item $-\frac12<\alpha_0<0$, $A=m+3$ and $B=m+1$;
\item $\alpha_0=-\frac12$, $\alpha_2>-\frac12$, $A=m+3$ and $B=m+1$.
\end{itemize}
in which cases the right hand side vanishes. Here $\lc(w_\mu(t_rp^{\alpha_r}))$ is given by the equations
\begin{itemize}
\item If $\alpha_0=0$ then 
\begin{align*}
\lc&(w_\mu(t_rp^{\alpha_r})) = 
\frac{\tilde C_{2\mu^2}^0(qt^{2(n-1)} t_0^2) \tilde C_{\mu}^0(t^n,q^{-N})(q^{N-1}t^{-3(n-1)}t_0^{-2})^{|\mu|} q^{-2n(\mu')} t^{4n(\mu)}}{\tilde C_{\mu}^0(qt^{n-2}t_0^2,q^{N+1}t^{2(n-1)} t_0^2) \tilde C_{\mu}^-(q,t) \tilde C_{\mu}^+(t^{2(n-1)}t_0^2,qt^{2n-1}t_0^2)} 
\\ & \times \left(( qt^{2(n-1)}t_0^2)^{|\mu|} q^{2n(\mu')} t^{-2n(\mu)}\right)^{A-B}
\prod_{r\geq 2:\alpha_r=0} \left(\frac{\tilde C_{\mu}^0(t^{n-1}t_0t_r)}{\tilde C_{\mu}^0(qt^{n-1}t_0/t_r)} \left( -\frac{qt^{n-1}t_0}{t_r}\right)^{|\mu|} q^{n(\mu')}t^{-n(\mu)} \right) 
\\ & \times \prod_{r\geq 2:\alpha_r=1} \left(\frac{\tilde C_{\mu}^0(t^{n-1}t_0t_r)}{\tilde C_{\mu}^0(qt^{n-1}t_0/t_r)} \left( -\frac{1}{t^{n-1}t_0t_r}\right)^{|\mu|} q^{-n(\mu')}t^{n(\mu)} \right).
\end{align*}
%\item If $-\frac12<\alpha_0<0$ and $\alpha_2 <\alpha_0$ then 
%\begin{align*}
%\lc(w_\mu(t_rp^{\alpha_r})) &= 
%\frac{\tilde C_{\mu}^0(t^n,q^{-N})}{\tilde C_{\mu}^-(q,t)}t^{2n(\mu)} q^{|\mu|} 
%\prod_{r\geq 3:\alpha_r=-\alpha_0} \tilde C_{\mu}^0(t^{n-1}t_0t_r)
%\end{align*}
\item If $-\frac12<\alpha_0<0$ and $\alpha_2 \leq \alpha_0$ then 
\begin{align*}
\lc(w_\mu(t_rp^{\alpha_r})) &= 
\frac{\tilde C_{\mu}^0(t^n,q^{-N})}{\tilde C_{\mu}^-(q,t)} t^{2n(\mu)} q^{|\mu|}
\left(\frac{1}{\tilde C_{\mu}^0(qt^{n-1}t_0/t_2)} \right)^{1_{\{\alpha_2=\alpha_0\}}}
\frac{\prod_{r\geq 3:\alpha_r=-\alpha_0} \tilde C_{\mu}^0(t^{n-1}t_0t_r)}{
\prod_{r\geq 3:\alpha_r=1+\alpha_0} \tilde C_{\mu}^0(qt^{n-1}t_0/t_r)}.
\end{align*}
\item If $-\frac12<\alpha_0<0$ and $\alpha_2 >\alpha_0$ then 
\begin{align*}
\lc(w_\mu(t_rp^{\alpha_r})) &= 
\frac{\tilde C_{\mu}^0(t^n,q^{-N})}{\tilde C_{\mu}^-(q,t)} (-t^{n-1}t_0)^{(A-B-2)|\mu|} q^{-B|\mu|}
q^{(A-B-2)n(\mu')} t^{(B-A+4)n(\mu)} 
\\ & \qquad \times \frac{\prod_{r\geq 3:\alpha_r=-\alpha_0} \tilde C_{\mu}^0(t^{n-1}t_0t_r)}{
\prod_{r\geq 3:\alpha_r=1+\alpha_0} \tilde C_{\mu}^0(qt^{n-1}t_0/t_r)}  z^{|\mu|}.
\end{align*}
where $z=\prod_{r\geq 2:\alpha_r<-\alpha_0} t_r \prod_{r\geq 2:\alpha_r>1+\alpha_0} t_r$

\item If $\alpha_0=-\frac12$ and $\alpha_2=-\frac12$ (so $\alpha_r=\frac12$ for $r\geq 3$) then 
\begin{align*}
\lc(w_\mu(t_rp^{\alpha_r})) & = 
\frac{\tilde C_{2\mu^2}^0(qt^{2(n-1)} t_0^2) \tilde C_{\mu}^0(t^n,q^{-N})}{\tilde C_{\mu}^0(qt^{n-2}t_0^2,q^{N+1} t^{2(n-1)} t_0^2) \tilde C_{\mu}^-(q,t) \tilde C_{\mu}^+(t^{2(n-1)}t_0^2,qt^{2n-1}t_0^2)} 
q^{|\mu|} t^{2n(\mu)}  
\\& \qquad \times 
\prod_{r=2}^{2m+5}  \left( \frac{\tilde C_{\mu}^0(t^{n-1}t_0t_r)}{\tilde C_{\mu}^0(qt^{n-1}t_0/t_r)} \right).
\end{align*}
\item If $\alpha_0=-\frac12$ and $\alpha_2>-\frac12$ then 
\begin{align*}
\lc(w_\mu(t_rp^{\alpha_r})) &= 
\frac{\tilde C_{2\mu^2}^0(qt^{2(n-1)} t_0^2) \tilde C_{\mu}^0(t^n,q^{-N})}{\tilde C_{\mu}^0(qt^{n-2}t_0^2,q^{N+1} t^{2(n-1)} t_0^2) \tilde C_{\mu}^-(q,t) \tilde C_{\mu}^+(t^{2(n-1)}t_0^2,qt^{2n-1}t_0^2)} 
%\\ & \qquad \times   
q^{-B |\mu|} (-t^{n-1}t_0)^{(A-B-2)|\mu|} 
\\ & \qquad \times q^{(A-B-2)n(\mu')}t^{(B+4-A)n(\mu)}  \prod_{r:\alpha_r=\frac12 } \left( \frac{\tilde C_{\mu}^0(t^{n-1}t_0t_r)}{\tilde C_{\mu}^0(qt^{n-1}t_0/t_r)} \right)z^{|\mu|},
\end{align*}
where $z=\prod_{r\geq 2:\alpha_r\neq \frac12} t_r$
\end{itemize}

\end{theorem}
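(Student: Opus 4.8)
The plan is to handle the theorem in three stages: the valuation and leading coefficient of a single summand $w_\mu(t_rp^{\alpha_r})$, the interchange of limit and finite sum, and finally the vanishing analysis that produces the list of exceptional $\alpha$.

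\emph{Stage 1: the individual summands.} Writing $a=t^{2(n-1)}t_0^2$ and $b_r=t^{n-1}t_0t_r$ (so that $b_1=q^{-N}$ by the balancing condition $t_0t_1=q^{-N}t^{1-n}$), one has the factorisation $w_\mu=\Delta_\mu(a\mid t^n)\prod_{r=1}^{2m+5}\Delta_\mu^0(a\mid b_r)$ with $\Delta_\mu^0(a\mid b_r)=C_\mu^0(b_r)/C_\mu^0(pq\,t^{n-1}t_0/t_r)$. I would then read off $val$ and $lc$ directly from the limit formula \eqref{eqlimc} for $C_\lambda^\epsilon(xp^\alpha)$ on $0\le\alpha<1$ (using the shift formulas \eqref{eqc0p}--\eqref{eqcpp} to reduce to that range) together with the stated valuation and leading coefficient of $\Delta_\lambda(ap^\alpha\mid t^n)$. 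Which of the five displayed cases one lands in is governed by which of the $\alpha_r$, and of $-\alpha_0-\alpha_r$ and $\alpha_r-1-\alpha_0$, are integers: in the fundamental domain of Lemma~\ref{lemfd2} the relevant integers are $\alpha_r\in\{0,1\}$ when $\alpha_0=0$ and $\alpha_r\in\{-\alpha_0,1+\alpha_0\}$ when $-\tfrac12<\alpha_0<0$ or $\alpha_0=-\tfrac12$, with the further split according to whether $\alpha_2\le\alpha_0$ or $\alpha_2>\alpha_0$ (respectively $\alpha_2=-\tfrac12$ or $\alpha_2>-\tfrac12$) separating the corresponding bullets. Every factor whose shifted argument sits at such an integer contributes a nontrivial $\tilde C_\mu^0$; all others contribute only monomials in $t_r,q,t$. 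This is pure bookkeeping, and en route it reproduces the displayed expression for $val(w_\mu(t_rp^{\alpha_r}))$ preceding Lemma~\ref{lemzero}.

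\emph{Stage 2: interchanging limit and sum.} The hypothesis of Lemma~\ref{lemzero} is precisely the vanishing of the bracketed coefficient of $|\mu|$ in $val(w_\mu(t_rp^{\alpha_r}))$, so under it every summand lies in $M(t_r,q,t)$ with valuation $0$ (consistent with $w_\emptyset=1$). For a finite sum of elements of $M(t_r,q,t)$ all of valuation $0$, the coefficient of $p^0$ in $h:=\sum_{\mu\subset N^n}w_\mu$ equals $\sum_\mu lc(w_\mu)$; hence, by Corollary~\ref{corsumequalval}, $val(h)=0$ and $lc(h)=\sum_\mu lc(w_\mu(t_rp^{\alpha_r}))$ exactly when that finite sum of rational functions is not identically zero, which is the content of \eqref{eqlcswitch}. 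The theorem is thereby reduced to the explicit $lc$-formulas (Stage~1) together with deciding for which admissible $\alpha$ one has $\sum_{\mu\subset N^n}lc(w_\mu(t_rp^{\alpha_r}))\equiv 0$.

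\emph{Stage 3: the vanishing list, and the main obstacle.} In each of the four exceptional configurations the $lc$-formula degenerates sharply: no $\alpha_r$ sits at an integer translate that would contribute a $t_r$-dependent $\tilde C_\mu^0$-factor, and the surviving exponent combinations collapse (for instance $A-B-2=0$ and $B+4-A=2$ when $A=m+3$, $B=m+1$), so that $\sum_\mu lc(w_\mu)$ becomes a multivariate, terminating, very-well-poised basic hypergeometric sum which — relative to a balanced configuration — carries one excess very-well-poised pair; such a $BC_n$-sum is identically $0$, being the $p\to 0$ degeneration of the multivariate Frenkel--Turaev/$q$-Saalsch\"utz evaluation taken past its balancing point, the several-variable analogue of the familiar fact that suitably too-well-poised terminating ${}_{r+1}\phi_r$-type sums telescope to zero. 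Conversely, for $\alpha$ satisfying the hypothesis but lying in none of the four cases, I would show $\sum_\mu lc(w_\mu)\not\equiv 0$ by specialising the free parameters $t_r$ so that the sum either terminates after the single term $\mu=\emptyset$ or reduces to the $m=0$ normalisation constant $\Delta_{N^n}^0(\cdots)$, a nonzero product; equivalently, the term $\mu=\emptyset$ always contributes the constant $1$, and one checks that the remaining, genuinely $t_r$-dependent terms can cancel it only when one of the listed degeneracies occurs. Establishing this dichotomy — and matching the ``cancelling'' configurations with the explicit list — is the real content of the theorem and the step I expect to be the main obstacle, since it requires both the correct vanishing $q$-summations and a clean argument that no further configuration produces cancellation; it is also here that the combinatorics coincides with that of the univariate case treated in \cite{vdBR}.
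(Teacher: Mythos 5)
Your Stages 1 and 2 match the paper: the leading coefficients of the individual summands are indeed a direct (if tedious) computation from the $C$-symbol limits, and Lemma \ref{lemzero} plus Corollary \ref{corsumequalval} reduce everything to deciding when $\sum_{\mu\subset N^n} lc(w_\mu(t_rp^{\alpha_r}))$ vanishes identically. But Stage 3 --- which you yourself flag as ``the real content'' --- is where your proposal has a genuine gap, and the mechanism you sketch is not the one that actually works. For the vanishing direction you appeal to an unproven heuristic that a ``too-well-poised'' terminating $BC_n$-sum past its balancing point telescopes to zero; no such general summation is available at $m>0$ (indeed the whole point of this section is that there is no evaluation formula for $m>0$). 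The paper's actual argument is that in each of the four exceptional configurations every $t_r$-dependent $\tilde C^0_\mu$-factor drops out, so the limiting series is \emph{literally identical} to an $m=0$ series for one of the vectors $\alpha=(0,0,-\frac12,\frac12,\frac12,\frac12)$, $(-\frac14,\frac14,-\frac12,\frac12,\frac12,\frac12)$, $(-\frac14,\frac14,0,0,0,1)$, $(-\frac12,\frac12,0,0,0,1)$; there the normalization $\langle 1,1\rangle=1$ combined with $b(\alpha)<0$ in \eqref{eqdefbmu} forces the sum of leading coefficients to vanish. This reduction to the known $m=0$ evaluation is the idea your proposal is missing.

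For the non-vanishing direction your plan (specialise the $t_r$ so the sum collapses to one term, or argue that only the listed degeneracies allow cancellation of the $\mu=\emptyset$ term) is not a proof and would be hard to make one: the terms are rational in the $t_r$ with the balancing constraint, and ruling out identical cancellation by inspection is exactly the difficulty. The paper instead perturbs the exponent vector $\alpha$: by Corollary \ref{corsumequalval}, if one can move $\alpha$ by an arbitrarily small amount so that the equation of Lemma \ref{lemzero} fails, the further limit concentrates on the single term $\mu=0$ or $\mu=N^n$, which is nonzero, hence the original sum of leading coefficients cannot vanish. The whole classification then becomes a piecewise-linear derivative computation --- the one-sided derivatives of the left-hand side of Lemma \ref{lemzero} in the $\alpha_r$ (and in $\alpha_0$) must all coincide for the perturbation to be impossible, and solving this recovers exactly the four listed cases (e.g.\ $A+B=2m+4$ with $A-2-B=0$ forcing $A=m+3$, $B=m+1$, and $A=B+1$ with $\alpha_0=0$ forcing $A=1$, $B=0$). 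Without this perturbation argument, or some substitute for it, your Stage 3 does not close, so the proposal as written does not constitute a proof.
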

\begin{proof}
Obtaining the leading coefficients of the individual summands is a straightforward calculation. Moreover the result of Lemma \ref{lemzero} indicates that we only consider cases in which the valuation of all summands is equal to the valuation of $w_0=1$. Thus it suffices to show that in these cases the right hand side of \eqref{eqlcswitch} does not vanish, and that it does vanish in the other cases. 
%Note that the equations we give for $\lc(w_\mu)$ also hold in the cases in which the right hand side of \eqref{eqlcswitch} does vanish.

In the cases in which we claim the right hand side of \eqref{eqlcswitch} vanishes, the series is identical as a series obtained in the $m=0$ case for one of the cases $\alpha=(0,0,-\frac12,\frac12,\frac12,\frac12)$,  
$\alpha=(-\frac14,\frac14,-\frac12,\frac12,\frac12,\frac12)$, $\alpha= (-\frac14,\frac14,0,0,0,1)$, or 
$\alpha=(-\frac12,\frac12,0,0,0,1)$. These are cases for which  which we determined in the previous section that the valuation of the series is more than the valuation of the individual summands (that is $b(\mu)$ of \eqref{eqdefbmu} is strictly negative). In particular we have already seen that these series vanish.

For the other cases we note that Corollary \ref{corsumequalval} implies that if we can modify our vector $\alpha$ by an arbitrarily small amount to make the equation from Lemma \ref{lemzero} fail, we would have a further limit to a series consisting of just 1 term (either the term $\mu=0$ or the term $\mu=N^n$), which therefore cannot vanish. In particular, in this case, the series corresponding to our original $\alpha$ also cannot vanish. 

Let us first consider the case $-\frac12 <\alpha_0<0$. If we increase one $\alpha_r$ ($r\geq 2$) we must decrease another one in order to preserve the balancing condition. Thus we see that unless the derivative in $\alpha_r$ of the left hand side of the equation in Lemma \ref{lemzero} is equal for all $r\geq 2$, our series does not vanish. The derivative to $\alpha_r$ is 1 if $\alpha_0<\alpha_r<-\alpha_0$ or integer shifts thereof, and 0 if $-\alpha_0<\alpha_r<1+\alpha_0$ and integer shifts thereof, and does not exist (left and right derivatives differ) if $\alpha_r \in \pm \alpha_0 + \mathbb{Z}$. Thus the only cases where all left and right derivatives are identical are when either all $\alpha_r$ are in the union of open intervals $(-\alpha_0,1+\alpha_0)+\mathbb{Z}$ or if they are all in $(\alpha_0,-\alpha_0)+\mathbb{Z}$. In our possible solutions the first case is only the case with $\alpha_2<\alpha_0$ and $\alpha_r \in (-\alpha_0,1+\alpha_0)$ for $r\geq 3$, for which we have already seen that the series vanishes. For the second case we notice that our equation reduces to 
\eqref{eqwhenzero} with the left hand side equal to zero and $A+B=2m+4$. The derivative to $\alpha_0$ of this equation is clearly non-zero precisely when $A-2-B\neq 0$, so in those cases we also cannot vanish, whereas the case $A-2-B=0$ corresponds to $A=m+3$ and $B=m+1$, of which we have seen that the series vanishes.

The cases $\alpha_0=0$ and $\alpha_0=-\frac12$ are similar (and indeed related by a half-integer shift), so we only have to consider the case $\alpha_0=0$. If there exists an $\alpha_r\in \mathbb{Z}$ we can change $\alpha_0$ and $\alpha_r$ simultaneously, while keeping $\alpha_0-\alpha_r$ fixed, and can thus take a limit to a case with $-\frac12 <\alpha_0<0$ and an $\alpha_r \in \pm \alpha_0 +\mathbb{Z}$, of which we know the series exists. So we only need to consider the cases where no $\alpha_r \in \mathbb{Z}$. In that case the derivative to $\alpha_0$ equals $2(-1+A-B)$, so we see that the series does not vanish unless $A=B+1$. Observe that our fundamental domain is such that, if $\alpha_0=0$, either $A=0$ or $B=0$, hence we see that the only vanishing case is $A=1$ and $B=0$ as claimed.
\end{proof}

\section{Absolutely continuous measures}\label{secac}
For the continuous measures we cannot simply refer to our algebraic framework, as we cannot necessarily expand integrals of power series in $p$ in such power series. Thus in this section we mostly deal with ordinary limits.
Generic parameters in this subsection are parameters $t$, $t_r$ satisfying a balancing condition (usually either  $t^{2(n-1)} \prod_{r=0}^5 t_r = pq$ or $t^{2(n-1)} \prod_{r=0}^5 t_r = q$) such that $t_rt_s \not \in p^{\mathbb{Z}_{\leq 0}} q^{\mathbb{Z}_{\leq 0}} t^{\mathbb{Z}_{\leq 0}}$ for $0\leq r,s\leq 3$ and $t_rt_4,t_rt_5 \not \in p^{\mathbb{Z}_{\leq 0} }q^{\mathbb{Z}}t^{\mathbb{Z}_{\leq 0}}$ for $0\leq r\leq 5$. Moreover we assume $|p|,|q|,|t|<1$. 

We will assume $f\in \tilde {A}^{(n)}(t_4)$ and $g\in \tilde{A}^{(n)}(t_5)$ throughout this section, and it becomes convenient to fix $m_f$ and $m_g$ such that 
\[
\hat f(z) := \prod_{i=1}^n \frac{\Gamma(t_4z_i^{\pm 1})}{\Gamma(t_4q^{-m_f} z_i^{\pm 1})} f(z), \qquad 
\hat g(z):=\prod_{i=1}^n \frac{\Gamma(t_5z_i^{\pm 1})}{\Gamma(t_5q^{-m_g} z_i^{\pm 1})} g(z_i),
\]
are holomorphic and define $\tilde t_r=t_r$ if $r=0,1,2,3$, $\tilde t_4=t_4 q^{-m_f}$ and $\tilde t_5 = t_5 q^{-m_g}$. In some cases we will moreover use 
$m_r:=\log_q( t_r/\tilde t_r)$, so $m_r=0$ for $r=0,1,2,3$, $m_4=m_f$ and $m_5=m_g$.
The first consequence is that we can immediately observe the analytic properties of $\lc(f)=\lc(f(zp^{\zeta};t_rp^{\alpha_r})$, as $\lc(\hat f)$ is 
a Laurent polynomial in the $z_i$ (as a rational function which is holomorphic on 
$\left( \mathbb{C}^* \right)^{n}$) and 
\begin{equation}\label{eqlcfinlchatf}
\lc(\hat f) = \lc(f) \prod_{i=1}^n 
(\tilde u_0 z_i;q)_{m_f}^{1_{\alpha_4+\zeta\in \mathbb{Z}}}
(\tilde u_0/z_i;q)_{m_f}^{1_{\alpha_4-\zeta\in \mathbb{Z}}}
\left( \left(-\frac{q}{u_0 z_i} \right)^{m_f} q^{\binom{m_f}{2}} \right)^{\lfloor \alpha_4+\zeta\rfloor} 
\left( \left(-\frac{qz_i}{ u_0} \right)^{m_f} q^{\binom{m_f}{2}} \right)^{\lfloor \alpha_4-\zeta\rfloor}.
\end{equation}

Let us now recall the definition of the continuous bilinear form from \cite{Rainstrafo}.
\begin{definition}
For generic parameters $t\in \mathbb{C}^6$ satisfying $t^{2(n-1)} \prod_{r=0}^5 t_r = pq$ %and for $f\in A(t_4)$ and $g\in A(t_5)$ 
we define the bilinear form on $f\in A^{(n)}(t_4)$ and $g\in A^{(n)}(t_5)$ as 
\begin{multline*}
\langle f,g\rangle_{t_0,t_1,t_2,t_3,t_4,t_5:q,t;p} = 
\frac{(q;q)^n (p;p)^n \Gamma(t;p,q)^n}{2^n n! \prod_{j=1}^n\Gamma(t^j;p,q) \prod_{0\leq r<s\leq 5} \Gamma(t^{n-j} t_rt_s;p,q)} \\ \times 
\int_{C^n} f(\cdots,z_i,\cdots) g(\cdots,z_i,\cdots) \prod_{1\leq j<k\leq n} \frac{\Gamma(tz_j^{\pm 1} z_k^{\pm 1};p,q)}{\Gamma(z_j^{\pm 1}z_k^{\pm 1};p,q)}
\prod_{j=1}^n \frac{\prod_{r=0}^5 \Gamma(t_rz_j^{\pm 1};p,q)}{\Gamma(z_j^{\pm 2};p,q)} \frac{dz_j}{2\pi i z_j},
\end{multline*}
where the integration contour $C=C^{-1}$ contains all points of the form $p^i q^j \tilde t_r$ ($i,j\geq 0$), excludes their reciprocals, and contains $p^iq^jtC$
($i,j\geq 0$) (i.e. its own image when multiplied by some number)\footnote{To be precise, $C$ should be a chain representing the described homology class.}.

If $|\tilde t_r|<1$ for all $r$, then the integration contour can be taken as the unit circle.
\end{definition}
Note that the specific choice of $m_f$ and $m_g$ does not change the value of the integral, as the only difference between the specification of the integration contours is whether we explicitly force them to contain some points which are no poles of the integrand.

It is convenient to rewrite the integrand using $\hat f$ and $\hat g$ as 
\begin{multline}\label{eqacip2}
\langle f,g\rangle_{t_0,t_1,t_2,t_3,t_4,t_5:q,t;p} = 
\frac{(q;q)^n (p;p)^n \Gamma(t;p,q)^n}{2^n n! \prod_{j=1}^n\Gamma(t^j;p,q) \prod_{0\leq r<s\leq 5} \Gamma(t^{n-j} t_rt_s;p,q)} \\ \times 
\int_{C^n} \hat f(z_i) \hat g(z_i) \prod_{1\leq j<k\leq n} \frac{\Gamma(tz_j^{\pm 1} z_k^{\pm 1};p,q)}{\Gamma(z_j^{\pm 1}z_k^{\pm 1};p,q)}
\prod_{j=1}^n \frac{\prod_{r=0}^5 \Gamma(\tilde t_rz_j^{\pm 1};p,q)}{\Gamma(z_j^{\pm 2};p,q)} \frac{dz_j}{2\pi i z_j},
\end{multline}

Now we want to obtain several limits of these bilinear forms. The easiest way to obtain such a limit is if there exists some valuation $\zeta$, such that for given functions $f$ and $g$ there exists a fixed contour (for small enough $p$) for the integral expression of $\langle f,g\rangle$ after rescaling $z \to p^{\zeta}z$ in the integral. In those cases we can just replace limit and integral.
The easiest example is the following (where $\zeta=0$).
\begin{proposition}\label{propPI}
Choose generic parameters satisfying $t^{2(n-1)} \prod_r t_r=q$.
Let $\alpha \in \mathbb{R}^6$, $\sum_{r=0}^5 \alpha_r=1$ and $\alpha_r\geq 0$ for $0\leq r\leq 5$. 
%Let $f\in A(t_4p^{\alpha_4})$ and $g\in A(t_5p^{\alpha_5})$.

%Let $m_f$ and $m_g$ be such that $\prod_{i=1}^n \theta(pq z_i^{\pm 1}/t_4;p,q)_{m_f} f(z)$ and $\prod_{i=1}^n \theta(pq z_i^{\pm 1}/t_5;p,q)_{m_g} g(z)$ are holomorphic. 

We now have the limit
\begin{align*}
\lim_{p\to 0} p^{-\val(f)-\val(g)} & \langle f,g\rangle_{t_r p^{\alpha_r}}  = \frac{(q;q)^n \prod_{j=1}^n (t^j;q) \prod_{0\leq r<s\leq 5: \alpha_r+\alpha_s=0} (t^{n-j} t_rt_s;q)}
{2^n n! (t;q)^n\prod_{j=1}^n  \prod_{0\leq r<s\leq 5:\alpha_r+\alpha_s=1}(q t^{j-n} t_r^{-1}t_s^{-1});q)} \\ &\qquad  \times 
\int_{C^n} \lc(f)(z_i) \lc(g)(z_i) 
\prod_{1\leq j<k\leq n} \frac{(z_j^{\pm 1} z_k^{\pm 1};q)}{(tz_j^{\pm 1}z_k^{\pm 1};q)}
\prod_{j=1}^n \frac{(z_j^{\pm 2};q) \prod_{r:\alpha_r=1} (q t_r^{-1} z_j ^{\pm 1};q)}{\prod_{r:\alpha_r=0} (t_rz_j^{\pm 1};q)} \frac{dz_j}{2\pi i z_j},
\end{align*}
where $\lc(f)=\lc(f(z_i;p^{\alpha_r}t_r))$ and likewise for $\lc(g)$.
Here the integration contour $C=C^{-1}$ is such that it includes the points $q^j \tilde t_r$, (for $0\leq r\leq 5$ with $\alpha_r=0$ and $j\geq 0$)  excludes their reciprocals, and contains $q^jtC$ ($j\geq 0$). The contour can be taken to be the unit circle if $|\tilde t_r|<1$ for all $r$ with $\alpha_r=0$.

%Here $\tilde t_r=t_r$ for $0\leq r \leq 3$, $\tilde t_4=q^{-m_f} t_4$, and $\tilde t_5=q^{-m_g} t_5$.
\end{proposition}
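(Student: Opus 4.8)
## Proof Plan

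The plan is to obtain the limit by a term-by-term analysis of the integrand in the rescaled integral, and then justify the interchange of limit and integral by a dominated-convergence argument on a fixed contour. The essential observation is that with $\zeta = 0$ (no rescaling of the $z_i$) and all $\alpha_r \geq 0$, every parameter $\tilde t_r p^{\alpha_r}$ either stays bounded away from zero (when $\alpha_r = 0$) or tends to zero (when $\alpha_r > 0$), so the contour $C$ described in the original definition — which must contain $p^i q^j \tilde t_r p^{\alpha_r}$ and exclude reciprocals, and contain $p^i q^j t C$ — can be taken to be a single fixed contour for all sufficiently small $p$. (Indeed, for $|\tilde t_r| < 1$ one may take the unit circle, since then $|\tilde t_r p^{\alpha_r}| < 1$ as well; in general one deforms $C$ once and keeps it fixed.) This is exactly the situation described in the paragraph preceding the statement, where limit and integral may be exchanged.

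First I would compute the limit of each factor of the integrand in \eqref{eqacip2} after substituting $t_r \to t_r p^{\alpha_r}$. The cross-term factors $\Gamma(t z_j^{\pm 1} z_k^{\pm 1};p,q)/\Gamma(z_j^{\pm 1} z_k^{\pm 1};p,q)$ and the factor $\Gamma(z_j^{\pm 2};p,q)$ in the denominator involve no $p$-shifted parameters, so using $\lim_{p\to 0}\Gamma(x;p,q) = 1/(x;q)$ they converge to $(z_j^{\pm 1}z_k^{\pm 1};q)/(tz_j^{\pm1}z_k^{\pm1};q)$ and to $1/(z_j^{\pm 2};q)$ respectively. For the single-variable factors $\prod_{r=0}^5 \Gamma(\tilde t_r p^{\alpha_r} z_j^{\pm 1};p,q)$: when $\alpha_r = 0$ this tends to $1/(\tilde t_r z_j^{\pm 1};q)$, while when $\alpha_r > 0$ we have $\tilde t_r p^{\alpha_r} \to 0$ and $\Gamma \to 1$. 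This is why only the parameters with $\alpha_r = 0$ survive in the denominator of the limiting integrand. The factors $\prod_{r:\alpha_r=1}(q t_r^{-1} z_j^{\pm 1};q)$ in the numerator of the limiting integrand come from the prefactor, not the integrand — see below — combined with \eqref{eqlcfinlchatf}; more precisely, the $\hat f$, $\hat g$ in the integrand converge (after rescaling by $p^{-val}$) to $lc(\hat f)$, $lc(\hat g)$, and one must then re-express $lc(\hat f) lc(\hat g)$ in terms of $lc(f) lc(g)$ using \eqref{eqlcfinlchatf} with $\zeta = 0$; the only surviving correction factors are the $(q t_r^{-1} z_j^{\pm 1};q)$-type terms, which occur exactly when $\alpha_r + \zeta = \alpha_r \in \mathbb{Z}$, i.e. $\alpha_r \in \{0,1\}$, and the $\alpha_r = 0$ contribution cancels against a pole of $\Gamma(\tilde t_r z_j^{\pm 1})$. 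Here the constraint $\sum \alpha_r = 1$ together with $\alpha_r \geq 0$ forces at most one $\alpha_r$ to equal $1$ (with all others zero) unless several are fractional, so the bookkeeping is finite but must be done carefully.

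Next I would handle the prefactor $\frac{(q;q)^n (p;p)^n \Gamma(t;p,q)^n}{2^n n! \prod_j \Gamma(t^j;p,q) \prod_{r<s}\Gamma(t^{n-j} t_r t_s;p,q)}$. The factors $(q;q)^n$, $\Gamma(t;p,q)^n$, $\Gamma(t^j;p,q)$ have no $p$-shifted parameters and converge to $(q;q)^n$, $(t;q)^{-n}$, $(t^j;q)^{-1}$. The crucial subtlety is the behaviour of $(p;p)^n$ together with the $\Gamma(t^{n-j} t_r t_s p^{\alpha_r+\alpha_s};p,q)$ factors: when $\alpha_r + \alpha_s = 0$ the gamma factor tends to $1/(t^{n-j}t_rt_s;q)$; when $\alpha_r+\alpha_s = 1$ the argument is $t^{n-j}t_rt_s p$ and $\Gamma(xp;p,q) = \frac{1}{(qx;q)}\cdot\frac{1}{(p;p)}\cdot(1 + o(1))$ asymptotically (since $\Gamma(xp;p,q)$ has a factor that diverges like $1/(p;p)$), so each such pair contributes a factor $(p;p)$ in the denominator of the gamma, i.e. cancels one power of $(p;p)$ from the numerator $(p;p)^n$, and contributes $1/(qt^{j-n}t_r^{-1}t_s^{-1};q)$ — wait, one must track this as $(qx;q)$ with $x = t^{n-j}t_rt_s$, giving $(q t^{n-j}t_rt_s;q)$ in the denominator; matching against the stated answer $(q t^{j-n}t_r^{-1}t_s^{-1};q)$ requires the balancing relation and the reflection, so this is where I would be most careful. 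When $0 < \alpha_r+\alpha_s < 1$ the gamma tends to $1$. Counting: the number of surviving $(p;p)$ powers in the numerator must be zero for a finite nonzero limit, which translates into a combinatorial identity on the $\alpha$'s that follows from $\sum\alpha_r = 1$ — this is the reason $val$ of the whole bilinear form is exactly $val(f)+val(g)$ and the $p^{-val(f)-val(g)}$ normalization gives a finite answer. I would verify this counting explicitly. I expect the prefactor/$(p;p)$-power bookkeeping (together with matching the reflected arguments via the balancing condition) to be the main obstacle; the integrand limit is routine once the contour is fixed, and the contour being fixable is immediate from $\alpha_r \geq 0$.

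Finally, I would invoke dominated convergence: on the fixed contour $C^n$, the absolute value of the rescaled integrand is bounded uniformly in $p$ (for $|p|$ small) by an integrable function — this uses that $\hat f, \hat g$ lie in $\tilde A^{(n)}$, so by the second bullet in the definition of $\tilde A^{(n)}$ the relevant products are polynomially bounded, and the $\Gamma$-quotients are uniformly bounded on $C$ away from $p = 0$ since no poles cross the contour. Hence $\lim_{p\to 0} p^{-val(f)-val(g)}\langle f,g\rangle_{t_rp^{\alpha_r}}$ equals the integral of the limiting integrand times the limit of the normalized prefactor, which is the claimed formula. The statement about the contour $C$ in the limit — containing $q^j\tilde t_r$ for $\alpha_r = 0$, excluding reciprocals, containing $q^j tC$ — is simply the $p\to 0$ shadow of the original contour conditions, retaining only the parameters with $\alpha_r = 0$ (the others having drifted to $0$, inside any reasonable contour).
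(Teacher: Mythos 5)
Your proposal follows essentially the same route as the paper's proof: since all $\alpha_r\geq 0$, the poles to be enclosed in the contour either stay fixed or drift to $0$ and those to be excluded stay fixed or drift to $\infty$, so a single $p$-independent contour works for all small $p$, and uniform convergence of the integrand on that contour justifies exchanging limit and integral. Two small bookkeeping slips in your narration do not affect the argument: the factors $(qt_r^{-1}z_j^{\pm 1};q)$ for $\alpha_r=1$ arise directly from the integrand via $\Gamma(pt_rz_j^{\pm 1};p,q)=\theta(t_rz_j^{\pm 1};q)\Gamma(t_rz_j^{\pm 1};p,q)\to (qt_r^{-1}z_j^{\pm 1};q)$ rather than from the prefactor, and $(p;p)^n\to 1$, so there is no divergence to cancel --- each $\Gamma(pt^{n-j}t_rt_s;p,q)$ simply tends to $(qt^{j-n}t_r^{-1}t_s^{-1};q)$ by the same identity.
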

\begin{proof}
Notice that all the poles of the integrand (of $\langle f,g\rangle_{t_r p^{\alpha_r}}$) which have to be included in the contour are
either $p$-independent or go to 0 as $p\to 0$; while all the poles we have to exclude from the contour are either $p$-independent or go to $\infty$ as $p\to 0$. 
In particular it is possible to find a constant (i.e. independent of $p$) contour (at least for small enough $p$), which works. Then we notice that
the integrand is holomorphic in some neighborhood of the contour 
%(thus bounded on the given finite contour) 
and converges uniformly to the integrand of the integral on the right hand side of the equation. Therefore we may interchange limit and integral and obtain the desired result.
\end{proof}

The most general case of this kind is when $\alpha=(0,0,0,0,0,1)$ (or a permutation hereof). In this case the limiting measure is given by (using the balancing condition to solve for $t_5$)
\begin{align*}
%\lim_{p\to 0} p^{-\val(f)-\val(g)} & \langle f,g\rangle_{t_r p^{\alpha_r}}  = 
& \frac{(q;q)^n \prod_{j=1}^n (t^j;q) 
\prod_{0\leq r<s\leq 4} (t^{n-j} t_rt_s;q)}
{2^n n! (t;q)^n\prod_{j=1}^n  \prod_{r=0}^4 (t^{n-2+j} \frac{\prod_{s=0}^4 t_s}{t_r} );q)} \\ &\qquad  \times 
\int_{C^n} \lc(f)(z_i) \lc(g)(z_i) 
\prod_{1\leq j<k\leq n} \frac{(z_j^{\pm 1} z_k^{\pm 1};q)}{(tz_j^{\pm 1}z_k^{\pm 1};q)}
\prod_{j=1}^n \frac{(z_j^{\pm 2};q) (t^{2(n-1)} \prod_{r=0}^4 t_r z_j ^{\pm 1};q)}{\prod_{r=0}^4 (t_rz_j^{\pm 1};q)} \frac{dz_j}{2\pi i z_j}.
\end{align*}
The measure thus corresponds to a multivariate version of the Nasrallah-Rahman integral evaluation \cite[(6.4.1)]{GR} which was first shown by Gustafson \cite[Theorem 2.1]{Gustafson}. Further limits can be obtained by setting the appropriate number of $t_r$ ($0\leq r\leq 4$) equal to 0, which is the practical application of the iterated limit theorem, Proposition \ref{propiteratedlim}.

To get other limits we first observe the identity \cite[Lemma 6.2]{Rainstrafo}, valid for $t^{n-1}v_0v_1v_2v_3=q$:
%\begin{multline*}
\[
\sum_{\sigma \in \{\pm 1\}^n}
\prod_{1\leq i<j\leq n} \frac{\theta(tz_i^{\sigma_i} z_j^{\sigma_j};q)}{\theta(z_i^{\sigma_i}z_j^{\sigma_j};q)}
\prod_{i=1}^n \frac{\theta(v_0z_i^{\sigma_i},v_1z_i^{\sigma_i},v_2z_i^{\sigma_i},v_3z_i^{\sigma_i};q)}{\theta(z_i^{2\sigma_i};q)}
%\\ =
%\prod_{i=0}^{n-1} \theta(t^i v_0v_1,t^i v_0v_2,t^i v_0v_3;q) 
= \prod_{i=0}^{n-1} \theta(t^i v_0v_1,t^i v_0v_2,t^i v_1v_2;q).
%\end{multline*}
\]
In particular this allows us to break $z_i\to 1/z_i$ symmetry in the definition of the bilinear form (note that not only the measure of the bilinear
form is $z_i \to 1/z_i$ symmetric, but by assumption the functions $f$ and $g$ which we are allowed to plug in, are as well). 
This gives the identity
\begin{multline}\label{eqlim000001}
\langle f,g\rangle_{t_0,t_1,t_2,t_3,t_4,t_5:q,t;p} = 
\frac{(q;q)^n (p;p)^n \Gamma(t;p,q)^n}{n! \prod_{j=1}^n\Gamma(t^j;p,q) \prod_{0\leq r<s\leq 5} \Gamma(t^{n-j} t_rt_s;p,q)
\theta(t^{n-j} v_0v_1,t^{n-j} v_0v_2,t^{n-j} v_1v_2;q)} \\ \times 
\int_{C^n} f(z) g(z) \prod_{1\leq j<k\leq n} \frac{\Gamma(tz_j^{\pm 1} z_k^{\pm 1};p,q) }{\Gamma(z_j^{\pm 1}z_k^{\pm 1};p,q)}
\frac{\theta(tz_iz_j;q)}{\theta(z_iz_j;q)} 
\prod_{j=1}^n \frac{\prod_{r=0}^5 \Gamma(t_rz_j^{\pm 1};p,q)}{\Gamma(z_j^{\pm 2};p,q)} 
\frac{\theta(v_0z_i,v_1z_i,v_2z_i,v_3z_i;q)}{\theta(z_i^2;q)}
\frac{dz_j}{2\pi i z_j}.
\end{multline}
Here the contours have not changed. Note that we multiply by a function which has (simple) poles only where $\theta(z_jz_k;q)=0$ or
$\theta(z_j^2;q)=0$; which are locations where the original integrand vanished, so the poles of the new integrand are a subset of the poles of the old integrand.
We can simplify this somewhat by using the difference equation for the elliptic gamma function. Different choices of 
$v_r$ moreover allow for different simplifications, thus we prefer to specialize before carrying out the simplifications.

There are several specializations (for the $v_i$) of interest, which can be put in two groups. The first group consists of 
specializing $v_0=\tilde t_r$ and $v_1=\tilde t_s$ for some given $r$ and $s$, while leaving $v_2$ and $v_3$ be free variables 
(though satisfying the balancing condition $v_2v_3 = qt^{1-n} \tilde t_r^{-1}\tilde t_s^{-1}$). The second group consists of specializing 
$v_0=\tilde t_r$, $v_1=\tilde t_s$ and $v_2=\tilde t_w$ for some $r$, $s$ and $w$, which determines $v_3=t^{1-n}/\tilde t_r\tilde t_s\tilde t_w$. 

It should be noted that there is a qualitative difference for whether we specialize the $v$'s as $\tilde t_r$'s with $0\leq r\leq 3$, or
whether we specialize them as $\tilde t_4$ or $\tilde t_5$, which is not immediately clear from the formulas we will give below. 
The difference consists in the interaction with the poles of the function $f$ and $g$, and thus the kind of behavior we allow those
poles to have in the limit. 

The formula we get if we specialize $v_0=\tilde t_a$ and $v_1=\tilde t_b$ (with $a\neq b$) is 
\begin{multline}\label{eqspecialized2}
\langle f,g\rangle_{t_0,t_1,t_2,t_3,t_4,t_5:q,t;p} = 
\frac{(q;q)^n (p;p)^n \Gamma(t;p,q)^n
(-\frac{q}{t_at_b})^{n(m_a+m_b)} t^{-\binom{n}{2}(m_a+m_b)} q^{n\binom{m_a+m_b}{2}}
}{ n! \prod_{j=1}^n\Gamma(t^j;p,q) \Gamma(pt^{n-j}t_at_b) \prod_{\substack{0\leq r<s\leq 5 \\  \{r,s\} \neq \{a,b\}}} \Gamma(t^{n-j} t_rt_s;p,q) \theta(t^{n-j} \tilde t_av_2,t^{n-j} \tilde t_bv_2;q)} \\ \times 
\int_{C^n} \hat f(z) \hat g(z) \prod_{1\leq j<k\leq n} \frac{\Gamma(ptz_jz_k, t z_j/z_k, tz_k/z_j, t/z_jz_k;p,q) }{\Gamma(pz_j z_k,z_j/z_k,z_k/z_j,1/z_jz_k;p,q)}
\\ \times 
\prod_{j=1}^n \frac{\Gamma(p\tilde t_az_j, \tilde t_a/z_j, p\tilde t_bz_j,\tilde t_b/z_j) \prod_{\substack{0\leq r\leq 5 \\ r\neq a,b}} \Gamma(\tilde t_rz_j^{\pm 1};p,q)}{\Gamma(pz_j^2, 1/z_j^2;p,q)} \theta(v_2z_j, v_3z_j;q)
\frac{dz_j}{2\pi i z_j}.
\end{multline}
Specializing $v_0=\tilde t_a$, $v_1=\tilde t_b$ and $v_2=\tilde t_c$ (with $a$, $b$, and $c$ pairwise different) gives the equation
\begin{multline}\label{eqspecialized3}
\langle f,g\rangle_{t_0,t_1,t_2,t_3,t_4,t_5:q,t;p} = 
\frac{(q;q)^n (p;p)^n \Gamma(t;p,q)^n
(\frac{q^2}{t^{n-1}t_at_bt_c} )^{n(m_a+m_b+m_c)}
t_a^{-nm_a} t_b^{-nm_b} t_c^{-nm_c}
}{ n! \prod_{j=1}^n\Gamma(t^j;p,q) 
\Gamma(pt^{n-j} t_at_b,pt^{n-j} t_at_c,pt^{n-j} t_bt_c) \prod_{\substack{0\leq r<s\leq 5 \\ \{r,s\} \not \subset \{a,b,c\} }} \Gamma(t^{n-j} t_rt_s;p,q)} \\ \times  q^{n(\binom{m_a+m_b}{2} + \binom{m_a+m_c}{2}+\binom{m_b+m_c}{2})}
\int_{C^n} \hat f(z) \hat g(z) 
\prod_{1\leq j<k\leq n} \frac{\Gamma(ptz_jz_k, t z_j/z_k, tz_k/z_j, t/z_jz_k;p,q) }{\Gamma(pz_j z_k,z_j/z_k,z_k/z_j,1/z_jz_k;p,q)}
\\ \times \prod_{j=1}^n \frac{\Gamma(p\tilde t_az_j,\tilde t_a/z_j,p\tilde t_bz_j,\tilde t_b/z_j,p\tilde t_cz_j,\tilde t_c/z_j) \prod_{\substack{0\leq r\leq 5  \\ r\neq a,b,c  }} \Gamma(\tilde t_rz_j^{\pm 1};p,q)}{\Gamma(pz_j^{2},1/z_j^2;p,q)} 
\theta(qt^{1-n}z_i/\tilde t_a\tilde t_b\tilde t_c;q)
\frac{dz_j}{2\pi i z_j}.
\end{multline}
Now we can use these two forms of the integrand to identify more cases in which we can write the integral with a fixed contour (as $p$ becomes small), which allows us to interchange limit and integral when taking the limit of the bilinear form.
\begin{proposition}\label{proplimip2}
Let $t_r\in \mathbb{C}$ be generic such that $t^{2(n-1)} \prod_r t_r=q$. 
Let $0\leq a,b\leq 5$ and let $\alpha \in \mathbb{R}^6$, and $-\frac12\leq \zeta<0$ satisfy $\sum_{r=0}^5 \alpha_r=1$, $\alpha_a=\alpha_b =\zeta$ and $-\zeta\leq \alpha_r\leq 1+\zeta$ for $r\neq a,b$. %Let $f\in A(t_4p^{\alpha_4})$ and $g\in A(t_5p^{\alpha_5})$
% Let $m_f$ and $m_g$ be defined as usual. 

Then we have the limit
\begin{align*}
\lim_{p\to 0} & p^{-\val(f)-\val(g)} \langle f(z;t_r p^{\alpha_r})  ,g(z;t_rp^{\alpha_r}) \rangle_{t_r p^{\alpha_r}} \\&   = 
\frac{(q;q)^n \prod_{j=1}^n (t^j;q) (t^{n-j} t_at_b;q)^{1_{\zeta=-1/2}} 
\prod_{r:\alpha_r=-\zeta} (t^{n-j} t_rt_a,t^{n-j}t_rt_b;q)}
{n! (t;q)^n \prod_{j=1}^n  \prod_{\substack{0\leq r<s\leq 5 \\ \alpha_r+\alpha_s=1}} (q t^{j-n} t_r^{-1}t_s^{-1};q) \theta(t^{n-j} t_av,t^{n-j} t_bv;q)} \\ & \qquad \times 
\int_{C^n} \lc(f)(z) \lc(g)(z) 
\prod_{1\leq j<k\leq n} \frac{(z_j/z_k,z_k/z_j;q)}{(tz_j/z_k,tz_k/z_j;q)}
\left( \frac{(z_jz_k,qz_jz_k/t;q)  }{(tz_jz_k,qz_jz_k;q)  }   \right)^{1_{\zeta=-1/2}}
\\ & \qquad \qquad \times \prod_{j=1}^n \frac{\prod_{r:\alpha_r=1+\zeta} (qz_j/t_r;q)}{(t_a/z_j,t_b/z_j;q) \prod_{r: \alpha_r=-\zeta} (t_rz_j;q)}
\left( \frac{(z_j^2;q)}{(t_az_j,t_bz_j, qz_j^2;q)}  \right)^{1_{\zeta=-1/2}} \theta(vz_j, \frac{qz_j}{t^{n-1}vt_at_b};q)
\frac{dz_j}{2\pi i z_j}
\end{align*}
for arbitrary $v\in \mathbb{C}^*$. Here $\lc(f)=\lc(f(zp^{\zeta};t_rp^{\alpha_r}))$ and likewise for $\lc(g)$ and their valuations.

Here the contour $C$ contains the points $q^j \tilde t_a$ and $q^j \tilde t_b$ (for $j\geq 0$), while excluding $q^{-j}/ \tilde t_r$ (for $j\geq 0$ and $r$ such that $\alpha_r=-\zeta$) and, if $\zeta=-1/2$, excluding $q^{-j}/\tilde t_a$ and $q^{-j}/\tilde t_b$ for $j\geq 0$. Moreover $C$ should contain the contours $tq^jC$ (for $j\geq 0$).
\end{proposition}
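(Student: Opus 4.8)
The plan is to repeat the argument of Proposition~\ref{propPI}, but starting from the symmetry-broken form \eqref{eqspecialized2} instead of \eqref{eqacip2}, and with the integration variables rescaled by $p^{\zeta}$. So I would take \eqref{eqspecialized2} with the two indices $a,b$ there equal to the two indices at which $\alpha_r=\zeta$, choose the free parameters there $p$-dependently as $v_2=v\,p^{-\zeta}$ and $v_3=qt^{1-n}/(\tilde t_a\tilde t_b v p^{\zeta})$ (so that, after replacing every $t_r$ by $t_rp^{\alpha_r}$ and substituting $z_j\mapsto p^{\zeta}z_j$ in the integral, the theta factors $\theta(v_2z_j;q)$ and $\theta(v_3z_j;q)$ become the $p$-independent $\theta(vz_j;q)$ and $\theta(qt^{1-n}z_j/(v\tilde t_a\tilde t_b);q)$), and then take $p\to0$.

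The heart of the proof is the pole analysis of the rescaled integrand, which shows that for $p$ small one may use a $p$-independent contour, namely the one in the statement. After the substitution, $\Gamma(\tilde t_a/z_j;p,q)$ is unchanged (as $\alpha_a=\zeta$) and carries the poles to be enclosed, at $z_j=\tilde t_ap^iq^j$ ($i,j\ge0$), with limits $\tilde t_aq^j$ (if $i=0$) or $0$; symmetrically for $\Gamma(\tilde t_b/z_j;p,q)$. The factor $\Gamma(p\tilde t_az_j;p,q)$ becomes $\Gamma(p^{1+2\zeta}\tilde t_az_j;p,q)$, whose poles, to be excluded, lie at $z_j=p^{-i-1-2\zeta}q^{-j}/\tilde t_a$ ($i,j\ge0$); since $-\tfrac12\le\zeta<0$ these all tend to $\infty$, except those with $i=0$ when $\zeta=-\tfrac12$, which sit at the fixed points $q^{-j}/\tilde t_a$; symmetrically for $\Gamma(p\tilde t_bz_j;p,q)$. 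For $r\ne a,b$, $\Gamma(\tilde t_rz_j^{\pm1};p,q)$ becomes $\Gamma(p^{\alpha_r+\zeta}\tilde t_rz_j;p,q)\,\Gamma(p^{\alpha_r-\zeta}\tilde t_r/z_j;p,q)$: the second factor has poles to be enclosed at $z_j=p^{i+\alpha_r-\zeta}q^j\tilde t_r$, all tending to $0$ since $\alpha_r-\zeta\ge-2\zeta>0$; the first has poles to be excluded at $z_j=p^{-i-\alpha_r-\zeta}q^{-j}/\tilde t_r$, all tending to $\infty$ since $\alpha_r+\zeta\ge0$, except those with $i=0$ when $\alpha_r=-\zeta$, which sit at $q^{-j}/\tilde t_r$. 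The remaining factors have poles only at $p$-independent multiples of $0$ or $\infty$, coming from $\Gamma(pz_j^2,1/z_j^2;p,q)$ and the cross-terms in $z_j,z_k$, or on the vanishing locus of a $q$-theta, and the expansion condition $q^jtC\subset C$ is $p$-independent; none of these obstruct a fixed contour. Hence such a contour exists, exactly the one described.

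Granted the fixed contour, what remains is asymptotic: compute the $p\to0$ limit of each factor using $\lim_{p\to0}\Gamma(x;p,q)=1/(x;q)$, $\lim_{p\to0}\Gamma(xp^{\gamma};p,q)=1$ for $0<\gamma<1$, the difference equation $\Gamma(px;p,q)=\theta(x;q)\Gamma(x;p,q)$, and the theta asymptotics of Section~\ref{secnot}. The gamma factors carrying an extra $p$ — $\Gamma(p^{1+2\zeta}\tilde t_az_j)$, $\Gamma(p^{1+2\zeta}\tilde t_bz_j)$, $\Gamma(p^{1+2\zeta}z_j^2)$ and $\Gamma(p^{1+2\zeta}z_jz_k)$ in numerators, $\Gamma(p^{-2\zeta}/z_j^2)$, $\Gamma(p^{-2\zeta}/z_jz_k)$, $\Gamma(p^{-2\zeta}t/z_jz_k)$ in denominators (the last three treated via the difference equation) — tend to $1$ when $\zeta>-\tfrac12$ and survive only at $\zeta=-\tfrac12$; this is the origin of every $1_{\zeta=-1/2}$ factor, in the integrand and, via $\Gamma(p^{1+2\zeta}t^{n-j}t_at_b)$, in the prefactor. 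The prefactor gammas $\Gamma(t^{n-j}t_rt_a),\Gamma(t^{n-j}t_rt_b)$ with $\alpha_r=-\zeta$ give $(t^{n-j}t_rt_a;q),(t^{n-j}t_rt_b;q)$; those with $\alpha_r+\alpha_s=1$ give $1/(qt^{j-n}t_r^{-1}t_s^{-1};q)$; $\Gamma(t;p,q)^n/\prod_j\Gamma(t^j;p,q)$ gives $\prod_j(t^j;q)/(t;q)^n$; the surviving $\theta(t^{n-j}\tilde t_av_2,\dots;q)$ become $\theta(t^{n-j}t_av,t^{n-j}t_bv;q)$. Finally I would rewrite $lc(\hat f)\,lc(\hat g)$ in terms of $lc(f)\,lc(g)$ via \eqref{eqlcfinlchatf}; the resulting monomial and $q$-Pochhammer corrections, the $m_a,m_b$-dependent constants in front of \eqref{eqspecialized2}, and the $\tilde t_r$-versus-$t_r$ discrepancies in the surviving factors recombine (trivially when $a,b\in\{0,1,2,3\}$) to produce the stated formula together with the normalization $p^{-val(f)-val(g)}$. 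Uniform convergence of the integrand on the fixed contour, using the growth bound in the definition of $\tilde A^{(n)}$, then justifies interchanging limit and integral.

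The step I expect to fight with is the pole bookkeeping of the second paragraph: one must check, for each family of gamma factors, that the constraints $-\tfrac12\le\zeta<0$, $\alpha_a=\alpha_b=\zeta$, $-\zeta\le\alpha_r\le1+\zeta$ are exactly sharp enough that no pole to be enclosed is pulled off to $\infty$ and no pole to be excluded is pulled inside — especially in the boundary cases $\zeta=-\tfrac12$ and $\alpha_r=-\zeta$, where poles coalesce onto fixed points lying on the contour. The $m_a,m_b$ accounting when $a$ or $b$ equals $4$ or $5$ is a lesser, purely clerical nuisance.
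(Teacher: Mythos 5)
Your proposal follows exactly the paper's route: start from \eqref{eqspecialized2} with $v_0=\tilde t_a$, $v_1=\tilde t_b$, specialize $v_2=vp^{-\zeta}$ and $v_3=qt^{1-n}\tilde t_a^{-1}\tilde t_b^{-1}v^{-1}p^{-\zeta}$, rescale $z_j\mapsto p^{\zeta}z_j$, verify that a $p$-independent contour then exists, interchange limit and integral, and convert $lc(\hat f)\,lc(\hat g)$ back to $lc(f)\,lc(g)$ via \eqref{eqlcfinlchatf}. This is precisely the argument the paper gives (in considerably less detail), so the proposal is correct and matches the paper's proof.
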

\begin{proof}
Same as that of the previous proposition, except now we start with \eqref{eqspecialized2} and replace $z_i \to p^{\zeta} z_i$ (and move the contour along). We also specialize $v_2 \to v p^{-\zeta}$, and hence $v_3\to q t^{1-n}\tilde t_a^{-1} \tilde t_b^{-1} v^{-1} p^{-\zeta}$. Only after this substitution the contour can be chosen independently of $p$ for sufficiently small $p$. The limiting integrand now contains $\lc(\hat f)$ and $\lc(\hat g)$, so we express those in terms of $\lc(f)$ and $\lc(g)$ using \eqref{eqlcfinlchatf} and simplify. (This simplification is rather tedious, and you may want to look at the case $a=4,5$ separately from the case $a\neq 4,5$, and likewise for $b$. In the case neither $a=4,5$ nor $b=4,5$ one can avoid the detour along the $\hat f$'s and $\hat g$'s and replace these functions with $f$ and $g$ before taking the limit.)
%
%
%In the case $\{a,b\}\cap \{4,5\} \neq \emptyset$ we have to be a little more careful, as we would also have $p$-dependent poles of $f$ and $g$, which would prevent us from finding a proper $p$-independent contour. The solution in that case is to start with \eqref{eqacip2}, use symmetrybreaking as before, but now use $u_0=\tilde t_a$ and $u_1=\tilde t_b$. After taking the limit we can plug in the expression for $\lc(\hat f)$ and $\lc(\hat g)$ in terms of $f$ and $g$, and, after some tedious calculations, observe that we still get the expression given in this proposition (the factors $\lc(\hat f)/\lc(f)$ and $\lc(\hat g)/\lc(g)$ are exactly right to, in the resulting integral, change all $\tilde t_r$'s back to regular $t_r$'s, and make it seem we originally started with $u_0=t_a$ and $u_1=t_b$).
\end{proof}
The most general case in this proposition is given for
$\alpha=(-\frac12,-\frac12,\frac12,\frac12,\frac12,\frac12)$ (or a
permutation thereof), in which case we obtain the measure
\begin{align}\label{eqlim--++++}
%\lim_{p\to 0} & p^{-\val(f)-\val(g)} \langle f(z;t_r p^{\alpha_r})  ,g(z;t_rp^{\alpha_r}) \rangle_{t_r p^{\alpha_r}} \\&   =
& \frac{(q;q)^n \prod_{j=1}^n (t^j;q) (t^{n-j} t_0t_1;q)
\prod_{r=2}^5 (t^{n-j} t_rt_0,t^{n-j}t_rt_1;q)}
{n! (t;q)^n \prod_{j=1}^n  \prod_{2\leq r<s\leq 5} (q t^{j-n} t_r^{-1}t_s^{-1};q) \theta(t^{n-j} t_0v,t^{n-j} t_1v;q)} \\ & \qquad \times 
\int_{C^n} \lc(f)(z) \lc(g)(z) 
\prod_{1\leq j<k\leq n} \frac{(z_j/z_k,z_k/z_j, z_jz_k, qz_jz_k/t;q)}{(tz_j/z_k,tz_k/z_j,tz_jz_k,qz_jz_k;q)}   \nonumber
\\ & \qquad \qquad \times \prod_{j=1}^n    \frac{(z_j^2;q) }{( qz_j^2,t_0z_j^{\pm 1},t_1z_j^{\pm 1};q) }
\prod_{r=2}^5 \frac{(qz_j/t_r;q) }{ (t_rz_j;q) }
 \theta(vz_j, \frac{qz_j}{t^{n-1}vt_0t_1};q)
\frac{dz_j}{2\pi i z_j}  \nonumber
\end{align}
To obtain the other cases using iterated limits is slightly non-trivial as it involves rescaling the integration variables $z_i$ along with the parameters. In particular the next most general case is $\alpha=(-\frac14,-\frac14,\frac14,\frac14,\frac14,\frac34)$ and is given by the measure (solving for $t_5$ using the balancing condition)
\begin{align}\label{eqlim--+++1+}
& \frac{(q;q)^n \prod_{j=1}^n (t^j;q)  
\prod_{r=2}^4 (t^{n-j} t_rt_0,t^{n-j}t_rt_1;q)}
{n! (t;q)^n \prod_{j=1}^n  \prod_{r=2}^4  (t^{n-2+j} \frac{\prod_{s=0}^4 t_s}{t_r} ;q) \theta(t^{n-j} t_0v,t^{n-j} t_1v;q)} %\\ & \qquad \times 
\int_{C^n} \lc(f)(z) \lc(g)(z) 
\\ & \qquad \qquad \times 
\prod_{1\leq j<k\leq n} \frac{(z_j/z_k,z_k/z_j;q)}{(tz_j/z_k,tz_k/z_j;q)}
%\\ & \qquad \qquad \times 
\prod_{j=1}^n \frac{(t^{2(n-1)} \prod_{r=0}^4 t_r z_j;q)}{(t_0/z_j,t_1/z_j;q) \prod_{r=2}^4 (t_rz_j;q)}
\theta(vz_j, \frac{qz_j}{t^{n-1}vt_0t_1};q)
\frac{dz_j}{2\pi i z_j}. \nonumber 
\end{align}
Further limits of this form can now be obtained by setting $t_r=0$ ($2\leq r\leq 4$). By the iterated limit theorem this expression should also be a limit of \eqref{eqlim000001}, and, while the limit involves shifting the integration variables $z_i$, it is not very hard to obtain that limit directly

We also have the following proposition using \eqref{eqspecialized3}.
\begin{proposition}\label{proplimip3}
Let $t_r\in \mathbb{C}$ be generic, satisfying $t^{2(n-1)} \prod_r t_r=q$. Let $0\leq a,b,c\leq 5$ and let $\alpha \in \mathbb{R}^6$, and $-\frac12\leq \zeta<0$ satisfy $\sum_{r=0}^5 \alpha_r=1$, $\alpha_a+\alpha_b+\alpha_c =\zeta$ and
$\zeta\leq \alpha_r\leq -\zeta$ for $r=a,b,c$ and
$-\zeta\leq \alpha_r\leq 1+\zeta$ for $r\neq a,b,c$.
%Let $f\in A(t_4p^{\alpha_4})$ and $g\in A(t_5p^{\alpha_5})$. 
Then we have

\begin{align*}
\lim_{p\to 0} & p^{-\val(f)-\val(g)}\langle f(z;t_r p^{\alpha_r}) ,g(z;t_rp^{\alpha_r}) \rangle_{t_r p^{\alpha_r}} \\ & = 
\frac{(q;q)^n \prod_{j=1}^n (t^j;q) \prod_{\substack{r,s\in \{a,b,c\}\\ \alpha_r+\alpha_s=-1}} (t^{n-j} t_rt_s;q) 
\prod_{\substack{r \in \{a,b,c\}, s\not \in \{a,b,c\} \\ \alpha_r+\alpha_s=0}} (t^{n-j} t_rt_s;q)
 }{n! (t;q)^n \prod_{j=1}^n 
\prod_{\substack{r,s\in \{a,b,c\}\\ \alpha_r+\alpha_s=0}} (qt^{j-n} t_r^{-1} t_s^{-1};q) 
\prod_{\substack{r,s\\ \alpha_r+\alpha_s=1}} (qt^{j-n} t_r^{-1}t_s^{-1};q)
}
\\& \qquad \times 
\int_{C^n} \lc(f)(z) \lc(g)(z) 
\prod_{1\leq j<k\leq n} \frac{(z_j/z_k,z_k/z_j;q)}{(tz_j/z_k,tz_k/z_j;q)}
\left( \frac{(z_jz_k,qz_jz_k/t;q)  }{(tz_jz_k,qz_jz_k;q)  }   \right)^{1_{\zeta=-1/2}}
\\& \qquad \qquad  \times \prod_{j=1}^n
\frac{\prod_{\substack{r\in \{a,b,c\} \\ \alpha_r+ \zeta=0}} (q/t_rz_j;q) \prod_{\substack{r\not\in \{a,b,c\} \\ \alpha_r=1+\zeta}} (qz_j/t_r;q)}
{\prod_{\substack{r\in \{a,b,c\} \\  \alpha_r=\zeta}} (t_r/z_j;q) \prod_{\substack{r\not \in \{a,b,c\} \\ \alpha_r=-\zeta}} (t_rz_j;q)}
\left( \frac{ (z_j^2;q) \prod_{\substack{r\in \{a,b,c\} \\ \alpha_r=1/2}} (qz_j/t_r;q)}{(qz_j^2;q)\prod_{\substack{r\in \{a,b,c\} \\ \alpha_r=-1/2}} (t_rz_j;q)}  \right)^{1_{\zeta=-1/2}}
\\ & \qquad \qquad \times \theta(qt^{1-n}z_j/t_at_bt_c;q)
\frac{dz_j}{2\pi i z_j},
\end{align*}
where $\lc(f)=\lc(f(p^{\zeta}z))$ and similarly for $g$. We also have the usual conditions on the integration contour.
\end{proposition}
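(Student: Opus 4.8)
The plan is to follow the proof of Proposition~\ref{proplimip2} essentially verbatim, replacing the two-parameter specialization \eqref{eqspecialized2} by the three-parameter specialization \eqref{eqspecialized3}. Concretely, in \eqref{eqspecialized3} set $v_0=\tilde t_a$, $v_1=\tilde t_b$, $v_2=\tilde t_c$, which forces $v_3=qt^{1-n}/(\tilde t_a\tilde t_b\tilde t_c)$; in contrast to the two-parameter case there is now no free $v$-parameter left, which is why no arbitrary $v$ appears in the conclusion. Then substitute $z_i\mapsto p^{\zeta}z_i$ in the integral and carry the contour $C$ along with it. The substance of the statement is that, under the stated hypotheses on $\alpha$ and $\zeta$, after this substitution the contour can be chosen independently of $p$ for all sufficiently small $p$, the rescaled integrand is holomorphic in a fixed neighbourhood of it and converges there uniformly to the claimed limiting integrand, so that limit and integral may be interchanged.

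The key steps, in order. \emph{(i) Pole tracking.} Under $z_i\mapsto p^{\zeta}z_i$ the poles of the integrand of \eqref{eqspecialized3} that must be enclosed move to $z_i=\tilde t_r\,p^{k-\zeta}q^{\ell}$ ($k,\ell\geq0$, over the appropriate set of $r$) together with the cross-pole locus coming from $\Gamma(z_j/z_k,z_k/z_j,1/z_jz_k;p,q)$, and those that must be excluded to the corresponding reciprocals. Using $-\tfrac12\le\zeta<0$ together with $\zeta\le\alpha_r\le-\zeta$ for $r\in\{a,b,c\}$ and $-\zeta\le\alpha_r\le1+\zeta$ for $r\notin\{a,b,c\}$, one checks that every enclosed pole is either $p$-independent or tends to $0$, and every excluded pole is either $p$-independent or tends to $\infty$; hence a fixed contour works for small $p$. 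This is where the inequalities in the hypothesis are used, and the boundary case $\zeta=-\tfrac12$ is the one in which several of these ``$\to0$''/``$\to\infty$'' statements degenerate to $p$-independent poles, which is why the $1_{\zeta=-1/2}$ correction factors appear. \emph{(ii) Interchange of limit and integral.} Since $f\in\tilde A^{(n)}(t_4)$ and $g\in\tilde A^{(n)}(t_5)$, the growth bound in the definition of $\tilde A^{(n)}$ applies to $\hat f,\hat g$, and all remaining factors are finite products of elliptic gamma and theta functions with monomial arguments; so the rescaled integrand converges uniformly near the contour and limit and integral can be swapped. \emph{(iii) Leading coefficient of the prefactor.} For a single elliptic gamma factor $\Gamma(p^{\gamma}x;p,q)$ the limit is $1$ if $\gamma>0$, $1/(x;q)$ if $\gamma=0$, and a $q$-Pochhammer symbol times an explicit monomial if $\gamma<0$; applying this to the factors $\Gamma(t^{n-j}t_rt_s;p,q)$, $\Gamma(pt^{n-j}t_\bullet t_\bullet;p,q)$, $\Gamma(\tilde t_rz_j^{\pm1};p,q)$, $\Gamma(p\tilde t_\bullet z_j,\tilde t_\bullet/z_j;p,q)$ and the cross-terms $\Gamma(ptz_jz_k,\ldots;p,q)/\Gamma(pz_jz_k,\ldots;p,q)$ (each after the $p^{\zeta}$-rescaling) shows that only the pairs with $\alpha_r+\alpha_s\in\{-1,0,1\}$ leave surviving $q$-Pochhammer symbols, which is precisely the pattern in the statement; the $\theta(\cdot;q)$ factors, together with the monomial prefactors involving $m_a,m_b,m_c$, pass to the limit unchanged. \emph{(iv) Converting $\hat f,\hat g$ back to $f,g$.} The limiting integrand contains $lc(\hat f)$ and $lc(\hat g)$; rewrite these via \eqref{eqlcfinlchatf} in terms of $lc(f)=lc(f(p^{\zeta}z;t_rp^{\alpha_r}))$ and $lc(g)$, absorbing the $(\tilde t_4z_i;q)_{m_f}$-type Pochhammers and monomial powers. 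As in Proposition~\ref{proplimip2}, it is cleanest to split according to whether each of $a,b,c$ equals $4$ or $5$; if none of them does, one may simply replace $\hat f,\hat g$ by $f,g$ from the outset and skip this detour.

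The main obstacle will be the bookkeeping of steps (iii) and (iv): matching the accumulated monomial prefactors --- powers of $q$, $t$ and $t_a,t_b,t_c$ produced by the $m_\bullet$-shifts in \eqref{eqspecialized3}, by the theta-shift formulas underlying $lc(\theta(xp^{\alpha};p))$ (cf.\ \eqref{eqc0p}--\eqref{eqcpp}), and by \eqref{eqlcfinlchatf} --- against the compact closed forms in the conclusion, and correctly sorting the resulting $q$-Pochhammer and theta factors into the regimes $\alpha_r+\alpha_s\in\{-1,0,1\}$ versus ``otherwise'', with the further case split $r,s\in\{a,b,c\}$ / $r\in\{a,b,c\}\not\ni s$ / $r,s\notin\{a,b,c\}$. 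The one genuinely delicate analytic point is step (i) at $\zeta=-\tfrac12$, where the fixed contour must simultaneously enclose $q^{j}\tilde t_a,q^{j}\tilde t_b,q^{j}\tilde t_c$, exclude their reciprocals, and exclude $q^{-j}/\tilde t_r$ for those $r\notin\{a,b,c\}$ with $\alpha_r=-\zeta$, uniformly in small $p$; verifying this is what forces the $1_{\zeta=-1/2}$ correction terms $(z_jz_k,qz_jz_k/t;q)/(tz_jz_k,qz_jz_k;q)$ and $(z_j^2;q)/(qz_j^2;q)$ together with the associated $(qz_j/t_r;q)$ and $(t_rz_j;q)$ factors in the limiting integrand. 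The remaining simplifications are routine and are carried out exactly as in the proof of Proposition~\ref{proplimip2}.
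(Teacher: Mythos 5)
Your proposal is correct and follows exactly the route the paper takes: the paper's own proof of this proposition consists of the single remark that it is identical to the proof of Proposition \ref{proplimip2}, starting instead from the three-parameter specialization \eqref{eqspecialized3}. Your more detailed elaboration of the pole tracking, the interchange of limit and integral, and the bookkeeping via \eqref{eqlcfinlchatf} is a faithful expansion of that same argument.
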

\begin{proof}
The proof is identical to the proof of the previous proposition, but now we start with \eqref{eqspecialized3}.
\end{proof}
The most general form of this proposition is for $\alpha=(-\frac12,-\frac12,\frac12,\frac12,\frac12,\frac12)$, which gives the limit \eqref{eqlim--++++}, 
specialized at $v=t_2$. And we can get the further limits of \eqref{eqlim--+++1+} as well, except that we cannot set $t_2=0$ after this specialization.
The most general limit for which we have to use this proposition is $\alpha=(-\frac12,0,0,\frac12,\frac12,\frac12)$, in which case the bilinear form becomes
\begin{align*}
&\frac{(q;q)^n \prod_{j=1}^n (t^j;q) \prod_{r=3}^5 (t^{n-j} t_0t_r;q) }
{n! (t;q)^n \prod_{j=1}^n  (t^{n-2+j} t_0t_3t_4t_5;q) \prod_{3\leq r<s\leq 5 } (qt^{j-n} t_r^{-1}t_s^{-1};q) }
\int_{C^n} \lc(f)(z) \lc(g)(z) 
\\& \qquad \times 
\prod_{1\leq j<k\leq n} 
\frac{(z_j/z_k,z_k/z_j, z_jz_k, qz_jz_k/t;q)}{(tz_j/z_k,tz_k/z_j,tz_jz_k,qz_jz_k;q)}
%\\& \qquad \qquad  \times 
\prod_{j=1}^n \frac{(z_j^2;q) \prod_{r=3}^5  (qz_j/t_r;q)}
{(qz_j^2,t_0 z_j^{\pm 1} ;q) \prod_{r=3}^5 (t_rz_j;q)}  \theta(t^{n-1}t_3t_4t_5z_j;q)
\frac{dz_j}{2\pi i z_j}.
\end{align*}
One gets another limiting measure by setting $t_0=0$ in the above measure. The remaining cases can be obtained from the limit for $\alpha=(-\frac13,0,0,\frac13,\frac13,\frac23)$, 
by setting some of $t_0$, $t_3$, and $t_4$ to 0, while keeping $t_3t_4t_5$ constant (in particular once one sets either $t_3$ or $t_4$ equal to 0, one must set $t_5=\infty$). The limit for this $\alpha$ is given by 
\begin{align*}
& \frac{(q;q)^n \prod_{j=1}^n (t^j;q) (t^{n-j} t_0t_3,t^{n-j}t_0t_4;q) }{n! (t;q)^n \prod_{j=1}^n (t^{n-2+j} t_0t_3t_4t_5,qt^{j-n} \frac{t_3}{t_3t_4t_5},qt^{j-n} \frac{t_4}{t_3t_4t_5};q) }
\\& \qquad \times 
\int_{C^n} \lc(f)(z) \lc(g)(z) 
\prod_{1\leq j<k\leq n} \frac{(z_j/z_k,z_k/z_j;q)}{(tz_j/z_k,tz_k/z_j;q)}
%\\& \qquad \qquad  \times 
\prod_{j=1}^n \frac{ (qz_j/t_5;q)}{ (t_0/z_j,t_3z_j,t_4z_j;q)} \theta(t^{n-1}t_3t_4t_5z_j ;q) \frac{dz_j}{2\pi i z_j}.
\end{align*}

The final case we have to consider is a lot more complicated. In particular we are unable to find an integral expression for
the limiting bilinear form. Indeed, in order to take the limit, we must pick up residues corresponding to poles depending on one
particular variable $t_r$, and take the limit of the residues. The measure for the bilinear form thus turns into a multivariate sum.
In the case the variable we have to take residues of is associated to the pole sequences of one of the two functions, we moreover have to 
take residues of that function in the process.

\begin{proposition}\label{propsumlim}
Let $t_r\in \mathbb{C}$ be generic such that $t^{2(n-1)}t_0t_1t_2t_3t_4t_5=q$.
Let $\sum_{r=0}^5 \alpha_r=1$. Let $0\leq a\leq 5$ be such that $-\frac12\leq \alpha_a<0$ and $1+\alpha_a\geq \alpha_r > \alpha_a$ for $r\neq a$ and such that 
$1\geq \alpha_r+\alpha_s> 0$ for $r,s\neq a$. 
Moreover assume $2\alpha_a = \sum_{r\neq a: \alpha_r+\alpha_a <0} (\alpha_r+\alpha_a)$.
%Let $f\in A(t_4 p^{\alpha_4})$
% be such that $f(zp^{\alpha_0};t_rp^{\alpha_r}) = \tilde f(z) + \mathcal{O}(p^{\tau})$ for some $\tau>0$,
%and likewise $g\in A(t_5 p^{\alpha_5})$. 

%Write $\tilde t_r= t_r$ if $r\neq a$ or $r=0,1,2,3$ and $\tilde t_4 = q^{-m_f} t_4$ if $a=4$ and $\tilde t_5 = q^{-m_g} t_5$ if $a=5$. Moreover set $\hat f=f$ unless $a=4$, in which case
%\[
%\hat f (z_i) = \prod_{i=1}^n \theta(pq z_i^{\pm 1}/t_4;p,q)_{m_f} f(z_i)
%\]
%and likewise $\hat g=g$ unless $a=5$, in which case
%\[
%\hat g (z_i) = \prod_{i=1}^n \theta(pq z_i^{\pm 1}/t_5;p,q)_{m_g} g(z_i).
%\]
We have if $a\leq 3$ 
\begin{align*}
\lim_{p\to 0} & p^{-\val(f)-\val(g)} \langle f(\cdot;t_rp^{\alpha_r}),g(\cdot;t_rp^{\alpha_r}) \rangle_{t_r p^{\alpha_r}}  =
\prod_{j=0}^{n-1}  \frac{\prod_{r:\alpha_r=1+\alpha_a} (qt^{j} t_a/ t_r;q) } {(q t^{n-1+j} t_a^2;q)^{1_{\{\alpha_a=-1/2\}}} 
\prod_{0\leq r<s\leq 5: \alpha_r+\alpha_s=1}(q / t^{j} t_r t_s;q)} 
\\ & \times 
\sum_{\lambda} \lc(f)(q^{\lambda_i} t^{n-i}  t_a) \lc(g)(q^{\lambda_i} t^{n-i} t_a)
\tilde \Delta_\lambda(t^{2(n-1)} t_a^2)^{1_{\{\alpha_a=-1/2\}}}
\\ &  \times 
\left( \frac{\tilde C_{\lambda}^0(t^n)}{\tilde C_{\lambda}^-(q,t)} 
(-t^{5(n-1)}  t_a^4 q^2)^{-|\lambda|} q^{-3n(\lambda')} t^{5n(\lambda)} \right)^{1_{\{\alpha_a\neq -1/2\}}}
\\ &  \times 
 \frac{ \prod_{r\neq a: \alpha_r=-\alpha_a} \tilde C_{\lambda}^0 (t^{n-1} t_a t_r)}{
 \prod_{r\neq a: \alpha_r=1+\alpha_a} \tilde C_{\lambda}^0(q t^{n-1} t_a/t_r)}
% \\ & \qquad \times 
\left((-1)^{A+1} t^{(n-1)(A+3)} t_a^{A+2} q^{2} \prod_{r\neq a: \alpha_r+\alpha_a<0}  t_r  \right)^{|\lambda|} q^{(A+1) n(\lambda')} t^{-(A+1) n(\lambda)}
\end{align*}
where $A=|\{ r~|~ r\neq a,  \alpha_r<-\alpha_a\}|$, 
and $\lc(\hat f)=\lc(\hat f(zp^{\alpha_a}))$, and likewise for $\lc(\hat g)$ and the valuations. The summation is over all partitions $\lambda$.

For general $a$ we have
\begin{align*}
\lim_{p\to 0} & p^{-\val(f)-\val(g)} \langle f(\cdot;t_rp^{\alpha_r}),g(\cdot;t_rp^{\alpha_r}) \rangle_{t_r p^{\alpha_r}}  =
\prod_{j=0}^{n-1}  \frac{\prod_{r:\alpha_r=1+\alpha_a} (qt^{j} \tilde t_a/\tilde t_r;q) } {(q t^{n-1+j} \tilde t_a^2;q)^{1_{\{\alpha_a=-1/2\}}} 
\prod_{0\leq r<s\leq 5: \alpha_r+\alpha_s=1}(q / t^{j} t_r t_s;q)} \\ & \times 
\prod_{j=0}^{n-1} \prod_{r:\alpha_r=-\alpha_a} \frac{(t^j t_rt_a;q)}{(t^j \tilde t_r \tilde t_a;q)}
\prod_{r:\alpha_r<-\alpha_a} \left( - \frac{q}{t^j t_a t_r} \right)^{m_r+m_a} q^{\binom{m_r+m_a}{2}}
\\ & \times 
\sum_{\lambda} \lc(\hat f)(q^{\lambda_i} t^{n-i} \tilde t_a) \lc(\hat g)(q^{\lambda_i} t^{n-i} \tilde t_a)
\\ &  \times 
\tilde \Delta_\lambda(t^{2(n-1)} \tilde t_a^2)^{1_{\{\alpha_a=-1/2\}}}
\left( \frac{\tilde C_{\lambda}^0(t^n)}{\tilde C_{\lambda}^-(q,t)} 
(-t^{5(n-1)} \tilde t_a^4 q^2)^{-|\lambda|} q^{-3n(\lambda')} t^{5n(\lambda)} \right)^{1_{\{\alpha_a\neq -1/2\}}}
\\ &  \times 
 \frac{ \prod_{r\neq a: \alpha_r=-\alpha_a} \tilde C_{\lambda}^0 (t^{n-1}\tilde t_a \tilde t_r)}{
 \prod_{r\neq a: \alpha_r=1+\alpha_a} \tilde C_{\lambda}^0(qt^{n-1}\tilde t_a/\tilde t_r)}
% \\ & \qquad \times 
\left((-1)^{A+1} t^{(n-1)(A+3)} \tilde t_a^{A+2} q^{2} \prod_{r\neq a: \alpha_r+\alpha_a<0} \tilde t_r  \right)^{|\lambda|} q^{(A+1) n(\lambda')} t^{-(A+1) n(\lambda)},
\end{align*}
with the same $A$ as before.
\end{proposition}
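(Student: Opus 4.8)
The obstruction that makes this case harder than Propositions \ref{propPI}--\ref{proplimip3} is that the integration contour cannot be chosen independently of $p$. After rescaling $z_i\to p^{\alpha_a}z_i$ so that the integrand's scale matches $\tilde t_a$, the contour $C$ must contain the pole sequences $z_i\in q^{\mathbb{Z}_{\geq 0}}\tilde t_a t^{n-i}$ of the factors $\Gamma(\tilde t_a z_i^{\pm 1};p,q)$ (which, when $a\in\{4,5\}$, are partly supplied by the pole sequences of $\hat f$ and $\hat g$ from Section \ref{secac}), while it must exclude the sequences $z_i\in q^{-\mathbb{Z}_{\geq 0}}p^{-\alpha_r-\alpha_a}\tilde t_r^{-1}$ for those $r\neq a$ with $\alpha_r+\alpha_a<0$; both families tend to $0$ as $p\to 0$, so $C$ is pinched. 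The plan is to resolve the pinch by deforming each $z_i$-contour inward through the $\tilde t_a$-pole sequence, picking up residues, thereby rewriting the bilinear form as a ($p$-dependent) sum of residue contributions plus a leftover integral over a fixed contour.

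For the residue calculus I would use the standard iterated computation for the $BC_n$ elliptic Selberg integrand (as in \cite{Rainstrafo}): picking up, in each of the $n$ variables, the residues along the principal pole sequence of $\tilde t_a$ produces contributions indexed by partitions $\lambda$ (necessarily $\ell(\lambda)\leq n$), the residue at $z_i=q^{\lambda_i}\tilde t_a t^{n-i}$ being the discrete weight $\Delta_\lambda(t^{2(n-1)}\tilde t_a^2~|~t^n,t^{n-1}\tilde t_a\tilde t_0,\ldots)$ up to elementary $\Gamma$- and $\theta$-factors; when $a\in\{4,5\}$ one additionally crosses the pole sequences of $\hat f$ or $\hat g$, which is exactly why the general formula is phrased with $\hat f,\hat g$ and the $\tilde t_r$ and collapses to $f,g,t_r$ when $a\leq 3$. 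Using the $z_i\to 1/z_i$ symmetry of the density and of $f,g$ to account for the reciprocal sequences, this yields an exact identity $\langle f,g\rangle_{t_rp^{\alpha_r}}=\sum_{\lambda} f(q^{\lambda_i}t^{n-i}\tilde t_a)\,g(q^{\lambda_i}t^{n-i}\tilde t_a)\,W_\lambda(p)+(\text{leftover integral over a }p\text{-independent contour})$, where the leftover, thanks to the conditions imposed on $\alpha$, does not contribute at leading order. The balancing condition $2\alpha_a=\sum_{r\neq a:\ \alpha_r+\alpha_a<0}(\alpha_r+\alpha_a)$ is precisely what forces $val(W_\lambda(p))$ to be independent of $\lambda$ (and equal to $val(f)+val(g)$); it is the continuous analogue of the requirement $a(\alpha)=0$ from Section \ref{secdisc}.

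It then remains to take the limit termwise. Each $W_\lambda(p)$ lies in $M(t_r,q,t)$, and its leading coefficient follows from the limit formulas \eqref{eqlimc} for $C_\lambda^\epsilon(p^\alpha x)$ and the formulas for $\Delta_\lambda(ap^\alpha~|~t^n)$ from Section \ref{secnot}, combined with \eqref{eqlcfinlchatf} to pass between $lc(f),lc(g)$ and $lc(\hat f),lc(\hat g)$. The split $\alpha_a=-\tfrac12$ versus $-\tfrac12<\alpha_a<0$ is inherited from the first argument $t^{2(n-1)}\tilde t_a^2 p^{2\alpha_a}$ of the relevant $\Delta_\lambda$: shifting $2\alpha_a$ into $[0,1)$, the fractional part is $0$ when $\alpha_a=-\tfrac12$ (so $lc$ is the full $\tilde\Delta_\lambda$) and lies in $(0,1)$ otherwise (so $lc$ is the monomial-times-$\tilde C$ expression from Section \ref{secnot}), which accounts for the factor $\tilde\Delta_\lambda(\cdots)^{1_{\{\alpha_a=-1/2\}}}$. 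Since all terms share the valuation $val(f)+val(g)$, Corollary \ref{corsumequalval}, applied iteratively as in the proof of Theorem \ref{thmdisclim}, permits moving the limit inside the sum; the sum converges for $|q|,|t|<1$ because of the $q^{cn(\lambda')}t^{c'n(\lambda)}(\cdots)^{|\lambda|}$ decay of the summands. Collecting leading coefficients and simplifying the prefactors via $t^{2(n-1)}\prod_r t_r=q$ gives the two displayed formulas.

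The principal obstacle is the rigorous justification of the contour deformation in the pinched regime and the claim that the leftover integral is of higher order in $p$: this requires uniform bounds on the $BC_n$ integrand near the escaping pole sequences, entirely parallel to the univariate estimates of \cite{vdBR}, and is the kind of tedious analytic verification naturally relegated to Appendix \ref{apb}. A secondary bookkeeping task is to track, when $a\in\{4,5\}$, how the residues of $\hat f$ and $\hat g$ along the $\tilde t_a$-sequence combine with the density residues so as to reproduce exactly the prefactors $(t^j t_r t_a;q)/(t^j\tilde t_r\tilde t_a;q)$ and $(-q/t^j t_a t_r)^{m_r+m_a}q^{\binom{m_r+m_a}{2}}$.
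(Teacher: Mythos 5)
Your proposal follows essentially the same route as the paper: iterated residue calculus on the $\hat f,\hat g$ form of the integrand along the $\tilde t_a$ pole sequences (via the Rains-style residue lemma), reduction of the full-residue terms to $\Delta_\lambda$ weights, appendix-level bounds showing the partial-residue-times-integral terms vanish, and termwise limits justified by the balancing condition. The one execution detail you elide is that the paper only deforms past finitely many poles, with cutoff $M=\epsilon\log_{|q|}(|p|)$ growing as $p\to 0$, so that the sum over all partitions emerges in the limit rather than from an infinite deformation at fixed $p$; this is needed to make the "leftover" terms (whose number also grows with $M$) tractable in the appendix estimates.
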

There exists a similar expression if the $\alpha_r$ are such that we have $\alpha_r+\alpha_s=0$ for some $r,s\neq a$. However in this case the limit is only valid under an extra condition on the functions $f$ and $g$ (to ensure convergence). In particular if we want to plug in our elliptic hypergeometric biorthogonal 
functions $(\langle \tilde R_{\mu},\tilde R_{\nu} \rangle$), this limit would (for some choices of $\alpha$) only hold for small partitions $\mu$ and $\nu$. Fortunately those cases are also treated in Proposition \ref{proplimip3}.

It should also be noted that if we have one parameter $\alpha_r=\alpha_a$ ($r\neq a$) we can get a discrete measure as limit, which is much more complicated in structure than a simple sum over partitions. This would be a multivariate analogue of the double sums appearing in \cite[Proposition 4.3]{vdBR}. An explicit formulation of a measure of this type is given in \cite[Section 7]{Stokman}, which deals with multivariate Big $q$-Jacobi polynomials (corresponding to the vector $\alpha=(-\frac16,-\frac16,\frac16,\frac16;\frac12,\frac12)$). Fortunately we also have an integral limit for these cases given in Proposition \ref{proplimip2}. Shrinking the contour of the integral in the latter proposition gives an expression of the measure as a sum of residues, which is equal to this multivariate analogue of a double sum measure. While the integral expression is simpler, it should be noted that, unlike the discrete measure for multivariate Big $q$-Jacobi polynomials it can not be made positive. 

The most general case of this proposition is $\alpha=(-\frac12,\frac16,\frac16,\frac16,\frac12,\frac12)$, in which case the measure becomes (since we assume $a=0$ we can use the first expression, otherwise the measure looks very similar but slightly more complicated). 
\begin{align*}
& \prod_{j=0}^{n-1}  \frac{(qt^{j} t_0/ t_4,qt^{j} t_0/ t_5;q) } 
{(q t^{n-1+j} t_0^2,q / t^{j} t_4 t_5;q)} 
\sum_{\lambda} \lc(f)(q^{\lambda_i} t^{n-i}  t_0) \lc(g)(q^{\lambda_i} t^{n-i} t_0)
\\ & \times 
 \frac{\tilde C_{2\lambda^2}^0(qt^{2(n-1)}t_0^2;q,t) \tilde C_{\lambda}^0(t^n,t^{n-1} t_0 t_4,t^{n-1} t_0 t_5) }{
 \tilde C_{\lambda}^-(q,t;q,t) \tilde C_{\lambda}^+(t^{2(n-1)}t_0^2,qt^{2n-3}t_0^2;q,t)  \tilde C_{\lambda}^0(qt^{2n-2}t_0^2,q t^{n-1} \frac{t_0}{t_4},q t^{n-1} \frac{t_0}{t_5})}
% \\ & \qquad \times 
\left(-\frac{q}{t^{n-1}t_4t_5} \right)^{|\lambda|} q^{n(\lambda')} t^{n(\lambda)}
\end{align*}
In this measure we can take further limits by letting $t_4$ and/or $t_5$ tend to infinity. The final measure we want to show is the most general measure
with $\alpha_a>-\frac12$, associated to $\alpha=(-\frac38,\frac18,\frac18,\frac18,\frac38,\frac58)$ and is given by 
\begin{align*}
& \prod_{j=0}^{n-1}  \frac{(qt^{j} t_0/ t_5;q) } {(q / t^{j} t_4 t_5;q)} 
%\\ & \times 
\sum_{\lambda} \lc(f)(q^{\lambda_i} t^{n-i}  t_0) \lc(g)(q^{\lambda_i} t^{n-i} t_0)
%\\ &  \times 
\frac{\tilde C_{\lambda}^0(t^n,t^{n-1} t_0 t_4)}{\tilde C_{\lambda}^-(q,t)\tilde C_{\lambda}^0(q t^{n-1} t_0/t_5)} 
\left(-\frac{q}{t^{n-1} t_4 t_5}   \right)^{|\lambda|} q^{n(\lambda')} t^{ n(\lambda)}.
\end{align*}
Further limits can be obtained by letting $t_4$ tend to zero or infinity, while keeping $t_4t_5$ constant.

\begin{proof}
Let us start with the expression \eqref{eqacip2} (in terms of $\hat f$ and $\hat g$) for the bilinear form 
without yet replacing the $t_r$'s by $t_r p^{\alpha_r}$, which simplifies the formulas to come. This expression of the bilinear form ensures we don't have to worry about poles of the function $f$ and $g$. We want to do residue calculus on this integral, so we need the following lemma, which is very similar to \cite[Lemma 10.5]{Rainstrafo}, and its proof is identical (see also the discussion leading up to \cite[Theorem 10.7]{Rainstrafo}). 
\begin{lemma}
Let $\Delta$ be a $BC_n$-symmetric meromorphic function on $(\mathbb{C}^*)^n \times P$, where $P$ is an irreducible normal subvariety of the domain $\{a_0,a_1,\ldots,a_{d-1},b_0,\ldots, b_{d-1},p,q,t\in \mathbb{C}^*~|~ |p|,|q|, |t|<1\}$. Suppose furthermore that the following conditions are satisfied
\begin{itemize}
\item The function 
\[
\prod_{i=1}^n \prod_{r=0}^{d-1} (a_r z_i, b_r/z_i;p,q) \prod_{1\leq i<j\leq n} (t z_i^{\pm 1} z_j^{\pm 1};p,q) 
\Delta(z;\textbf{p})
\]
is holomorphic
\item At a generic point of $P$, the factor $\prod_{i=1}^n \prod_{r=0}^{d-1} (a_r z_i, b_r/z_i;p,q) \prod_{1\leq i<j\leq n} (t z_i^{\pm 1} z_j^{\pm 1};p,q)$ has only simple zeros.
\item For any integers $i,j,k,l \geq 0$,
\[
\Delta(p^i q^j z,p^k q^l z,z_3,z_4,\ldots,z_n;\textbf{p}) = 
-\Delta(p^i q^l z,p^k q^j z,z_3,z_4,\ldots,z_n;\textbf{p}) 
\]
as an identity of meromorphic functions on $(\mathbb{C}^*)^{n-1} \times P$. 
\end{itemize}
For generic $\textbf{p} \in P$, choose a contour $C_{\textbf{p}}$ containing all points of the form $b_rp^i q^j$ ($i,j \in \mathbb{Z}_{\geq 0}$, $0\leq r<d$), and excluding all points of the form $(a_rp^i q^j)^{-1}$ ($i,j \in \mathbb{Z}_{\geq 0}$, $0\leq r<d$) and including the contour $tp^i q^j C_{\textbf{p}}$ ($i,j \in \mathbb{Z}_{\geq 0}$). Let $C_{\textbf{p}}'$ be a different contour satisfying the same conditions, except now excluding the points $b_0 p^i q^j$ ($0\leq i\leq l$, $0\leq j\leq m$). Then 
\begin{align*}
\int_{C_{\textbf{p}}^n} \Delta(z;\textbf{p}) \prod_{j=1}^n \frac{dz_j}{2\pi i z_j} &-
\int_{C_{\textbf{p}}'^n} \Delta(z;\textbf{p}) \prod_{j=1}^n \frac{dz_j}{2\pi i z_j} 
\\ &
= n \sum_{r=0}^l \sum_{s=0}^m \int_{C_{\textbf{p}}'^{n-1}}
\lim_{z_n \to p^r q^s b_0} (1- p^r q^s b_0/z_n) \Delta(z;\textbf{p}) \prod_{j=1}^{n-1} \frac{dz_j}{2\pi i z_j} .
\end{align*}
A similar equation holds for moving the contour through the points $(a_0p^i q^j)^{-1}$ ($0\leq i\leq l$, $0\leq j\leq m$).
\end{lemma}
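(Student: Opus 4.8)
The plan is to prove this by iterated one-variable residue calculus, letting the hyperoctahedral symmetry of $\Delta$ do the combinatorial bookkeeping; the argument is parallel to the proof of \cite[Lemma 10.5]{Rainstrafo}, the only difference being that several polar sequences $(a_r,b_r)$ are now permitted instead of a single one. First I would reduce the statement about the points $(a_0p^iq^j)^{-1}$ to the one about the points $b_0p^iq^j$ by the substitution $z_i\mapsto z_i^{-1}$, under which $\Delta$ and the three hypotheses are preserved while the roles of the $a_r$ and $b_r$ are interchanged; so it suffices to fix a generic $\mathbf{p}$ and treat the points $b_0p^iq^j$. Then I would extract the relevant geometry from the first two hypotheses: the poles of $\Delta$ are simple and occur only among the zeros of the displayed factor, so for generic $\mathbf{p}$ the only poles of $\Delta$ in a single variable $z_k$ lying between $C_{\mathbf{p}}'$ and $C_{\mathbf{p}}$ are the simple poles at $z_k=b_0p^rq^s$ with $0\le r\le l$ and $0\le s\le m$. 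In particular no diagonal pole gets crossed, since $C_{\mathbf{p}}$ and $C_{\mathbf{p}}'$ both satisfy the $t$-self-similarity condition and differ only in small neighbourhoods of those finitely many points.

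The core step is a homological expansion. Writing $[C_{\mathbf{p}}]=[C_{\mathbf{p}}']+D$, where $D$ is the formal sum of small positively oriented loops around the crossed poles, I would expand $([C_{\mathbf{p}}']+D)^{\otimes n}$ by distributivity: $[C_{\mathbf{p}}]^{\otimes n}-[C_{\mathbf{p}}']^{\otimes n}$ is the sum, over nonempty $S\subseteq\{1,\ldots,n\}$, of the cycle obtained by placing $D$ in the factors indexed by $S$ and $[C_{\mathbf{p}}']$ in the others, and I integrate $\Delta\prod_j\frac{dz_j}{2\pi i z_j}$ over each such cycle. For $S=\{k\}$ this integral reduces, by Cauchy's theorem in the caught variable $z_k$, to $\int_{(C_{\mathbf{p}}')^{n-1}}\sum_{r,s}\lim_{z_k\to b_0p^rq^s}(1-b_0p^rq^s/z_k)\,\Delta$; by permutation invariance of $\Delta$ all $n$ of these coincide, and together they give exactly the claimed right-hand side. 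Everything then comes down to showing that the cycles with $|S|\ge 2$ integrate to zero.

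This vanishing is where the third hypothesis is used, and I expect it to be the only genuinely delicate point. Such a cycle, after expanding the $D$-factors, is a sum over independent choices of points $z_k=b_0p^{r_k}q^{s_k}$ ($k\in S$) of an iterated residue, integrated over $C_{\mathbf{p}}'$ in the remaining variables; this iterated residue is well defined and independent of the order of the variables in $S$ because, for generic $\mathbf{p}$, the polar locus of $\Delta$ along the relevant coordinate subspace is a transversal crossing of simple-pole divisors. Here I would also record, from the third hypothesis with all four exponents equal to $0$, that $\Delta$ vanishes identically on every diagonal $z_a=z_b$, which kills the terms in which two of the chosen points coincide. For the rest, I would fix two indices $a\neq b$ in $S$, hold the other points fixed, and apply the third hypothesis with $(i,j,k,l)=(r_a,s_a,r_b,s_b)$: it makes the iterated residue odd under the swap $s_a\leftrightarrow s_b$ of the two $q$-exponents, while permutation invariance of $\Delta$ makes it even under the simultaneous swap of the two full points $b_0p^{r_a}q^{s_a}\leftrightarrow b_0p^{r_b}q^{s_b}$; these two involutions break the inner sum into orbits, generically of size four, each summing to zero. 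Hence only $|S|\le 1$ contributes, which is the asserted identity; the companion statement for the points $(a_0p^iq^j)^{-1}$ follows from it by the substitution $z_i\mapsto z_i^{-1}$.
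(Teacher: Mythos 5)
Your proposal is correct and follows essentially the same route as the paper, which in fact gives no proof of its own but declares the argument identical to that of \cite[Lemma 10.5]{Rainstrafo}: decompose $[C_{\textbf{p}}]^{\otimes n}-[C_{\textbf{p}}']^{\otimes n}$ into terms indexed by the set $S$ of caught variables, observe that the $n$ singleton terms each yield the stated residue sum by $S_n$-symmetry, and kill the $|S|\ge 2$ terms via the sign-reversing swap of $q$-exponents supplied by the third hypothesis (whose fixed points already force the iterated residue to vanish). Your reconstruction, including the reduction of the $(a_0p^iq^j)^{-1}$ case by $z_i\mapsto z_i^{-1}$, is a faithful account of that argument.
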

Fix some constant $M$. We can now shift the contour over the poles at $q^{k_1} \tilde t_a$, and simultaneously passing over the pole at $q^{-k_1}/\tilde t_a$ for $0\leq k_1\leq M$. The residue corresponding to $q^{k_1} \tilde t_a$ equals the residue corresponding to $q^{-k_1}/\tilde t_a$ (with a minus sign), so that in fact we get an extra factor 2 by moving over both sets of poles. Thus we obtain  
\begin{align*}
& \int_{C^n}  \hat f(z_i) \hat g(z_i) \prod_{1\leq j<k\leq n} \frac{\Gamma(tz_j^{\pm 1} z_k^{\pm 1})}{\Gamma(z_j^{\pm 1}z_k^{\pm 1})}
\prod_{j=1}^n \frac{\prod_{s=0}^5 \Gamma(\tilde t_sz_j^{\pm 1})}{\Gamma(z_j^{\pm 2})} \frac{dz_j}{2\pi i z_j}
\\ &
= 
\int_{C'^n}  \hat f(z_i) \hat g(z_i) \prod_{1\leq j<k\leq n} \frac{\Gamma(tz_j^{\pm 1} z_k^{\pm 1})}{\Gamma(z_j^{\pm 1}z_k^{\pm 1})}
\prod_{j=1}^n \frac{\prod_{s=0}^5 \Gamma(\tilde t_sz_j^{\pm 1})}{\Gamma(z_j^{\pm 2})} \frac{dz_j}{2\pi i z_j}
\\ & \quad + 
2n \sum_{k_1=0}^{M} \frac{\Gamma(q^{k_1} \tilde t_a^2) \prod_{s\neq a} \Gamma(\tilde t_sq^{k_1}\tilde t_a, q^{-k_1} \frac{\tilde t_s}{\tilde t_a})}{\Gamma(q^{2k_1} \tilde t_a^2, q^{-2k_1} \tilde t_a^{-2})} 
\frac{1}{(p;p)(q;q) \theta(q^{-k_1};p)_{k_1}}
 \\ & \quad \times 
\int_{C'^{n-1}}  \hat f(z_i,q^{k_1} \tilde t_a) \hat g(z_i, q^{k_1} \tilde t_a) 
\prod_{1\leq j<k\leq n-1} \frac{\Gamma(tz_j^{\pm 1} z_k^{\pm 1})}{\Gamma(z_j^{\pm 1}z_k^{\pm 1})}
\prod_{j=1}^{n-1} \frac{\Gamma(tq^{k_1} \tilde t_a z_j^{\pm 1}, \frac{t}{\tilde t_a} q^{-k_1}  z_j^{\pm 1})}{\Gamma(q^{k_1} \tilde t_a z_j^{\pm 1},\frac{1}{\tilde t_a} q^{-k_1} z_j^{\pm 1})}
 \frac{\prod_{s=0}^5 \Gamma(\tilde t_sz_j^{\pm 1})}{\Gamma(z_j^{\pm 2})} 
\frac{dz_j}{2\pi i z_j}
\end{align*}
Let us now observe that
\[
\frac{\Gamma(\tilde t_a z^{\pm 1})}{\Gamma(q^{k_1} \tilde t_a z^{\pm 1}, \frac{1}{\tilde t_a} q^{-k_1} z^{\pm 1})}
= 
\Gamma(\tilde t_a z^{\pm 1}) \theta(\tilde t_a q^{k_1} z^{\pm 1};q) \theta(p \tilde t_a q^{k_1} z^{\pm 1};p)=
\Gamma(p\tilde t_a z^{\pm 1}) \tilde t_a^{-2k_1} q^{-2\binom{k_1}{2}} \theta(p\tilde t_a q^{k_1} z^{\pm 1};p).
\]
Thus we can view each of the residues as an integral of the form of the lemma, and apply the lemma again. Now we pass over the poles at $t \tilde t_a q^{k_1+k_2}$ ($0\leq k_2\leq M-k_1$), and obtain residues which are integrals of dimension $n-2$. 
We can subsequently iterate this until the residues are 0-dimensional integrals (i.e. constants).
We end up with an $n$-fold sum of residues times $0$-dimensional integrals, plus an $(n-1)$-fold sum of univariate integrals, an $(n-2)$-fold sum of bivariate integrals etc. We will later show that every term with an integral vanishes, so we focus now on the $n$-fold sum. 

This sum, where we include the prefactor, is given by (for moving $z_i$ over poles at $q^{\lambda_{j}} t^{n-j} \hat t_a$ for $1\leq j\leq n$) 
\begin{align*}
& \frac{\Gamma(t)^n}{\prod_{j=1}^n \Gamma(t^j) \prod_{0\leq r<s\leq 5} \Gamma(t^{n-j}t_rt_s)}
\sum_{\lambda\subset M^n} \hat f(q^{\lambda_i} t^{n-i} \tilde t_a) \hat g(q^{\lambda_i}t^{n-i} \tilde t_a) \\ & \qquad \times 
\prod_{1\leq i<j\leq n} \frac{\Gamma(\tilde t_a^2 q^{\lambda_i+\lambda_j} t^{2n+1-i-j}, \frac{1}{\tilde t_a^2} q^{-\lambda_i-\lambda_j} t^{1+i+j-2n}, q^{\lambda_i-\lambda_j} t^{1+j-i})}
{\Gamma(\tilde t_a^2 q^{\lambda_i+\lambda_j} t^{2n-i-j}, \frac{1}{\tilde t_a^2} q^{-\lambda_i-\lambda_j} t^{i+j-2n}, q^{\lambda_i-\lambda_j} t^{j-i}, q^{\lambda_j-\lambda_i} t^{i-j})}
\\ & \qquad \times \prod_{i=1}^n \frac{ \Gamma(\tilde t_a^2 q^{\lambda_i} t^{n-i})  \prod_{r\neq a} \Gamma( \tilde t_r \tilde t_a q^{\lambda_i} t^{n-i},\frac{\tilde t_r}{\tilde t_a} q^{-\lambda_i} t^{i-n})}{\Gamma(\tilde t_a^2 q^{2\lambda_i} t^{2(n-i)}, \frac{1}{\tilde t_a^2} q^{-2\lambda_i} t^{-2(n-i)}) }
\prod_{i=1}^{n-1} \Gamma(q^{-\lambda_i} t^{-(n-i)})
\prod_{1\leq i<j-1 \leq n-1} \Gamma(q^{\lambda_j-\lambda_i}  t^{1+i-j})
\\ & \qquad \times 
\frac{1}{\prod_{j=1}^n \theta(q^{-\lambda_{j}+\lambda_{j+1}};p)_{\lambda_{j}-\lambda_{j+1}}}
\end{align*}
where $\lambda_{n+1}=0$ by definition. We can simplify this, by first taking out the $\lambda=0^n$ term to 
\begin{align*}
 \prod_{j=0}^{n-1} & 
\frac{ \Gamma(pq t^{2(n-1)-j} \tilde t_a^2 )}{ \prod_{0\leq r<s\leq 5} \Gamma(t^{j}t_rt_s)}
 \prod_{r\neq a} \frac{\Gamma(t^{j} \tilde t_a\tilde t_r)}{\Gamma(p q t^{j}\frac{\tilde t_a}{\tilde t_r} )}
\\ & \times\sum_{\lambda}  
 \hat f(q^{\lambda_j} t^{n-j} \tilde t_a) \hat g(q^{\lambda_j} t^{n-j} \tilde t_a) 
 \Delta_{\lambda}( t^{2(n-1)} \tilde t_a^2 ~|~ t^n, t^{n-1} \tilde t_a \tilde t_0,\ldots,\widehat{t^{n-1} \tilde t_a \tilde t_a} ,\ldots, t^{n-1} \tilde t_a \tilde t_5)
\end{align*}
where in the arguments of $\Delta_{\lambda}$ we omit the $t^{n-1} \tilde t_a \tilde t_a$ term. This simplification is quite tedious, so will not include it here, except to mention that the alternative expressions for the $C_{\lambda}^*$ from \eqref{eqaltdefcl} come in useful. 
%the following expressions for some of the $C$-symbols which come in useful
%\begin{align*}
%C_{\lambda}^0(x) & = \prod_{i=1}^n \theta(t^{1-i}x;p)_{\lambda_i}, \qquad \qquad 
%C_{\lambda}^-(x) = \prod_{1\leq i<j\leq n} \frac{\theta(t^{j-1-i}x;p)_{\lambda_i-\lambda_j}}{\theta(t^{j-i}x;p)_{\lambda_i-\lambda_j}}
%\prod_{i=1}^n  \theta(t^{n-i}x;p)_{\lambda_i}, \\
%C_{\lambda}^+(x) &= \prod_{1\leq i<j\leq n} \frac{\theta(t^{2-j-i}x;p)_{\lambda_i+\lambda_j}}{\theta(t^{3-j-i}x;p)_{\lambda_i+\lambda_j}}
%\prod_{i=1}^n \frac{\theta(t^{2-2i}x;p)_{2\lambda_i}}{\theta(t^{2-n-i}x;p)_{\lambda_i}}.
%\end{align*}
Note that this sum is the same as the measure for the finitely supported measure, apart from a different scaling factor. 

Now we are ready to replace the $t_r$ by $t_rp^{\alpha_r}$, and moreover let $M$ depend on $p$. Choose an $\epsilon>0$ such that $\alpha_a+\epsilon<\alpha_r$ for $r\neq a$, then we set $M= \epsilon \log_{|q|}(|p|)$, so it increases slowly as $p \to 0$ to ensure that $|q^M| = |p^{\epsilon}|$. We will show in the appendix that for the values of $\alpha$ given the terms with integral vanish (even when we consider that their number increases as $p\to 0$), while the limit of the sum term is just obtained by taking the sum of the termwise limits. This gives us the desired result.
\end{proof}

We can summarize the results in this subsection by the following theorem. The polytope $P^{(0)}$ is the same as in \cite{vdBR}. Notice that the interior of $P_{II,0}$ as given below, and its various permutations, are precisely the subpolytopes on which the limits of the biorthogonal functions as discussed in \cite{vdBRmult}, fail to exist. Thus we obtain explicit bilinear forms for almost all of the limits of the biorthogonal functions discussed in \cite{vdBRmult}, and likewise have an explicit family of biorthogonal functions for each of the bilinear forms given in this section. The biorthogonal functions which still lack an explicit bilinear form are the flipped versions of $0022pp$, $04as$ and $0031as$, that is they correspond to Stieltjes-Wiegert and Continuous $q$-Hermite for $|q|<1$ (also known as Continuous $q^{-1}$-Hermite). Explicit bilinear forms for these functions are the bilateral series given in the next section.

It should be stressed that the measures that we find are not necessarily positive, even in cases in which one would expect to have a positive measure (such as real-valued orthogonal polynomials with positive squared norms). Indeed, in a generic case, we expect there are several measures which can be obtained as limits from the elliptic hypergeometric level, other than just the ones we gave here (for example, derived by using techniques as in the next section). We hope that amongst those different options we can find a positive measure in each case where this is expected, but deriving that is reserved for later work.
\begin{theorem}\label{thmacmeasurelim}
Let $t_r\in \mathbb{C}$ be generic such that $\prod_r t_r=q$. 
Consider the polytope $P^{(0)}$ given by the bounding inequalities
\[
\alpha_r\geq -\frac12, \qquad 
\alpha_r-\alpha_s\leq 1, \qquad 
\alpha_r+\alpha_s\leq 1, \qquad 
\sum_r \alpha_r =1.
\]

For each vector in $P^{(0)}$, outside the interior of the subpolytope $P_{II,0}$
\[
-\frac12\leq \alpha_0\leq 0, \qquad 
\alpha_0\leq \alpha_r\leq 1+\alpha_0, \quad (1\leq r\leq 5), \qquad 
0\leq \alpha_r+\alpha_s\leq 1, \quad (1\leq r<s\leq 5), \qquad
\sum_r \alpha_r=1,
\]
or one of its 5 images under permutation of the $\alpha_r$'s, we find
\[
\lim_{p\to 0} \langle f,g\rangle_{t_rp^{\alpha_r}} = 
\langle \lc(f(zp^{\zeta})),\lc(g(zp^{\zeta})) \rangle_{\alpha,t_r}
\]
with $\zeta$ given by $|\zeta+\frac12|=\min(\frac12,\alpha_r+\frac12,\alpha_r+\alpha_s+\alpha_t+\frac12)$ and the limiting inner products $\langle \cdot,\cdot \rangle_{\alpha,t_r}$ being given in the propositions of this section.
\end{theorem}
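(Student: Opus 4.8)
The statement repackages Propositions \ref{propPI}, \ref{proplimip2}, \ref{proplimip3} and \ref{propsumlim} into a single uniform formula, so the proof is essentially a bookkeeping argument over the polytope $P^{(0)}$: one must verify that each $\alpha\in P^{(0)}$ outside the bad region falls, after applying a symmetry, into the hypotheses of exactly one of those propositions, that the prescribed $\zeta$ coincides with the one used there, and that the formulas agree on overlaps. First I would record the symmetries of the continuous bilinear form that act on the exponent vector $\alpha$: the $p$-shifts $t_r\mapsto pt_r$, $t_s\mapsto t_s/p$ preserving the balancing condition $t^{2(n-1)}\prod_r t_r=q$ (the continuous analogue of the relations in Lemma \ref{lemweightsym}), the half-integral shift coming from \eqref{eqlim000001} with a suitable choice of the $v_i$, and the permutation symmetry of the parameters $t_0,\dots,t_3$, together with the permutations that exchange $t_4$ or $t_5$ with the others once $f$ and $g$ are correspondingly transformed (this is the bookkeeping implicit in the ``or a permutation hereof'' clauses of the propositions, and in the discussion preceding \eqref{eqspecialized2}). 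These generate a group $G$ acting on $\{\alpha\in\mathbb{R}^6:\sum_r\alpha_r=1\}$, and since fixing $t$ keeps the combinatorics identical to the univariate case (as stressed in the introduction), $P^{(0)}$ is a fundamental domain for $G$, this being exactly the polytope $P^{(0)}$ of \cite{vdBR} shown there to be a fundamental domain. Hence it suffices to treat $\alpha\in P^{(0)}$.

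Next, given $\alpha\in P^{(0)}$ I would compute $\zeta$ from $|\zeta+\tfrac12|=\min(\tfrac12,\alpha_r+\tfrac12,\alpha_r+\alpha_s+\alpha_t+\tfrac12)$; the defining inequalities $\alpha_r\ge-\tfrac12$, $\alpha_r+\alpha_s\le1$, $\sum_r\alpha_r=1$ guarantee the argument of the $\min$ is nonnegative and at most $\tfrac12$, so $-\tfrac12\le\zeta\le0$. I then split into cases according to which terms realise the minimum: (i) the minimum is $\tfrac12$, so $\zeta=0$ and all $\alpha_r\ge0$, and Proposition \ref{propPI} applies; (ii) the minimum is realised by a pair $\alpha_a=\alpha_b=\zeta$, where one checks from the $P^{(0)}$-inequalities and minimality that $-\zeta\le\alpha_r\le1+\zeta$ for $r\ne a,b$, so Proposition \ref{proplimip2} applies; (iii) the minimum is realised by a triple $\alpha_a+\alpha_b+\alpha_c=\zeta$ with $\zeta\le\alpha_r\le-\zeta$ for $r\in\{a,b,c\}$ and $-\zeta\le\alpha_r\le1+\zeta$ otherwise, so Proposition \ref{proplimip3} applies; (iv) the minimum is realised by a single $\alpha_a=\zeta$ with all other $\alpha_r>\alpha_a$ and with $\alpha_r+\alpha_s>0$ for $r,s\ne a$, which is precisely the hypothesis of Proposition \ref{propsumlim}. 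The things to check here are that these four cases exhaust $P^{(0)}$ minus the interior of $P_{II,0}$ and its permutation images, that the side conditions $2\alpha_a=\sum_{r\ne a:\alpha_r+\alpha_a<0}(\alpha_r+\alpha_a)$ of Proposition \ref{propsumlim} and the analogous balancing-type conditions in Propositions \ref{proplimip2}–\ref{proplimip3} hold automatically once $\zeta$ is taken from the $\min$ formula, and that on overlaps (several subsets realising the minimum simultaneously) the limiting inner products agree.

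For the overlap consistency I would use that the free parameter $v$ in Propositions \ref{proplimip2} and \ref{proplimip3} allows the two formulas to be matched (Proposition \ref{proplimip3}'s most general case is \eqref{eqlim--++++} specialised at $v=t_2$, while $v$ is arbitrary in Proposition \ref{proplimip2}), and more generally the iterated-limit Proposition \ref{propiteratedlim} together with Corollary \ref{corsumequalval} applied along the boundary strata of $P^{(0)}$ (where some $\alpha_r$ or $\alpha_r+\alpha_s$ hits an integer, so $lc(f)$ and $lc(g)$ change form via \eqref{eqlcfinlchatf}): the limit over a face must equal the iterated limit from either adjacent open cell, forcing the closed-form expressions to glue. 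Finally, the interior of $P_{II,0}$ must be excluded because there the only available description is the residue expansion of Proposition \ref{propsumlim} in the situation $\alpha_r+\alpha_s=0$ for some $r,s\ne a$, where the termwise limit of the sum of residues fails to converge without an extra decay hypothesis on $f$ and $g$ (this is the multivariate analogue of the Big $q$-Jacobi double-sum measure of \cite{Stokman}, recovered by Proposition \ref{proplimip2} only for small partitions), so no uniform closed form exists.

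I expect the main obstacle to be the case analysis of the second paragraph: proving that the single $\min$-formula for $\zeta$ together with the inequalities cutting out $P^{(0)}$ force $\alpha$ into exactly one proposition's hypothesis set, including the automatic validity of the several balancing-type side conditions, and then checking that the four families of formulas are mutually consistent on all overlaps. This amounts to reproving, in the multivariate setting, the polytope decomposition of \cite{vdBR} and matching it term by term against Propositions \ref{propPI}–\ref{propsumlim}; everything else (the symmetry reduction, the boundary gluing, and the exclusion of $P_{II,0}$) is comparatively routine given the results already in hand.
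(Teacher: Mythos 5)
The paper offers no explicit proof of this theorem --- it is presented purely as a summary of Propositions \ref{propPI}, \ref{proplimip2}, \ref{proplimip3} and \ref{propsumlim} --- and your bookkeeping is exactly the intended justification; in particular your key claim checks out: when the minimum in the $\zeta$-formula is attained by a unique singleton $\alpha_a$, the side condition $2\alpha_a=\sum_{r\neq a:\,\alpha_r+\alpha_a<0}(\alpha_r+\alpha_a)$ of Proposition \ref{propsumlim} holds precisely on the facets $\alpha_a=-\frac12$, $\alpha_r=1+\alpha_a$, $\alpha_r+\alpha_s=1$ of $P_{II,a}$, which is exactly what remains once the interior is excluded. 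One minor simplification: the overlap consistency needs no appeal to Proposition \ref{propiteratedlim} or Corollary \ref{corsumequalval} (which the paper explicitly declines to extend to integrals), since any two applicable propositions compute the same limit $\lim_{p\to 0}\langle f,g\rangle_{t_rp^{\alpha_r}}$ and therefore agree automatically.
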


\section{Bilateral series}\label{sec5}
The final limits we consider are of a slightly different form. In particular these limits only work when we let $p\to 0$ along a geometric progression, and as long as $\alpha_r\in \mathbb{Q}$ for all $r$. Let $d$ be the least even common multiple of the denominators of the $\alpha_r$. We set $p=(xq^k)^d$ for the purpose of this section. We will consider limits $k\to \infty$ (for integer $k$), which corresponds to letting $p\to 0$. We can choose any $q^d$-geometric progression by varying $x$, and we will typically obtain different results for different values of $x$. 

The reason for these conditions is that it allows us to write (for $\alpha \in \frac{1}{d} \mathbb{Z}$)
\[
\theta(p^{\alpha} y;q) = \theta(x^{d\alpha} q^{dk\alpha} y;q) = 
\theta(x^{d\alpha}y;q) (-x^{d\alpha} y)^{-dk\alpha} q^{-\binom{dk\alpha}{2}},
\]
which allows us to determine the behavior as $k\to \infty$, while the behavior as $p\to 0$ continuously is erratic, as we would pass many zeros (whenever $p^{\alpha} \in \frac{1}{y} q^{\mathbb{Z}}$), and many large values (the $q^{-\binom{km\alpha}{2}}$ blows up if $k\to \infty$ for $\alpha\neq 0$). In particular this also gives the limiting behavior of 
$(p^{\alpha} y;q)$ for all $\alpha \in \frac{1}{d} \mathbb{Z}$. If $\alpha>0$ the limits is 1, and for $\alpha<0$  we obtain the limit by writing $(p^{\alpha} y;q) = \theta(p^{\alpha} y;q)/(\frac{q}{y} p^{-\alpha};q)$.

In this section we need the extensions of the definition of $C_{\lambda}^\epsilon$ for $\lambda \in \mathbb{Z}^n$ a decreasing, not necessarily positive sequence. 
If we want to use these functions, we run into the technical difficulty that the expressions we get have spurious poles and zeros which we must cancel to each other. We can solve this issue by defining
\[
D_{\lambda}:=\frac{C_{\lambda}^0(t^{n-1}x)}{C_{\lambda}^-(x)}=\prod_{1\leq i<j\leq n} \frac{\theta(t^{j-i}x;p)_{\lambda_i-\lambda_j}}{\theta(t^{j-1-i}x;p)_{\lambda_i-\lambda_j}}, \qquad 
\tilde D_{\lambda}:=\frac{\tilde C_{\lambda}^0(t^{n-1}x)}{\tilde C_{\lambda}^-(x)}=\prod_{1\leq i<j\leq n} \frac{(t^{j-i}x;q)_{\lambda_i-\lambda_j}}{(t^{j-1-i}x;q)_{\lambda_i-\lambda_j}},
\]
and use $D_{\lambda}$ instead of $C_{\lambda}^-$, and we will do so for the rest of this section. It should be noted that $D_{\lambda}(q)$ is well-defined for all decreasing integer sequences $\lambda$, even though $C_{\lambda}^0(t^{n-1}q)=C_{\lambda}^-(q)=0$ when $\lambda_n<0$.

%We will need to use the following simple equations for these extensions:
%These can be used to define $C_{\lambda}^\epsilon$ and $\tilde C_{\lambda}^\epsilon$ for any $\lambda$ which is a (weakly) decreasing sequence of integers (thus ignoring the condition $\lambda_n\geq 0$ for partitions) and for $\epsilon =0,+,-$. It can easily be shown that these functions satisfy the same basic equations (e.g. \eqref{eqc0p}-\eqref{eqcpp}) as the $C_{\lambda}^{\epsilon}$ for partitions $\lambda$. 
%The definitions of $|\lambda|$, $n(\lambda)$ and $n(\lambda')$ can be simply extended to arbitrary $\lambda \in \mathbb{Z}^n$, by using 
%\[
%|\lambda| = \sum_i \lambda_i, \qquad 
%n(\lambda) = \sum_{i}  (i-1) \lambda_i , \qquad
%n(\lambda') = \sum_i \binom{\lambda_i}{2} 
%\]
%and in particular we define $n(\lambda')$ without having to define $\lambda'$ itself.

There are two cases we want to consider. The first one is the following.
\begin{proposition}
Let $t_r\in \mathbb{C}$ be generic such that $t^{2(n-1)}t_0t_1t_2t_3t_4t_5=q$.
Let $\sum_{r=0}^5 \alpha_r=1$. Let $0\leq a,b,c\leq 5$ be such that 
$\alpha_a+\alpha_b>0$, $\alpha_a+\alpha_c>0$, $\alpha_b+\alpha_c>1$, 
$\alpha_a-\beta< \alpha_r\leq \beta-\alpha_a$ ($r\neq a,b,c$),
where $\beta=1-\alpha_b-\alpha_c$.
%$-\frac12 \leq \alpha_a$, $\alpha_a<\alpha_r \leq 1+\alpha_a$ for $r\neq a$, $\alpha_r+\alpha_s\geq 0$ for 
%$r,s\neq a$, $1-\alpha_a>\alpha_b+\alpha_c>1$, and $\alpha_a-\beta < \alpha_r$ ($r\neq a,b,c$), where $\beta=1-\alpha_b-\alpha_c$.  
We have 
\begin{align*}
\lim_{k\to \infty} &  p^{-\val(f)-\val(g)} \langle f(\cdot;t_rp^{\alpha_r}),g(\cdot;t_rp^{\alpha_r}) \rangle_{t_r p^{\alpha_r}} 
\\& =
%\prod_{i=1}^n\frac{1}{  }  
%\\& \qquad \times  
\prod_{i=1}^n \frac{\prod_{r\neq a,b,c:\alpha_a+\alpha_r=\beta} (\frac{q}{t^{n-i} t_at_r x^{d\beta}};q)
\prod_{r,s\neq a,b,c: \alpha_r+\alpha_s=0} (t^{i-1}t_rt_s;q)}
{\theta( t^{i-1} t_bt_c x^{-d\beta};q) (qt^{n-i};q) }
\\& \qquad \times
\sum_{\lambda\in \mathbb{Z}^n} \lc(f)(q^{\lambda_j} t^{n-j} t_a x^{\beta d})\lc(g)(q^{\lambda_j} t^{n-j} t_a x^{\beta d})
%\\& \qquad \times 
 q^{n(\lambda')} t^{n(\lambda)} (- \frac{q x^{d\beta}}{t^{n-1} t_bt_c} )^{|\lambda|} 
\\& \qquad \times 
\tilde D_{\lambda}(q,t) 
\prod_{r\neq a,b,c: \alpha_a+\alpha_r=\beta} \tilde C_{\lambda}^0(t^{n-1} t_at_r x^{d\beta}) 
\left(-\frac{1}{t^{n-1}t_at_r x^{d\beta}}\right)^{|\lambda|} q^{-n(\lambda')} t^{n(\lambda)}
\end{align*}
where $\lc( f)=\lc(f(zp^{\alpha_a-\beta}))$, and likewise for $\lc(g)$ and the valuations. The summation is over all (weakly) ordered sequences $\lambda \in \mathbb{Z}^n$.
\end{proposition}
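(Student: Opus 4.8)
The plan is to follow the strategy of the proof of Proposition~\ref{propsumlim} closely: start from the integral representation \eqref{eqacip2} of the bilinear form in terms of $\hat f$ and $\hat g$, evaluate it by iterated residue calculus in the integration variables, and then take the limit of the resulting sum, now along the geometric progression $p=(xq^k)^d$, $k\to\infty$. The two features distinguishing the present case from Proposition~\ref{propsumlim} are that the residues which survive are indexed by all weakly decreasing sequences $\lambda\in\mathbb{Z}^n$ rather than by partitions, and that, because of this, one must work with $\tilde D_\lambda$ in place of $\tilde C_\lambda^-$.

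Concretely, I would substitute $t_r\to t_rp^{\alpha_r}$ and rescale the integration variables $z_j\to z_jp^{\alpha_a-\beta}$ (this is the rescaling for which the contour can be taken independent of $k$ once $k$ is large, and the one encoded in the normalization $lc(f)=lc(f(zp^{\alpha_a-\beta}))$). As in the proof of Proposition~\ref{propsumlim} I would then push the $z_j$-contour, one variable at a time, across the pole strings of the $\tilde t_a$-gamma factor $\Gamma(\tilde t_az_j^{\pm 1})$, using the residue lemma stated in that proof together with the identity that rewrites each residue again in the form required by the lemma; iterating yields an $n$-fold sum of $0$-dimensional residues plus lower-dimensional sums of lower-dimensional integrals. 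The point is that, after the rescaling, the poles $z_j\in\tilde t_aq^{\mathbb{Z}_{\geq 0}}p^{\alpha_a}$ of $\Gamma(\tilde t_a/z_j)$ sit at $z_j\in\tilde t_ax^{d\beta}q^{dk\beta+\mathbb{Z}_{\geq 0}}$ with $dk\beta\to-\infty$ (note $\alpha_a<\beta$ is forced by $\alpha_a-\beta<\beta-\alpha_a$), so that for each fixed $\lambda\in\mathbb{Z}$ there is, for $k$ large, a pole of this family near the fixed location $\tilde t_ax^{d\beta}q^{\lambda}$; letting the cutoff on the residue index grow slowly with $k$ (take it $=\epsilon\log_{|q|}|p|$, exactly as in Proposition~\ref{propsumlim}) makes the $n$-fold residue sum fill out the whole bilateral sum $\sum_{\lambda\in\mathbb{Z}^n}$.

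It then remains to identify the limit. The $n$-fold residue, with its prefactor, is simplified into $\Delta_\lambda$-shape exactly as in Proposition~\ref{propsumlim} via the alternative expressions \eqref{eqaltdefcl}, the only change being that every occurrence of $C_\lambda^-$ is replaced by $D_\lambda=C_\lambda^0(t^{n-1}x)/C_\lambda^-(x)$ (so that $\tilde C_\lambda^-$ becomes $\tilde D_\lambda$); this is forced because for $\lambda\in\mathbb{Z}^n$ with $\lambda_n<0$ the symbols $C_\lambda^0(t^{n-1}q)$ and $C_\lambda^-(q)$ separately vanish while $D_\lambda(q)$ stays finite and nonzero. Feeding $p=(xq^k)^d$ into each theta and $q$-Pochhammer symbol via $\theta(p^\gamma y;q)=\theta(x^{d\gamma}y;q)(-x^{d\gamma}y)^{-dk\gamma}q^{-\binom{dk\gamma}{2}}$ and the stated companion formula for $(p^\gamma y;q)$ then lets one read off each term's valuation and leading coefficient: factors with $\gamma>0$ drop out, those with $\gamma=0$ survive, those with $\gamma<0$ are flipped into thetas --- in particular the large-order $p$-theta-Pochhammers produced by the high-index residue collapse, along the progression, into the factor $\theta(t^{i-1}t_bt_cx^{-d\beta};q)$ and the $q$-Pochhammers $\prod(qt^{-i}/t_at_r x^{d\beta};q)$, $\prod(t^{i-1}t_rt_s;q)$ appearing in the answer, while the monomial powers of $q$, $t$ and the $t_r$ come from the $(-x^{d\gamma}y)^{-dk\gamma}q^{-\binom{dk\gamma}{2}}$ prefactors. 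The values of $f$ and $g$ at the residue points become $lc(f)(q^{\lambda_j}t^{n-j}t_ax^{\beta d})$ and $lc(g)(q^{\lambda_j}t^{n-j}t_ax^{\beta d})$ through \eqref{eqlcfinlchatf} (here $\tilde t_ap^{\alpha_a}\cdot p^{\beta-\alpha_a}=\tilde t_ap^\beta=\tilde t_ax^{d\beta}q^{dk\beta}$, combining with the $q^{\lambda_j}$-shift of the pole); as in Proposition~\ref{propsumlim} one uses $\hat f,\hat g$ when $a\in\{4,5\}$, but for generic $t$ and $x$ the residue locations avoid the pole sequences of $f$ and $g$, so no residues of $f$ or $g$ themselves are picked up.

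The hard part, which I would delegate to Appendix~\ref{apb} (where the parallel estimates for Proposition~\ref{propsumlim} already appear), is the analytic control along the progression: one must show that the leftover lower-dimensional integral terms all vanish as $k\to\infty$ despite their growing number, and that the bilateral sum converges with limit equal to the sum of the term-wise limits. The second point is the genuinely new difficulty, since a bilateral series has no leading term to dominate it; here one needs a two-sided version of Corollary~\ref{corsumequalval}, applied separately as the entries of $\lambda$ tend to $+\infty$ and to $-\infty$, and it is exactly the strict inequalities $\alpha_a+\alpha_b>0$, $\alpha_a+\alpha_c>0$, $\alpha_b+\alpha_c>1$, together with $\alpha_a-\beta<\alpha_r\leq\beta-\alpha_a$ for $r\neq a,b,c$, that force geometric decay of the summand at both ends for generic $x$.
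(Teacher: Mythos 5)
Your proposal follows the paper's proof essentially verbatim: iterated residue calculus on \eqref{eqacip2} over the $\tilde t_a$ pole strings, a cutoff growing linearly in $k$ so that after re-indexing by $\lambda\mapsto\lambda-(\beta dk)^n$ the residue sum fills out $\mathbb{Z}^n$, evaluation of the theta and Pochhammer factors along $p=(xq^k)^d$, replacement of $C_\lambda^-$ by $D_\lambda$, and delegation of the vanishing of the leftover integral terms and the summability of the bilateral series to the appendix estimates (Lemmas \ref{lempiv} and \ref{lempiv2}). The one quantitative correction is that the cutoff must be taken as $M=dk(-\beta+\epsilon)$ with $0<\epsilon<-\beta$ rather than $\epsilon\log_{|q|}|p|$ with $\epsilon$ small, since the upper end of the shifted window $[\beta dk,\,M+\beta dk]$ must still tend to $+\infty$ --- which is exactly what your own requirement of filling out the bilateral sum dictates.
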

Observe that the right hand side depends on $t_a$ and $x$ only through the combination $t_a x^{\beta d}$. As $x$ can be chosen by taking the limit over the appropriate geometric sequence of values for $p$, this means that the right hand side is essentially independent of $t_a$. 

Apart from the choice of $a$, $b$ and $c$ , there are basically two distinct cases of this proposition, associated to $\alpha=(-\frac25,0,0,\frac15,\frac35,\frac35)$, giving
\begin{align*}
& \prod_{i=1}^n \frac{(\frac{q}{t^{n-i} t_0t_3 x^{d\beta}}, t^{i-1}t_1t_2;q)}
{\theta( t^{i-1} t_4t_5 x^{-d\beta};q) (qt^{n-i};q) }
\\& \qquad \times
\sum_{\lambda\in \mathbb{Z}^n} \lc(f)(q^{\lambda_j} t^{n-j} t_0 x^{\beta d})\lc(g)(q^{\lambda_j} t^{n-j} t_0 x^{\beta d})
D_{\lambda}(q,t)   \tilde C_{\lambda}^0(t^{n-1} t_0t_3 x^{d\beta}) 
t^{2n(\lambda)} ( t_1t_2)^{|\lambda|} 
\end{align*}
and to $\alpha=(-\frac{5}{12}, \frac{1}{12},\frac{1}{12},\frac{1}{12}, \frac{7}{12},\frac{7}{12})$ (obtained by letting $t_3\to \infty$ and $t_1t_2\to 0$ while keeping their product constant) giving
\begin{align*}
&\prod_{i=1}^n \frac{ 1}
{\theta( t^{i-1} t_4t_5 x^{-d\beta};q) (qt^{n-i};q) }
%\\& \qquad \times
\sum_{\lambda\in \mathbb{Z}^n} \lc(f)(q^{\lambda_j} t^{n-j} t_0 x^{\beta d})\lc(g)(q^{\lambda_j} t^{n-j} t_0 x^{\beta d})
D_{\lambda}(q,t)  (- \frac{q x^{d\beta}}{t^{n-1} t_4t_5} )^{|\lambda|}.
\end{align*}
\begin{proof}
For notational convenience we assume $a\neq 4,5$, the proof in these cases still works with slight modifications to accommodate the extra poles introduced by $f$ or $g$ (the corresponding residues vanish in the limit). In particular this allows us to work with the $t_r$ parameters instead of $\tilde t_r$.  As in the proof of Proposition \ref{propsumlim} we first pick up the residues of the poles associated to $t_a$, now for 
$M=dk(-\beta+\epsilon)$ for some $0<\epsilon < -\beta$ to obtain that the the left hand side equals a series of residues plus a sum of residues times integrals. As before the estimates from Lemma \ref{lempiv} together with Lemma \ref{lempiv2} show that the terms involving integrals all vanish in the limit. Thus we are concerned just with the sum of the residues. Now we can rewrite these by shifting the index $\lambda$ as
\begin{align*}
 \prod_{j=0}^{n-1} & 
\frac{ \Gamma(pq t^{2(n-1)-j} t_a^2 )}{ \prod_{0\leq r<s\leq 5} \Gamma(t^{j}t_rt_s)}
 \prod_{r\neq a} \frac{\Gamma(t^{j}  t_a t_r)}{\Gamma(p q t^{j}\frac{t_a}{t_r} )}
\\ & \times\sum_{0^n \subset \lambda\subset M^n}  
f(q^{\lambda_j} t^{n-j} t_a)  g(q^{\lambda_j} t^{n-j} t_a) 
 \Delta_{\lambda}( t^{2(n-1)} t_a^2 ~|~ t^n, t^{n-1} t_a t_0,\ldots,\widehat{t^{n-1}  t_a t_a} ,\ldots, t^{n-1}  t_a  t_5)
 \\ &= 
 \prod_{j=0}^{n-1}  
\frac{ \Gamma(pq t^{2(n-1)-j} t_a^2 )}{ \prod_{0\leq r<s\leq 5} \Gamma(t^{j}t_rt_s)}
 \prod_{r\neq a} \frac{\Gamma(t^{j}  t_a t_r)}{\Gamma(p q t^{j}\frac{t_a}{t_r} )}
\sum_{(\beta dk)^n \subset \lambda\subset (M+\beta dk)^n}  
 f(q^{\lambda_j - \beta dk} t^{n-j} t_a)  g(q^{\lambda_j-\beta dk} t^{n-j} t_a) 
\\ & \qquad \times 
 \Delta_{\lambda- (\beta dk)^n}( t^{2(n-1)} t_a^2 ~|~ t^n, t^{n-1} t_a t_0,\ldots,\widehat{t^{n-1}  t_a t_a} ,\ldots, t^{n-1}  t_a  t_5)
\end{align*}
In this expression we can replace the $t_r\to t_rp^{\alpha_r}$ and rewrite the $C_{\lambda- (\beta dk)^n}^{\epsilon}$, respectively $D_{\lambda - (\beta dk)^n}$, as $C_{\lambda}^\epsilon$, respectively $D_{\lambda}$, times some elliptic gamma functions using the equations 
\begin{align*}
C_{\lambda+m^n}^0(x) &=  C_{\lambda}^0(q^m x)
\prod_{i=1}^n \frac{\Gamma(q^m t^{1-i}x)}{\Gamma(t^{1-i}x)}, & \tilde C_{\lambda+m^n}^0(x) &= 
\tilde C_{\lambda}^0(q^m x) \prod_{i=1}^n \frac{(t^{1-i}x;q)}{(q^m t^{1-i}x;q)}
\\
D_{\lambda+m^n}(x) &= D_{\lambda}(x), & 
\tilde D_{\lambda+m^n}(x) &= \tilde D_{\lambda}(x),
%C_{\lambda+m^n}^-(x) &= C_\lambda^-(x) \prod_{i=1}^n \frac{\Gamma( q^{\lambda_i+m} t^{n-i}x)}{\Gamma( q^{\lambda_i} t^{n-i}x)}, & 
%\tilde C_{\lambda+m^n}^-(x) &= \tilde C_\lambda^-(x) \prod_{i=1}^n \frac{( q^{\lambda_i} t^{n-i}x;q)}{( q^{\lambda_i+m} t^{n-i}x;q)}
%, 
 \\
C_{\lambda+m^n}^+(x) &= C_{\lambda}^+(q^{2m}x) \prod_{i=1}^n \frac{\Gamma(q^{\lambda_i+2m}t^{2-n-i}x)}{\Gamma(q^{\lambda_i+m} t^{2-n-i}x)}, & 
\tilde C_{\lambda+m^n}^+(x) &= \tilde C_{\lambda}^+(q^{2m}x) 
\prod_{i=1}^n \frac{(q^{\lambda_i+m} t^{2-n-i}x;q)}{(q^{\lambda_i+2m}t^{2-n-i}x;q)}.
\end{align*}
Subsequently we replace the $q^{\beta dk}$ appearing by $p^{\beta}/x^{d\beta}$, 
use the difference equation $\Gamma(px)=\theta(x;q)\Gamma(x)$ of the elliptic gamma functions to ensure all elliptic gamma functions are of the form $\Gamma(p^{\alpha} x)$ for $0\leq \alpha\leq 1$ for any choice of $\lambda$, and likewise for the $C_{\lambda}^{\epsilon}$ terms. Then we replace $p\to (xq^k)^d$ in the theta functions which have thus appeared, and use the difference equation of the theta function to make these constant times a certain factor. Thus we obtain the expression
% See calcbilateral.nb for a derivation
\begin{align*}
& \sum_{(\beta k d)^n \subset \lambda \subset (M+\beta k d)^n}
 f(q^{\lambda_j} x^{\beta d} t^{n-j} t_a p^{\alpha_a-\beta})  g(q^{\lambda_j} x^{\beta d} t^{n-j} t_a p^{\alpha_a-\beta}) 
q^{n(\lambda')} t^{n(\lambda)} (-\frac{q}{t^{n-1}t_bt_c}  x^{d\beta})^{|\lambda|}
\\ & \times 
\frac{D_{\lambda}(q,t)C_{2\lambda^2}^0(p^{1-2\beta+2\alpha_a} q t^{2(n-1)} t_a^2 x^{2d \beta}) C_{\lambda}^0(p^{-\beta} t^n x^{\beta d}) \prod_{r\neq a,b,c}  C_{\lambda}^0(p^{1-\beta+\alpha_a+\alpha_r} t^{n-1} t_at_r x^{\beta d})   }
{ C_{\lambda}^+(p^{1-2\beta+2\alpha_a} q t^{2n-3} t_a^2 x^{2d\beta}, p^{1-2\beta+2\alpha_a} t^{2(n-1)} t_a^2 x^{2d\beta})
C_{\lambda}^0(p^{1-\beta+2\alpha_a} q t^{n-2} t_a^2 x^{d\beta}) 
}
\\ & \times 
\frac{\prod_{r=b,c}
C_\lambda^0(p^{-\beta+\alpha_a+\alpha_r} t_rt_ax^{d\beta}) }{\prod_{r\neq a} C_{\lambda}^0(p^{1-\beta+\alpha_a-\alpha_r} q t^{n-1} \frac{t_a}{t_r} x^{d\beta}) }
\prod_{i=1}^n \frac{\Gamma(q t^{n-i},  p^{-\beta} t^{1+n-i} x^{d\beta}, p^{1-\beta+2\alpha_a} q^{\lambda_i+1} t^{n-i-1} t_a^2 x^{d\beta} )  }
{\Gamma(p^{-\beta} q^{\lambda_i+1} t^{n-i} x^{d\beta}, p^{-\beta} q^{\lambda_i} t^{n+1-i} x^{d\beta}) }
\\& \times \prod_{i=1}^n \frac{\Gamma( p^{1-\beta+2\alpha_0} q^{\lambda_i} t^{n-i} t_a^2 x^{d\beta},p^{1-2\beta+2\alpha_a}q t^{n-i-1} t_a^2 x^{2 d \beta}, p^{1-2\beta+2\alpha_a}qt^{2n-i-1} t_a^2 x^{2d\beta})}{\Gamma( p^{1-\beta+2\alpha_a} q t^{n-i-1} t_a^2 x^{d\beta}, p^{1-2\beta+2\alpha_0} q^{\lambda_i+1} t^{n-i-1} t_a^2 x^{2d\beta},p^{1-2\beta+2\alpha_a} q^{\lambda_i} t^{n-i} t_a^2 x^{2d\beta})}
\\& \times \prod_{i=1}^n \prod_{r\neq a,b,c} \frac{\Gamma( p^{1-\beta+\alpha_a+\alpha_r} t^{n-i} t_at_r x^{d\beta}  )}
{\Gamma( p^{1+\alpha_a+\alpha_r} t^{n-i} t_at_r, p^{1-\beta+\alpha_a-\alpha_r} q t^{n-i} x^{d\beta} \frac{t_a}{t_r})}
%\\& \times \prod_{i=1}^n 
\prod_{r=b,c} \frac{\Gamma( p^{-\beta+\alpha_a+\alpha_r} t^{n-i} t_at_r x^{d\beta} )}
{\Gamma( p^{\alpha_a+\alpha_r} t^{n-i} t_at_r, p^{1-\beta+\alpha_a-\alpha_r} q t^{n-i} x^{d\beta} \frac{t_a}{t_r})}
\\ & \times \prod_{i=1}^n   \frac{1}{\Gamma( p^{\alpha_b+\alpha_c-1} t^{i-1}t_bt_c)} \prod_{\substack{0\leq r<s\leq 5 \\ r,s\neq a,b,c}} \frac{1}{\Gamma( p^{\alpha_r+\alpha_s} t^{i-1}t_rt_s) } \prod_{r\neq a,b,c} \prod_{s=b,c} \frac{1}{\Gamma( p^{\alpha_r+\alpha_s} t^{i-1}t_rt_s)}  \frac{1}{\theta( t^{i-1} t_bt_c x^{-d\beta};q)}.
\end{align*}
Now we can interchange limit and sum to obtain the desired result. Note that we are allowed to interchange sum and integral as the calculations in the appendix give us an absolutely summable bound on the summand. Indeed Lemma \ref{lempiv2} tells us that the summand is maximized for residues where the values of $z$ is originally around $p^{-(\alpha_1+\alpha_2+\alpha_3)}$ (in notation of that lemma), which is in this case $p^{\alpha_a-\beta}$ (this scale can be seen in the argument of the functions $f$ and $g$).
\end{proof}

The second proposition is applicable for $\alpha=(-\frac23,\frac13,\frac13,\frac13,\frac13,\frac13)$.
\begin{proposition}
Let $t_r\in \mathbb{C}$ be generic such that $t^{2(n-1)}t_0t_1t_2t_3t_4t_5=q$.
Let $\sum_{r=0}^5 \alpha_r=1$. Let $0\leq a\leq 5$ be such that $\alpha_a< -\frac12$ and $-\frac12<\alpha_r\leq \frac12$ for $r\neq a$. We write $\beta=\alpha_a+\frac12$.  We have 
\begin{align*}
\lim_{k\to \infty} &  p^{-\val(f)-\val(g)} \langle f(\cdot;t_rp^{\alpha_r}),g(\cdot;t_rp^{\alpha_r}) \rangle_{t_r p^{\alpha_r}}  = \prod_{i=1}^n \frac{1}{( qt^{2n-i-1} t_a^2 x^{2d\beta},q t^{i-1},  q t^{1-i} t_a^{-2} x^{-2d\beta};q)}
\\ & \times \sum_{\lambda}
 \lc(f)(q^{\lambda_j} x^{\beta d} t^{n-j} t_a )  \lc(g)(q^{\lambda_j} x^{\beta d} t^{n-j} t_a ) 
  q^{4n(\lambda')} t^{-2n(\lambda)}  \left(q t^{2(n-1)} t_a^4 x^{4d\beta} \right)^{|\lambda|}
\\ & \qquad \qquad \times 
\frac{\tilde D_{\lambda}(q,t) \tilde C_{2\lambda^2}^0(q t^{2(n-1)} t_a^2 x^{2d \beta}) }
{ \tilde C_{\lambda}^+(q t^{2n-3} t_a^2 x^{2d\beta}, t^{2(n-1)} t_a^2 x^{2d\beta})
\tilde C_{\lambda}^0 (q t^{n-2} t_a^2 x^{2d\beta}, t^{n-1} t_a^2 x^{2d\beta})}
%\\& \times 
%\prod_{i=1}^n \frac{1}{(q^{\lambda_i} t^{n+1-i}, q^{\lambda_i+1} t^{n-i},  q^{-\lambda_i} t^{1+i-n} t_a^{-2} x^{-2d\beta}, q^{1-\lambda_i} t^{i-n} t_a^{-2} x^{-2d\beta};q)}
\end{align*}
where $\lc( f)=\lc(f(zp^{-\frac12}))$, and likewise for $\lc(g)$ and the valuations. The summation is over all (weakly) ordered sequences $\lambda \in \mathbb{Z}^n$.
\end{proposition}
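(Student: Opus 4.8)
The proof will follow the argument for the preceding proposition (the first bilateral one) essentially verbatim, the differences being the choice of residue scale and the way the well-poised part of $\Delta_\lambda$ behaves. As in the proof of Proposition~\ref{propsumlim} I would start from the expression \eqref{eqacip2} for $\langle f,g\rangle$ written through $\hat f,\hat g$ (so the poles of $f,g$ are immaterial), assume $a\le 3$ for notational ease (the cases $a=4,5$ being handled as there, working with $\tilde t_a$ and $\hat f,\hat g$ and observing the extra residues vanish in the limit), and apply the contour-shifting lemma iteratively, pulling $z_1,\dots,z_n$ in turn across the poles at $q^{\lambda_j}t^{n-j}t_a$ ($0\le\lambda_j\le M$) and their reciprocals. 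This rewrites $\langle f,g\rangle$ as the $n$-fold residue sum displayed in that proof — the $\Delta_\lambda$-sum that there equals the finitely supported measure up to scaling — plus lower-dimensional sums of residues times integrals.

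Here $\beta:=\alpha_a+\tfrac12<0$, so I would take $M=dk(-\beta+\epsilon)$ with small $\epsilon\in(0,-\beta)$, substitute $p=(xq^k)^d$, and re-index $\lambda\mapsto\lambda-(\beta dk)^n$; as $k\to\infty$ the index range $(\beta dk)^n\subset\lambda\subset(dk\epsilon)^n$ exhausts all weakly decreasing $\lambda\in\mathbb{Z}^n$, which forces one to use $D_\lambda$ (and $\tilde D_\lambda$ in the limit) in place of $C_\lambda^-$. The shift formulas for $C^0_{\lambda+m^n},C^+_{\lambda+m^n},D_{\lambda+m^n}$ turn the constituents of $\Delta_{\lambda-(\beta dk)^n}$ (after $\Delta_\lambda$ is expanded through its definition) into unshifted $C$- and $D$-symbols times elliptic gamma functions; one rewrites $q^{\beta dk}$ as $p^{\beta}x^{-d\beta}$, uses $\Gamma(px)=\theta(x;q)\Gamma(x)$ and its $C_\lambda^\epsilon$ analogues to bring every gamma and $C$-symbol to an argument $p^\alpha(\cdot)$ with $0\le\alpha\le 1$ for every $\lambda$, then substitutes $p=(xq^k)^d$ into the theta functions thus produced and splits them via $\theta(p^\alpha y;q)=\theta(x^{d\alpha}y;q)(-x^{d\alpha}y)^{-dk\alpha}q^{-\binom{dk\alpha}{2}}$ into a constant and an explicit $k$-factor. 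The defining relation of this case, $\alpha_a-\beta=-\tfrac12$, makes the double-shifted argument of $C^0_{2\lambda^2}$ (and of $C^+_\lambda$) have integer $p$-valuation, so these survive as $\tilde C^0_{2\lambda^2}$ and $\tilde C^+_\lambda$, whereas the single-shifted $C^0_\lambda$-factors pick up the $p$-valuation $-\beta\notin\mathbb{Z}$ and trivialize to $1$ — this is why, for $\alpha=(-\tfrac23,\tfrac13,\tfrac13,\tfrac13,\tfrac13,\tfrac13)$ (so $\beta=-\tfrac16$), the limiting summand carries the full well-poised structure $\tilde D_\lambda\,\tilde C^0_{2\lambda^2}/(\tilde C^+_\lambda\,\tilde C^0_\lambda)$, in contrast to the preceding proposition. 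The $(p^\alpha(\cdot);q)$ factors meanwhile tend to $1$, to $\theta(\cdot)/(\cdot;q)$, or to a $q$-Pochhammer symbol according as $\alpha>0$, $\alpha<0$, or $\alpha\in\{0,1\}$, and these assemble into the asserted prefactor.

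The last step is to interchange $\lim_{k\to\infty}$ with $\sum_\lambda$ and take termwise limits, legitimate by the appendix estimates: Lemma~\ref{lempiv} together with Lemma~\ref{lempiv2} (which locates the dominant residues at the scale $z\sim p^{-1/2}$, visible in the argument $q^{\lambda_j}x^{\beta d}t^{n-j}t_a$ of $lc(f),lc(g)$) provides an absolutely summable bound on the summand uniform in $k$, and simultaneously shows that every term still containing an integral vanishes as $k\to\infty$, so only the pure-residue sum contributes. I expect the main obstacle to be precisely this $p$-power bookkeeping: one must check, for this particular $\alpha$, that after all the $\Gamma(px)=\theta(x;q)\Gamma(x)$ and $C^\epsilon_\lambda$ shifts every gamma and $C$-symbol lands in a range where its $k\to\infty$ behaviour is finite and controlled (nothing blows up or degenerates), and that the $k$-dependent monomials accumulated from the theta difference equations cancel in aggregate, against one another and against the $p^{-val(f)-val(g)}$ normalisation, to leave exactly the finite $\lambda$-dependent double sum claimed.
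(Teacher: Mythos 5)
Your overall strategy coincides with the paper's: its proof of this proposition consists essentially of the remark that the argument of the preceding bilateral proposition carries over, with the residue scale moved up by $-\beta=-\frac12-\alpha_a$ so that the dominant residues sit at $|z|\sim|p|^{-1/2}$, followed by the explicit re-indexed residue sum and a termwise limit. Your account of the contour-shifting lemma, the choice $M=dk(-\beta+\epsilon)$, the re-indexing $\lambda\mapsto\lambda-(\beta dk)^n$ exhausting all weakly decreasing $\lambda\in\mathbb{Z}^n$, the passage from $C_\lambda^-$ to $D_\lambda$, and the mechanism by which the arguments of $C^0_{2\lambda^2}$ and $C^+_\lambda$ acquire integer $p$-valuation (so that the full well-poised structure survives) while the cross factors trivialize, all match what the paper does.

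The one step that would fail as written is your justification for discarding the integral terms and interchanging limit and sum: you invoke Lemmas \ref{lempiv} and \ref{lempiv2}, which are the estimates for the polytope $P_{IV}$ and belong to the \emph{first} bilateral proposition. The $\alpha$-vectors of the present proposition do not lie in $P_{IV}$ (for $\alpha=(-\frac23,\frac13,\frac13,\frac13,\frac13,\frac13)$ one has $\alpha_4+\alpha_5=\frac23<1$, violating the first defining inequality of $P_{IV}$), so those bounds do not apply; and Lemma \ref{lempiv2} locates the maximum of $c_{IV}$ at $\zeta_i=\pm(\alpha_1+\alpha_2+\alpha_3)$, which is not the scale $\pm\frac12$ that you (correctly) assert. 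The paper instead uses Lemma \ref{lempv} and Lemma \ref{lemlowa0}, the bounds attached to the polytope $P_V$ given by $\alpha_0\le-\frac12$ and $\alpha_r\le\frac12$ ($r\ge1$), which is where these $\alpha$ live and whose maximum is located at $|\zeta_i|=\frac12$ (or $\frac32$). With that substitution your argument is the paper's.
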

%Observe that the right hand side depends on $t_a$ and $x$ only throught the combination $t_a x^{\beta d}$. As $x$ can be chosen by taking the limit over the appropriate geometric sequence of values for $p$, this means that the right hand side is essentially independent of $t_a$. 
\begin{proof}
The proof is essentially the same as that of the previous proposition. However this time we use Lemma \ref{lempv} and Lemma \ref{lemlowa0} for the estimates. The maximum is at $\zeta=-\frac12$, so, since the series starts from $\alpha_a<-\frac12$ we have to move up $-\beta=(-\frac12) - \alpha_a$. Doing the calculations as before leads to the following expression for the sum of residues (where we once again assume $a\neq 4,5$ for notational convenience):
\begin{align*}
& \sum_{(\beta k d)^n \subset \lambda \subset (M+\beta k d)^n}
 f(q^{\lambda_j} x^{\beta d} t^{n-j} t_a p^{-\frac12})  g(q^{\lambda_j} x^{\beta d} t^{n-j} t_a p^{-\frac12}) 
q^{2n(\lambda')} t^{|\lambda|} 
\\ & \times 
\frac{D_{\lambda}(q,t) C_{2\lambda^2}^0(q t^{2(n-1)} t_a^2 x^{2d \beta}) C_{\lambda}^0(p^{-\beta} t^n x^{\beta d}) \prod_{r\neq a}  C_{\lambda}^0(p^{\frac12+\alpha_r} t^{n-1} t_at_r x^{\beta d})}
{ C_{\lambda}^+(q t^{2n-3} t_a^2 x^{2d\beta}, t^{2(n-1)} t_a^2 x^{2d\beta})
C_{\lambda}^0(p^{\beta+1} q t^{n-2} t_a^2 x^{d\beta}) \prod_{r\neq a} C_{\lambda}^0(p^{\frac12-\alpha_r} q t^{n-1} \frac{t_a}{t_r} x^{d\beta})}
\\ & \times 
\prod_{i=1}^n \frac{\Gamma(q t^{n-i},  p^{-\beta} t^{1+n-i} x^{d\beta}, p^{1+\beta} q^{\lambda_i+1} t^{n-i-1} t_a^2 x^{d\beta}, p^{\beta+1} q^{\lambda_i} t^{n-i} t_a^2 x^{d\beta}, q t^{n-i-1} t_a^2 x^{2 d \beta})  }
{\Gamma(p^{-\beta} q^{\lambda_i+1} t^{n-i} x^{d\beta}, p^{-\beta} q^{\lambda_i} t^{n+1-i} x^{d\beta}, p^{\beta+1} q t^{n-i-1} t_a^2 x^{d\beta}, q^{\lambda_i+1} t^{n-i-1} t_a^2 x^{2d\beta}) }
\\& \times \prod_{i=1}^n 
\frac{\Gamma(qt^{2n-i-1} t_a^2 x^{2d\beta}  )}{\Gamma(q^{\lambda_i} t^{n-i} t_a^2 x^{2d\beta} )}
\prod_{r\neq a} \frac{\Gamma( p^{\frac12+\alpha_r} t^{n-i} t_at_r x^{d\beta})}
{\Gamma( p^{1+\alpha_a+\alpha_r} t^{n-i} t_at_r, p^{\frac12-\alpha_r} q t^{n-i} x^{d\beta} \frac{t_a}{t_r})}
\prod_{0\leq r<s\leq 5:r,s\neq a} \frac{1}{\Gamma( p^{\alpha_r+\alpha_s} t^{i-1}t_rt_s)}
\\& \times 
\prod_{i=1}^n \frac{\theta( qt^{n-i-1} t_a^2 x^{2d\beta};q)}{\theta(q^{\lambda_i+1} t^{n-i-1} t_a^2 x^{2d\beta}, q^{\lambda_i} t^{n-i} t_a^2 x^{2d\beta};q)}
\end{align*}
Now we can interchange limit and sum to obtain the desired result. 
%Note that we are allowed to interchange sum and integral as the calculations in the appendix give us an absolutely summable bound on the summand.
\end{proof}

\section{Integrals with more parameters}\label{sec6}
The cases we have considered so far are the evaluation cases: we have an explicit evaluation (as product of elliptic gamma functions) of the constant term. Adding extra parameters to the integrals still gives us interesting functions. This corresponds to looking at integrals  with $m>0$ in \cite{Rainstrafo}. For example the beta integral with $W(E_7)$ symmetry is of this form. It turns out that with the exact same argument as before the limits given in Section \ref{secac} are all still valid. 

In this section we will only present the results as the proofs are identical to the ones given before. Moreover, to simplify the notation we will not consider the associated bilinear forms, though extending the results to that case works in the same way as before. 

We thus consider the integral of Definition \ref{defhm} below. The prefactor of $p,q$-Pochhammer symbols has been chosen to ensure that the integral is holomorphic. Since we do not have an evaluation it is of course impossible to normalize the integral to the value 1 as we did in the previous section. If we were to turn the integral into a bilinear form the necessary prefactor to make the result holomorphic would depend on the functions $f$ and $g$ we plug in (it would essentially be the same prefactor but with $\tilde t_r$ instead of $t_r$), which is one of the reasons the equations become more convoluted in that case.
\begin{definition}\label{defhm}
Let $p,q,t\in \mathbb{C}^*$ satisfy $|t|,|p|,|q|<1$. For generic parameters $t_r\in \mathbb{C}^*$ satisfying the balancing condition $t^{2(n-1)} \prod_{r=0}^{2m+5} t_r=(pq)^{m+1}$ we define the integral
\begin{multline*}
\II_m^{(n)}(t_r;t,p,q) \\ :=
\prod_{i=1}^n\prod_{0\leq r<s\leq 2m+5} (t^{i-1}t_rt_s;p,q)
\frac{(q;q)^n (p;p)^n \Gamma(t)^n}{2^n n!} \int_{C^n} 
\prod_{1\leq j<k\leq n} \frac{\Gamma(tz_j^{\pm 1}z_k^{\pm 1})}{\Gamma(z_j^{\pm 1}z_k^{\pm 1})}
\prod_{j=1}^n \frac{\prod_{r=0}^{2m+5} \Gamma(t_r z_j^{\pm 1})}{\Gamma(z_j^{\pm 2})} 
\frac{dz_j}{2\pi i z_j}.
\end{multline*}
In the case that $|t_r|<1$ we define the contour $C$ to be the unit circle. By \cite[Theorem 10.7]{Rainstrafo} this function extends to a holomorphic function on 
$t_r \in \mathbb{C}^*$, $|t|,|p|,|q|<1$.
\end{definition}

The first propositions are given as follows. The analogue of Proposition \ref{propPI}:
\begin{proposition}\label{propPI2}
Choose generic parameters satisfying $t^{2(n-1)} \prod_r t_r=q^{m+1}$.
Let $\alpha \in \mathbb{R}^{2m+6}$, $\sum_{r=0}^{2m+5} \alpha_r=m+1$ and $0\leq \alpha_r\leq 1$ for $0\leq r\leq 2m+5$. 

We now have the limit
\begin{align*}
\lim_{p\to 0}   \II_m^{(n)}(t_r p^{\alpha_r}) &  = \frac{(q;q)^n  \prod_{j=1}^n \prod_{0\leq r<s\leq 2m+5: \alpha_r=\alpha_s=0} (t^{j-1} t_rt_s;q)}
{2^n n! \ (t;q)^n} \\ &\qquad  \times 
\int_{C^n} 
\prod_{1\leq j<k\leq n} \frac{(z_j^{\pm 1} z_k^{\pm 1};q)}{(tz_j^{\pm 1}z_k^{\pm 1};q)}
\prod_{j=1}^n \frac{(z_j^{\pm 2};q) \prod_{r:\alpha_r=1} (q t_r^{-1} z_j ^{\pm 1};q)}{\prod_{r:\alpha_r=0} (t_rz_j^{\pm 1};q)} \frac{dz_j}{2\pi i z_j},
\end{align*}
Here the integration contour $C=C^{-1}$ is such that it includes the points $q^j t_r$, (for $0\leq r\leq 2m+5$ with $\alpha_r=0$ and $j\geq 0$) excludes their reciprocals, and contains $q^jtC$ ($j\geq 0$). The contour can be taken to be the unit circle if $|t_r|<1$ for all $r$ with $\alpha_r=0$.
\end{proposition}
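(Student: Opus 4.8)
The plan is to follow the proof of Proposition \ref{propPI} essentially verbatim, the only additional bookkeeping being the limit of the $p,q$-Pochhammer prefactor. First I would substitute $t_r\mapsto t_rp^{\alpha_r}$ and examine the pole structure of the integrand of $\II_m^{(n)}(t_rp^{\alpha_r})$. The poles that the contour must enclose are the points $t_rp^{i+\alpha_r}q^j$ ($i,j\geq 0$); since every $\alpha_r\geq 0$ these are either $p$-independent (when $\alpha_r=0$ and $i=0$, namely $q^jt_r$) or tend to $0$ as $p\to 0$. Dually, the poles that must be excluded, the reciprocals $p^{-i-\alpha_r}q^{-j}/t_r$, are either $p$-independent or tend to $\infty$. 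The cross-term requirements ``$C\supseteq q^jtC$'' involve only the $p$-independent parameter $t$. Hence, exactly as in Proposition \ref{propPI}, for all sufficiently small $p$ there is a single contour $C$, independent of $p$, satisfying all the requirements, and on a fixed neighbourhood of $C$ the integrand is holomorphic in $z$.

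Next I would compute the locally uniform limits of the individual factors as $p\to 0$, using $\lim_{p\to 0}\Gamma(x;p,q)=1/(x;q)$, $\lim_{p\to 0}\Gamma(xp^{\beta};p,q)=1$ for $0<\beta<1$, and $\lim_{p\to 0}\Gamma(xp;p,q)=(q/x;q)$. This gives $\frac{\Gamma(tz_j^{\pm 1}z_k^{\pm 1})}{\Gamma(z_j^{\pm 1}z_k^{\pm 1})}\to\frac{(z_j^{\pm 1}z_k^{\pm 1};q)}{(tz_j^{\pm 1}z_k^{\pm 1};q)}$, $\Gamma(z_j^{\pm 2})^{-1}\to(z_j^{\pm 2};q)$, and $\Gamma(t_rp^{\alpha_r}z_j^{\pm 1};p,q)$ tends to $(t_rz_j^{\pm 1};q)^{-1}$, to $1$, or to $(qt_r^{-1}z_j^{\pm 1};q)$ according as $\alpha_r=0$, $0<\alpha_r<1$, or $\alpha_r=1$ (the hypothesis $0\le\alpha_r\le 1$ leaves no other possibility). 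For the prefactor I would use $\lim_{p\to 0}(x;p,q)=(x;q)$ and $\lim_{p\to 0}(xp^{\beta};p,q)=1$ for $\beta>0$, so that $\prod_{i=1}^n\prod_{0\le r<s\le 2m+5}(t^{i-1}t_rt_sp^{\alpha_r+\alpha_s};p,q)\to\prod_{i=1}^n\prod_{0\le r<s\le 2m+5:\,\alpha_r=\alpha_s=0}(t^{i-1}t_rt_s;q)$, since $\alpha_r+\alpha_s>0$ unless $\alpha_r=\alpha_s=0$; together with $(p;p)^n\to 1$ and $\Gamma(t)^n\to(t;q)^{-n}$. Multiplying these limits reproduces the asserted right-hand side, including the $(t;q)^n$ in the denominator.

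Finally, since the integrand converges uniformly on the fixed contour $C$ to the limiting integrand (the infinite products defining the elliptic gamma functions converging uniformly in $z\in C$ once $p$ is small, because the $p$-power factors tend uniformly to $1$), I would interchange limit and integral to conclude. I expect no genuine obstacle here: the argument is a routine adaptation of Proposition \ref{propPI}, and the only mild care needed is the prefactor bookkeeping and the observation that enlarging the parameter count from $6$ to $2m+6$ does not disturb the dichotomy ``enclosed poles move toward $0$, excluded poles toward $\infty$'', which depends only on the signs $\alpha_r\ge 0$; hence a single $p$-independent contour still works.
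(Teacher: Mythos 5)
Your proposal is correct and follows exactly the route the paper intends: Section \ref{sec6} states that the proofs there are identical to the earlier ones, and the paper's argument for Proposition \ref{propPI2} is precisely the proof of Proposition \ref{propPI} (fixed $p$-independent contour since enclosed poles tend to $0$ and excluded poles to $\infty$, uniform convergence of the integrand, interchange of limit and integral), with the extra $(t^{i-1}t_rt_s p^{\alpha_r+\alpha_s};p,q)$ prefactor handled by the dichotomy $\alpha_r+\alpha_s=0$ versus $\alpha_r+\alpha_s>0$ exactly as you describe. No gaps.
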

The analogue of Proposition \ref{proplimip2}
\begin{proposition}\label{proplimip2b}
Let $t_r\in \mathbb{C}$ be generic such that $t^{2(n-1)} \prod_r t_r=q^{m+1}$. 
Choose $\alpha\in \mathbb{R}^{2m+6}$, and $\zeta\in \mathbb{R}$.
Suppose $-\frac12\leq \zeta=\alpha_0=\alpha_1 <0$, $-\zeta\leq \alpha_r\leq 1+\zeta$ for $r>1$, and that $\sum_{r=0}^{2m+5} \alpha_r=m+1$

Then we have the limit
\begin{align*}
\lim_{p\to 0} \II_m^{(n)}(t_rp^{\alpha_r}) &   = 
\frac{(q;q)^n \prod_{j=1}^n (t^{n-j} t_0t_1;q)^{1_{\zeta=-1/2}} 
\prod_{r:\alpha_r=-\zeta} (t^{n-j} t_rt_0,t^{n-j}t_rt_1;q)}
{n! (t;q)^n  \theta(t^{n-j} t_0v,t^{n-j} t_1v;q)} \\ & \qquad \times 
\int_{C^n} \prod_{1\leq j<k\leq n} \frac{(z_j/z_k,z_k/z_j;q)}{(tz_j/z_k,tz_k/z_j;q)}
\left( \frac{(z_jz_k,qz_jz_k/t;q)  }{(tz_jz_k,qz_jz_k;q)  }   \right)^{1_{\zeta=-1/2}}
\\ & \qquad \qquad \qquad \times \prod_{j=1}^n \frac{\prod_{r:\alpha_r=1+\zeta} (qz_j/t_r;q)}{(t_0/z_j,t_1/z_j;q) \prod_{r: \alpha_r=-\zeta} (t_rz_j;q)}
\left( \frac{(z_j^2;q)}{(t_0z_j,t_1z_j, qz_j^2;q)}  \right)^{1_{\zeta=-1/2}} 
\\ & \qquad \qquad \qquad \qquad \qquad \times
\theta(vz_j, \frac{qz_j}{t^{n-1}vt_0t_1};q)
\frac{dz_j}{2\pi i z_j}
\end{align*}
for arbitrary $v\in \mathbb{C}^*$. Here the contour $C$ contains the points $q^j t_0$ and $q^j t_1$ (for $j\geq 0$), while excluding $q^{-j}/ t_r$ (for $j\geq 0$ and $r$ such that $\alpha_r=-\zeta$) and, if $\zeta=-1/2$, excluding $q^{-j}/ t_0$ and $q^{-j}/t_1$ for $j\geq 0$. Moreover $C$ should contain the contours $tq^jC$ (for $j\geq 0$).
\end{proposition}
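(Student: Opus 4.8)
\emph{Proof (sketch).} The plan is to run the argument of Proposition~\ref{proplimip2} essentially verbatim; the one genuine simplification is that $\II_m^{(n)}$ carries no functions $f,g$, so the detour through $\hat f,\hat g$ and all bookkeeping of their pole sequences disappears, and one may work directly with $t_r$ rather than $\tilde t_r$. First I would break the $z_i\mapsto 1/z_i$ symmetry of the integrand in Definition~\ref{defhm} by inserting the theta identity \cite[Lemma 6.2]{Rainstrafo} (valid whenever $t^{n-1}v_0v_1v_2v_3=q$), specialised at $v_0=t_0$, $v_1=t_1$ with $v_2,v_3$ free subject to $v_2v_3=qt^{1-n}t_0^{-1}t_1^{-1}$. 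This identity involves only the integration variables, so it applies unchanged for every $m$; exactly as in the passage from \eqref{eqacip2} to \eqref{eqspecialized2}, it replaces each $\Gamma(t_0z_j^{\pm 1})\Gamma(t_1z_j^{\pm 1})$ by $\Gamma(pt_0z_j,t_0/z_j,pt_1z_j,t_1/z_j)\,\theta(v_2z_j,v_3z_j;q)$ (using $\Gamma(px;p,q)=\theta(x;q)\Gamma(x;p,q)$) and divides the prefactor by $\prod_j\theta(t^{n-j}t_0v_2,t^{n-j}t_1v_2;q)$, the remaining $(t^{i-1}t_rt_s;p,q)$-prefactor of Definition~\ref{defhm} playing the role of the $\Gamma$-quotient normalisation in \eqref{eqspecialized2}.

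Next I would substitute $z_i\mapsto p^{\zeta}z_i$, drag the contour along, and simultaneously specialise $v_2\mapsto vp^{-\zeta}$ (hence $v_3\mapsto qt^{1-n}t_0^{-1}t_1^{-1}v^{-1}p^{-\zeta}$), as in Proposition~\ref{proplimip2}. The purpose of this combined rescaling is that, under the hypotheses $-\tfrac12\le\zeta=\alpha_0=\alpha_1<0$ and $-\zeta\le\alpha_r\le 1+\zeta$ for $r>1$ (with $\sum_r\alpha_r=m+1$), every pole of the rescaled integrand that the contour must enclose is either $p$-independent or tends to $0$ as $p\to 0$, while every pole it must exclude is either $p$-independent or tends to $\infty$; moreover the specialisation of $v_2$ is precisely what keeps the factor $\Gamma(p^{1+2\zeta}t_0z_j)$ and the zeros $\theta(v_2z_j;q)=0$ from misbehaving at $\zeta=-\tfrac12$. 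Hence for all sufficiently small $|p|$ a single $p$-independent contour $C$ works — the one described in the statement — and on a neighbourhood of $C^n$ the integrand is holomorphic and converges uniformly, as $p\to 0$, to the claimed limiting integrand; one then interchanges limit and integral.

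The remaining work is to read off the limiting integrand and prefactor by taking valuations and leading coefficients of each elliptic gamma and theta factor, and this is the only laborious step. It is the $f=g=1$ specialisation of the computation already done for Proposition~\ref{proplimip2}: a factor $\Gamma(t_rz_j^{\pm 1})$ contributes $1/(t_rz_j;q)$ exactly when $\alpha_r=-\zeta$ and $1$ when $\alpha_r>-\zeta$; a prefactor term $\Gamma(t^{i-1}t_rt_sp^{\alpha_r+\alpha_s};p,q)$ survives as $(t^{i-1}t_rt_s;q)$ when $\alpha_r+\alpha_s=0$, the pair $\{r,s\}=\{0,1\}$ producing the $1_{\{\zeta=-1/2\}}$ exponent because $\alpha_0+\alpha_1=2\zeta$ equals $-1$ precisely at $\zeta=-\tfrac12$ and lies in $(-1,0)$ otherwise (so $\Gamma$ of a bounded argument); the $\theta(v_2z_j,v_3z_j;q)$ insertion passes to the limit as $\theta(vz_j,\,qz_j/(t^{n-1}vt_0t_1);q)$ after the rescaling; and the leftover monomial powers of $q,t,z_j$ coming from $lc(\theta(p^{\gamma}x;p))$ with $\gamma\notin\mathbb{Z}$ cancel, leaving no residual power of $p$ (which is why the statement, unlike the $f,g$-version, has no $p^{-val}$ factor). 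Collecting these contributions gives the asserted formula. I do not expect any real obstacle beyond the tedium of this last step, since the combinatorics is literally that of Proposition~\ref{proplimip2}; the one point requiring a little care is the verification that the contour can be frozen, i.e.\ checking the pole displacements against the inequalities on $\alpha$. $\qed$
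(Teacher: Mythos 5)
Your proposal is correct and follows essentially the same route as the paper: Section \ref{sec6} explicitly states that the proofs are identical to those of Section \ref{secac}, and your argument is precisely the proof of Proposition \ref{proplimip2} (symmetry breaking via \cite[Lemma 6.2]{Rainstrafo}, the rescaling $z_i\to p^{\zeta}z_i$ with $v_2\to vp^{-\zeta}$, freezing the contour, and interchanging limit and integral) with the $f,g$/$\tilde t_r$ bookkeeping stripped out.
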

The analogue of Proposition \ref{proplimip3}:
\begin{proposition}\label{proplimip3b}
Let $t_r\in \mathbb{C}$ be generic, satisfying $t^{2(n-1)} \prod_r t_r=q^{m+1}$. Suppose $\alpha \in \mathbb{R}^{2m+6}$ and $\zeta\in \mathbb{R}$ satisfy $-\frac12\leq \zeta<0$, $\sum_{r=0}^{2m+5} \alpha_r=m+1$, $\alpha_0+\alpha_1+\alpha_2 =\zeta$ and
$\zeta\leq \alpha_r\leq -\zeta$ for $r=0,1,2$ and
$-\zeta\leq \alpha_r\leq 1+\zeta$ for $r>2$. 
Then we have
\begin{align*}
\lim_{p\to 0} \II_m^{(n)}(t_rp^{\alpha_r}) & = 
\frac{(q;q)^n \prod_{j=1}^n  \prod_{\substack{r,s \leq 2\\ \alpha_r+\alpha_s=-1}} (t^{n-j} t_rt_s;q) 
\prod_{\substack{r\leq 2, s>2 \\ \alpha_r+\alpha_s=0}} (t^{n-j} t_rt_s;q)
 }{n! (t;q)^n }
\\& \qquad \times 
\int_{C^n} 
\prod_{1\leq j<k\leq n} \frac{(z_j/z_k,z_k/z_j;q)}{(tz_j/z_k,tz_k/z_j;q)}
\left( \frac{(z_jz_k,qz_jz_k/t;q)  }{(tz_jz_k,qz_jz_k;q)  }   \right)^{1_{\zeta=-1/2}}
\\& \qquad \qquad  \times \prod_{j=1}^n
\frac{\prod_{\substack{r\leq 2 \\ \alpha_r=- \zeta}} (q/t_rz_j;q) \prod_{\substack{r>2  \\ \alpha_r=1+\zeta}} (qz_j/t_r;q)}
{\prod_{\substack{r\leq 2 \\  \alpha_r=\zeta}} (t_r/z_j;q) \prod_{\substack{r>2  \\ \alpha_r=-\zeta}} (t_rz_j;q)}
\left( \frac{ (z_j^2;q) \prod_{\substack{r \leq 2 \\ \alpha_r=1/2}} (qz_j/t_r;q)}{(qz_j^2;q)\prod_{\substack{r \leq 2  \\ \alpha_r=-1/2}} (t_rz_j;q)}  \right)^{1_{\zeta=-1/2}}
\\ & \qquad \qquad \times \theta(qt^{1-n}z_j/t_0t_1t_2;q)
\frac{dz_j}{2\pi i z_j},
\end{align*}
where we have the usual conditions on the integration contour.
\end{proposition}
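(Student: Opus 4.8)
The plan is to run the argument of Proposition~\ref{proplimip3} essentially verbatim in the $m>0$ setting. Since $\II_m^{(n)}$ carries no functions $f,g$ — so that all $\tilde t_r$ coincide with $t_r$ and all $m_r=0$ — and no normalising prefactor, the bookkeeping is in fact slightly cleaner than in Proposition~\ref{proplimip3}, and there is no $val(f)+val(g)$ correction to track. After relabelling the parameters we may assume that the three distinguished indices are $0,1,2$.

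First I would break the $z_i\to 1/z_i$ symmetry of the integrand of $\II_m^{(n)}(t_r)$ exactly as in \eqref{eqspecialized3}: apply \cite[Lemma 6.2]{Rainstrafo} with $v_0=t_0$, $v_1=t_1$, $v_2=t_2$ and $v_3=qt^{1-n}/(t_0t_1t_2)$ (the balancing $t^{n-1}v_0v_1v_2v_3=q$ needed there holds automatically, independently of the condition $t^{2(n-1)}\prod_r t_r=q^{m+1}$). As in \eqref{eqspecialized3} this splits the cross terms as $\Gamma(ptz_jz_k,tz_j/z_k,tz_k/z_j,t/z_jz_k;p,q)$ over $\Gamma(pz_jz_k,z_j/z_k,z_k/z_j,1/z_jz_k;p,q)$, replaces $\Gamma(t_rz_j^{\pm1};p,q)$ by $\Gamma(pt_rz_j,t_r/z_j;p,q)$ for $r=0,1,2$, inserts the factor $\theta(qt^{1-n}z_j/t_0t_1t_2;q)$, and turns the prefactor $\prod_{i=1}^n\prod_{0\le r<s\le 2m+5}(t^{i-1}t_rt_s;p,q)$ into the product in which the entries $(t^{n-j}t_rt_s;p,q)$ with $\{r,s\}\subset\{0,1,2\}$ are split off (cancelling the new numerator $\Gamma$'s) while the $\theta(t^{n-j}t_0t_2,t^{n-j}t_1t_2;q)$-type denominators appear. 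Because there are no $m_r$, the monomial factors $q^{n\binom{m_a+m_b}{2}}$, $t_a^{-nm_a}$, etc.\ occurring in \eqref{eqspecialized3} are all trivial here.

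Next I would substitute $t_r\mapsto t_rp^{\alpha_r}$ and rescale $z_i\mapsto p^{\zeta}z_i$, carrying the contour $C$ along. The crucial point, identical to Propositions~\ref{propPI}, \ref{proplimip2} and \ref{proplimip3}, is that after this substitution every pole of the integrand which must lie inside $C$ (the towers $q^jp^{\zeta}t_0$, $q^jp^{\zeta}t_1$ arising from the $\Gamma(t_r/z_j)$ factors, the images $tq^jC$, and, when $\zeta=-1/2$, the poles of $\Gamma(pz_j^2)$) either remains $p$-independent or tends to $0$ as $p\to0$, while every pole which must lie outside either remains $p$-independent or tends to $\infty$. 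The hypotheses $\zeta\le\alpha_r\le-\zeta$ for $r\le2$ and $-\zeta\le\alpha_r\le1+\zeta$ for $r>2$ are exactly what make this dichotomy hold, with the equality cases $\alpha_r=-\zeta$, $\alpha_r=1+\zeta$, $\alpha_r=\pm\frac12$ supplying the ``boundary'' poles that stay at finite nonzero positions. Hence for all sufficiently small $p$ one can fix a $p$-independent contour $C$ meeting all the stated requirements, on which the integrand is holomorphic in a neighbourhood of $C^n$ and converges uniformly: each $\Gamma(p^{\gamma}x;p,q)$ with $\gamma>0$ tends to $1$, each $\Gamma(x;p,q)$ to $1/(x;q)$, each $\theta(p^{\gamma}x;q)$ with $\gamma>0$ to $1$, each $\theta(x;q)$ is unchanged, and the split prefactor collapses, the surviving $q$-Pochhammer entries being $(t^{n-j}t_rt_s;q)$ precisely for the pairs with $\alpha_r+\alpha_s=-1$ ($r,s\le2$) or $\alpha_r+\alpha_s=0$ ($r\le2$, $s>2$), all the $\Gamma(pt^{n-j}\cdots)$'s going to $1$. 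Interchanging limit and integral then yields the asserted formula.

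The only genuine work — and the step most prone to sign and monomial errors — is the diagonal bookkeeping: deciding which of $(t_rz_j;q)$, $(q/t_rz_j;q)$, $(qz_j/t_r;q)$, $(z_j^2;q)$, $(qz_j^2;q)$ survive in the limiting integrand, and with which exponents, sorted according to whether $\alpha_r$ equals $\zeta$, $-\zeta$, $1+\zeta$, $\frac12$ or $-\frac12$, together with the handling of the $1_{\zeta=-1/2}$ factors that appear when the rescaled $\Gamma(pz_j^2,1/z_j^2;p,q)$ contributes a $(z_j^2;q)/(qz_j^2;q)$ pair. This computation is routine and coincides line by line with the corresponding one in the proof of Proposition~\ref{proplimip3}, so I would simply refer to that proof rather than repeat it here.
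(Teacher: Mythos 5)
Your proposal matches the paper's intended argument exactly: Section \ref{sec6} gives no separate proof for this proposition, stating only that the proofs are identical to those of Section \ref{secac}, and your reconstruction — break the $z_i\to 1/z_i$ symmetry via \cite[Lemma 6.2]{Rainstrafo} with $v_0,v_1,v_2$ specialized to $t_0,t_1,t_2$ as in \eqref{eqspecialized3}, rescale $z_i\to p^{\zeta}z_i$ and $t_r\to t_rp^{\alpha_r}$, verify that the inequalities on the $\alpha_r$ force the included/excluded pole towers to stay fixed or escape to $0$/$\infty$ so a $p$-independent contour exists, then interchange limit and integral — is precisely that argument, simplified by the absence of $f$, $g$ and the $\tilde t_r$, $m_r$ bookkeeping. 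The deferred "diagonal bookkeeping" is treated with the same level of detail as the paper itself affords it.
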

And finally the series limit, which is the analogue of Proposition \ref{propsumlim}.
\begin{proposition}\label{propsumlimb}
Let $t_r\in \mathbb{C}$ be generic such that $t^{2(n-1)}\prod_{r=0}^{2m+5}=q^{m+1}$.
Let $\sum_{r=0}^{2m+5} \alpha_r=m+1$. 
Suppose $-\frac12 \leq \alpha_0 <0$ and $1+\alpha_0\geq \alpha_r > \alpha_0$ for $r>0$ and such that $1\geq \alpha_r+\alpha_s> 0$ for $r,s\neq 0$. 
Moreover assume $2\alpha_0 = \sum_{r>0: \alpha_r+\alpha_0 <0} (\alpha_r+\alpha_0)$.

Then we get
\begin{align*}
\lim_{p\to 0} & \II_m^{(n)}(t_rp^{\alpha_r}) =
\prod_{j=0}^{n-1}  \frac{\prod_{r:\alpha_r=1+\alpha_0} (qt^{j} t_0/ t_r;q) } {(t^{n-j};q) (q t^{n-1+i} t_0^2;q)^{1_{\{\alpha_0=-1/2\}}} 
%\prod_{0\leq r<s\leq 5: \alpha_r+\alpha_s=1}(q / t^{j} t_r t_s;q)
} 
\\ & \times 
\sum_{\lambda} 
\tilde \Delta_\lambda(t^{2(n-1)} t_0^2)^{1_{\{\alpha_0=-1/2\}}}
\left( \frac{\tilde C_{\lambda}^0(t^n)}{\tilde C_{\lambda}^-(q,t)} 
(-t^{5(n-1)}  t_0^4 q^2)^{-|\lambda|} q^{-3n(\lambda')} t^{5n(\lambda)} \right)^{1_{\{\alpha_0\neq -1/2\}}}
\\ &  \times 
 \frac{ \prod_{r> 0: \alpha_r=-\alpha_0} \tilde C_{\lambda}^0 (t^{n-1} t_0 t_r)}{
 \prod_{r> 0: \alpha_r=1+\alpha_0} \tilde C_{\lambda}^0(q t^{n-1} t_0/t_r)}
\left((-1)^{A+1} t^{(n-1)(A+3)} t_0^{A+2} q^{2} \prod_{r: \alpha_r+\alpha_0<0}  t_r  \right)^{|\lambda|} q^{(A+1) n(\lambda')} t^{-(A+1) n(\lambda)}
\end{align*}
where $A=|\{ r~|~ r>0,  \alpha_r<-\alpha_0\}|$.
\end{proposition}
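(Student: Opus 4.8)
The plan is to follow the proof of Proposition \ref{propsumlim} essentially verbatim, with the integral $\II_m^{(n)}(t_rp^{\alpha_r})$ of Definition \ref{defhm} playing the role of the bilinear form. Since no functions $f,g$ occur here, there is no need to pass to auxiliary functions $\hat f,\hat g$, nor to use the $z_i\to 1/z_i$-breaking identity; everything else carries over. The balancing condition $t^{2(n-1)}\prod_{r=0}^{2m+5}t_r=q^{m+1}$ and the extra parameters $t_6,\dots,t_{2m+5}$ only change the bookkeeping.

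First I would apply the residue lemma stated in the proof of Proposition \ref{propsumlim}. Its hypotheses only require the integrand to have the product form $\prod_i\prod_r(a_rz_i,b_r/z_i;p,q)\prod_{i<j}(tz_i^{\pm1}z_j^{\pm1};p,q)$ times a $BC_n$-symmetric function with the stated antisymmetry, which the integrand of $\II_m^{(n)}$ has regardless of the number of parameters. Shifting the contour over the poles at $q^{k_1}t_0$ — and simultaneously over the reciprocal poles $q^{-k_1}/t_0$, whose residues match those at $q^{k_1}t_0$ and hence contribute an overall factor $2$ — produces a sum of $(n-1)$-dimensional residue integrals. Iterating $n$ times exactly as before gives an $n$-fold sum of $0$-dimensional residues, plus an $(n-1)$-fold sum of univariate integrals, an $(n-2)$-fold sum of bivariate integrals, and so on. Collecting the $(p,q)$-Pochhammer prefactor and simplifying with the alternative expressions \eqref{eqaltdefcl} for the $C_\lambda^\epsilon$, the $n$-fold residue sum is identified, up to an overall monomial scaling factor, with the $m$-parameter discrete sum
\[
\sum_\lambda\Delta_\lambda\bigl(t^{2(n-1)}t_0^2\bigm| t^n,t^{n-1}t_0t_1,\dots,\widehat{t^{n-1}t_0t_0},\dots,t^{n-1}t_0t_{2m+5}\bigr)
\]
of Lemma \ref{lemhighermsym}.

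Next one substitutes $t_r\to t_rp^{\alpha_r}$ and lets the truncation grow slowly, $M=\epsilon\log_{|q|}(|p|)$ with $0<\epsilon<\min_{r>0}(\alpha_r-\alpha_0)$, so that $|q^M|=|p|^\epsilon$. The hypothesis $2\alpha_0=\sum_{r>0:\alpha_r+\alpha_0<0}(\alpha_r+\alpha_0)$ is precisely the condition that makes the valuation of each summand independent of $\lambda$ and equal to that of the $\lambda=0$ term, so the summand has a limit uniform in $\lambda$; the explicit leading coefficient is then read off from \eqref{eqlimc} and the limit formulas for $\Delta_\lambda(\,\cdot\,|\,t^n)$ recorded in Section \ref{secnot}, yielding the stated expression. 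It remains to show that every term carrying a genuine multidimensional integral vanishes in the limit even though their number grows like a power of $|p|^{-\epsilon}$; this is exactly what the estimates in Appendix \ref{apb} (Lemmas \ref{lempiv} and \ref{lempiv2}) provide, since they are formulated for an integrand of precisely this shape and the extra factors $\Gamma(t_rz_i^{\pm1})$ with $\alpha_r>0$ merely contribute additional $p$-positive quantities.

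The main obstacle is organizational rather than conceptual: carefully tracking the $m$-dependent prefactor of Definition \ref{defhm} through the iterated residue calculus, and confirming that the appendix estimates — written for the six-parameter integrand — are unaffected by the additional parameters, which holds because each new factor $\Gamma(t_rz_i^{\pm1})$ with $\alpha_r>0$ is bounded and tends to $1$ on the relevant contours. Unlike in Proposition \ref{propsumlim} we have no evaluation of $\II_m^{(n)}$, so we cannot assert a priori that the limiting series is non-degenerate; but this is immaterial, since we are simply computing the limit of a genuine holomorphic integral, whose value is well defined regardless.
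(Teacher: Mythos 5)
Your proposal takes essentially the same route as the paper: Section \ref{sec6} explicitly states that the proofs there are identical to the earlier ones, and your argument is exactly the proof of Proposition \ref{propsumlim} (residue lemma, iterated contour shifts, identification of the $n$-fold residue sum with the higher-$m$ discrete sum, slowly growing truncation $M$, and appendix bounds for the leftover integrals) adapted to $\II_m^{(n)}$. One slip: the appendix estimates you need are Lemma \ref{lemb4} and the lemma following it (the $c(\zeta)\leq 0$ bound on $\hat P_{II}$, used for Proposition \ref{propsumlim}), not Lemmas \ref{lempiv} and \ref{lempiv2}, which concern the bilateral-series polytope $P_{IV}$ of Section \ref{sec5}.
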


Remark.  If $\alpha_1+\alpha_2=0$, say, then the above Proposition does not
apply, and one can either use Proposition \ref{proplimip3b} to get an
expression as an integral, or (assuming $|t_1t_2|<1$ for convergence) may
fix the series expression by including an additional factor $\prod_{0\le
  j<n} (t_1t_2t^{n-j-1};q)$.

\appendix
\section{Bounds on the integrand}\label{apb}
In this section we make explicit the bounds on the integrand used in the proof of Proposition \ref{propsumlim}. We say that a statement holds for $z$ away from the set $P$ (usually of zeros or poles of some function) if for all $\epsilon>0$ it holds for all $z$ such that $|1-z/p|>\epsilon$ for all $p\in P$.

\begin{lemma}\label{lemb1}
For all $M>0$ and all $q$ with $|q|<1$ there exist constants $C_1, C_2>0$ such that 
\[
C_1 \leq |(z;q)| \leq C_2
\]
for all $z$ with $|z|\leq M$ and $z$ away from the set of zeros of $(z;q)$. 
\end{lemma}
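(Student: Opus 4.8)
The plan is to prove a two-sided bound on $|(z;q)|$ uniformly for $z$ in a disk of radius $M$ (excluding small neighborhoods of the zeros), by exploiting the fact that $(z;q) = \prod_{r\geq 0}(1-zq^r)$ is an entire function with no zeros accumulating in the finite plane except at the points $q^{-r}$, $r\geq 0$, which all lie outside any fixed disk once $r$ is large. First I would fix $M$ and $q$ with $|q|<1$, and choose an integer $R$ so large that $|q^{-R}| > 2M$, equivalently $|q|^R < 1/(2M)$. This splits the product as $(z;q) = \left(\prod_{r=0}^{R-1}(1-zq^r)\right)\cdot\left(\prod_{r\geq R}(1-zq^r)\right)$, and I would bound the two factors separately.

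For the tail factor $\prod_{r\geq R}(1-zq^r)$: when $|z|\leq M$ and $r\geq R$ we have $|zq^r| \leq M|q|^R|q|^{r-R} \leq \tfrac12 |q|^{r-R}$, so each factor satisfies $|1-zq^r| \in [1 - \tfrac12|q|^{r-R},\, 1+\tfrac12|q|^{r-R}]$, and since $\sum_{r\geq R}|q|^{r-R} = (1-|q|)^{-1}<\infty$ the infinite product of these factors converges and is bounded above and below by positive constants $C_2'$ and $C_1'$ depending only on $|q|$ (one can use $\prod(1+a_r)\leq \exp(\sum a_r)$ and, since all $\tfrac12|q|^{r-R}<1$, $\prod(1-a_r)\geq \exp(-2\sum a_r)$ or a similar elementary estimate, noting the tail factors are never zero on the disk). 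For the finite factor $g(z):=\prod_{r=0}^{R-1}(1-zq^r)$, this is a polynomial of degree $R$ in $z$, hence continuous, so on the compact disk $|z|\leq M$ it is bounded above by some constant $C_2''=\max_{|z|\leq M}|g(z)|$. It is also bounded below away from its zeros: given $\epsilon>0$, the set $K_\epsilon = \{|z|\leq M : |1-zq^r|\geq \epsilon \text{ for } 0\leq r<R\}$ is compact and $g$ is nonvanishing on it (its only zeros in the disk are the points $q^{-r}$, $0\leq r<R$, which are excluded), so $|g|$ attains a positive minimum $C_1''(\epsilon)>0$ there. Combining, $C_1 := C_1'\,C_1''(\epsilon)$ and $C_2 := C_2'\,C_2''$ work — with the caveat, matching the statement's convention that "$z$ away from the set of zeros" means excluding $\epsilon$-neighborhoods for each $\epsilon$, so $C_1$ is allowed to depend on $\epsilon$.

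I do not expect any real obstacle here; the only thing to be careful about is the dependence of the lower bound on the excluded-neighborhood parameter $\epsilon$, which is exactly why the statement is phrased with the "away from the set $P$" convention defined just before the lemma. One should also note that for $z$ in the disk none of the tail factors $(1-zq^r)$ with $r\geq R$ can vanish (since $|zq^r|\leq \tfrac12<1$), so all the zeros of $(z;q)$ inside the disk come from the finite polynomial factor $g$, which is what makes the compactness argument for the lower bound clean. The upper bound requires no exclusion of zeros at all. Hence the lemma follows.
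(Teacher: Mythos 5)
Your proof is correct. The underlying reason is the same as in the paper's argument --- only finitely many factors $1-zq^r$ can vanish or be large on the disk $|z|\leq M$, and the rest form a uniformly convergent product bounded away from $0$ and $\infty$ --- but the execution is genuinely different. The paper splits the \emph{domain}: it first treats $|z|\leq |q|^{1/2}$ directly, then inducts over the annuli $|q|^{-n+1/2}\leq |z|\leq |q|^{-n-1/2}$, peeling off one factor $1-z$ at a time via $(z;q)=(1-z)(qz;q)$ and using $|1-z|\geq\epsilon$ for the dangerous factor; this yields fully explicit constants, with the lower bound proportional to $\epsilon$ and of the form $\epsilon(-1)^n(|q|^{1/2-n};|q|)$. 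You instead split the \emph{product} at a fixed index $R$ with $|q|^R<1/(2M)$, bound the tail by elementary convergent-product estimates (each tail factor lies in $[1/2,3/2]$, so it never vanishes on the disk), and handle the finite head $g(z)=\prod_{r<R}(1-zq^r)$ by continuity and compactness of $K_\epsilon$. Your route is shorter and avoids the induction, at the cost of non-explicit constants; since the subsequent lemmas in the appendix only ever invoke the existence of $C_1,C_2$, nothing is lost. You also correctly identify the one genuinely delicate point, namely that $C_1$ must be allowed to depend on $\epsilon$ under the ``away from the set of zeros'' convention, which matches how the paper's own lower bound degenerates linearly in $\epsilon$.
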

\begin{proof}
For $|z|\leq |q|^{1/2}$ we have the bound
\[
|(z;q)| = \prod_{k\geq 0} |1-zq^k| \leq \prod_{k\geq 0} (1+|z||q|^k) \leq \prod_{k\geq 0} (1+|q|^{k+1/2})
 = (-|q|^{1/2};|q|),
\]
and
\[
|(z;q)| =  \prod_{k\geq 0} |1-zq^k| \geq \prod_{k\geq 0} (1-|z||q|^k) \geq 
\prod_{k\geq 0} (1-|q|^{k+1/2})
= (|q|^{1/2};|q|).
\]
Thus we find for $M\leq |q|^{1/2}$ the bound 
\[
(|q|^{1/2};|q|) \leq |(z;q)| \leq (-|q|^{1/2};|q|).
\]

Suppose $|1-z/p| \geq \epsilon$ for all zeros of $(z;q)$. Then in particular we have $|1-z|\geq \epsilon$. Thus for $|q|^{1/2}\leq |z|\leq |q|^{-1/2}$ we get 
\[
|(z;q)| = |1-z| |(qz;q)| \leq |1-z| (-|q|^{1/2};|q|) \leq (1+|q|^{-1/2})(-|q|^{1/2};|q|) =
(-|q|^{-1/2};|q|) 
\]
and
\[
|(z;q)| = |1-z| |(qz;q)| \geq \epsilon (|q|^{1/2};|q|).
\]
Using induction we can subsequently easily prove that
for $|q|^{-n+1/2} \leq |z|\leq |q|^{-n-1/2}$ (with $n\geq 0$) we have (for $z$ away from the zeros of $(z;q)$) 
\[
\epsilon (-1)^{n} (|q|^{1/2-n};|q|)
\leq |(z;q)| \leq (-|q|^{-n-1/2};|q|).
\]
Note the $(-1)^n$ factor on the left hand side makes it positive.
\end{proof}

\begin{lemma}
For all $M>0$ and all $q$ with $|q|<1$ there exist constants $C_1, C_2>0$ such that 
for all $p$ with $|p|<|q|$ we have
\[
C_1 \leq |(z;p,q)| \leq C_2
\]
for all $z$ with $|z|\leq M$ and $z$ away from the set of zeros of $(z;p,q)$. 
\end{lemma}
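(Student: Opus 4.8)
The plan is to reduce the double-infinite-product estimate for $(z;p,q)=\prod_{i,j\geq 0}(1-zp^iq^j)$ to the single-product estimate of Lemma \ref{lemb1}. First I would write
\[
(z;p,q) = \prod_{i\geq 0} (zp^i;q),
\]
so that the task becomes controlling an infinite product of the functions $(zp^i;q)$ as $i$ ranges over $\mathbb{Z}_{\geq 0}$. Since $|p|<|q|<1$, the arguments $zp^i$ shrink geometrically, so for all but finitely many $i$ the factor $(zp^i;q)$ is close to $1$; the finitely many remaining factors are handled by Lemma \ref{lemb1} provided $z$ stays away from the zero set.

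The key steps, in order, are: (1) Fix $M$ and $q$. Choose $i_0$ (depending only on $M$ and $|q|$) such that $|zp^i|\leq |q|^{1/2}$ for all $i\geq i_0$ whenever $|z|\leq M$; this is possible because $|p|<|q|<1$ forces $|zp^i|\to 0$ geometrically, uniformly for $|z|\leq M$. (2) For $i\geq i_0$, use the elementary two-sided bound established inside the proof of Lemma \ref{lemb1}: for $|w|\leq |q|^{1/2}$ one has $(|q|^{1/2};|q|)\leq |(w;q)|\leq (-|q|^{1/2};|q|)$, and more precisely $\prod_{k\geq 0}(1-|w||q|^k)\leq |(w;q)|\leq \prod_{k\geq 0}(1+|w||q|^k)$. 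Hence
\[
\prod_{i\geq i_0} |(zp^i;q)| \geq \prod_{i\geq i_0}\prod_{k\geq 0}(1 - M|p|^i|q|^k) \geq \prod_{i\geq i_0}\prod_{k\geq 0}(1 - M|q|^{i/2}|q|^k),
\]
and the analogous upper bound with $+$ signs; both infinite products converge to strictly positive finite constants because $\sum_{i\geq i_0}\sum_{k\geq 0} M|q|^{i/2+k}<\infty$. This yields constants $C_1', C_2'>0$ with $C_1'\leq \prod_{i\geq i_0}|(zp^i;q)|\leq C_2'$ for all $|z|\leq M$, \emph{with no genericity hypothesis needed on this tail} since the factors are bounded away from $0$. (3) For the finitely many indices $0\leq i< i_0$, apply Lemma \ref{lemb1} directly to each $(zp^i;q)$ with bound $M$ (note $|zp^i|\leq M$). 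One must check that $z$ away from the zeros of $(z;p,q)$ implies each $zp^i$ is away from the zeros of $(\cdot;q)$: indeed the zero set of $(z;p,q)$ is $\bigcup_{i}\{zp^i\in q^{-\mathbb{Z}_{\geq 0}}\}$, so a uniform lower bound on $|1-z/\zeta|$ over all zeros $\zeta$ of $(z;p,q)$ gives, for each fixed $i<i_0$, a uniform (possibly $|p|$-dependent, but $i_0$ is finite and $|p|<|q|$ is the only constraint) lower bound on $|1-zp^i/\eta|$ over zeros $\eta$ of $(\cdot;q)$. A mild care point: the constant $\epsilon$ in Lemma \ref{lemb1} may need to be replaced by $\epsilon\cdot|p|^{\text{(bounded)}}$, but since $|p|<|q|$ is fixed in the statement, this only changes the final constants $C_1, C_2$, which is allowed. (4) Multiply the bounds from (2) and (3) to obtain $C_1\leq |(z;p,q)|\leq C_2$.

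The main obstacle I anticipate is step (3): reconciling the ``away from the zero set'' hypothesis across the two descriptions of the zero sets, and making sure the finitely-many-factor estimate is genuinely uniform in $z$ and does not secretly depend on $p$ in a way that degenerates as $p\to 0$. This is handled by observing that $i_0$ depends only on $M$ and $|q|$, and that for $i<i_0$ the factor $|p|^i$ is bounded below by $|q|^{i_0}$, so all the constants stay uniform over $|p|<|q|$. The tail estimate in step (2) is the technically cleanest part and requires no genericity, which is the conceptual reason the lemma is true; everything else is bookkeeping built on Lemma \ref{lemb1}.
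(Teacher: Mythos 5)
Your proof is correct and takes essentially the same route as the paper, which writes $(z;p,q)=(zp^{n};p,q)\prod_{r=0}^{n-1}(zp^{r};q)$, bounds the finite head by Lemma \ref{lemb1} and the tail by the same elementary small-argument estimate you use in step (2). (Minor remark: your intermediate lower bound $1-M|q|^{i/2}|q|^{k}$ can be nonpositive at $i=i_0$ when $M$ is large, but it is superfluous, since the product $\prod_{i\geq i_0}\prod_{k\geq 0}(1-M|p|^{i}|q|^{k})$ is already positive and convergent because $M|p|^{i}\leq |q|^{1/2}|q|^{i-i_0}$ for $i\geq i_0$.)
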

\begin{proof}
The proof can be done in a very similar way to the one above, however we use a short cut using the above result. First we note that for $|z|\leq |q|^{1/2}$ we have
\[
|(|q|^{1/2};|q|,|q|)| \leq |(z;p,q)|  \leq |(-|q|^{1/2};|q|,|q|)|,
\]
which we prove as in the previous lemma.
Then we use that for $|z|\leq |q|^{1/2-n}$ (for $n\geq 0$) we get 
\[
|(z;p,q)| = |(zp^{n};p,q)| \prod_{r=0}^{n-1} |(zp^r;q)|.
\]
By our assumption on $|p|$ we see $|zp^{n}| \leq |z||q|^{n} \leq |q|^{1/2}$, so the first part can be bounded (above and below), and the remaining product is a finite product (the length of which is independent of $z$ as long as $|z|\leq |q|^{-n}$) and can be bounded using the above proposition. 
\end{proof}

\begin{lemma}\label{lemb3}
For all $M>0$ and all $\alpha>0$ and all $q$ with $|q|<1$ there exist constants $C_1, C_2>0$  such that for all 
$p$ with $|p|<|q|$ we have
\[
C_1 \leq |(zp^{-\alpha};q)|  |z|^{-k\alpha} |q|^{\binom{k\alpha+1}{2}} \leq  C_2,
\]
for all $z$ away from the zeros of $(zp^{-\alpha};q)$ and of $(qp^{\alpha}/z;q)$ with $1/M<|z|<M$, where we write $p=xq^k$ for some
$x\in \mathbb{C}$ with $|x|=1$ and $k\in \mathbb{R}$.
\end{lemma}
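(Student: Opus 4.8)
The plan is to peel off all the unbounded growth of the infinite product $(zp^{-\alpha};q)$ \emph{exactly}, leaving only factors that are trapped between positive constants by Lemma \ref{lemb1}. Throughout I would fix $\epsilon>0$ and assume $z$ is $\epsilon$-away from the zeros of $(zp^{-\alpha};q)$ and of $(qp^{\alpha}/z;q)$, with $1/M<|z|<M$; since $|p|<|q|<1$ and $|x|=1$ we have $k>0$, and I write $k\alpha=N+s$ with $N=\lfloor k\alpha\rfloor\in\mathbb{Z}_{\geq 0}$ and $s=\{k\alpha\}\in[0,1)$.

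First I would use $\theta(y;q)=(y;q)(q/y;q)$ to write
\[
(zp^{-\alpha};q)=\frac{\theta(zp^{-\alpha};q)}{(qp^{\alpha}/z;q)},
\]
since $q/(zp^{-\alpha})=qp^{\alpha}/z$. Here $|qp^{\alpha}/z|=|q||p|^{\alpha}|z|^{-1}<|q|M$, and $z$ is away from the zeros of $(qp^{\alpha}/z;q)$, so Lemma \ref{lemb1} bounds $|(qp^{\alpha}/z;q)|$ above and below by positive constants depending only on $M,q$. Next, since $p^{-\alpha}=x^{-\alpha}q^{-N}q^{-s}$, setting $w':=zx^{-\alpha}q^{-s}$ we get $zp^{-\alpha}=q^{-N}w'$, and iterating the functional equation $\theta(qy;q)=-y^{-1}\theta(y;q)$ gives the exact identity
\[
\theta(zp^{-\alpha};q)=(-1)^{N}q^{-\binom{N+1}{2}}(w')^{N}\,\theta(w';q).
\]
Now $|w'|=|z||q|^{-s}\in(1/M,\,M/|q|)$ and $|q/w'|=|q|^{1+s}|z|^{-1}<|q|M$ are both bounded, and $\theta(w';q)=(w';q)(q/w';q)$; since $w'\in q^{\mathbb{Z}}$ is equivalent to $zp^{-\alpha}\in q^{\mathbb{Z}}$, which is exactly the union of the zero locus $zp^{-\alpha}\in q^{-\mathbb{Z}_{\geq 0}}$ of $(zp^{-\alpha};q)$ with the zero locus $zp^{-\alpha}\in q^{1+\mathbb{Z}_{\geq 0}}$ of $(qp^{\alpha}/z;q)$, the point $w'$ stays away from the zeros of $(w';q)$ and $(q/w';q)$, and Lemma \ref{lemb1} bounds $|\theta(w';q)|$ above and below.

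Finally I would combine the pieces. Using $|x|=1$ and $|w'|=|z||q|^{-s}$,
\[
|(zp^{-\alpha};q)|\,|z|^{-k\alpha}|q|^{\binom{k\alpha+1}{2}}
=\frac{|\theta(w';q)|}{|(qp^{\alpha}/z;q)|}\;|z|^{\,N-k\alpha}\,|q|^{\binom{k\alpha+1}{2}-\binom{N+1}{2}-Ns}.
\]
A short computation with $k\alpha=N+s$ gives $N-k\alpha=-s$ and $\binom{k\alpha+1}{2}-\binom{N+1}{2}-Ns=\tfrac{1}{2}s(s+1)$, both controlled because $s\in[0,1)$: indeed $|z|^{-s}\in(1/M,M)$ (as $1/M<|z|<M$) and $|q|^{s(s+1)/2}\in(|q|,1]$. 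Since every factor on the right is now bounded above and below by positive constants depending only on $M$, $\alpha$, $q$ (and $\epsilon$), this yields the claim.

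The step I expect to be the only real obstacle is verifying that ``$z$ away from the zeros of $(zp^{-\alpha};q)$ and of $(qp^{\alpha}/z;q)$'' genuinely transfers, \emph{uniformly in $p$}, to the three single $q$-Pochhammer symbols $(w';q)$, $(q/w';q)$, $(qp^{\alpha}/z;q)$ to which Lemma \ref{lemb1} is applied. The zero loci match up bijectively via maps of the shape $z\mapsto c/z$, so one gets identities like $|1-w'/c_{w'}|=|1-z/c_z|$ (which is directly $\geq\epsilon$ for $(w';q)$ and $(q/w';q)$) but also the reciprocal form $|1-(qp^{\alpha}/z)/c|=|1-c'/z|$ for the remaining factor; the resolution is that all arguments involved lie in a fixed bounded annulus, so $|1-a/b|$ is bounded below whenever $|a|$ and $|b|$ are not comparable, and is a bounded multiple of $|1-b/a|\geq\epsilon$ when they are — hence all the required ``away from zeros'' hypotheses hold with a single constant $\epsilon'=\epsilon'(\epsilon,M,q)>0$ independent of $p$.
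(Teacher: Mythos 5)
Your proof is correct and, despite the detour through $\theta(zp^{-\alpha};q)$ and its quasi-periodicity, it lands on exactly the decomposition the paper uses: the paper splits off the finite product $(zx^{-\alpha}q^{-k\alpha};q)_{\lfloor k\alpha\rfloor}$ and reverses it to obtain $(zx^{-\alpha}q^{-\{k\alpha\}};q)(q^{1+\{k\alpha\}}x^{\alpha}/z;q)/(q^{1+k\alpha}x^{\alpha}/z;q)$ times the same monomial, which coincides term-for-term with your $(w';q)(q/w';q)/(qp^{\alpha}/z;q)$ times $(-1)^N q^{-\binom{N+1}{2}}(w')^N$. Your closing paragraph on transferring the ``away from zeros'' hypothesis uniformly in $p$ to the reciprocal-argument Pochhammer symbols is a point the paper passes over silently, and it is handled correctly here.
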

Note that for $\alpha\leq 0$ we can use the first lemma to see that there exist constants $C_1$ and $C_2$ such that
\[
C_1 \leq |(zp^{-\alpha};q)|   \leq  C_2.
\]
\begin{proof}
We write
\begin{align*}
(zp^{-\alpha};q) 
&= (zx^{-\alpha} q^{-k\alpha};q) 
= (zx^{-\alpha} q^{-\{k\alpha\}};q)(zx^{-\alpha} q^{-k\alpha};q)_{\lfloor k\alpha\rfloor} 
\\ &= (zx^{-\alpha} q^{-\{k\alpha\}};q) (q^{1+\{k\alpha\}} x^{\alpha}/z;q)_{\lfloor k\alpha\rfloor}
(-zx^{-\alpha} q^{-k\alpha})^{\lfloor k\alpha\rfloor} q^{\binom{\lfloor k\alpha\rfloor}{2}} 
\\ &= \frac{(zx^{-\alpha} q^{-\{k\alpha\}};q) (q^{1+\{k\alpha\}} x^{\alpha}/z;q)}{
 (q^{1+k\alpha} x^{\alpha}/z;q)} (-zx^{-\alpha} q^{-k\alpha})^{\lfloor k\alpha\rfloor} q^{\binom{\lfloor k\alpha\rfloor}{2}}.
\end{align*}
The three $q$-Pochhammer symbols in the final expression all have arguments which are bounded by
$M/q$, resp. $Mq$, resp. $Mq$, so we can bound those (away from the zeros) by the previous lemma.
For the remaining part we note that $|-x^{-\alpha}|=1$, so that does not change the norm and that
\[
\binom{\lfloor k\alpha \rfloor}{2} - k\alpha \lfloor k\alpha \rfloor = 
- \binom{k\alpha+1}{2} + \frac12 \{k\alpha\}^2.
\]
Combining everything together gives the desired bound.
\end{proof}

\begin{lemma}\label{lemb4}
Consider the integrand
\[
I(z) = 
\frac{(q;q)^n (p;p)^n \Gamma(t;p,q)^n}{2^n n! \prod_{j=1}^n\Gamma(t^j;p,q) \prod_{ r<s} \Gamma(t^{n-j} t_rt_s;p,q)} 
 \prod_{1\leq j<k\leq n} \frac{\Gamma(tz_j^{\pm 1} z_k^{\pm 1};p,q)}{\Gamma(z_j^{\pm 1}z_k^{\pm 1};p,q)}
\prod_{j=1}^n \frac{\prod_{r=0}^5 \Gamma(t_rz_j^{\pm 1};p,q)}{\Gamma(z_j^{\pm 2};p,q)} \frac{1}{2\pi i z_j}.
\]
Let $\alpha$ be in the polytope $\hat P_{II}$ (which is $P_{II}$ from \cite{vdBR} except that we insist that some bounding inequalities are strict) given by the equations 
\begin{multline*}
-\frac12\leq \alpha_0<0, \qquad  1+\alpha_0\geq \alpha_r > \alpha_0, \quad (1\leq r\leq 5), \qquad 1\geq \alpha_r+\alpha_s\geq 0, \quad (1\leq r<s\leq 5), \\
2\alpha_0 = \sum_{r>0: \alpha_r+\alpha_0 <0} (\alpha_r+\alpha_0), \qquad \sum_r \alpha_r=1.
\end{multline*}
Write $z_i = x_i p^{\zeta_i}$ with $|x_i|=1$. Then there exist constants $C_1,C_2>0$ such that 
for all $p$ and all $\alpha_0\leq \zeta_i \leq -\alpha_0$ we have away from the zeros and poles of $I(z)$ 
\[
C_1 \leq |I(z)| |q|^{l^2 c(\zeta)} \left( d(t_r;q,t)  \right)^l  \leq C_2
\]
where we write $p=yq^l$ for some $|y|=1$ (thus $l=\log_{|q|}(|p|)$). Here $c(\zeta)$ is given by
\[
c(\zeta) = \frac12 \sum_i \left[
2\zeta_i^2 - 2 \alpha_0^2 + 
\sum_{r\geq 1: \alpha_r<-\alpha_0} (\alpha_r+\alpha_0)^2 - 
\sum_{r\geq 1: \alpha_r< -|\zeta_i|} (2\alpha_r^2 + 2\zeta_i^2) -\sum_{r\geq 1:-|\zeta_i|\leq \alpha_r< |\zeta_i|} (\alpha_r-|\zeta_i|)^2 \right].
\]
and $d(t_r;q,t)$ by 
\begin{multline*}
d(t_r;q,t) = \prod_{i=1}^n |t|^{2\zeta_i(i-1) + 2\alpha_0(n-i)}
|q|^{\sum_{r\geq 1:\alpha_r<-\zeta_i} \alpha_r + \frac12 \sum_{r\geq 1:-\zeta_i\leq \alpha_r<\zeta_i} (\alpha_r-\zeta_i)}
\\ \times \prod_{r\geq 1: \alpha_r<-\zeta_i} |t_r|^{\alpha_0-\alpha_r}
\prod_{r\geq1 : -\zeta_i\leq \alpha_r<\zeta_i} |t_r|^{\alpha_0+\zeta_i} \prod_{r\geq 1: \zeta_i\leq \alpha_r< -\alpha_0} |t_r|^{\alpha_0+\alpha_r} 
\end{multline*}
when the $\zeta_i$ are ordered such that $\zeta_1\leq \zeta_2\leq \cdots \leq \zeta_n$.
\end{lemma}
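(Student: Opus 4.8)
The plan is to bound $I(z)$ one elliptic gamma factor at a time. First, after substituting $z_i=x_ip^{\zeta_i}$ with $|x_i|=1$ and writing the $r$-th parameter as $t_rp^{\alpha_r}$, I would expand each elliptic gamma function via $\Gamma(x;p,q)=(pq/x;p,q)/(x;p,q)$, so that $I(z)$ becomes a finite product and quotient of $q$-Pochhammer symbols $(up^{\beta};p,q)$, together with the bounded constants $(q;q)^n$, $(p;p)^n$, $\Gamma(t;p,q)^n$ and the monomials $(2\pi iz_j)^{-1}$. Here every $u$ is a monomial in the $x_i$, the (generic, modulus-bounded) $t_r$, $q$ and $t$, so $|u|$ lies between two fixed positive constants independent of $p$; and every $\beta$ is an integer translate of an $\mathbb{R}$-linear combination of the $\zeta_i$ and the $\alpha_r$, so $\beta$ ranges over a fixed bounded set as $\zeta$ runs through the box $\alpha_0\le\zeta_i\le-\alpha_0$. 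Once the ordering $\zeta_1\le\dots\le\zeta_n$ has been fixed, the sign of each relevant exponent ($\zeta_j-\zeta_k$, $\pm\zeta_j\pm\zeta_k$, $\alpha_r\pm\zeta_i$, $\alpha_r+\alpha_s$, $\dots$) is locally constant in $\zeta$; this is exactly why $c(\zeta)$ turns out to be piecewise quadratic and why the case splits $\alpha_r<-|\zeta_i|$, $-|\zeta_i|\le\alpha_r<|\zeta_i|$, $\alpha_r\ge|\zeta_i|$ (and $\alpha_r+\alpha_0<0$ versus $\ge0$) show up in the statement.

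Next I would reduce a single $\Gamma(up^{\beta};p,q)$ to a bounded core times an explicit power of $p$. The difference equation $\Gamma(px;p,q)=\theta(x;q)\Gamma(x;p,q)$, applied a bounded number of times, gives
\[
\Gamma(up^{\beta};p,q)=\Gamma\bigl(up^{\{\beta\}};p,q\bigr)\prod_{k=0}^{\lfloor\beta\rfloor-1}\theta\bigl(up^{\{\beta\}+k};q\bigr)\qquad(\beta\ge0),
\]
with the analogous reciprocal product of theta factors for $\beta<0$. The core $\Gamma(up^{\{\beta\}};p,q)=(p^{1-\{\beta\}}q/u;p,q)/(up^{\{\beta\}};p,q)$ with $0\le\{\beta\}<1$ has, for $|p|<|q|$, both Pochhammer factors of modulus $\le M$; since (by the standing hypothesis, read against the explicit form of $I$) every argument that occurs, including those produced by the difference-equation shifts, stays a bounded distance from the relevant zero locus, the $(p,q)$-analogue of Lemma~\ref{lemb1} bounds the core above and below by positive constants. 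For each theta factor I would set $p=yq^{l}$ with $|y|=1$ (so $l=\log_{|q|}|p|$), put $m=\lfloor l\gamma\rfloor$, $\delta=\{l\gamma\}$, and apply the quasi-periodicity $\theta(q^{m}w;q)=(-1/w)^{m}q^{-\binom{m}{2}}\theta(w;q)$ with $w=uy^{\gamma}q^{\delta}$: then $\theta(w;q)$ has bounded argument, bounded away from $0$ and $\infty$, hence is $\Theta(1)$ by Lemma~\ref{lemb1} (with Lemma~\ref{lemb3} taking care of boundary values of $\gamma$), while the identity $\binom{m}{2}+\delta m=\tfrac{1}{2}l^{2}\gamma^{2}-\tfrac{1}{2}\lfloor l\gamma\rfloor-\tfrac{1}{2}\delta^{2}$ yields $|\theta(up^{\gamma};q)|=|q|^{-\frac{1}{2}l^{2}\gamma^{2}}\,|u|^{-l\gamma}\,|q|^{O(l)}\,\Theta(1)$. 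Throughout $\Theta(1)$ denotes a quantity bounded above and below by positive constants, uniformly in $p$ with $|p|<|q|$ and in $z$ away from the zeros and poles of $I$. Collecting, each elliptic gamma factor contributes $|q|^{\pm\frac{1}{2}l^{2}\gamma^{2}}\,|u|^{\pm l\gamma}\,|q|^{O(l)}\,\Theta(1)$ to $|I(z)|$, with the sign governed by its position in $I$ and by whether $\beta$ is positive or negative.

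It remains to add everything up and recognise $c$ and $d$. The pairwise block $\prod_{1\le j<k\le n}\Gamma(tz_j^{\pm1}z_k^{\pm1};p,q)/\Gamma(z_j^{\pm1}z_k^{\pm1};p,q)$ contributes nothing at order $l^{2}$: because $t$ is not scaled, corresponding numerator and denominator factors carry the same power of $p$ and their $\tfrac{1}{2}l^{2}\gamma^{2}$ parts cancel; they survive only in the $l^{1}$-data, consistent with the $|t|$-power $2\zeta_i(i-1)+2\alpha_0(n-i)$ appearing in $d$. What remains at order $l^{2}$ is, per variable $i$, the block $\prod_{r=0}^{5}\Gamma(t_rz_i^{\pm1};p,q)/\Gamma(z_i^{\pm2};p,q)$, together with $\prod_{j=1}^{n}\prod_{0\le r<s\le5}\Gamma(t^{n-j}t_rt_sp^{\alpha_r+\alpha_s};p,q)^{-1}$ from the prefactor (the $\Gamma(t^{j};p,q)$ and the overall $(q;q)^{n}(p;p)^{n}\Gamma(t;p,q)^{n}$ being $\Theta(1)$). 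Running through the sign cases and collecting the $\tfrac{1}{2}\gamma^{2}$ exponents, with the $\tfrac{1}{2}\sum_{i}$ normalisation, should reproduce $c(\zeta)$ exactly: $\Gamma(z_i^{\pm2})^{-1}$ accounts for the $2\zeta_i^{2}$, the distinguished parameter of exponent $\alpha_0$ for the $-2\alpha_0^{2}$ (after a $-2\zeta_i^{2}$ cancellation against the previous term), each of the other five parameters for the two $|\zeta_i|$-dependent sums, and the prefactor pairs $(0,s)$ with $\alpha_s<-\alpha_0$ for $\sum_{r\ge1,\,\alpha_r<-\alpha_0}(\alpha_r+\alpha_0)^{2}$, its outer factor of $n$ coming from the product over $j$. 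The residual $|q|^{O(l)}$, $|t|^{O(l)}$, $|t_r|^{O(l)}$, together with $|1/z_j|=|q|^{-l\zeta_j}$, then gather precisely into $\bigl(d(t_r;q,t)\bigr)^{-l}$. Since the $\zeta$-box splits into finitely many subregions of constant combinatorial type, the bounds $C_1,C_2$ can be chosen uniformly.

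The main obstacle is this final bookkeeping: for each of the $O(n^{2})$ elliptic gamma factors one must read off the exponent $\beta$ as an explicit function of the ordered $\zeta_i$ and the $\alpha_r$, reduce it modulo $1$, square the resulting family of theta exponents, and verify---with all signs, and after the cancellations inside the pairwise block---that the accumulated $l^{2}$-coefficient telescopes to the compact formula for $c(\zeta)$ in the statement, and likewise that the $l^{1}$- and parameter data telescope to $d(t_r;q,t)$. The only analytic inputs are Lemma~\ref{lemb1}, its $(p,q)$-analogue, and Lemma~\ref{lemb3}; everything else is the combinatorial reorganisation above, carried out with repeated use of $\binom{m}{2}+\delta m=\tfrac{1}{2}l^{2}\gamma^{2}-\tfrac{1}{2}\lfloor l\gamma\rfloor-\tfrac{1}{2}\delta^{2}$ to convert the number of difference-equation steps into the quadratic $c(\zeta)$.
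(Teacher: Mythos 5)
Your proposal is correct and follows essentially the same route as the paper: reduce each elliptic gamma to bounded $(p,q)$-Pochhammer cores plus single $q$-Pochhammer/theta factors with $p$-dependent arguments, extract the quadratic-in-$l$ exponent from each via $p=yq^{l}$ (your theta quasi-periodicity step is the same computation as the paper's Lemma~\ref{lemb3}), note the order-$l^2$ cancellation in the cross-term block, and leave the final telescoping into $c(\zeta)$ and $d(t_r;q,t)$ as bookkeeping. The paper organizes this slightly differently (peeling $(p^{\alpha}x;q)$ off $(p^{\alpha}x;p,q)$ for $\alpha<0$ rather than iterating the difference equation $\Gamma(px;p,q)=\theta(x;q)\Gamma(x;p,q)$), but the two decompositions are equivalent and the paper likewise ends with ``which simplifies to the desired result.''
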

\begin{proof}
By symmetry we may assume that $0\leq \zeta_1\leq \zeta_2\leq \cdots \leq \zeta_n$. 
Removing the constants from $I(z)$, replacing $\Gamma(z)$ by $(pq/z;p,q)/(z;p,q)$, replacing 
$(p^{\alpha}x;p,q)$ for $\alpha<0$ and $x$ independent of $p$ by $(p^{\alpha}x;q) (p^{\alpha+1};p,q)$ and subsequently removing all $(p^{\alpha}x;p,q)$-terms with $\alpha\geq 0$ (as by the second lemma those are bounded above and below) we find that there exist constants such that outside of poles we have
\[
C_1 \leq \left| I(z) 
\prod_{1\leq i<j\leq n} \frac{(tz_i/z_j,t/z_iz_j;q)}{(z_i/z_j,1/z_iz_j;q)}
\prod_{i=1}^n \frac{\prod_{r\geq 0: \alpha_r<-\zeta_i} (t_r z_i;q) \prod_{r\geq 0: \alpha_r<\zeta_i} (t_r/z_i;q)  }{(z_i^{-2};q) \prod_{r\geq 1: \alpha_r+\alpha_0<0} (t^{n-i}t_rt_0;q) }  z_i \right|
   \leq C_2.
\]
Now, replacing $z_j$ by $x_j p^{\zeta_j}$ and $t_r$ by $u_r p^{\alpha_r}$ and using the final lemma
we get that there exist (different from before) constants such that
\[
C_1 \leq \left| I(z) 
\prod_{1\leq i<j\leq n}
|t|^{2l\zeta_j} 
\prod_{i=1}^n \frac{\prod_{r\geq 0: \alpha_r<-\zeta_i}
|t_r|^{-l(\alpha_r+\zeta_i)} q^{-\binom{1-l(\alpha_r+\zeta_i)}{2}} \prod_{r\geq 0: \alpha_r<\zeta_i} 
|t_r|^{-l(\alpha_r-\zeta_i)} q^{-\binom{1-l(\alpha_r-\zeta_i)}{2}} }{
q^{-\binom{1+2l\zeta_i}{2}}
 \prod_{r\geq 1: \alpha_r+\alpha_0<0} \left(|t|^{n-i} |t_r||t_0| \right)^{-l(\alpha_r+\alpha_0)}
 q^{-\binom{1-l(\alpha_r+\alpha_0)}{2}}  }  p^{\zeta_i} \right|
   \leq C_2
\]
which simplifies to the desired result.

\end{proof}
\begin{lemma}
For $\alpha$ in $\hat P_{II}$ and $\alpha_0 \leq \zeta_i\leq -\alpha_0$ we have $c(\zeta)\leq 0$ (where $c(\zeta)$ is given in the previous lemma)
%\[
%c(\zeta) = \frac12 \sum_i \left[ 2 \zeta_i^2 - 2 \alpha_0^2 + \sum_{r\geq 1: \alpha_r<-\alpha_0} (\alpha_r+\alpha_0)^2 - 
%\sum_{r\geq 1: \alpha_r< -|\zeta_i|} (2\alpha_r^2 + 2\zeta_i^2) -\sum_{r\geq 1:-|\zeta_i|\leq \alpha_r< |\zeta_i|} (\alpha_r-|\zeta_i|)^2\right] \leq 0
%\]
and equality holds only if either $\alpha_r+\alpha_s=0$ for some $r,s\geq 1$, or $\zeta_i=\pm \alpha_0$ (for all $i$). 

\end{lemma}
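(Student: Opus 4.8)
The plan is to analyze the quadratic form $c(\zeta)$ directly, treating it as a function of the $\zeta_i$ on the box $\alpha_0\le\zeta_i\le-\alpha_0$, and to show it is non-positive with the stated equality cases. The crucial observation is that $c(\zeta) = \sum_i c_1(\zeta_i)$ splits as a sum over $i$ of identical one-variable functions (the summand depends on $i$ only through $\zeta_i$), so it suffices to prove the single-variable statement: for $\alpha_0\le\zeta\le-\alpha_0$,
\[
c_1(\zeta) := \frac12\left[2\zeta^2 - 2\alpha_0^2 + \sum_{r\ge1:\alpha_r<-\alpha_0}(\alpha_r+\alpha_0)^2 - \sum_{r\ge1:\alpha_r<-|\zeta|}(2\alpha_r^2+2\zeta^2) - \sum_{r\ge1:-|\zeta|\le\alpha_r<|\zeta|}(\alpha_r-|\zeta|)^2\right] \le 0,
\]
with equality iff $\zeta=\pm\alpha_0$ or some $\alpha_r+\alpha_s=0$ with $r,s\ge1$. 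By the symmetry $\zeta\mapsto-\zeta$ we may assume $0\le\zeta\le-\alpha_0$, so $|\zeta|=\zeta$.

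First I would check the boundary value $\zeta=-\alpha_0$: there the last two sums combine (every $\alpha_r<-\alpha_0$ contributes to one or the other) and one computes $c_1(-\alpha_0)=0$ after using the balancing-type relation $2\alpha_0=\sum_{r\ge1:\alpha_r+\alpha_0<0}(\alpha_r+\alpha_0)$ from $\hat P_{II}$; in fact the whole point of that relation is to make this endpoint vanish. Next, on the interior I would compute the derivative. On an open subinterval of $(0,-\alpha_0)$ where no $\alpha_r$ equals $\pm\zeta$, write $A(\zeta)=\#\{r\ge1:\alpha_r<\zeta\}$ (which counts the $\alpha_r$ contributing a $-2\zeta^2$ term) and $B(\zeta)=\#\{r\ge1:-\zeta\le\alpha_r<\zeta\}$ (contributing $-(\alpha_r-\zeta)^2$). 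Then
\[
c_1'(\zeta) = 2\zeta - 2\zeta A(\zeta) + \sum_{r\ge1:-\zeta\le\alpha_r<\zeta}(\alpha_r-\zeta) = 2\zeta(1-A(\zeta)) + \sum_{r:-\zeta\le\alpha_r<\zeta}(\alpha_r-\zeta).
\]
Here every term $\alpha_r-\zeta$ is $\le0$, and $1-A(\zeta)\le0$ precisely when $A(\zeta)\ge1$. So the derivative is $\le0$ on any interval where $A\ge1$, i.e. where at least one $\alpha_r$ (with $r\ge1$) lies below $\zeta$. Since $c_1(-\alpha_0)=0$, if $A\ge1$ throughout $(0,-\alpha_0)$ we get $c_1(\zeta)\ge c_1(-\alpha_0)=0$ is the wrong direction — so I must be careful: I would instead integrate from the right endpoint, $c_1(\zeta)=-\int_\zeta^{-\alpha_0}c_1'(s)\,ds$, so $c_1'\le0$ on $(\zeta,-\alpha_0)$ gives $c_1(\zeta)\le0$, as wanted. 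The remaining worry is the subinterval nearest $0$ where possibly $A(\zeta)=0$; there $c_1'(\zeta)=2\zeta+\sum_{r:-\zeta\le\alpha_r<\zeta}(\alpha_r-\zeta)$, and if additionally $B(\zeta)=0$ this is just $2\zeta>0$, making $c_1$ increasing near $0$. But then one checks $c_1(0)=\frac12[\text{sum of negative squares}]<0$ unless that sum is empty, which forces all $\alpha_r\ge-\alpha_0$ for $r\ge1$, hence (with $\sum\alpha_r=1$ and $\alpha_0\le-\frac12$... actually $\alpha_0\ge-\frac12$) a rigidity one traces back to some $\alpha_r+\alpha_s=0$ or $\zeta=\pm\alpha_0$. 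I would handle the edge cases $A\ge1,B=0$ and $A=0,B\ge1$ by the same monotonicity bookkeeping, and piece the subintervals together using continuity of $c_1$ (it is continuous and piecewise quadratic, with matching values at the breakpoints $\zeta=|\alpha_r|$).

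For the equality analysis, I would track when each inequality above is tight: $c_1'\equiv0$ on a subinterval forces $A(\zeta)=1$ and $\alpha_r=\zeta$ for all $r$ in the $B$-sum simultaneously, which can only persist if that $B$-sum is empty, pushing the argument toward $\zeta=-\alpha_0$; and tightness at $\zeta=0$ of $c_1(0)\le0$ needs the square-sum empty. Running these together with the defining relation of $\hat P_{II}$ yields exactly the two stated equality scenarios. The main obstacle I anticipate is the careful bookkeeping of the breakpoints — the functions $A(\zeta),B(\zeta)$ are step functions in $\zeta$ and the sign of $\alpha_r-\zeta$ flips as $\zeta$ crosses $|\alpha_r|$ — so the argument must be organized as a finite induction over the ordered values $\{|\alpha_r|:r\ge1\}\cap[0,-\alpha_0]$, propagating the bound $c_1\le0$ from the endpoint $-\alpha_0$ leftward, and separately verifying the one or two leftmost pieces where monotonicity may reverse. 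Everything else is routine algebra with the $\hat P_{II}$ relations.
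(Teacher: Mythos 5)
Your reduction to the one--variable function $c_1(\zeta)$ (the paper's $h(\zeta)$, up to the factor $\tfrac12$), the use of evenness to assume $\zeta\geq 0$, and the verification that $c_1(-\alpha_0)=0$ are all fine. But the core of your argument --- propagating the bound leftward from $\zeta=-\alpha_0$ via monotonicity --- has its sign inverted. From $c_1(-\alpha_0)=0$ and $c_1'\leq 0$ on $(\zeta,-\alpha_0)$ one gets $c_1(\zeta)=c_1(-\alpha_0)-\int_\zeta^{-\alpha_0}c_1'(s)\,ds\geq 0$, not $\leq 0$; ``integrating from the right endpoint'' does not change this. What you actually need is $c_1'\geq 0$ to the right of $\zeta$. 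Moreover, the region where you establish $c_1'\leq 0$, namely where $A(\zeta)=\#\{r\geq 1:\alpha_r<-\zeta\}\geq 1$, is never adjacent to the endpoint $-\alpha_0$: since $\hat P_{II}$ forces $\alpha_r>\alpha_0$ for all $r\geq 1$, one has $A(\zeta)=0$ for $\zeta$ near $-\alpha_0$, so your monotone stretch does not connect to the point where you know the value. (On the $A\geq 1$ region the pairwise constraints force $A=1$ and an empty $B$-sum, so $c_1'=0$ there; $c_1$ is constant and its value still has to be computed and bounded --- this is exactly the paper's first case, which uses $\alpha_0-\alpha_r=\sum_{t\neq r,\,\alpha_t<-\alpha_0}(\alpha_t+\alpha_0)$ together with the same-sign inequality $\bigl(\sum x_t\bigr)^2\geq\sum x_t^2$.) On the $A=0$ region the inequality $c_1'=(2-|B|)\zeta+\sum_{r\in B}\alpha_r\geq 0$ is true but genuinely requires the balancing relation $2\alpha_0=\sum_{r:\alpha_r+\alpha_0<0}(\alpha_r+\alpha_0)$ once $|B|\geq 3$; you never invoke it for the derivative.

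A second concrete error: $c_1(0)$ is not ``a sum of negative squares.'' It equals $\tfrac12\bigl[-2\alpha_0^2+\sum_{r:\alpha_r<-\alpha_0}(\alpha_r+\alpha_0)^2-2\sum_{r:\alpha_r<0}\alpha_r^2\bigr]$, which contains a positive term; showing it is $\leq 0$ is itself the nontrivial step (again the balancing relation plus the same-sign inequality), and it is precisely where the equality case $\alpha_r+\alpha_s=0$ emerges. The paper avoids the derivative bookkeeping altogether: it splits into the two cases ``some $\alpha_r<-\zeta$'' (where $h$ collapses to a $\zeta$-independent expression bounded by the same-sign trick) and ``no such $r$'' (where $2=\sum_{r:\alpha_r<-\alpha_0}\tfrac{\alpha_r+\alpha_0}{\alpha_0}$ is used to rewrite $h$ as a sum of individually controllable summands). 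Your monotonicity strategy can be repaired --- prove $c_1'\geq 0$ on $[0,-\alpha_0]$ using the constraints as sketched above --- but as written the proof does not establish the inequality, and the equality analysis built on top of it does not go through either.
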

\begin{proof}
It is easy to check equality holds if $\alpha_r+\alpha_s=0$ for some $r,s\geq 1$ (which implies that
$\alpha_j\geq -\alpha_0$ for all $j\neq 0,r,s$), or if $\zeta=\pm \alpha_0$. 

Notice that $c(\zeta)$ is even in the $\zeta_i$, so we can assume $\zeta_i \geq 0$. Let us consider the function $h$ given by 
\[
h(\zeta) =  2 \zeta^2 - 2 \alpha_0^2 + \sum_{r\geq 1: \alpha_r<-\alpha_0} (\alpha_r+\alpha_0)^2 - 
\sum_{r\geq 1: \alpha_r< -\zeta} (2\alpha_r^2 + 2\zeta^2) -\sum_{r\geq 1:-\zeta\leq \alpha_r< \zeta} (\alpha_r-\zeta)^2,
\]
so $c(\zeta) = \frac12 \sum_i h(\zeta_i)$. 
%Notice that $h$ is even in $\zeta$, so from now on we can assume that $\zeta\geq 0$. 
Now there are two options
\begin{itemize}
\item There exists an $r$ such that $\alpha_r<-\zeta$.

In this case we notice that for all $s\neq r$ we have $\alpha_s \geq -\alpha_r > \zeta$. Thus in this case the function becomes
\begin{align*}
h(\zeta) & = 2\zeta^2 - 2\alpha_0^2 + \sum_{t\geq 1: \alpha_t<-\alpha_0} (\alpha_t+\alpha_0)^2 
- (2\alpha_r^2+2\zeta^2)  \\
&=  -2\alpha_r^2 - 2\alpha_0^2 + (\alpha_r+\alpha_0)^2 + \sum_{t\geq 1, t\neq r: \alpha_t<-\alpha_0} (\alpha_t+\alpha_0)^2  \\ 
&=  - (\alpha_0-\alpha_r)^2 + \sum_{t\geq 1, t\neq r: \alpha_t<-\alpha_0} (\alpha_t+\alpha_0)^2.
\end{align*}
Now we observe that $\alpha_0-\alpha_r = 2\alpha_0 - (\alpha_r+\alpha_0) = 
\sum_{t\geq 1, t\neq r, \alpha_t<-\alpha_0} (\alpha_t+\alpha_0) $. Plugging this in and noticing that the terms $\alpha_t+\alpha_0$ are all negative (thus in particular all have the same sign), we immediately see that $h(\zeta)\leq 0$.

\item There exist no $r$ such that $\alpha_r<-\zeta$.

In this case we write (using $2=\sum_{r\geq 1: \alpha_r<-\alpha_0} \frac{\alpha_r+\alpha_0}{\alpha_0}$)
\begin{align}
h(\zeta) &= 2\zeta^2 - 2 \alpha_0^2 + \sum_{r\geq 1: \alpha_r<-\alpha_0} (\alpha_r+\alpha_0)^2  -\sum_{r\geq 1:-\zeta\leq \alpha_r< \zeta} (\alpha_r-\zeta)^2 \nonumber  \\ 
&= \sum_{r\geq 1: \alpha_r<- \alpha_0} \left[
\frac{\zeta^2 (\alpha_r+\alpha_0)}{\alpha_0} - (\alpha_r+\alpha_0) \alpha_0 +  (\alpha_r+\alpha_0)^2  -
1_{\{\alpha_r<\zeta\}}  (\alpha_r-\zeta)^2 \right].  \label{eqgzeta}
\end{align}
Now let $m=\min(\alpha_r ,\zeta)$, then we can rewrite the summands as
\begin{align*}
\frac{\zeta^2 (\alpha_r+\alpha_0)}{\alpha_0}  &- (\alpha_r+\alpha_0) \alpha_0 +  (\alpha_r+\alpha_0)^2  -
1_{\{\alpha_r<\zeta\}}  (\alpha_r-\zeta)^2
\\ &= \frac{\zeta^2 (\alpha_r+\alpha_0)}{\alpha_0}  - (\alpha_r+\alpha_0) \alpha_0 +  (\alpha_r+\alpha_0)^2  -  (\zeta-m)^2
%\\ &= \frac{1}{\alpha_0} \left( 
%\zeta^2 \alpha_r + \alpha_r^2 \alpha_0 +  \alpha_r \alpha_0^2 + 2m \zeta \alpha_0 - m^2 \alpha_0 \right)
%\\ &= \frac{(\alpha_r \zeta+\alpha_0 m)(\alpha_0+\zeta) }{\alpha_0}
%+  \frac{1}{\alpha_0} \left( -\alpha_r\zeta\alpha_0  - \alpha_0^2 m  + \alpha_r^2 \alpha_0 +  \alpha_r \alpha_0^2 +m \zeta \alpha_0 - m^2 \alpha_0 \right)
\\ &= \frac{(\alpha_r \zeta+\alpha_0 m)(\alpha_0+\zeta) }{\alpha_0}
+  (\alpha_r-m) \left(m - \zeta +\alpha_0 + \alpha_r \right)
\\ & \leq \frac{(\alpha_r \zeta+\alpha_0 m)(\alpha_0+\zeta) }{\alpha_0}.
\end{align*}
Now we have a few different cases
\begin{itemize}
\item $\alpha_r\geq 0$ for all $r\geq 1$. In this case all terms in the sum \eqref{eqgzeta} are negative, so the sum is as well.
\item There exist $r\neq s\geq 1$ with $\alpha_r,\alpha_s\leq \zeta$. In this case we have 
(since $\alpha_r+\alpha_s \geq 0$)
\[
(\alpha_r \zeta+\alpha_0 \min(\alpha_r,\zeta)) + (\alpha_s \zeta+\alpha_0 \min(\alpha_s,\zeta))
%= \alpha_r\zeta+\alpha_0\alpha_r+\alpha_s\zeta+\alpha_0\alpha_s 
= (\alpha_r+\alpha_s)(\zeta+\alpha_0) \leq 0,
\]
while all the remaining summands are negative as before (we have at most one negative $\alpha_r$ with $r\geq 1$). 
\item There exists $\alpha_t<0$ for some $t\geq 1$ and $\alpha_s>\zeta$ for all $s\neq t$. In this case we get 
\[
\sum_{r:\alpha_r<-\alpha_0} \alpha_r \zeta+\alpha_0 m 
=  \alpha_t \zeta + \alpha_0 \alpha_t + \sum_{r \neq t: \alpha_r<-\alpha_0}
(\alpha_r + \alpha_0) \zeta
=  \alpha_t \zeta + \alpha_0 \alpha_t + \zeta (\alpha_0 - \alpha_t)
= \alpha_0 (\zeta+\alpha_t) \leq 0.
\]
So in this case as well we find that $h(\zeta)\leq 0$.
\end{itemize}
\end{itemize}
The cases when equality holds can also be easily determined from this analysis. 
\end{proof}
%Finally we need to be able to bound the functions we plug into the bilinear form.
%\begin{lemma}
%Let $f\in A^{(n)}(u)$ and write $u=p^{\upsilon} \tilde u$ then there exist constants $C>0$  and $\alpha\in \mathbb{R}$ such that for all $z\in (\mathbb{C}^*)^n$ and $|p|<\frac12$ we have 
%\[
%|f(z)| \leq C |p|^{\alpha}
%\]
%away from the poles.
%\end{lemma}
%\begin{proof}
%First of all we notice that $p$-abelianness implies that we might as well assume that $|p|\leq |z_i|\leq 1$.

%We notice that any element in $A(u)$ is given by a finite sum of products and quotients of $p$-theta functions, so it suffices to show
%that there exist constants $C_1, C_2, \gamma >0$ and $\alpha,\beta\in \mathbb{R}$ such that for $|p|^{\gamma}<|z|<|p|^{-\gamma}$ we have 
%\[
%C_1 p^{\alpha} \leq |\theta(z;p) | \leq C_2 p^{\beta},
%\]
%away from the zeros and poles. To prove this we first note that by using the formula $\theta(px;p) = -\frac1{x}\theta(x;p)$ it suffices to prove the above formula for $1 \geq |z| > |p|$. This case, however, follows immediately from Lemma \ref{lemb1}, after writing $\theta(z;p) = (z,p/z;p)$ (as both $z$ and $p/z$ are bounded in size from above). 
%\end{proof}

\begin{proposition}
The sum of the remaining integrals after picking up the residues in the proof of Proposition \ref{propsumlim} vanishes as $p\to 0$. 
\end{proposition}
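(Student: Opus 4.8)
The plan is to show that each of the lower-dimensional integral terms produced by the iterated residue calculus in the proof of Proposition~\ref{propsumlim} is $o(p^{-val(f)-val(g)})$, uniformly in the (growing number of) summation indices, so that after multiplying by $p^{-val(f)-val(g)}$ they all disappear in the limit. Recall that after applying the residue lemma $n-k$ times one obtains, for each multi-index $(\lambda_1,\dots,\lambda_k)$ with $\lambda_j$ bounded by roughly $M=\epsilon\log_{|q|}|p|$, a product of residue factors times a $k$-dimensional integral of the same shape as the original integrand but in $k$ variables, with the remaining contour $C'$ (the shrunken one). The strategy is: (i) bound the residue prefactors, (ii) bound the $k$-dimensional integral by its length times the sup-norm of its integrand on $C'$, (iii) combine these into a bound of the form $|p|^{\text{(something)}}$ times a polynomially-in-$\log|p|$ number of terms, and (iv) check that ``something'' strictly exceeds $-val(f)-val(g)$ for every $\alpha$ in the polytope $\hat P_{II}$ from Lemma~\ref{lemb4}.

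First I would reduce the $k$-dimensional integrals to $0$-dimensional data. The key point is that the integrand of the remaining integral, once we substitute $t_r\to t_rp^{\alpha_r}$ and rescale $z_i\to z_ip^{\zeta_i}$, is exactly the object controlled by Lemma~\ref{lemb4}: it equals $|q|^{-l^2c(\zeta)}d(t_r;q,t)^{-l}$ times something bounded above and below by positive constants, where $l=\log_{|q|}|p|$. Since the preceding lemma shows $c(\zeta)\le 0$ on the relevant range of $\zeta$, with equality only on the boundary cases $\zeta_i=\pm\alpha_0$ (which are exactly the scales where the surviving $n$-fold sum lives) or when some $\alpha_r+\alpha_s=0$ (excluded by the genericity hypothesis of Proposition~\ref{propsumlim}), we get that on the interior of the contour $C'$ the factor $|q|^{l^2c(\zeta)}$ is bounded, and in fact $c(\zeta)<0$ strictly on any compact piece of $C'$ bounded away from the critical scales. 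The integral over $z_1,\dots,z_k$ therefore contributes at worst a bounded factor (and genuinely a decaying factor once one is away from the critical radius), so the size of the $k$-dimensional term is governed by the residue prefactors alone, which are the $k$-variable analogue of the $\Delta$-weights times $f,g$ evaluated at the residue points — the same data whose valuation is computed in Section~\ref{secdisc} and which, by the balancing condition and the condition $2\alpha_a=\sum_{r\neq a:\alpha_r+\alpha_a<0}(\alpha_r+\alpha_a)$, has valuation $val(f)+val(g)$ for the $\mu=0$ term.

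Next I would sum over the multi-indices. Because each $\lambda_j$ ranges over an interval of length $O(M)=O(\log_{|q|}|p|)$, there are at most $O((\log|p|)^n)$ terms in total (across all $k$); any polynomial-in-$\log|p|$ factor is absorbed by an arbitrarily small power of $|p|$. Hence it suffices that every individual residue-times-integral term has valuation \emph{strictly} greater than $val(f)+val(g)$. For the $k$-dimensional term with $k<n$ this strict inequality comes precisely from the fact that we have ``used up'' fewer than $n$ of the available residue sequences: the missing $n-k$ integrations each contribute, via Lemma~\ref{lemb4} and the strictness $c(\zeta)<0$ off the critical radius, a factor that is $o(1)$, i.e. a strictly positive extra power of $|p|$. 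I would make this quantitative by choosing the shrunken contour $C'$ to pass through a radius $|p|^{\zeta}$ with $\zeta$ bounded strictly away from $\pm\alpha_0$ (possible since $C'$ excludes a finite block of poles near the critical radius), so that $c(\zeta)\le -\delta<0$ there and the $k$-dimensional integral is bounded by $C|p|^{\delta l^2}\cdot(\text{residue factors})$, which beats any $\log$-power count with room to spare.

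The main obstacle will be step (iv): assembling the residue prefactor estimates with the integral estimate and verifying, uniformly over the polytope $\hat P_{II}$ and over which subset of the six parameters the residue sequences are attached to, that the combined exponent of $|p|$ strictly exceeds $-val(f)-val(g)$ for the $k<n$ terms while being exactly $-val(f)-val(g)$ (not worse) for the full $n$-fold sum. This is bookkeeping-heavy because the residue prefactors involve the full $\Delta_\lambda$ symbol evaluated at shifted arguments, whose valuation must be tracked through the shifting formulas \eqref{eqc0p}--\eqref{eqcpp} and the definitions in Section~\ref{secnot}, and one must also handle separately the cases $a\le 3$ versus $a=4,5$ (where $f$ or $g$ contributes extra poles and one picks up residues of $\hat f$ or $\hat g$, requiring the polynomial-boundedness clause in the definition of $\tilde A^{(n)}$ to control those residues). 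I would organize this by first doing the case $a\le 3$ and $n=1$ as a template (where it reduces essentially to the univariate estimate in \cite{vdBR}), then noting that the multivariate case factors: the $z_i$-dependence in $c(\zeta)$ and $d(t_r;q,t)$ is a sum over $i$, so the $n$-variable bound is a product of $n$ essentially-univariate bounds, and the strict inequality for $k<n$ follows from strictness in any one of the $n-k$ unused coordinates.
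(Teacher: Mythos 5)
Your proposal follows essentially the same route as the paper: both arguments rest on the two-sided bound of Lemma \ref{lemb4} applied to the partial residues (with $c(\zeta)$ summed only over the variables not yet residued), on keeping the contours in the sub-annulus $|p|^{\alpha_a+\epsilon}\le |z_i|\le |p|^{-\alpha_a-\epsilon}$ where $c(\zeta)\le -\delta<0$ strictly, and on the resulting superpolynomial decay $|p|^{\rho\log_{|q|}|p|}$ swamping the $\mathcal{O}(1/|p|^n)$ contour length, the $\mathcal{O}((\log|p|)^n)$ count of terms, and the polynomially bounded contributions of $f$ and $g$. The only real difference is one of emphasis: the paper short-circuits your bookkeeping-heavy step (iv) by observing that each residue has the same form as $I(z)$ with shifted parameters, so the residue prefactors are already absorbed into the Lemma \ref{lemb4}-type bound (their $\zeta_j$ lie in $[\alpha_a,\alpha_a+\epsilon]$ where $h\le 0$) and no separate comparison of valuations against $-val(f)-val(g)$ is required.
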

\begin{proof}
First of all we note that the contours of the integrals can be chosen inside the annulus around 0 with radii $p^{\alpha_a+\epsilon}$ and
$p^{-\alpha_a-\epsilon}$ (for the $\epsilon>0$ of the proof of Proposition \ref{propsumlim}) and away from the poles of the integrands. Moreover for generic parameters the length of the contour is at worst $\mathcal{O}(1/|p|)$ (that is, we might have to curve from near the circle with radius $p^{\alpha_a+\epsilon}$ to near the circle with radius $p^{-\alpha_a-\epsilon}$). The residues of the integrand $I(z)$ at $z_j = q^{k_j} t \tilde t_a$ (for $1\leq j\leq s$, for some value of $s\leq n$), satisfy a similar bound as the one on $I(z)$ itself given in Lemma \ref{lemb4}, since those residues have the same form as $I(z)$ with different parameters. Indeed the only difference in the bound is the explicit form of the function $d$, and the fact that in the definition of $c$ we only sum over those $i$ corresponding to $z_i$'s we have not yet taken a residue in.

Multiplying this integrand by functions $f$ and $g$ still shows that we have a bound of the order $|p|^{\rho \log_{|q|}(|p|)}$ for some $\rho>0$ (as the function $c$ from Lemma \ref{lemb4} is strictly positive in the domain we are interested in). Even if we multiply this bound by the length of the contour ($\approx 1/|p|^n$) and the number of integrals ($\approx \log(|p|)^n$) it still converges to 0 as $p\to 0$. 
\end{proof}

\subsection{Polytopes for bilateral series}
For the two propositions from Section \ref{sec5} the main ideas of the bounds are the same, but the practical calculations are slightly different. 

Define the polytope $P_{IV}$ by the equations
\[
\alpha_4+\alpha_5\geq 1, \qquad
\alpha_r+\alpha_s\geq 0, \quad (1\leq r<s\leq 3), \qquad
\alpha_0+\alpha_r\geq 0, \quad (r=4,5), \qquad 
\sum_{r=0}^5 \alpha_r=1
\]
\begin{lemma}
The vertices of $P_{IV}$ are the $S_1\times S_3\times S_2$ orbits of $(0,0,0,0,0,1)$, $(-\frac12,-\frac12,\frac12,\frac12,\frac12,\frac12)$ and 
$(-1,0,0,0,1,1)$. 
%Subsequently the following inequalities hold
%\begin{itemize}
%\item $\alpha_r+\alpha_0\geq 0$ if $r=4,5$;
%\item $\alpha_r+\alpha_0\leq 0$ if $r\leq 3$;
%\item $\alpha_r+\alpha_s \leq 1$ if $r\leq 3$ and $s\geq 4$.
%\end{itemize}
\end{lemma}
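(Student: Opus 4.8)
The plan is to treat $P_{IV}$ as a genuine $5$-dimensional polytope lying in the affine hyperplane $\{\sum_{r=0}^{5}\alpha_r=1\}$ and to read off its vertices from the standard fact that each vertex is the unique point at which at least five of the bounding inequalities become equalities (with the five corresponding normals independent modulo the all-ones vector).

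First I would verify that $P_{IV}$ is compact and $5$-dimensional. Summing the three inequalities $\alpha_r+\alpha_s\ge 0$ for $1\le r<s\le 3$ gives $\alpha_1+\alpha_2+\alpha_3\ge 0$; together with $\alpha_4+\alpha_5\ge 1$ and $\sum_r\alpha_r=1$ this forces $\alpha_0\le 0$, and then $\alpha_4+\alpha_5=1-\alpha_0-(\alpha_1+\alpha_2+\alpha_3)\le 1-\alpha_0$ combined with $\alpha_0+\alpha_4\ge 0$ and $\alpha_0+\alpha_5\ge 0$ yields $\alpha_4,\alpha_5\le 1$ and hence $\alpha_0\ge -1$; finally $\alpha_1=1-\alpha_0-(\alpha_2+\alpha_3)-(\alpha_4+\alpha_5)\le 1-(-1)-0-1=1$ and $\alpha_1\ge-\alpha_2\ge-1$, and symmetrically for $\alpha_2,\alpha_3$, so all coordinates lie in $[-1,1]$ and $P_{IV}$ is bounded and closed. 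Full-dimensionality follows from exhibiting a strictly interior point, e.g. $(-\tfrac12,\tfrac{1}{10},\tfrac{1}{10},\tfrac{1}{10},\tfrac35,\tfrac35)$, which satisfies all six inequalities strictly; thus $\dim P_{IV}=5$.

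Next I would label the six inequalities $I_1\colon\alpha_4+\alpha_5\ge 1$; $I_2\colon\alpha_1+\alpha_2\ge 0$, $I_3\colon\alpha_1+\alpha_3\ge 0$, $I_4\colon\alpha_2+\alpha_3\ge 0$; $I_5\colon\alpha_0+\alpha_4\ge 0$, $I_6\colon\alpha_0+\alpha_5\ge 0$. Since $\dim P_{IV}=5$, each vertex makes at least five of these tight, and there are only $\binom{6}{5}=6$ five-element subsets to consider; for each one I would solve the linear system made of $\sum_r\alpha_r=1$ and the five chosen equalities and then test the remaining inequality. The group $S_1\times S_3\times S_2$ (trivial on $\alpha_0$, permuting $\alpha_1,\alpha_2,\alpha_3$, permuting $\alpha_4,\alpha_5$) preserves $P_{IV}$ and acts on the six inequalities by fixing $I_1$, permuting $\{I_2,I_3,I_4\}$, and permuting $\{I_5,I_6\}$, so only three subsystems are inequivalent. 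Omitting $I_1$ forces $\alpha_1=\alpha_2=\alpha_3=0$, $\alpha_4=\alpha_5=-\alpha_0$, $\alpha_0=-1$, giving $(-1,0,0,0,1,1)$, at which $\alpha_4+\alpha_5=2\ge 1$. Omitting $I_2$ forces $\alpha_1=\alpha_2=-\alpha_3$, $\alpha_4=\alpha_5=\tfrac12$, $\alpha_0=-\tfrac12$, $\alpha_3=-\tfrac12$, giving $(-\tfrac12,\tfrac12,\tfrac12,-\tfrac12,\tfrac12,\tfrac12)$, an $S_3$-image of $(-\tfrac12,-\tfrac12,\tfrac12,\tfrac12,\tfrac12,\tfrac12)$, at which $\alpha_1+\alpha_2=1\ge 0$. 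Omitting $I_5$ forces $\alpha_1=\alpha_2=\alpha_3=0$, $\alpha_0=-\alpha_5$, $\alpha_4+\alpha_5=1$, $\alpha_0=0$, giving $(0,0,0,0,1,0)$, an $S_2$-image of $(0,0,0,0,0,1)$, at which $\alpha_0+\alpha_4=1\ge 0$. In each case one checks the five normals are independent modulo the all-ones vector, so the solution is unique and is a bona fide vertex; conversely each of these points obviously lies in $P_{IV}$. Hence the vertices are exactly the three stated orbits, of sizes $1$, $3$, and $2$.

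I would finish with the remark that no vertex can have all six inequalities tight, since adding the equalities $\alpha_4+\alpha_5=1$, $\alpha_r+\alpha_s=0$ ($1\le r<s\le 3$), $\alpha_0+\alpha_r=0$ ($r=4,5$) gives $2\sum_r\alpha_r=1$, which contradicts $\sum_r\alpha_r=1$; thus $P_{IV}$ is simple, and having six vertices in dimension five it is even a $5$-simplex. The only real work is the elementary arithmetic of the three $6\times 6$ solves together with the feasibility and independence checks; I do not anticipate a genuine conceptual obstacle.
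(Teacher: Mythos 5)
Your proof is correct. Note that the paper states this lemma with no proof at all, evidently regarding it as a routine computation, so there is no argument to compare against; your approach --- check boundedness and full-dimensionality, observe that a vertex of a $5$-dimensional polytope cut out by six inequalities must make at least five of them tight, rule out all six being tight by summing them against $\sum_r\alpha_r=1$, and then use the $S_1\times S_3\times S_2$ symmetry to reduce the six candidate subsystems to three explicit solves --- is exactly the standard enumeration one would expect the authors to have in mind, and all of your arithmetic (the interior point, the three solved systems, and the feasibility checks) is correct. The concluding observation that $P_{IV}$ is a $5$-simplex is a pleasant bonus consistent with the orbit sizes $1+3+2=6$.
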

%\begin{proof}
%For the first one we note that $\alpha_4+\alpha_0 \geq 1-\alpha_5+\alpha_0\geq 0$ and likewise for $r=5$.
%The second equatino follows as $\alpha_1+\alpha_0 = (1-\alpha_5-\alpha_4)-(\alpha_3+\alpha_2) \leq 0$ and likewise for $r=2,3$. For the final equations we use $\alpha_3+\alpha_5 = 1-(\alpha_4+\alpha_0)-(\alpha_1+\alpha_2) \leq 1$ and symmetrically for other values of $r$ and $s$.
%
%One can also calculate the vertices of $P_{IV}$ to be the $S_1\times S_3 \times S_2$ orbits of 
%$(0,0,0,0,0,1)$, $(-\frac12,-\frac12,\frac12,\frac12,\frac12,\frac12)$ and 
%$(-\frac13,0,0,0,\frac23,\frac23)$ and notice that the inequalities hold in also these vertices.
%\end{proof}

\begin{lemma}\label{lempiv}
Let $\alpha$ be in the polytope $P_{IV}$. Write $z_i = x_i p^{\zeta_i}$ with $|x_i|=1$. Then there exist constants $C_1,C_2>0$ such that 
for all $p$ and all $\alpha_0\leq \zeta_i \leq -\alpha_0$ we have away from the zeros and poles of $I(z)$ 
\[
C_1 \leq |I(z)| |q|^{l^2 c_{IV}(\zeta)} \left( d_{IV}(t_r;q,t)  \right)^l  \leq C_2
\]
where we write $p=yq^l$ for some $|y|=1$ (thus $l=\log_{|q|}(|p|)$). Here $c_{IV}(\zeta)$ is given by
\begin{multline*}
c_{IV}(\zeta) = \frac12 \sum_i \bigg[
2\zeta_i^2 - 2 \alpha_0^2 - (1-\alpha_4-\alpha_5)^2 - (|\zeta_i| - \alpha_0-1)^2 1_{\{1+\alpha_0<|\zeta_i|\}}
+\sum_{r\geq 1: \alpha_r<-\alpha_0} (\alpha_r+\alpha_0)^2  \\ 
+ \sum_{r\geq 1: \alpha_r>1-|\zeta_i|} (|\zeta_i|+\alpha_r-1)^2
- 
\sum_{r\geq 1: \alpha_r< -|\zeta_i|} (2\alpha_r^2 + 2\zeta_i^2) -\sum_{r\geq 1:-|\zeta_i|\leq \alpha_r< |\zeta_i|} (\alpha_r-|\zeta_i|)^2 \bigg].
\end{multline*}
and $d_{IV}(t_r;q,t)$ is some explicit product of powers of $|q|$, $|t|$ and $|t_r|$'s. 
%by 
%\begin{multline*}
%d_{IV}(t_r;q,t) = \prod_{i=1}^n |t|^{2\zeta_i(i-1) + 2\alpha_0(n-i)}
%|q|^{\sum_{r\geq 1:\alpha_r<-\zeta_i} \alpha_r + \frac12 \sum_{r\geq 1:-\zeta_i\leq \alpha_r<\zeta_i} (\alpha_r-\zeta_i)}
%\left| \frac{q^{\frac12} t^{j-n}}{t_4t_5}\right|^{\alpha_4+\alpha_5-1}
%\\ \times \prod_{r\geq 1: \alpha_r<-\zeta_i} |t_r|^{\alpha_0-\alpha_r}
%\prod_{r\geq1 : -\zeta_i\leq \alpha_r<\zeta_i} |t_r|^{\alpha_0+\zeta_i} \prod_{r\geq 1: \zeta_i\leq \alpha_r< -\alpha_0} |t_r|^{\alpha_0+\alpha_r} 
%\end{multline*}
\end{lemma}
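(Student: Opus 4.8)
The plan is to follow the proof of Lemma \ref{lemb4} essentially verbatim, the only change being that the polytope $P_{II}$ with its balancing identity is replaced by $P_{IV}$ with the genuine inequality $\alpha_4+\alpha_5\ge 1$ (together with $\alpha_r+\alpha_s\ge 0$ for the other pairs containing an index $\ge 1$); as a result a slightly different collection of factors of $I(z)$ becomes unbounded as $p\to 0$, and $c_{IV}$ and $d_{IV}$ are nothing but the bookkeeping of which ones.

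First I would perform the same reduction of $I(z)$ to a finite product of ordinary $q$-Pochhammer symbols. Strip off the $p$-independent prefactor and all the factors $\Gamma(t^j;p,q)$, $\Gamma(t^{n-j}t_rt_s;p,q)$: after writing each $\Gamma(x;p,q)=(pq/x;p,q)/(x;p,q)$ and peeling a $(p^{\gamma}w;q)$ off every $(p^{\gamma}w;p,q)$ with $\gamma<0$ via $(p^{\gamma}w;p,q)=(p^{\gamma}w;q)(p^{\gamma+1}w;p,q)$, iterated until the exponent reaches $[0,1)$, each surviving $(p^{\beta}w;p,q)$ with $\beta\ge 0$ is squeezed between two positive constants by the $(z;p,q)$-analogue of Lemma \ref{lemb1}. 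Doing the same to the $\Gamma$-factors of the integrand itself — $\Gamma(tz_j^{\pm1}z_k^{\pm1})$, $\Gamma(z_j^{\pm1}z_k^{\pm1})$, $\Gamma(t_rz_j^{\pm1})$, $\Gamma(z_j^{\pm2})$ — leaves, away from the zeros and poles of $I(z)$, a finite product of $q$-Pochhammer symbols $(p^{\gamma}w;q)^{\pm1}$ (with $w$ a monomial in the $x_i$, the $u_r$ where $t_r=u_rp^{\alpha_r}$, $q$ and $t$) times $\prod_i z_i$, the whole expression squeezed between $C_1$ and $C_2$.

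Next I would substitute $z_i=x_ip^{\zeta_i}$, order $\zeta_1\le\cdots\le\zeta_n$ by the $BC_n$-symmetry of $I$, and apply Lemma \ref{lemb3} factor by factor: for $\gamma<0$ the symbol $(p^{\gamma}w;q)$ equals $|w|^{l\gamma}|q|^{-\binom{l\gamma+1}{2}}$ up to a bounded quantity (with $l=\log_{|q|}|p|$), while for $\gamma\ge 0$ it is bounded by Lemma \ref{lemb1}. Reading off the exponent $\gamma$ of $p$ in each argument as a piecewise-linear function of $\zeta_i$ and the $\alpha_r$, and how deep each symbol must be peeled, one collects the $|q|$-exponents into $l^2c_{IV}(\zeta)$ and the $|w|$-exponents into $d_{IV}(t_r;q,t)^l$. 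Modulo the signs, the terms common to $c$ and $c_{IV}$ arise exactly as in Lemma \ref{lemb4}, and the genuinely new contributions come from: the pair prefactor $\prod_j\Gamma(t^{n-j}t_4t_5;p,q)^{-1}$, whose exponent $\alpha_4+\alpha_5\ge 1$ is now the ``wrong'' sign so that $(pq/(t^{n-j}t_4t_5);p,q)^{-1}$ has negative exponent $1-\alpha_4-\alpha_5$, producing the $-(1-\alpha_4-\alpha_5)^2$ term; the factors $\Gamma(t_rz_i^{\pm1})$ for $r\ge1$ with $\alpha_r$ now allowed up to near $1$, which must be peeled through the exponents $\alpha_r\pm\zeta_i$ and one step further once these pass $1$, giving the $\sum_{\alpha_r>1-|\zeta_i|}(|\zeta_i|+\alpha_r-1)^2$ term (and, as in the $P_{II}$ case, the $\sum_{\alpha_r<-|\zeta_i|}$ and $\sum_{-|\zeta_i|\le\alpha_r<|\zeta_i|}$ terms, and the $\sum_{\alpha_r<-\alpha_0}(\alpha_r+\alpha_0)^2$ term from $\Gamma(t^{n-j}t_rt_0)^{-1}$); and the factors $\Gamma(t_0z_i^{\pm1})$ whose exponent, since the vertex $(-1,0,0,0,1,1)$ allows $\alpha_0$ as small as $-1$, can now drop below $-1$ and hence occasionally needs a double peel, giving the $-(|\zeta_i|-\alpha_0-1)^2\,1_{\{1+\alpha_0<|\zeta_i|\}}$ term. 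Note that the cross factors $\Gamma(tz_i^{\pm1}z_j^{\pm1})/\Gamma(z_i^{\pm1}z_j^{\pm1})$ contribute no $l^2$-term: numerator and denominator share the same $p$-exponent, so the $|q|$-powers from Lemma \ref{lemb3} cancel and only a power of $|t|$ survives into $d_{IV}$. Throughout, it is the inequalities defining $P_{IV}$, rather than any balancing identity, that decide which exponents are negative and how deep each peel goes.

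The hard part will be the casework, not any single estimate: one must check that under $\alpha\in P_{IV}$ and $\alpha_0\le\zeta_i\le-\alpha_0$ the index sets $\{r:\alpha_r<-|\zeta_i|\}$, $\{r:\alpha_r>1-|\zeta_i|\}$, $\{r:\alpha_r<-\alpha_0\}$ and the condition $1+\alpha_0<|\zeta_i|$ interact consistently (for example $\alpha_r<-|\zeta_i|$ for at most one $r\ge1$ unless two of the $\alpha$'s sum to $0$, as in the $P_{II}$ analysis), and that the fragments peeled off all the $\Gamma$'s assemble into precisely the stated $c_{IV}$ and into the $d_{IV}$ whose exact shape the lemma wisely leaves unspecified. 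I would also be careful that the hypothesis ``away from the zeros and poles of $I(z)$'' is exactly what is needed: through the $\epsilon$ in the definition of ``away from $P$'' it feeds into Lemma \ref{lemb1} to keep the finitely many residual $q$-Pochhammer factors — the ones whose exponent stays in a bounded range and are therefore neither discarded nor rescaled — uniformly bounded away from $0$.
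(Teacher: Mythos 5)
Your proposal matches the paper's proof, which consists entirely of the remark that one argues as in Lemma \ref{lemb4} but with the intermediate sandwich of $q$-Pochhammer symbols adjusted to the inequalities of $P_{IV}$ — exactly the peeling-plus-Lemma-\ref{lemb3} bookkeeping you describe, including the correct attribution of the new terms $-(1-\alpha_4-\alpha_5)^2$, $-(|\zeta_i|-\alpha_0-1)^2\,1_{\{1+\alpha_0<|\zeta_i|\}}$ and $(|\zeta_i|+\alpha_r-1)^2$ to the prefactor $\Gamma(t^{n-j}t_4t_5;p,q)^{-1}$, the deeper peel of $\Gamma(t_0z_i^{\pm 1})$, and the factors $(pq/t_rz_i;q)$ with $r=4,5$. (One harmless slip: in your paraphrase of Lemma \ref{lemb3} the bounded quantity for $(p^{\gamma}w;q)$ with $\gamma<0$ is $|w|^{-l\gamma}|q|^{-\binom{l\gamma}{2}}$ rather than $|w|^{l\gamma}\cdots$, which only affects $d_{IV}$ and that is left unspecified in the statement anyway.)
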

\begin{proof}
The proof is very similar to that of Lemma \ref{lemb4}. Except now we use that, assuming $0\leq \zeta_1\leq \cdots \leq \zeta_n \leq -\alpha_0$ there exist constants bounding $I(z)$ as 
\begin{multline*}
C_1 \leq \bigg| I(z) 
\prod_{1\leq i<j\leq n} \frac{(\frac{tz_i}{z_j},\frac{t}{z_iz_j}, \frac{pt}{z_iz_j}, \frac{pq}{z_iz_j} ;q)}{(\frac{z_i}{z_j},\frac{1}{z_iz_j},\frac{p}{z_iz_j}, \frac{pq}{tz_iz_j};q)}
\\ \times 
\prod_{i=1}^n \frac{(pqz_i^{-2},pq t^{j-n}/t_4t_5,pt_0/z_i;q) \prod_{r=0}^3 (t_rz_i, t_r/z_i;q)}
{(z_i^{-2},pz_i^{-2};q) \prod_{r=4}^5 (\frac{pq}{t_rz_i};q) \prod_{r= 1}^3 (t^{n-i}t_rt_0;q) }  z_i \bigg|
   \leq C_2.
\end{multline*}
\end{proof}

\begin{lemma}\label{lempiv2}
For $\alpha_0 \leq \zeta_j \leq -\alpha_0$ we have $c_{IV}(\zeta) \leq 0$ with equality only if $\zeta_i=\pm (\alpha_1+\alpha_2+\alpha_3)$ for all $i$, or $\alpha_4=\alpha_5=-\alpha_0$ and $|\zeta_i| \geq \alpha_1+\alpha_2+\alpha_3$  for all $i$.
\end{lemma}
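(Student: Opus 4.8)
The plan is to reduce to a one-variable estimate and then follow the analysis of $c(\zeta)$ carried out above, keeping track of the extra terms. Write $c_{IV}(\zeta)=\frac12\sum_i h_{IV}(\zeta_i)$, where $h_{IV}$ denotes the bracketed expression in the definition of $c_{IV}$. Since $h_{IV}$ depends on $\zeta_i$ only through $|\zeta_i|$, it suffices to show $h_{IV}(\zeta)\le 0$ for $0\le\zeta\le-\alpha_0$ and to pin down its zero set; summing over $i$ then gives $c_{IV}\le 0$, with equality exactly when $h_{IV}(\zeta_i)=0$ for every $i$, which will reproduce the two alternatives in the statement (the first forcing $\zeta_i=\alpha_1+\alpha_2+\alpha_3$, the second the degenerate corner $\alpha_4=\alpha_5=-\alpha_0$). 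Note that $h_{IV}$ is continuous and piecewise quadratic in $\zeta$, with breakpoints only at $\zeta=|\alpha_r|$, $\zeta=1-\alpha_r$ and $\zeta=1+\alpha_0$.

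First I would sort the indices $r\ge1$ using the defining inequalities of $P_{IV}$. From $\alpha_0+\alpha_4\ge0$ and $\alpha_0+\alpha_5\ge0$ one gets $\alpha_4,\alpha_5\ge-\alpha_0\ge\zeta$, so $\alpha_4,\alpha_5$ never enter the sums over $\alpha_r<-\alpha_0$, $\alpha_r<-|\zeta|$ or $-|\zeta|\le\alpha_r<|\zeta|$; and combining $\sum_r\alpha_r=1$, $\alpha_4+\alpha_5\ge1$ and the pair inequalities $\alpha_r+\alpha_s\ge0$ ($1\le r<s\le3$) gives $\alpha_1,\alpha_2,\alpha_3\le\alpha_1+\alpha_2+\alpha_3\le-\alpha_0$. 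In the principal regime $-\frac12\le\alpha_0$ (so $-\alpha_0\le\frac12$, hence $1+\alpha_0\ge\frac12\ge|\zeta|$ and the indicator $1_{\{1+\alpha_0<|\zeta|\}}$ is identically $0$, and $\alpha_r\le-\alpha_0\le1-|\zeta|$ for $r\le3$) the sum over $\alpha_r>1-|\zeta|$ runs only over $\{4,5\}$, and $h_{IV}(\zeta)$ splits as a $\{1,2,3\}$-block $2\zeta^2-2\alpha_0^2+\sum_{r\in\{1,2,3\}}(\cdots)$, having the same shape as the function $h(\zeta)$ treated in the lemma on $c(\zeta)$ above, plus a $\{4,5\}$-block $-(1-\alpha_4-\alpha_5)^2+\sum_{r\in\{4,5\}:\alpha_r>1-|\zeta|}(|\zeta|+\alpha_r-1)^2$.

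For the $\{1,2,3\}$-block I would rerun the case split of the $c(\zeta)$ lemma: either some $\alpha_r$ ($r\le3$) has $\alpha_r<-\zeta$, in which case every other $\alpha_s$ ($s\le3$) exceeds $\zeta$ and the block collapses to $-(\alpha_0-\alpha_r)^2+\sum_{s\ne r:\alpha_s<-\alpha_0}(\alpha_s+\alpha_0)^2$; or no such $r$ exists, and one rewrites the block as a weighted sum over $\{s\le3:\alpha_s<-\alpha_0\}$, estimated term by term with $m=\min(\alpha_s,\zeta)$. The only input from the ambient polytope used there is the balancing identity, which here is replaced by $\sum_{r=1}^3(\alpha_r+\alpha_0)=2\alpha_0-(\alpha_4+\alpha_5-1)\le2\alpha_0$; since $\alpha_4+\alpha_5-1\ge0$ this only sharpens the sums entering the estimates, so the sign-of-sum comparisons go through, and one reads off that this block vanishes precisely when $\zeta=\alpha_1+\alpha_2+\alpha_3$ (equality in $(\sum x_i)^2\ge\sum x_i^2$) or when all three $\alpha_r$ sit above $-\alpha_0$. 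For the $\{4,5\}$-block, with $s=\alpha_4+\alpha_5-1\ge0$ and $a_r=\alpha_r-(1-|\zeta|)$ for the retained indices, one has $a_4\le\alpha_5$ and $a_5\le\alpha_4$ from $|\zeta|\le\alpha_4,\alpha_5$, and, when both indices are retained, $a_4+a_5=2|\zeta|-1+s\le s$ because $|\zeta|\le\frac12$; hence $\sum a_r^2\le s^2$, with equality forcing $\alpha_4=\alpha_5=-\alpha_0$ (and $\alpha_0=-\frac12$) or no index retained. Intersecting the equality loci of the two blocks gives the claimed characterization in this regime.

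The remaining regime $\alpha_0<-\frac12$ — which does occur in $P_{IV}$, as the vertex $(-1,0,0,0,1,1)$ shows, and which is exactly where the second equality family and the indicator term live — is the main obstacle. There the indicator $1_{\{1+\alpha_0<|\zeta|\}}$ is active for $|\zeta|>1+\alpha_0$, parameters among $\alpha_1,\alpha_2,\alpha_3$ may cross $1-|\zeta|$, and the $\{4,5\}$-block may retain both squares, so the crude bound $\sum a_r^2\le(\sum a_r)^2$ becomes too lossy and must be refined. Here I would exploit the piecewise-quadratic structure directly: on each linear piece the coefficient of $\zeta^2$ is an explicit small-integer combination of the $+2$, $-1$ and $-2$ contributions, and on the pieces where it is nonnegative $h_{IV}$ is convex, so its maximum is attained at one of the finitely many breakpoints or at an endpoint $\zeta\in\{0,-\alpha_0\}$; this reduces the whole inequality to a finite list of explicit checks together with a check at the vertices of $P_{IV}$ (among which $(-1,0,0,0,1,1)$, where $1+\alpha_0=\alpha_1+\alpha_2+\alpha_3=0$ and the whole interval of admissible $|\zeta|$ gives equality, matching the second family). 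The most delicate point throughout will be keeping the equality analysis consistent across all sub-cases, so as to confirm that the two families $\{\zeta_i=\pm(\alpha_1+\alpha_2+\alpha_3)\}$ and $\{\alpha_4=\alpha_5=-\alpha_0,\ |\zeta_i|\ge\alpha_1+\alpha_2+\alpha_3\}$ exhaust the equality locus.
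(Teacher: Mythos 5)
Your reduction to a single-variable bound $h_{IV}(\zeta)\le 0$ and the sorting of the indices ($\alpha_4,\alpha_5\ge-\alpha_0\ge|\zeta_i|$, $\alpha_r\le\alpha_1+\alpha_2+\alpha_3\le\min(-\alpha_0,1+\alpha_0)$ for $r\le 3$) are fine, but the core of your argument in the regime $\alpha_0\ge-\frac12$ does not hold up. You claim the $\{1,2,3\}$-block $2\zeta^2-2\alpha_0^2+\sum_{r\le3}(\cdots)$ is non-positive because the earlier analysis of $h(\zeta)$ survives with the balancing identity replaced by $\sum_{r=1}^3(\alpha_r+\alpha_0)\le 2\alpha_0$, the inequality "only sharpening" the estimates. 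It sharpens them in the wrong direction. In the case where some $\alpha_r<-\zeta$ ($r\le3$), the block collapses to $-(\alpha_0-\alpha_r)^2+\sum_{t\le3,\,t\ne r}(\alpha_t+\alpha_0)^2$, and the earlier proof concludes by writing $\alpha_0-\alpha_r$ as \emph{exactly} the sum $\sum_t(\alpha_t+\alpha_0)$ of same-signed terms, so that the square of the sum dominates the sum of squares. With only $\sum_{t\le3}(\alpha_t+\alpha_0)\le2\alpha_0$ one gets $|\alpha_0-\alpha_r|\le\bigl|\sum_t(\alpha_t+\alpha_0)\bigr|$, which is useless, and the block can in fact be strictly positive inside $P_{IV}$: take $\alpha=(-0.4,\,-0.35,\,0.36,\,0.36,\,0.515,\,0.515)$ and $\zeta=0.3$ (all constraints of $P_{IV}$ hold, and $\alpha_0\ge-\frac12$). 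Then the block equals $-(\alpha_0-\alpha_1)^2+2(\alpha_2+\alpha_0)^2=-0.0025+0.0032=+0.0007>0$. Here $h_{IV}(\zeta)=-0.0002<0$ only because of the term $-(1-\alpha_4-\alpha_5)^2=-0.0009$, which you have assigned to the $\{4,5\}$-block; so the two blocks cannot be bounded independently, and your decomposition breaks down even in the "easy" regime.

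The second gap is that the regime $\alpha_0<-\frac12$ — which you correctly identify as the one containing the second equality family $\alpha_4=\alpha_5=-\alpha_0$, $|\zeta_i|\ge\alpha_1+\alpha_2+\alpha_3$ — is not actually proved. Convexity on pieces with non-negative leading coefficient only controls those pieces; on concave pieces the maximum can be interior, and nothing is said about them. Moreover the breakpoints $|\alpha_r|$, $1-\alpha_r$, $1+\alpha_0$ move with $\alpha$, so "a finite list of explicit checks together with a check at the vertices of $P_{IV}$" is not a finite verification: $h_{IV}$ is not concave or convex in $\alpha$, so checking vertices of $P_{IV}$ proves nothing about interior points. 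Since the equality characterization is the substantive content of the lemma and lives exactly here, the proposal does not establish the statement. For comparison, the paper avoids any case split on $\alpha_0$: it uses the polytope relations (in particular $1-\alpha_4-\alpha_5=\alpha_0+\alpha_1+\alpha_2+\alpha_3$ and $\alpha_0+\alpha_4,\alpha_0+\alpha_5\ge0$) to re-complete the squares globally, writing the bracket as $-(\zeta_i-(\alpha_1+\alpha_2+\alpha_3))^2-(\zeta_i-\alpha_0-1)^2 1_{\{1+\alpha_0<\zeta_i\}}-\sum_{r\le3:\alpha_r+\zeta_i<0}(\alpha_r+\zeta_i)^2$ plus at most two positive squares $(\zeta_i+\alpha_r-1)^2$ ($r=4,5$), each of which is then dominated by one of the explicit negative squares; the equality locus is read off directly from when the dominations are tight. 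Some version of that global pairing (or another mechanism coupling the $\{1,2,3\}$ and $\{4,5\}$ contributions) is what your argument is missing.
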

\begin{proof}
By evenness we may again assume $\zeta_i\geq 0$ for all $i$. Considering the different inequalities valid in $P_{IV}$ we obtain that 
\begin{multline*}
c_{IV}(\zeta) = \sum_{i=1}^n \bigg[ -(\zeta_i - (\alpha_1+\alpha_2+\alpha_3))^2 - (\zeta_i-\alpha_0-1)^2 1_{\{1+\alpha_0<\zeta_i\}} \\- 
\sum_{1\leq r\leq 3: \alpha_r+\zeta_i<0} (\alpha_r+\zeta_i)^2 
+\sum_{r\geq 4: \alpha_r> 1-\zeta_i} (\zeta_i+\alpha_r-1)^2
\bigg].
\end{multline*}
Note that if $\zeta_i=\alpha_1+\alpha_2+\alpha_3$, the equations for the polytope imply that the only term is 
$-(\zeta_i-(\alpha_1+\alpha_2+\alpha_3))^2$, which of course vanishes at this value of $\zeta_i$. The result now follows as $\zeta_i+\alpha_4-1\leq \zeta_i-\alpha_0-1$ and 
$\zeta_i+\alpha_5-1 \leq \zeta_i-(\alpha_1+\alpha_2+\alpha_3)$, so the only two positive terms are always less in absolute value than two given negative terms. 

This analysis shows that if $\zeta_i \neq \alpha_1+\alpha_2+\alpha_3$, the expression can only vanish if 
$\zeta_i+\alpha_4-1= \zeta_i+\alpha_5-1=\zeta_i-\alpha_0-1 = \zeta_i-(\alpha_1+\alpha_2+\alpha_3) >0$.
\end{proof}

The second polytope associated to a bilateral series is $P_V$ given by the bounding inequalities
\[
\alpha_0\leq -\frac12, \qquad \alpha_r \leq \frac12, \quad (r\geq 1), \qquad
\sum_{r=0}^5 \alpha_r=1.
\]
In this case the vertices are the $S_1\times S_5$ orbits of $(-\frac12,-\frac12,\frac12,\frac12,\frac12,\frac12)$ and 
$(-\frac32,\frac12,\frac12,\frac12,\frac12,\frac12)$. For bounding the integrand we find that 
\begin{lemma}\label{lempv}
Let $\alpha$ be in the polytope $P_{V}$. Write $z_i = x_i p^{\zeta_i}$ with $|x_i|=1$. Then there exist constants $C_1,C_2>0$ such that 
for all $p$ and all $\alpha_0\leq \zeta_i \leq -\alpha_0$ we have away from the zeros and poles of $I(z)$ 
\[
C_1 \leq |I(z)| |q|^{l^2 c_{V}(\zeta)} \left( d_{V}(t_r;q,t)  \right)^l  \leq C_2
\]
where we write $p=yq^l$ for some $|y|=1$ (thus $l=\log_{|q|}(|p|)$). Here $c(\zeta)$ is given by
\begin{multline*}
c_{V}(\zeta) = \frac12 \sum_i \bigg[
4\zeta_i^2 - (\alpha_0-|\zeta_i|)^2 -(\alpha_0+|\zeta_i|)^2 1_{\{\alpha_0+|\zeta_i|<0\}}
  - (1+\alpha_0-|\zeta_i|)^2  1_{\{1+\alpha_0<|\zeta_i|\}} 
\\ -(1+\alpha_0+|\zeta_i|)^2 1_{\{1+\alpha_0+|\zeta_i|<0\}}
-(2+\alpha_0-|\zeta_i|)^2 1_{\{2+\alpha_0<|\zeta_i|\}} 
+
\sum_{r=1}^5  \bigg( (\alpha_r+\alpha_0)^2 \\ -(\alpha_r+|\zeta_i|)^2 1_{\{\alpha_r+|\zeta_i|<0\}}
- (\alpha_r-|\zeta_i|)^2 1_{\{\alpha_r<|\zeta_i|\}}
 - (1+\alpha_r-|\zeta_i|)^2 1_{\{1+\alpha_r<|\zeta_i|\}}
+(1-\alpha_r-|\zeta_i|)^2 1_{\{1-\alpha_r<|\zeta_i|\}} \bigg)
 \bigg].
\end{multline*}
and $d_{V}(t_r;q,t)$ is a product of powers of $|t|$, $|q|$, $|t_r|$, the coefficients of which can be explicitly expressed in terms of $\alpha_r$ and $\zeta_i$.
%\begin{multline*}
%d_{V}(t_r;q,t) = \prod_{i=1}^n |t|^{2\zeta_i(i-1) + 2\alpha_0(n-i)}
%|q|^{\sum_{r\geq 1:\alpha_r<-\zeta_i} \alpha_r + \frac12 \sum_{r\geq 1:-\zeta_i\leq \alpha_r<\zeta_i} (\alpha_r-\zeta_i)}
%\\ \times \left|\frac{\sqrt{q}}{t_0}\right|^{(1+\alpha_0-\zeta_i)1_{\{1+\alpha_0<\zeta_i\}}}
% \prod_{r\geq 1: \alpha_r<-\zeta_i} |t_r|^{\alpha_0-\alpha_r}
%\prod_{r\geq1 : -\zeta_i\leq \alpha_r<\zeta_i} |t_r|^{\alpha_0+\zeta_i} \prod_{r\geq 1: \zeta_i\leq \alpha_r< -\alpha_0} |t_r|^{\alpha_0+\alpha_r} 
%\end{multline*}
\end{lemma}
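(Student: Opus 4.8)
The plan is to follow the same strategy that established Lemma \ref{lemb4} and Lemma \ref{lempiv}, only with the contour-independent factor $\Gamma(p^{\alpha}x)$ now arising from parameters $t_r$ whose exponents $\alpha_r$ range over the larger polytope $P_V$, where $\alpha_0$ is allowed to be as small as $-3/2$ (so $|\zeta_i|$ can be as large as $3/2$). First I would strip the $p$-independent constant prefactor out of $I(z)$; it is bounded above and below by the second lemma of the appendix. Then, using $\Gamma(x;p,q) = (pq/x;p,q)/(x;p,q)$ together with the factorisation $(p^{\alpha}x;p,q) = (p^{\alpha}x;q)(p^{\alpha+1};p,q)$ for $\alpha < 0$ and the fact that $(p^{\beta}x;p,q)$ with $\beta \ge 0$ is bounded above and below (again the second lemma), I reduce $|I(z)|$, up to bounded factors and away from zeros and poles, to a finite product of factors $(p^{\gamma}x;q)$ with $\gamma < 0$. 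Each such factor is then estimated by Lemma \ref{lemb3}, which extracts precisely the powers $|x|^{-l\gamma}|q|^{\binom{1-l\gamma}{2}}$; substituting $z_i = x_i p^{\zeta_i}$, $t_r = u_r p^{\alpha_r}$ and $p = yq^l$ and collecting the $l^2$-, $l$-, and $l^0$-coefficients produces exactly $c_V(\zeta)$ for the $l^2$-part and $d_V(t_r;q,t)$ for the $l$-part.

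The bookkeeping differs from the $P_{II}$ and $P_{IV}$ cases in exactly the places where the enlarged range of $\alpha_0$ forces extra indicator terms: the factors $\Gamma(z_i^{\pm 2};p,q)$ in the denominator of $I$ contribute the $(\alpha_0 \pm |\zeta_i|)$, $(1+\alpha_0\pm|\zeta_i|)$, $(2+\alpha_0-|\zeta_i|)$ terms (since $z_i^2 \sim p^{2\zeta_i}$ can now have valuation as negative as $-3$), while each $\Gamma(t_r z_i^{\pm 1};p,q)$ contributes the family $(\alpha_r \pm |\zeta_i|)$, $(1+\alpha_r-|\zeta_i|)$, $(1-\alpha_r-|\zeta_i|)$ terms; and the $\prod_{0\le r<s\le 5}\Gamma(t^{n-j}t_rt_s;p,q)^{-1}$ prefactor, after using the balancing condition and pulling out bounded factors, contributes the $\sum_r (\alpha_r+\alpha_0)^2$ terms. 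One must be careful, just as in Lemma \ref{lempiv}, to order the $\zeta_i$ so that the intermediate reductions (moving the absorbed $(p^{\gamma};p,q)$-factors around) are legitimate; by the $z_i \to 1/z_i$ and permutation symmetry of $I$ we may assume $0 \le \zeta_1 \le \cdots \le \zeta_n \le -\alpha_0$ throughout, which is the hypothesis of the lemma up to that symmetry.

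The main obstacle is purely organisational rather than conceptual: keeping track of which $\Gamma$-factor produces which indicator, and verifying that the resulting $l^2$-coefficient genuinely equals the displayed $c_V(\zeta)$ after the various $\binom{1-l\gamma}{2}$ are expanded and recombined (the cross terms and the $\frac12\{l\gamma\}^2$ corrections from Lemma \ref{lemb3} are $o(l^2)$ and get absorbed into the $l$- and $l^0$-parts). Because the proof is word-for-word the same as Lemma \ref{lempiv} once this tabulation is done, I would simply remark that it is ``the same as that of Lemma \ref{lempiv}, except that the wider range of $\alpha_0$ in $P_V$ introduces the additional indicator terms recorded in the statement, arising from $\Gamma(z_i^{\pm 2})$ and from the individual $\Gamma(t_rz_i^{\pm 1})$,'' and leave the explicit form of $d_V$ implicit, exactly as the statement already does.
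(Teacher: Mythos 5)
Your overall strategy is exactly the paper's: the proof there is literally ``nearly identical to that of Lemma \ref{lemb4}'', with the only new content being the intermediate two-sided bound listing which $q$-Pochhammer factors survive after stripping the bounded $(p^{\beta}x;p,q)$-pieces, each surviving factor then being fed into Lemma \ref{lemb3}. So the plan is right, and your identification of the $r\geq 1$ contributions (the family $(\alpha_r\pm|\zeta_i|)$, $(1+\alpha_r-|\zeta_i|)$, $(1-\alpha_r-|\zeta_i|)$ from $\Gamma(t_rz_i^{\pm1};p,q)$, and $\sum_r(\alpha_r+\alpha_0)^2$ from the prefactor $\prod\Gamma(t^{n-j}t_rt_s;p,q)^{-1}$) matches the paper's displayed bound.

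However, your tabulation misattributes the $\alpha_0$-terms, and since the entire content of this lemma \emph{is} the tabulation, this would derail the computation if carried out as described. The terms $-(\alpha_0\pm|\zeta_i|)^2$, $-(1+\alpha_0\pm|\zeta_i|)^2$, $-(2+\alpha_0-|\zeta_i|)^2$ do not come from $\Gamma(z_j^{\pm2};p,q)$; they come from $\Gamma(t_0z_j^{\pm1};p,q)$, because it is $t_0z_j^{\pm1}=u_0x_j^{\pm1}p^{\alpha_0\pm\zeta_j}$ whose valuation can now drop to $-3$ (as $\alpha_0$ can be as small as $-\frac32$), forcing the peeling of $(t_0z_j^{\pm1};q)$, $(pt_0z_j^{\pm1};q)$ and $(p^2t_0z_j^{-1};q)$ before the remaining double Pochhammer is bounded. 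The $\Gamma(z_j^{\pm2};p,q)^{-1}$ factors contribute only the $4\zeta_i^2$ term: after peeling, the denominator factors $(z_j^{-2};q)$ and $(pz_j^{-2};q)$ appear, but the $(pz_j^{-2};q)$ contribution is cancelled by $(pqz_j^{-2};q)$ coming from $\Gamma(z_j^{\pm2};p,q)^{-1}=(pqz_j^{\mp2};p,q)/(z_j^{\pm2};p,q)$, and the deeper factors $(p^2z_j^{-2};q)$ and $(p^2qz_j^{-2};q)$ cancel each other up to a single bounded linear factor. In short: $r=0$ should be treated as an instance of your $\Gamma(t_rz_i^{\pm1})$ analysis with one extra layer of peeling, not folded into $\Gamma(z_i^{\pm2})$. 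With that correction (and with the intermediate bound actually written out, which is the one thing the paper's proof records explicitly), your argument coincides with the paper's.
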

\begin{proof}
The proof is again nearly identical to that of Lemma \ref{lemb4}. Except now we use that there exist constants bounding $I(z)$ as 
\begin{multline*}
C_1 \leq \bigg| I(z) 
\prod_{1\leq i<j\leq n} \frac{(tz_i/z_j,ptz_i/z_j,pqz_j/z_i,t/z_iz_j,pt/z_iz_j,p^2t/z_iz_j,pq/z_iz_j,p^2q/z_iz_j;q)}{(z_i/z_j,pz_i/z_j,pqz_j/tz_i,1/z_iz_j,p/z_iz_j,p^2/z_iz_j,pq/tz_iz_j,p^2q/tz_iz_j;q)}
\\ \times \prod_{i=1}^n 
\frac{(pq z_i^{-2}, t_0z_i^{\pm 1},pt_0z_i^{\pm 1}, p^2t_0z_i^{-1};q)   }
{(z_i^{-2},p z_i^{-2};q)  }  
\prod_{r=1}^5 \frac{(t_r z_i^{\pm 1}, pt_rz_i^{-1};q) }{ (t^{n-i}t_rt_0,pq/t_rz_i;q)}
z_i \bigg|
   \leq C_2.
\end{multline*}
\end{proof}

\begin{lemma}\label{lemlowa0}
For $\alpha_0 \leq \zeta_j \leq -\alpha_0$ we have $c_{V}(\zeta) \leq 0$ with equality only if $|\zeta_i|= \frac12$ or $|\zeta_i|=\frac32$ for all $i$, or if at least 4 of the bounding inequalities are satisfied (i.e. at least four of  $\alpha_0=-\frac12$, and $\alpha_r=\frac12$ ($r\geq 1$) hold).
\end{lemma}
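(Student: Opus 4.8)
The plan is to follow the template of the proofs of Lemma~\ref{lempiv2} and of the corresponding $\hat P_{II}$ estimate: write $c_V(\zeta)=\tfrac12\sum_i h_V(\zeta_i)$, where $h_V(\zeta)$ is the bracketed expression in the statement of Lemma~\ref{lempv}, note that $c_V$ is manifestly even in each $\zeta_i$ (it is written entirely in terms of the $|\zeta_i|$), and reduce to showing $h_V(\zeta)\le 0$ for $\alpha_0\le 0\le\zeta\le-\alpha_0$ with the asserted equality cases. First I would record the breakpoints of the piecewise‑quadratic $h_V$: they sit at $\zeta\in\{-\alpha_0,\,1+\alpha_0,\,2+\alpha_0\}$ (from the $z_i^{\pm 2}$ factors and their two extra $p$‑shift levels) and, for each $r\ge 1$, at $\zeta\in\{-\alpha_r,\,\alpha_r,\,1-\alpha_r,\,1+\alpha_r\}$ (from the $t_rz_i^{\pm 1}$ factors), keeping in mind that the sign of $\alpha_r$ is \emph{not} fixed in $P_V$. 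On each cell between consecutive breakpoints $h_V$ is a quadratic in $\zeta$.

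Second, I would use the defining inequalities of $P_V$, namely $\alpha_0\le-\tfrac12$, $\alpha_r\le\tfrac12$ for $r\ge 1$, and $\sum_r\alpha_r=1$, to collapse $h_V$ into a sum of squares. As in Lemma~\ref{lempiv2}, where $c_{IV}$ reduced to an unconditional negative square $-(\zeta-(\alpha_1+\alpha_2+\alpha_3))^2$ plus gated squares that pair up, here the indicator‑gated terms combine so that $h_V(\zeta)$ becomes a sum of negative squares — $-(\zeta-\tfrac12)^2$, $-(\zeta-\tfrac32)^2 1_{\{\zeta>3/2\}}$, $-(\alpha_0+\zeta)^2 1_{\{\cdots\}}$, $-(\alpha_r+\zeta)^2 1_{\{\cdots\}}$, $-(\alpha_r-\zeta)^2 1_{\{\cdots\}}$, etc. — together with a handful of positive squares of the form $+(1-\alpha_r-\zeta)^2 1_{\{1-\alpha_r<\zeta\}}$ and $+(1+\alpha_r-\zeta)^2 1_{\{1+\alpha_r<\zeta\}}$ (exactly the positive pieces visible in the last line of Lemma~\ref{lempv}'s statement). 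The key point is a term‑by‑term domination: because $\alpha_r\le\tfrac12$ one has $|1-\alpha_r-\zeta|\le|\tfrac12-\zeta|$ on the cell where the first term is active, and because $\alpha_0\le-\tfrac12$ one has $|1+\alpha_r-\zeta|\le|1+\alpha_0-\zeta|$, and similarly the remaining positive squares are each dominated by a distinct negative square; summing these inequalities gives $h_V(\zeta)\le 0$.

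Third, for the equality analysis I would track exactly when each of these dominations is tight. A positive term $+(1-\alpha_r-\zeta)^2$ equals the negative one it is paired with only if $\alpha_r=\tfrac12$; a positive $+(1+\alpha_r-\zeta)^2$ matches only if $\alpha_0=-\tfrac12$, i.e.\ the corresponding bounding inequality of $P_V$ is tight; and when no positive term is present, the surviving negative squares force $\zeta\in\{\tfrac12,\tfrac32\}$. Since $c_V(\zeta)=0$ requires $h_V(\zeta_i)=0$ for every $i$, the bookkeeping of which squares can be forced to vanish simultaneously yields precisely the stated alternatives: either $|\zeta_i|\in\{\tfrac12,\tfrac32\}$ for all $i$, or at least four of the six equalities $\alpha_0=-\tfrac12$, $\alpha_r=\tfrac12$ $(1\le r\le 5)$ hold (the balancing condition $\sum_r\alpha_r=1$ accounts for why the count lands on ``at least four'' rather than all six). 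This matches the vertices listed after Lemma~\ref{lempv}.

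The main obstacle is the case bookkeeping in the second step: compared with the $\hat P_{II}$ and $P_{IV}$ situations there are substantially more indicator‑gated pieces — the two additional $p$‑shift levels in the $z_i^{\pm 2}$ factor and the full set $\{-\alpha_r,\alpha_r,1\pm\alpha_r\}$ for each of five parameters whose signs are unconstrained — and the breakpoints interleave in $\zeta$ in several different orders depending on the $\alpha_r$. One must organize the split so that in each cell the active positive and negative terms are matched consistently; I expect the cleanest route is to first fix an ordering $\alpha_1\le\cdots\le\alpha_5$ and the position of $\zeta$ relative to $-\alpha_0$, $1+\alpha_0$, $2+\alpha_0$, thereby reducing to a finite (if tedious) list of cells, and then to verify the pairing of positive against negative squares cell by cell. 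Everything beyond this case analysis is routine quadratic estimation identical in spirit to Lemmas~\ref{lempiv2} and the $\hat P_{II}$ bound.
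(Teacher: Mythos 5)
Your overall architecture (reduce by evenness to $\zeta_i\ge 0$, study the one-variable piecewise quadratic $h_V$, then do equality bookkeeping) matches the paper, but your core analytic step is different from the paper's and, as stated, has a gap. The paper does \emph{not} prove $h_V\le 0$ by pairing each positive square against a distinct dominating negative square. Instead it first establishes the reflection/translation symmetries $\zeta_i\mapsto 1-\zeta_i$ on $[\frac12,1]$ and $\zeta_i\mapsto 1+\zeta_i$ on $[1,\frac32]$, reducing everything to $[0,\frac12]$; on that interval it argues globally: after rewriting, the bracket has vanishing first derivative at $0$ and at $\frac12$, vanishes at $\zeta_i=\frac12$, and its second derivative is a piecewise constant \emph{even integer} equal to $-8$ near $\frac12$ which can only jump \emph{up} by $2$ at the breakpoints $-\alpha_r$ and $-1-\alpha_0$, hence can never pass from strictly negative to strictly positive. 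So the function is convex-then-concave with zero derivative at both endpoints, its derivative is nonnegative, and the maximum $0$ is attained at $\zeta_i=\frac12$; the equality cases fall out of when the bracket is locally constant there.

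The reason your term-by-term domination fails: on $[0,\frac12]$ the reduced bracket is $-(1-2\zeta)^2+(1+\alpha_0-\zeta)^2 1_{\{1+\alpha_0>\zeta\}}+\sum_{r\ge1}(\alpha_r-\zeta)^2 1_{\{\alpha_r>\zeta\}}-(\text{gated negatives})$, so up to six gated positive squares can be active at once, each individually as large as $(\tfrac12-\zeta)^2$ (only $\alpha_0\le-\tfrac12$ and $\alpha_r\le\tfrac12$ enter such a bound), while the only unconditionally present negative term is $-4(\tfrac12-\zeta)^2$. For $\alpha=(-\tfrac12,0.3,0.3,0.3,0.3,0.3)$ at $\zeta=0$ the positive part is $0.25+5\cdot 0.09=0.7$ against $-1$: the inequality holds, but no assignment of a fixed share of the single negative square to each of the six positive squares can work, since the term $(1+\alpha_0-\zeta)^2=0.25$ already exceeds one sixth of it. The collective bound genuinely needs the balancing condition $\sum_r\alpha_r=1$ (which prevents all six from being near-maximal simultaneously), and a pairwise comparison never sees that constraint — you invoke it only for the equality count. (You have also misread the signs in $c_V$: the gated term $(1+\alpha_r-|\zeta_i|)^2 1_{\{1+\alpha_r<|\zeta_i|\}}$ enters with a minus; the only gated positive square per $r$ is $(1-\alpha_r-|\zeta_i|)^2 1_{\{1-\alpha_r<|\zeta_i|\}}$, alongside the ungated $(\alpha_r+\alpha_0)^2$ and $4\zeta_i^2$.) Repairing your plan means replacing the pairwise dominations by an aggregate estimate that uses the balancing condition, at which point you are essentially driven to the paper's derivative argument.
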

\begin{proof}
By evenness we may again assume $\zeta_i\geq 0$ for all $i$. Considering the different inequalities valid in $P_{V}$ we obtain that for $0\leq \zeta_i\leq \frac12$ we get 
\begin{multline*}
c_{V}(\zeta) = \sum_{i=1}^n \bigg[ -(1-2\zeta_i)^2 + 1_{\{1+\alpha_0>\zeta_i\}} (1+\alpha_0-\zeta_i)^2
-(1+\alpha_0+\zeta)^2 1_{1+\alpha_0+\zeta_i<0}
\\ + \sum_{r\geq 1} (\alpha_r-\zeta_i)^2 1_{\{\alpha_r>\zeta_i\}}  - (\alpha_r+\zeta_i)^21_{\{\alpha_r+\zeta_i<0\}}  \bigg].
\end{multline*}
All the indicator functions vanish at $\zeta_i=\frac12$, so for those values of $\zeta_i$ this term is clearly zero.
The term between brackets is a piecewise quadratic function, the first derivative of which vanishes in 0 and $\frac12$, the second derivative of which is always an even integer and is negative in $\zeta_i=\frac12$, ($\left(\frac{d}{d\zeta_i}\right)^2 -(1-2\zeta_i)^2 = -8$), and changes sign at most once. 
Indeed, the second derivative only increases (if $\zeta_i$ increases) at points where $\zeta_i=-\alpha_r$ (for some $r\geq 1$) or $\zeta_i=-1-\alpha_0$, which is at most once. At such a point the second derivative increases by 2, and as it is always an even integer, it cannot go from strictly negative to strictly positive. In particular the only sign change the second derivative can make is changing from positive to negative once. These arguments imply that the derivative is always positive, and thus that the value in the interval $\zeta_i\in [0,\frac12]$ is maximized at $\zeta_i=\frac12$. We moreover have that this term is constant in some neighborhood of $\zeta_i=\frac12$, whenever at least 4 of the equations $\alpha_0=-\frac12$ and $\alpha_r=\frac12$ (for $r\geq 1$) hold. In the case 4 of these equations hold, the expression between brackets reduces to zero identically. 

For the regions $\zeta_i \in [\frac12,1]$, respectively $\zeta_i\in [1,\frac32]$ it is opportune to prove
that the value of $c_V$ does not change if we replace $\zeta_i$ by $1-\zeta_i$, respectively $1+\zeta_i$ (assuming the original $\zeta_i\in [0,\frac12]$). 
\end{proof}

\end{document}